\colorlet{refkey}{orange!20}
\colorlet{labelkey}{blue!30}
\newtheorem{theorem}{Theorem}[section]
\newtheorem{proposition}[theorem]{Proposition}
\newtheorem{prop}[theorem]{Proposition}
\newtheorem{lemma}[theorem]{Lemma}
\newtheorem{claim}[theorem]{Claim}
\newtheorem{cor}[theorem]{Corollary}
\newtheorem{conj}[theorem]{Conjecture}
\newtheorem*{question*}{Question}
\theoremstyle{definition}
\newtheorem{defn}[theorem]{Definition}
\newtheorem{problem}[theorem]{Problem}
\newtheorem*{definition*}{Definition}
\newtheorem{example}[theorem]{Example}
\newtheorem{alg}[theorem]{Algorithm}
\theoremstyle{remark}
\newtheorem{rem}[theorem]{Remark}
\numberwithin{equation}{section}
\newcommand{\abs}[1]{\left\lvert#1\right\rvert}
\newcommand{\floor}[1]{\left\lfloor #1 \right\rfloor}
\newcommand{\ceil}[1]{\left\lceil #1 \right\rceil}
\newcommand{\paren}[1]{\left( #1 \right)}
\newcommand{\sqb}[1]{\left[ #1 \right]}
\newcommand{\set}[1]{\left\{ #1 \right\}}
\newcommand{\codeg}{\text{codeg}}
\renewcommand{\lg}{\text{lg}}
\newcommand{\sm}{\text{sm}}
\newcommand\widecheck[1]{
\savestack{\tmpbox}{\stretchto{
  \scaleto{
    \scalerel*[\widthof{\ensuremath{#1}}]{\kern-.6pt\bigwedge\kern-.6pt}
    {\rule[-\textheight/2]{1ex}{\textheight}}
  }{\textheight}
}{0.5ex}}
\stackon[1pt]{#1}{\scalebox{-1}{\tmpbox}}
}
\DeclareMathOperator{\Tr}{Tr}
\DeclareMathOperator{\SAT}{-SAT}
\DeclareMathOperator{\type}{type}
\DeclareMathOperator{\wt}{wt}
\newcommand{\ol}{\overline}
\newcommand{\wc}{\widecheck}
\newcommand{\EE}{\mathbb{E}}
\newcommand{\RR}{\mathbb{R}}
\newcommand{\NN}{\mathbb{N}}
\newcommand{\cB}{\mathcal{B}}
\newcommand{\cF}{\mathcal{F}}
\newcommand{\mcE}{\mathcal{E}}
\newcommand{\mcF}{\mathcal{F}}
\newcommand{\mcG}{\mathcal{G}}
\newcommand{\cH}{\mathcal{H}}
\newcommand{\cI}{\mathcal{I}}
\newcommand{\mcH}{\mathcal{H}}
\newcommand{\cG}{\mathcal{G}}
\newcommand{\cP}{\mathcal{P}}
\newcommand{\cS}{\mathcal{S}}
\title{Enumerating $k$-SAT functions}
\author{Dingding Dong}
\author{Nitya Mani}
\author{Yufei Zhao}
\thanks{Mani was supported by the NSF Graduate Research Fellowship Program and a Hertz Graduate Fellowship.}
\thanks{Zhao was supported by NSF award DMS-1764176, NSF CAREER award DMS-2044606, a Sloan Research Fellowship, and the MIT Solomon Buchsbaum Fund.}
\address{Dong: Department of Mathematics, Harvard University, Cambridge, MA 02138, USA}
\email{ddong@math.harvard.edu}
\address{Mani \& Zhao: Department of Mathematics, Massachusetts Institute of Technology, Cambridge, MA 02139, USA}
\email{\{nmani,yufeiz\}@mit.edu}
\begin{document}

\begin{abstract}
    How many $k$-SAT functions on $n$ boolean variables are there? 
    What does a typical such function look like?
    Bollob\'as, Brightwell, and Leader conjectured that, for each fixed $k \ge 2$, the number of $k$-SAT functions on $n$ variables is $(1+o(1))2^{\binom{n}{k} + n}$, or equivalently: a $1-o(1)$ fraction of all $k$-SAT functions are unate, i.e., monotone after negating some variables. They proved a weaker version of the conjecture for $k=2$. The conjecture was confirmed for $k=2$ by Allen and $k=3$ by Ilinca and Kahn.
    
    We show that the problem of enumerating $k$-SAT functions is equivalent to a Tur\'an density problem for partially directed hypergraphs.
    Our proof uses the hypergraph container method.
    Furthermore, we confirm the Bollob\'as--Brightwell--Leader conjecture for $k=4$ by solving the corresponding Tur\'an density problem. 
    Our solution applies a recent result of F\"uredi and Maleki on the minimum triangular edge density in a graph of given edge density.
    In an appendix (by Nitya Mani and Edward Yu), we further confirm the $k=5$ case of the conjecture via a brute force computer search.
\end{abstract}

\maketitle

\section{Introduction}

\subsection{Background}
We study the following basic question on boolean functions:
\begin{quote}
How many $k$-SAT functions on $n$ boolean variables are there? 
What does a typical such function look like?
\end{quote}
This question was first studied by Bollob\'as, Brightwell, and Leader~\cite{BBL03}. 
We focus on the regime where $k$ is fixed and $n \to \infty$.
We will consider $k$-SAT functions in their disjunctive normal form (DNF). It would be an equivalent problem to enumerate $k$-SAT functions in their conjunctive normal form (CNF) since the negation of a DNF is a CNF and vice-versa. 
For our purpose, a \emph{$k$-SAT function} on $n$ boolean variables is a function $f \colon \{0,1\}^n \to \{0,1\}$ of the form
\[
f(x_1, \dots, x_n) = C_1 \lor C_2 \lor  \cdots \lor C_m,
\]
where each $C_i$ has the form $z_1 \land \cdots \land z_k$ with $z_1, \dots, z_k \in \{x_1, \ol {x_1}, \dots, x_n, \ol {x_n}\}$.
Here we call $x_1, \dots, x_n$ the \emph{variables}.
Each of $x_i$ and $\ol{x_i}$ is a called a \emph{literal} (\emph{positive literal} and \emph{negative literal}, respectively). Each \emph{clause} $C_i$ is a conjunction (``and'') of $k$ literals.
We further restrict that every clause uses $k$ distinct variables (e.g., both $x_1 \land x_2$ and $x_1 \land x_2 \land \ol{x_2}$ are invalid 3-SAT clauses).
This restriction does not lose any generality (the first example can be replaced by $(x_1 \land x_2 \land x_3) \lor (x_1 \land x_2 \land \ol{x_3})$ and the second example is a clause that is never satisfied and so can be deleted).
To simplify notation, we will drop the ``and'' symbol $\land$ when writing a clause.
A \emph{formula} is a set of clauses. 
For instance, the $2$-SAT formula 
$(x_1 \land x_2) \lor  (\ol{x_1}\land x_3) \lor (x_3 \land x_4)$ is written as 
$\{x_1x_2, \ol{x_1} x_3, x_3 x_4\}$. 
Every $k$-SAT function has a $k$-SAT formula, but different $k$-SAT formulae may correspond to the same $k$-SAT function.

Given the importance of $k$-SAT functions, it is a natural question to try to understand how rich this family of functions is. 
While the total number of functions $\{0,1\}^n \to \{0,1\}$ is $2^{2^n}$, the number of $k$-SAT functions is significantly smaller. 
As an easy upper bound, since there are $2^k \binom{n}{k}$ possible clauses, the number of $k$-SAT formulae is $2^{2^k \binom{n}{k}}$.
So the number of $k$-SAT functions is at most $2^{2^k \binom{n}{k}}$,
which is significantly smaller than $2^{2^n}$ for a fixed $k$ and large $n$.
The actual number of $k$-SAT functions turns out to be considerably smaller than even this upper bound.

A $k$-SAT formula is \emph{monotone} if it only uses positive literals.
A $k$-SAT function is monotone if it has a monotone $k$-SAT formula.
There are $\binom{n}{k}$ possible monotone clauses, and every monotone $k$-SAT formula produces a unique monotone $k$-SAT function, and so there are $2^{\binom{n}{k}}$ monotone $k$-SAT functions.

A $k$-SAT function or formula is \emph{unate} if it is monotone after replacing some variables with their negations (e.g., $\{\ol{x_1}x_2, \ol{x_1}x_2,x_2x_3\}$ is unate but $\{\ol{x_1}x_2, x_1x_2, x_2x_3\}$ is not).
The number of unate $k$-SAT formulae that use all $n$ variables is at least
\[
2^n \left(2^{\binom{n}{k}} - n 2^{\binom{n-1}{k}} \right) = (1+o(1)) 2^{n + \binom{n}{k}},
\]
for fixed $k$ as $n \to \infty$. Indeed, for each variable $x_i$, there are $2^n$ choices as to whether to use it as a positive literal or a negative literal and there are at least $2^{\binom{n}{k}} - n 2^{\binom{n-1}{k}}$ monotone formulae that use all $n$ variables. 
All unate $k$-SAT formulae represent distinct functions, and thus the number of unate $k$-SAT functions on $n$ variables is at least $(1+o(1)) 2^{n + \binom{n}{k}}$.

Bollob\'as, Brightwell, and Leader~\cite{BBL03} conjectured the following.

\begin{conj} \label{conj:BBL}
Fix $k \ge 2$. 
The number of $k$-SAT functions on $n$ boolean variables is $(1+o(1)) 2^{n + \binom{n}{k}}$.
Equivalently: a $1-o(1)$ fraction of all $k$-SAT functions on $n$ variables are unate.
\end{conj}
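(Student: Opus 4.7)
The plan is to upper-bound the number of $k$-SAT functions by $(1+o(1)) 2^{n + \binom{n}{k}}$; the matching lower bound is already established by the explicit construction of unate formulae using all $n$ variables.

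First, I would pick a canonical representative for each function. Every $k$-SAT function $f$ is determined by its \emph{saturated formula} $S(f)$, consisting of all $k$-clauses $C$ whose satisfying face $\{z_1 = \cdots = z_k = 1\}$ lies inside $f^{-1}(1)$; distinct functions yield distinct saturated formulae, so it suffices to enumerate saturated formulae. A formula is saturated precisely when it is closed under the local deduction rule: if $Dx$ and $D\ol x$ both lie in $S$ for some $(k-1)$-conjunction $D$ and variable $x \notin D$, then every extension $Dy$ with $y$ a literal on a variable outside $D$ must also lie in $S$. Encoding each formula as a partially directed $k$-uniform hypergraph on $[n]$ whose hyperedges carry sign patterns, one sees that unate saturated formulae are exactly the sign-consistent ones; these already account for the asserted main term and give the trivial lower bound.

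The main step is to show that non-unate saturated formulae contribute only $o(2^{n + \binom{n}{k}})$. I would attack this via the hypergraph container method, applied to an auxiliary hypergraph $\cH$ on the ground set of all $2^k\binom{n}{k}$ clauses whose edges encode the closure-forcing configurations: a saturated formula that contains a non-unate pair $Dx, D\ol x$ is forced to contain every $Dy$, so non-unate saturated formulae either avoid triggering these patterns locally or drag in many forced clauses. The container lemma then yields a small family of containers into which every non-unate saturated formula fits, with each container structurally close to being sign-consistent. Summing $2^{|\text{container}|}$ across the container list should give the desired bound, provided the container sizes for non-sign-consistent structures are genuinely smaller than $n + \binom{n}{k}$.

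The main obstacle is the extremal problem that the container step hands off: one must determine the maximum density of a deduction-closed, non-sign-consistent partially directed $k$-uniform hypergraph. This is exactly the Tur\'an-type density problem for partially directed hypergraphs referenced in the abstract, and it is the heart of confirming the conjecture for any given $k$. For $k=4$ it can be transformed into a question about minimising the triangular edge density in a graph of prescribed edge density, allowing the recent F\"uredi--Maleki theorem to be invoked. For general $k$ the associated extremal problem appears to require genuinely new ideas, which is precisely where the conjecture remains open.
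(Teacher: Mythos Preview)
Your high-level architecture---containers reducing to a Tur\'an-type extremal problem on partially directed hypergraphs, with F\"uredi--Maleki entering for $k=4$---matches the paper. But two concrete pieces of your plan do not work as stated.

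First, your characterization of saturated formulae is false. Closure under the rule ``$Dx, D\ol x \in S \Rightarrow Dy \in S$ for all $y$'' is necessary but not sufficient for saturation. Take $k=2$, four variables $w,x,y,z$, and $S = \{wx, wy, x\ol z, \ol y z\}$. No two clauses in $S$ share a $(k-1)$-stem and differ by a single negation, so $S$ is vacuously closed under your rule. Yet the function $f$ it defines satisfies $f(w{=}1,z{=}1)=1$ on the entire face (check all four points), so $wz \in S(f) \setminus S$. Saturation is a genuinely global property; there is no finite local rule that captures it. Relatedly, your claim that unate functions correspond to sign-consistent saturated formulae fails: the monotone function $f = x_1$ (as a $2$-SAT function on three variables) has saturated formula $\{x_1x_2, x_1\ol{x_2}, x_1x_3, x_1\ol{x_3}\}$, which is not sign-consistent. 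The paper sidesteps all of this by working with \emph{minimal} formulae rather than saturated ones; minimality is witnessed clause-by-clause and is exactly what makes the forbidden-subformula / container machinery bite.

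Second, even granting the container step and the Tur\'an bound $\pi = 1$, summing $2^{|\text{container}|}$ only gives $2^{(1+o(1))\binom{n}{k}}$, not the sharp $(1+o(1))2^{n+\binom{n}{k}}$. Closing that gap requires a separate stability analysis showing that nearly-unate minimal formulae are overwhelmingly unate; this is Sections~6--7 of the paper and is technically the most delicate part (Kruskal--Katona, Shearer, a hypergraph orientation lemma, and a multi-step recursive bound on $|\cI^*(n,\zeta)|$). Your proposal does not address this at all, and ``container sizes for non-sign-consistent structures are genuinely smaller than $n+\binom{n}{k}$'' is not what the containers deliver---they are only $o(n^k)$-close to the extremal value, which is far too coarse.
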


Bollob\'as, Brightwell, and Leader also proposed a weaker version of this conjecture, namely that the number of $k$-SAT functions on $n$ boolean variables is $2^{(1+o(1))\binom{n}{k}}$, and established this weaker conjecture for $k=2$.
\cref{conj:BBL} was proved for $k=2$ by Allen \cite{All07} and for $k=3$ by Ilinca and Kahn~\cite{IK12}. The proofs in \cite{BBL03,All07} for $k=2$ used graph regularity (\cite{IK09} gave an alternate regularity-free proof for $k=2$), whereas the proof for $k=3$ \cite{IK12}  used hypergraph regularity. Bollob\'as and Brightwell~\cite{BB03} further conjectured that even if $k = k(n)$ is allowed to increase with $n$, as long as $k \le (1/2-c) n$ for some constant $c > 0$, the number of $k$-SAT functions on $n$-variables is $2^{(1+o(1))\binom{n}{k}}$. In that paper \cite{BB03}, they proved bounds on the number of $k$-SAT functions on $n$ variables in the regime $k \ge n/2$, where a completely different asymptotic behavior arises.

The goal of our paper here is two-fold. First, we reduce \cref{conj:BBL} for each $k$ to a specific extremal problem about partially directed hypergraphs, analogous to classical hypergraph Tur\'an density problems. 
Hypergraph Tur\'an density problems have been intensely studied, although only solved in a relatively small number of cases (see survey by Keevash~\cite{Kee11}).
Our reduction is essentially lossless, as we show that the $k$-SAT enumeration problem (for fixed $k$) is equivalent to the corresponding Tur\'an density problem. 
Whereas previous approaches to enumerating $k\SAT$ functions looked at specific forbidden structures (e.g., odd-blue-triangle-free graphs~\cite{BBL03,All07,IK09}), our approach is more systematic and identifies all relevant obstructions.
Our reduction uses the hypergraph container method, as opposed to the graph and hypergraph regularity methods used in earlier works.

The second goal of our paper is to prove \cref{conj:BBL} for $k=4$ by solving the corresponding hypergraph Tur\'an density problem, \cref{conj:pi} below.
We also give an easier proof of the corresponding $k=3$ problem compared to the methods in \cite{IK12}.
For every fixed $k$, in principle one might be able to confirm the conjecture (after our reduction) via a finite computation, but the size of the computation grows extremely quickly (the approach in \cite{IK12} for $k=3$ essentially amounts to checking 5-vertex cases by hand).
To solve the $k=4$ case of the problem, we apply a recent result of F\"uredi and Melaki~\cite{FM17} on the minimum number of triangular edges in a graph with a given number of vertices and edges.
In an appendix, we further confirm the $k=5$ case via a brute force computer search.
To prove \cref{conj:BBL} for additional values of $k$ (or ideally for all $k$), it remains to solve a Tur\'an density type problem, \cref{conj:pi} below.

\subsection*{Acknowledgments} Zhao first learned of this problem as a graduate student from Jeff Kahn and would like to thank him for the encouragement to work on this problem.

\subsection{An extremal open problem} \label{sec:extremal-open}
We state a tantalizing conjecture that would imply \cref{conj:BBL}. 

A \emph{partially directed graph} (also known as a \textit{mixed graph}) is formed by taking a graph and orienting a subset of its edges (to orient an edge means to choose one of two directions for the edge), so every edge is either directed or undirected. An example is illustrated below.

\begin{center}
    \begin{tikzpicture}[
    scale=.5,
    font=\footnotesize,
    v/.style = {circle, fill, inner sep = 0pt, minimum size = 3pt},
    e/.style = {thick, red, postaction={decorate,decoration={
        markings,
        mark=at position .5*\pgfdecoratedpathlength+1.5pt 
        with {\arrow{angle 90}}
      }}}]
    \node[v,label=left:1] (1) at (0,0) {};
    \node[v,label=above left:2] (2) at (1,1) {};
    \node[v,label=above right:3] (3) at (2,1) {};
    \node[v,label=right:4] (4) at (3,0) {};
    \draw (2)--(3)--(4);
    \draw[e] (2)--(4);
    \draw[e] (1)--(2);
    \end{tikzpicture}
\end{center}

Given a pair of partially directed graphs $\vec H$ and $\vec G$, we say that $\vec H$ is a \emph{subgraph} of $\vec G$ if one can obtain $\vec H$ from $\vec G$ by a combination of (1) removing vertices, (2) removing edges, and (3) removing the orientation of some edges. 

The following partially directed graph plays a special role:
\[
\vec T_2 = 
\begin{tikzpicture}[
    scale=.5,
    baseline={([yshift=-.8ex]current bounding box.center)},
    font=\footnotesize,
    v/.style = {circle, fill, inner sep = 0pt, minimum size = 3pt},
    e/.style = {thick, red, postaction={decorate,decoration={
        markings,
        mark=at position .5*\pgfdecoratedpathlength+1.5pt 
        with {\arrow{angle 90}}
      }}}]
    \node[v] (1) at (90:1) {};
    \node[v] (2) at (210:1) {};
    \node[v] (3) at (-30:1) {};
    \draw (3)--(1)--(2);
    \draw[e] (2)--(3);
    \end{tikzpicture}.
\]
Below, the left graph contains $\vec T_2$ as a subgraph, and the right does not contain $\vec T_2$ as a subgraph.

\begin{center}
    \begin{tikzpicture}[
    scale=.5,
    font=\footnotesize,
    v/.style = {circle, fill, inner sep = 0pt, minimum size = 3pt},
    e/.style = {thick, red, postaction={decorate,decoration={
        markings,
        mark=at position .5*\pgfdecoratedpathlength+1.5pt 
        with {\arrow{angle 90}}
      }}}]

    \begin{scope}
        \node[v] (1) at (0,0) {};
        \node[v] (2) at (1,1) {};
        \node[v] (3) at (2,1) {};
        \node[v] (4) at (3,0) {};
        \draw[e] (2)--(3);
        \draw[e] (3)--(4);
        \draw (2)--(4);
        \draw[e] (1)--(2);
    \end{scope}
    
    \begin{scope}[shift={(6,0)}]
        \node[v] (1) at (0,0) {};
        \node[v] (2) at (1,1) {};
        \node[v] (3) at (2,1) {};
        \node[v] (4) at (3,0) {};
        \draw (2)--(3)--(4);
        \draw (2)--(4);
        \draw[e] (1)--(2);
    \end{scope}

    \end{tikzpicture}
\end{center}

The following statement implies \cref{conj:BBL} for $k=2$ (see \cref{thm:strong-count} for the full statement of the implication).

\begin{theorem}
For all sufficiently large $n$, 
every $n$-vertex partially directed graph with $\alpha \binom{n}{2}$ undirected edges and $\beta \binom{n}{2}$ directed edges and not containing $\vec T_2$ as a subgraph satisfies
\[
\alpha + (\log_2 3)\beta \le 1.
\]
\end{theorem}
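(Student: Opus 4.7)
The plan is to reduce the $\vec T_2$-free condition on the partially directed graph to a density condition on triangular edges of the underlying simple graph, and then invoke the F\"uredi--Maleki minimum-triangular-edge theorem cited in the introduction.

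I would first unpack the definition of containment: since the subgraph relation allows deletions and un-orienting of edges, $\vec G$ contains $\vec T_2$ exactly when its underlying simple graph $G$ has a triangle containing at least one directed edge of $\vec G$. Hence $\vec G$ being $\vec T_2$-free is equivalent to the statement that no directed edge of $\vec G$ lies in any triangle of $G$, i.e., every \emph{triangular edge} of $G$ (one belonging to at least one triangle) must be undirected in $\vec G$. Writing $T(G)$ for the number of triangular edges of $G$ and $\gamma = \alpha+\beta$ for its edge density, this immediately yields
\[
\alpha \binom{n}{2} \;\geq\; T(G).
\]

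Next I would apply the theorem of F\"uredi and Maleki, which determines the asymptotic minimum triangular-edge density among $n$-vertex graphs of edge density $\gamma$. Denote this minimum by $\phi(\gamma)$; it is an explicit piecewise function with $\phi(\gamma)=0$ for $\gamma\le 1/2$ and $\phi(1)=1$, and it satisfies $T(G)/\binom{n}{2} \ge \phi(\gamma) - o(1)$ for every $n$-vertex $G$ of edge density $\gamma$. Setting $c = \log_2 3$ and using $\beta = \gamma - \alpha$, one has
\[
\alpha + c\beta \;=\; c\gamma - (c-1)\alpha \;\leq\; c\gamma - (c-1)\phi(\gamma) + o(1),
\]
so the theorem reduces to the analytic inequality $c\gamma - (c-1)\phi(\gamma) \le 1$ for every $\gamma\in[0,1]$, equivalently $\phi(\gamma) \ge (c\gamma-1)/(c-1)$. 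This is trivial on $\gamma \leq 1/c = \log_3 2$ (the right-hand side is then nonpositive), and tight at $\gamma=1$, corresponding to $\vec G$ being the all-undirected $K_n$.

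The step I expect to be the main obstacle is verifying this inequality on the interval $\gamma\in[\log_3 2,1]$ from the explicit F\"uredi--Maleki description of $\phi$. Geometrically, this amounts to checking that the graph of $\phi$ lies above the chord joining $(\log_3 2,0)$ to $(1,1)$, and I would handle it by case analysis on the transition points of the piecewise formula for $\phi$, using that the F\"uredi--Maleki extremal configurations (disjoint unions of cliques and related book-type graphs) produce slack away from $\gamma=1$ and behave convexly at the relevant regimes. Once this inequality is established, choosing $n$ sufficiently large absorbs the $o(1)$ error term and produces the stated bound.
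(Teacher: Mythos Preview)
Your reduction is correct: $\vec T_2$-freeness is equivalent to every directed edge being non-triangular in the underlying graph $G$, so $\alpha\binom{n}{2}\ge T(G)$ and hence $\alpha+c\beta\le c\gamma-(c-1)\phi(\gamma)+o(1)$ with $c=\log_2 3$. However, invoking F\"uredi--Maleki here is substantial overkill, and the ``main obstacle'' you flag is in fact trivial once you notice that the much more elementary F\"uredi lemma $e(G^2)\ge e(G)-\lfloor n/2\rfloor$ already gives $\phi(\gamma)\ge 2\gamma-1$ (equivalently $f(\gamma)\le 1-\gamma$). Since $c<2$, the linear bound $2\gamma-1$ dominates $(c\gamma-1)/(c-1)$ on all of $[0,1]$, so your analytic inequality holds without any case analysis of the F\"uredi--Maleki piecewise formula.

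This is exactly the route the paper takes: it observes that every directed edge is non-triangular, hence disjoint from $E(G^2)$, and combines $\binom{n}{2}\ge e(G^2)+\beta\binom{n}{2}$ with F\"uredi's lemma to obtain directly the stronger inequality $\alpha+2\beta\le 1+o(1)$, from which the stated bound with coefficient $\log_2 3<2$ follows immediately. So your argument is sound but circuitous; the paper's proof is a two-line application of the elementary $G^2$ lemma rather than the full triangular-edge minimization theorem, and it yields the sharper constant $2$ in place of $\log_2 3$. The F\"uredi--Maleki theorem is genuinely needed elsewhere in the paper (for the $k=3$ link-graph analysis, where the strict convexity of $f$ matters), but not here.
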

 
In fact, we know the optimal constant in front of $\beta$: it is $2-o(1)$ (see \cref{prop:cf2}).
By orientating all the edges of $K_{\floor{n/2}, \ceil{n/2}}$ from one part to the other part, we obtain a partially directed graph with $\floor{n^2/4}$ directed edges and no $\vec T_2$. This construction shows that one cannot do better than $\beta = 2-o(1)$.

Now let us generalize the problem to hypergraphs.

A \emph{partially directed 3-graph} (\emph{3-PDG}) is formed by taking a 3-graph and orienting a some subset of edges. Here to \emph{orient} an edge means to pick some vertex in the edge (we say that this is a \emph{directed edge} that is \emph{directed towards} or \emph{pointed at} the chosen vertex). We notate a directed edge by putting a $\vee$ on top of the pointed vertex, e.g., $12\wc3$. 

This following 3-PDG plays the role of $\vec T_2$ from earlier:
\[
\vec T_3
=
    \begin{tikzpicture}[
    font=\footnotesize,
    baseline={([yshift=-.8ex]current bounding box.center)},
    v/.style = {circle, fill, inner sep = 0pt, minimum size = 3pt},
    e/.style = {thick, red, postaction={decorate,decoration={
        markings,
        mark=at position .5*\pgfdecoratedpathlength+1.5pt 
        with {\arrow{angle 90}}
      }}}]
	\node[v] (0) at (0:0) {};
    \node[v] (1) at (90:1) {};
    \node[v] (2) at (210:1) {};
    \node[v] (3) at (-30:1) {};
	
	\draw[red, thick, rounded corners=10pt] (90:.4) -- +(-40:1.6) coordinate (a) -- +(-140:1.6)--cycle;
	\draw[red,  thick] ($ (a) + (-180:.6) $) arc (180:140:.6);

	\draw[rounded corners=10pt,rotate=120] (90:.4) -- +(-40:1.6) coordinate (a) -- +(-140:1.6)--cycle;
	\draw[rounded corners=10pt,rotate=-120] (90:.4) -- +(-40:1.6) coordinate (a) -- +(-140:1.6)--cycle;
    \end{tikzpicture}
    \qquad 
    \text{edges = } 
    \{ 123, 12\wc 4, 134 \}.
\]

We define \emph{subgraph} the same as earlier, i.e., obtainable by deleting vertices and edges as well as removing the orientation of some edges. 

We prove the following result, which in turn (via \cref{thm:strong-count}) implies \cref{conj:BBL} for $k=3$.

\begin{theorem}
For all sufficiently large $n$, 
every $n$-vertex 3-PDG with $\alpha \binom{n}{3}$ undirected edges and $\beta \binom{n}{3}$ directed edges and not containing $\vec T_3$ as a subgraph satisfies
\[
\alpha + (\log_2 3)\beta \le 1.
\]
\end{theorem}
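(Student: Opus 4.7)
The strategy is a link reduction to the analogous 2-PDG problem. For each vertex $v \in V(H)$, define the \emph{link} $L_v$ to be the 2-PDG on $V(H) \setminus \{v\}$ in which a pair $\{u,w\}$ is an edge iff $\{u,v,w\}$ is an edge of $H$; orient the link edge at $u$ (resp.\ $w$) when $\{u,v,w\}$ is directed at $u$ (resp.\ $w$) in $H$, and leave it undirected when $\{u,v,w\}$ is undirected in $H$ or directed at $v$ itself.

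First, I would verify that each link $L_v$ is $\vec T_2$-free. If $L_v$ contained $\vec T_2$, there would be a triangle $\{a,b,c\}$ in $L_v$ with $\{b,c\}$ directed (say toward $c$) and $\{a,b\}, \{a,c\}$ undirected. Unpacking the definition of $L_v$, the three 3-edges $\{v,a,b\}, \{v,a,c\}, \{v,b,c\}$ all lie in $H$ with $\{v,b,c\}$ directed at $c$ and $\{v,a,b\}, \{v,a,c\}$ each either undirected or directed at $v$. Forgetting the orientations of the latter two (allowed in the subgraph relation) yields a $\vec T_3$ subgraph of $H$ centered at $v$, contradicting the hypothesis.

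Next, apply the sharp 2-PDG inequality from \cref{prop:cf2}, which asserts $\alpha(L_v) + 2\beta(L_v) \le 1 + o(1)$ as $n \to \infty$. Summing over $v \in V(H)$ and accounting for how each 3-edge of $H$ contributes to the links---each undirected 3-edge yields three undirected link edges (one per endpoint), while each directed 3-edge yields one undirected link edge (a V-type edge in the link of its head) together with two directed link edges---we obtain
\[
3U + 5D \le n\binom{n-1}{2}(1 + o(1)) = 3\binom{n}{3}(1+o(1)),
\]
where $U = \alpha\binom{n}{3}$ and $D = \beta\binom{n}{3}$. This simplifies to $\alpha + \tfrac{5}{3}\beta \le 1 + o(1)$.

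Since $\log_2 3 < \tfrac{5}{3}$, we have
\[
\alpha + (\log_2 3)\beta = \paren{\alpha + \tfrac{5}{3}\beta} - \paren{\tfrac{5}{3} - \log_2 3}\beta \le 1 + o(1) - \paren{\tfrac{5}{3} - \log_2 3}\beta.
\]
For $\beta$ bounded below by a positive constant the slack $(\tfrac{5}{3} - \log_2 3)\beta$ dominates the $o(1)$ error, yielding $\alpha + (\log_2 3)\beta \le 1$; for $\beta$ arbitrarily close to zero the conclusion follows instead from the trivial density bound $\alpha \le 1$ combined with the smallness of $(\log_2 3)\beta$. The main technical obstacle is stitching these two regimes together to establish the strict inequality uniformly for all sufficiently large $n$; doing so requires the explicit quantitative form of \cref{prop:cf2} and some care with the integrality of edge counts.
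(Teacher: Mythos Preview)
Your approach is essentially the paper's own: this is precisely \cref{lem:vertex-averaging} applied with $k=3$ and $\theta=2$, combined with \cref{prop:cf2}. The link construction, the verification that links of a $\vec T_3$-free 3-PDG are $\vec T_2$-free, and the averaging count $3U+5D\le 3\binom{n}{3}(1+o(1))$ giving $\alpha+\tfrac{5}{3}\beta\le 1+o(1)$ all match the paper's argument exactly.

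Your final paragraph, however, does not work as written. The ``trivial density bound'' you can use is $\alpha+\beta\le 1$ (each $k$-set carries at most one edge), not $\alpha\le 1$ alone; and from $\alpha+\beta\le 1$ you only get $\alpha+(\log_2 3)\beta \le 1+(\log_2 3 -1)\beta$, which exceeds $1$ whenever $\beta>0$. So the two regimes do not stitch together: for $\beta$ in the range $0<\beta\lesssim o(1)/(\tfrac{5}{3}-\log_2 3)$ neither bound yields $\le 1$, and integrality of $\beta\binom{n}{3}$ (which only forces $\beta\gtrsim n^{-3}$) does not close this gap. The paper itself does not attempt to remove the $o(1)$: what it actually proves is $\pi(\vec T_3,5/3)=1$, i.e.\ the inequality with the asymptotic error term, and that is all that is required downstream (\cref{thm:strong-count} needs $\pi(\cF_k,\theta)=1$ for some $\theta>\log_2 3$, not the exact inequality at $\log_2 3$). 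So your core argument is complete for the result the paper proves and uses; you should simply drop the attempt to eliminate the $o(1)$.
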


We can extend the above definitions to \emph{partially directed $k$-graphs} (\emph{$k$-PDG}).
For each $k \ge 3$, we define $\vec T_k$, the \textit{partially directed $k$-triangle}, as the $k$-PDG obtained by starting with $\vec T_2$ and then adding $k-2$ common vertices to all three edges, e.g.,
\[
\vec T_4
=
    \begin{tikzpicture}[
    font=\footnotesize,
    baseline={([yshift=-.8ex]current bounding box.center)},
    v/.style = {circle, fill, inner sep = 0pt, minimum size = 3pt},
    e/.style = {thick, red, postaction={decorate,decoration={
        markings,
        mark=at position .5*\pgfdecoratedpathlength+1.5pt 
        with {\arrow{angle 90}}
      }}}]
	\node[v] (0) at (0:.2) {};
    \node[v] (0') at (180:.2) {};
    
    \node[v] (1) at (90:1) {};
    \node[v] (2) at (210:1) {};
    \node[v] (3) at (-30:1) {};
	
	\draw[red, thick, rounded corners=10pt] (90:.4) -- +(-40:1.6) coordinate (a) -- +(-140:1.6)--cycle;
	\draw[red,  thick] ($ (a) + (-180:.6) $) arc (180:140:.6);

	\draw[rounded corners=10pt,rotate=120] (90:.4) -- +(-40:1.6) coordinate (a) -- +(-140:1.6)--cycle;
	\draw[rounded corners=10pt,rotate=-120] (90:.4) -- +(-40:1.6) coordinate (a) -- +(-140:1.6)--cycle;
    \end{tikzpicture}
    \qquad 
    \text{edges = } 
    \{ 1234, 123\wc 5, 1245 \},
\]
and
\[
\vec T_5
=
    \begin{tikzpicture}[
    font=\footnotesize,
    baseline={([yshift=-.8ex]current bounding box.center)},
    v/.style = {circle, fill, inner sep = 0pt, minimum size = 3pt},
    e/.style = {thick, red, postaction={decorate,decoration={
        markings,
        mark=at position .5*\pgfdecoratedpathlength+1.5pt 
        with {\arrow{angle 90}}
      }}}]
	\node[v] at (90:.1) {};
    \node[v] at (210:.1) {};
    \node[v] at (-30:.1) {};
    
    \node[v] (1) at (90:1) {};
    \node[v] (2) at (210:1) {};
    \node[v] (3) at (-30:1) {};
	
	\draw[red, thick, rounded corners=10pt] (90:.4) -- +(-40:1.6) coordinate (a) -- +(-140:1.6)--cycle;
	\draw[red,  thick] ($ (a) + (-180:.6) $) arc (180:140:.6);

	\draw[rounded corners=10pt,rotate=120] (90:.4) -- +(-40:1.6) coordinate (a) -- +(-140:1.6)--cycle;
	\draw[rounded corners=10pt,rotate=-120] (90:.4) -- +(-40:1.6) coordinate (a) -- +(-140:1.6)--cycle;
    \end{tikzpicture}
    \qquad 
    \text{edges = } 
    \{ 12345, 1234\wc 6, 12356 \}.
\]

For each fixed $k$, the following conjecture implies \cref{conj:BBL} (via \cref{thm:strong-count}).

\begin{conj} \label{conj:T_k}
Fix $k \ge 2$. 
For all sufficiently large $n$, 
every $n$-vertex $k$-PDG with $\alpha \binom{n}{k}$ undirected edges and $\beta \binom{n}{k}$ directed edges and not containing $\vec T_k$ as a subgraph satisfies
\[
\alpha + (\log_2 3)\beta \le 1.
\]
\end{conj}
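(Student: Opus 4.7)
The plan is to prove \cref{conj:T_k} by induction on $k$, with the $k=2$ theorem (stated above and proved via F\"uredi--Maleki) as the base case. For the inductive step, given an $n$-vertex $k$-PDG $G$ with no $\vec T_k$, I would form, for each $(k-2)$-subset $S\subset V(G)$, the link 2-PDG $L(S)$ on $V(G)\setminus S$, whose edges are $\{e\setminus S : S\subset e\in E(G)\}$, directed toward $w$ if $e$ is directed toward $w\in e\setminus S$ and undirected otherwise. A short check---adjoining $S$ to any $\vec T_2$ subgraph of $L(S)$ recovers a $\vec T_k$ subgraph of $G$---shows each $L(S)$ is $\vec T_2$-free, so the base case gives $\alpha_S+(\log_2 3)\beta_S\le 1$, where $(\alpha_S,\beta_S)$ are the densities of $L(S)$.

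Summing over $S$ with standard double-counting---each undirected $k$-edge of $G$ appears undirected in all $\binom{k}{2}$ of its links, while each directed $k$-edge pointed at $w$ appears as directed in the $k-1$ links with $w\notin S$ and as undirected in the $\binom{k-1}{2}$ links with $w\in S$---yields only the weaker inequality
\[
\alpha+\frac{k-2+2\log_2 3}{k}\beta\le 1,
\]
whose $\beta$-coefficient equals $\log_2 3$ at $k=2$ but drops strictly below it for $k\ge 3$ (tending to $1$ as $k\to\infty$). A short linear-algebra calculation in fact shows that no weighted linear combination of these link inequalities alone can close this gap, since the dual constraints force zero weight on every $(k-2)$-set contained in any directed edge of $G$.

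The main obstacle is this ``entropy leak'': a directed $G$-edge pointed at $w$ costs only $1$ per link instead of $\log_2 3$ whenever $w$ is absorbed into the contracted set $S$. To recover the lost factor, I would pursue a local-to-global exchange argument coupling different links: for each directed edge $e$ pointed at $w$ and each pair of $(k-2)$-subsets $S,S'\subset e$ with $w\in S$ and $w\notin S'$, I would try to transfer the ``entropy surplus'' from the $L(S')$-bound (where $e$ appears honestly directed) to compensate the slack in the $L(S)$-bound. Making this rigorous would likely require a stability or supersaturation refinement of the $k=2$ theorem, quantifying how close $L(S)$ must be to an extremal construction whenever $\alpha_S+(\log_2 3)\beta_S$ is near $1$. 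I expect this refinement to be the critical difficulty: it must enhance the F\"uredi--Maleki input beyond a pure count of triangular edges into a structural statement about how directed and undirected edges co-occur in triangles. As an alternative pursued in parallel, I would attempt a direct $k$-uniform generalization of F\"uredi--Maleki applied to the global $k$-PDG---bypassing induction entirely, at the cost of proving a new hypergraph extremal theorem.
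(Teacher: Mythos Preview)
Your diagnosis of the averaging obstruction is exactly right, but you should be aware that \cref{conj:T_k} is genuinely open: the paper does not prove it for general $k$, only for $k\le 4$ (and $k=5$ by brute-force computer search in the appendix). So your proposal is an attack on an open problem, and the vague ``exchange/stability'' step is not a gap in presentation but the entire missing idea.

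Two concrete points of comparison with what the paper actually does. First, the paper does not try to repair the entropy leak by coupling links; instead it \emph{strengthens the input}. The $k=2$ case in fact holds with constant $2$ rather than $\log_2 3$ (\cref{prop:cf2}), and this is proved via a short lemma of F\"uredi on $e(G^2)$, not F\"uredi--Maleki. Plugging $\theta_2=2$ into your link computation gives coefficient $(k+2)/k$, which already exceeds $\log_2 3$ at $k=3$ but falls below it at $k=4$. Second, to reach $k=4$, the paper does not boost from $k=2$; it proves directly that $\pi(\vec T_3,1.909)=1$ (\cref{prop:cf3}) via a refined link decomposition of a $3$-PDG together with the F\"uredi--Maleki triangular-edge theorem, and then a single averaging step to $k=4$ suffices. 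This is closer in spirit to your ``direct $k$-uniform generalization'' alternative than to your primary exchange plan, but it is carried out only at level $k=3$, not for general $k$.

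Your linear-algebra observation that no weighting of $(k-2)$-link inequalities alone can close the gap is correct and is exactly why the paper needs new extremal input at each successive $k$. A general proof would require either your hoped-for hypergraph F\"uredi--Maleki or some other genuinely new ingredient; neither the paper nor your proposal supplies one.
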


We also show that by enlarging the set of forbidden subgraphs from $\vec T_k$ to some special finite set $\cF_k$, the extremal claim $\alpha + (\log_2 3)\beta \le 1$ becomes essentially equivalent to \cref{conj:BBL}. This set $\cF_k$ is constructed in \cref{d:forbidden} below.

For each $k$, if the statement is true for some $n$, then it is true for all larger $n$ by a standard averaging argument.
So in principal, the conjecture could be verified via a finite computation.
However, this computation grows extremely quickly with $k$.
\cref{s:turan} contains our proof of the conjecture for $k \le 4$ via extremal graph theoretic techniques. 
Appendix~\ref{a:comp} contains a brute force solution to $k=5$.
The brute force method is unlikely to extend further, and so the extremal graph theoretic techniques may be valuable for future work.

\subsection{Strategy}
Now let us describe our strategy for enumerating $k$-SAT functions.
Our setup is closest to the work of Ilinca and Kahn~\cite{IK12} where they enumerated $3$-SAT functions.

\begin{defn}[Minimal formula]
A formula $G = \{ C_1, C_2, \dots , C_l\}$ on variables $\{x_1, \ldots x_n\}$ is \emph{minimal} if deleting any clause from $G$ changes the resulting boolean function. 
Equivalently, $G$ is minimal if for each clause $C_i \in G$, there is an assignment $\mathsf w \in \{0, 1\}^n$ that satisfies $C_i$ but no other $C_j \in G$, $j \neq i$. We call such $\mathsf w$ (not necessarily unique) a \emph{witness} to $C_i$.\footnote{In \cite{IK12}, the term ``non-redundant'' is used for what we are calling ``minimal.''}
\end{defn}

\begin{example}
The $2$-SAT formula $\{wx,wy,x\ol{z},\ol{y}z\}$ is not minimal since it is impossible to satisfy only $wx$ and no other clause. 
Indeed, if we attempt to only satisfy $wx$, we must assign $w=1$ and $x=1$; 
to avoid satisfying $wy$ and $x\ol{z}$, we must assign $y=0$ and $z=1$, which would then lead to the final clause $\ol{y}z$ being satisfied.
\end{example}

Every $k$-SAT function can be expressed as a minimal formula, but possibly in more than one way. We upper bound the number of minimal $k$-SAT formulae, which in turn upper bounds the number of $k$-SAT functions.

To bound the number of minimal $k$-SAT formulae, we identify a fixed set $\cB$ of non-minimal formulae, and then find an upper bound to the number of (not necessarily minimal) formulae not containing anything from $\cB$ as a subformulae.
We will show that, by setting $\cB$ to be all non-minimal formulae on a constant number of variables, the number of $\cB$-free formulae is asymptotically the same as the number of minimal formulae. 

\begin{defn}[Subformula]
Given a formula $G$, a \emph{subformula} of $G$ is a subset of clauses of $G$. 
We say that two formulae are \emph{isomorphic} if one can be obtained from the other by relabeling variables. 
Given another formula $F$, a \emph{copy} of $F$ in $G$ is a subformula of $G$ that is isomorphic to $F$.
Given a set $\cB$ of $k$-SAT formulae, we say that $G$ is \emph{$\cB$-free} if $G$ has no copy of $B$ for every $B \in \cB$.
\end{defn}

The problem of counting $\cB$-free formulae is analogous to counting $F$-free graphs on $n$ vertices.
A classic result by Erd\H{o}s, Kleitman, and Rothschild~\cite{EKR76} shows that almost all triangle-free graphs are bipartite. 
Erd\H{o}s, Frankl, and R\"odl~\cite{EFR86} generalized this result and showed that for a fixed graph $F$, the number of $n$-vertex $F$-free graph is $2^{\operatorname{ex}(n, F) + o(n^2)}$. 
This result was initially proved using the graph regularity method.
It can also be proved as quick application of the more recently developed hypergraph container method~\cite{BMS15,ST15} (see survey \cite{BMS18}).
Earlier works on $k$-SAT enumeration are closer in spirit to the regularity approach to counting $F$-free graphs. 
Here we instead use the container method, which is simpler to carry out.

Here is a quick sketch of how to use the container method to enumerate $n$-vertex triangle-free graphs. 
By an application of the hypergraph container theorem, there is a collection $\cG$ of $n$-vertex graphs (``containers'')
each with $(1/4 + o(1))n^2$ edges, 
such that $\abs{\cG} = 2^{o(n^2)}$
and every $n$-vertex triangle-free graph is a subgraph of some container $G \in \cG$.
It then follows that the number of $n$-vertex triangle-free graphs is at most $\sum_{G \in \cG} 2^{e(G)} = 2^{(1/4 + o(1)) n^2}$.

As in the case of counting $F$-free graphs, the enumeration of $\cB$-free formulae reduces to a certain extremal problem. 
Here the situation diverges from the graph theory setting.

\begin{defn}[Simple formula] \label{def:simple}
A $k$-SAT formula is \emph{simple} if no two clauses use exactly the same set of $k$ variables.
\end{defn}

\begin{example}
The 3-SAT formula $\{wxy, wx\ol{z},xyz\}$ is simple, whereas $\{xyz,xy\ol{z}\}$ is not.
\end{example}

What happens when we apply a similar approach to enumerate $\cB$-free formulae?
Every minimal $k$-SAT formula on $n$ variables can be made simple by removing $o(n^k)$ clauses (\cref{prop:nearly-simple}). 
So let us focus on enumerating simple $\cB$-free formulae.
The container theorem gives us a collection $\cG$ of $n$-variable ``container'' formulae with $\abs{\cG} = 2^{o(n^k)}$
such that every $n$-variable $\cB$-free formula is a subformula of some $G \in \cG$. 
Furthermore, every $G \in \cG$ is ``not too large'' in a certain sense.
For a container $G \in \cG$, suppose $\alpha \binom{n}{k}$ $k$-subsets of variables support exactly one clause, and $\beta \binom{n}{k}$ $k$-subsets of variables support exactly two clauses.
It will always be the case that $o(n^k)$ $k$-subsets of variables support more than two clauses.
Then the number of simple subformulae of $G$ is $2^{\alpha\binom{n}{k}} 3^{\beta\binom{n}{k}} 2^{o(n^k)}$, and we would like to guarantee that $\alpha + (\log_2 3) \beta \le 1 +o(1)$. 

Even if we wish to only enumerate simple formulae, we need to solve an extremal problem for non-simple formulae. 
This is not an inadequacy of our approach, but actually an essential feature of the problem (see a related discussion at the end of \cite[Section 2]{BBL03}).
This is where our situation differs from enumerating $F$-free graphs, which reduces to an extremal problem on $F$-free graphs instead of some more complicated object.

The extremal problem for formulae is rather unwieldly.
In earlier approaches, the idea is to isolate some specific patterns and analyze them by hand.
Here we take a different approach which is both simpler and more systematic.
We show that the extremal problem for formulae actually reduces to a Tur\'an density type extremal problem on partially directed hypergraphs.
We explain what these objects are shortly in the next subsection. 
We will define a concrete finite set $\cF_k$ of small partially directed $k$-graphs that turn out to completely capture the asymptotic enumeration of $k$-SAT functions.
This is a self-contained Tur\'an density problem and can be studied independently of the $k$-SAT enumeration problem.
A solution to this Tur\'an density problem would yield a resolution to \cref{conj:BBL}.
If the Tur\'an density conjecture were false, then there would be way more $k$-SAT functions than in \cref{conj:BBL}.
Unfortunately, we were not able to resolve the Tur\'an density conjecture in full, although we solve it for $k=4$, which is new.
Readers who are interested in working on the Tur\'an problem only need to read the first two sections of this paper, as the rest of the paper concerns the reduction from the $k$-SAT enumeration problem to this Tur\'an problem.

The container approach as described can only deduce an upper bound of the form $2^{(1+o(1))\binom{n}{k}}$. 
Obtaining the tighter asymptotic $(1+o(1))2^{n + \binom{n}{k}}$ of \cref{conj:BBL} requires additional stability arguments.
By analogy, the container method sketched earlier shows that there are $2^{(1/4+o(1))n^2}$ triangle-free graphs on $n$ vertices.
To obtain the Erd\H{o}s--Kleitman--Rothschild result in its full strength, namely that $1-o(1)$ fraction of all triangle-free graphs are bipartite, one needs to apply additional stability arguments on top of the container method, as done in \cite{BBCLMS17} (and later strengthened in \cite{BS20}).

Recall that  \cref{conj:BBL} is equivalent to saying that a $1-o(1)$ fraction of all $k$-SAT functions are unate.
Far-from-unate formulae can be effectively handled by the container argument.
The final piece to show is that that almost all nearly unate formulae are unate. 
We do this by extending the argument from \cite[Section 8]{IK12}, which only dealt with the $3$-SAT case; 
our $k$-SAT argument has to overcome further technical challenges.

\subsection{Partially directed hypergraphs}

Now we provide some precise definitions.
While there are many possible notions of a directed hypergraph, the relevant notion for us is the one where a directed edge is an edge with along with ``pointed'' vertex on the edge.
A partially directed hypergraph is formed from a hypergraph by directing some of its edges and leaving others intact.

\begin{defn}[$k$-PDG]
A \emph{partially directed $k$-graph}, abbreviated \emph{$k$-PDG}, is given by a set $V$ of $n$ vertices, and, for each unordered $k$-element subset $\{v_1, \dots, v_k\}$, one of the following possibilities:
\begin{enumerate}
    \item[(a)] No edge with vertices $\{v_1, \dots, v_k\}$, or
    \item[(b)] An \emph{undirected edge} with vertices  $\{v_1, \dots, v_k\}$, or
    \item[(c)] A \emph{directed edge} using vertices  $\{v_1, \dots, v_k\}$ along with a choice of some $v_i \in \{v_1, \dots, v_k\}$;  we say that the edge is \emph{directed} (or \emph{pointed}) towards $v_i$. We notate such a directed edge by $v_1 \cdots \wc v_i \cdots v_k$.
\end{enumerate}
In particular, no two edges (whether directed or undirected) of a $k$-PDG can use the exact same set of $k$ vertices.
\end{defn}

\begin{example}
This is the edge set of a $3$-PDG: $\{123,12\wc4, 134,235,\wc245\}$.

Non-examples of edge sets of $3$-PDGs include $\{123,12\wc 3\}$ (two edges using the same triple of vertices), $\{1\wc2\wc3\}$ (not a valid edge), and $\{123,12\}$ (not 3-uniform).
\end{example}

Given a $k$-PDG $\vec{H}$, let $e_u(\vec H)$ be the number of undirected edges in $\vec H$ and $e_d(\vec H)$ be the number of directed edges in $\vec H$. Denote their edge densities by
\[
\alpha(\vec{H}) = \frac{e_u(\vec H)}{\binom{n}{k}}
\quad \text{and} \quad 
\beta(\vec{H}) = \frac{e_d(\vec H)}{\binom{n}{k}}.
\]

\begin{defn}[Subgraph of $k$-PDG] \label{def:subgraph}
We say that a $k$-PDG $\vec{F}$ is a \emph{subgraph} of another $k$-PDG $\vec{H}$ if we can obtain a $k$-PDG isomorphic to $\vec{F}$ starting from $\vec{H}$ by some sequence of actions of the following types: (a) deleting an edge, (b) deleting a vertex, and (c) forgetting the direction of an edge.
\end{defn}

\begin{defn}[$\cF$-free $k$-PDG]
Given a set $\cF$ of $k$-PDGs, we say that a $k$-PDG $\vec{H}$ is \emph{$\cF$-free} if $\vec{H}$ does not contain any element of $\cF$ as a subgraph.
\end{defn}

We usually denote $k$-PDGs by simply writing their edge sets.

\begin{example}
$\{123,12\wc4\}$ is a subgraph of $\{12\wc3,12\wc 4\}$ by (c) in \cref{def:subgraph}. 
However, $\{1\wc23,12\wc4\}$ is not a subgraph of $\{12\wc3,12\wc 4\}$.
Another example is that by removing the second edge from $\{123,1\wc24,12\wc5\}$, we obtain a subgraph isomorphic to $\{123,12\wc4\}$.
\end{example}

\begin{defn}[Tur\'an density for $k$-PDGs]\label{d:densitypdg}
Given a set $\cF$ of $k$-PDGs, let $\pi(\cF, \theta)$ be the smallest real number such that every $n$-vertex $\cF$-free $k$-PDG $\vec{H}$ satisfies
\[
\alpha(\vec{H}) +\theta\cdot \beta(\vec{H})  \le \pi(\cF, \theta) + o_{n\to\infty}(1).
\]
\end{defn}

We will consider the following finite family $\mcF_k$ of forbidden structures, that will completely certify \textit{minimality} of the $k$-SAT formula associated to a particular $k$-PDG. 

As it turns out, our proof of \cref{conj:BBL} for $k=2,3,4$, and $5$, the Tur\'an density problem only needs to forbid $\vec T_k$, which recall is the generalized triangle with one directed edge introduced in the previous subsection; $T_k$ is also the first listed element (a) of $\cF_k$ in \cref{ex:F2,ex:F3,ex:F4}.

\begin{defn}\label{d:forbidden}
Let $k\ge 2$ be an integer. 
Let the family $\mcF_k$ be the set of $k$-PDGs that can be built from the following procedure, and furthermore are non-redundant in the sense of not containing as a subgraph a different $k$-PDG arising from the same procedure.
\begin{enumerate}
    \item \emph{Initialization.} Start with a single edge $e_0$ that may be directed or undirected.
    If $e_0$ is undirected, then let $S$ be its set of $k$ vertices.
    If $e_0$ is directed, then let $S$ be the set of $k-1$ undirected vertices of $e_0$. Note that $S$ is a set of unpointed vertices and it will maintain this property later.
    \item \emph{Extensions.} Add $j$ directed edges $e_1, \dots, e_j$, with $0 \le j \le k$, where each new edge $e_i$ is directed towards some new vertex $w_i$ and its $k-1$ remaining vertices are chosen from $S$. Here $w_1, \dots, w_j$ are all distinct and outside $e_0$. Let $T$ be the set of all $k+j$ vertices seen so far.
    \item \emph{Closure.} Either 
    \begin{enumerate}
        \item[(a)] add a new undirected edge contained in $T$ (note that this can only happen if we had $j > 0$ in the previous step, as every $k$ vertices are allowed to support at most one edge), or
        \item[(b)] add an edge directed towards some new vertex $w_0 \notin T$ with its remaining $k-1$ vertices all contained in $T$ and at least one of these $k-1$ vertices outside $S$. 
    \end{enumerate}
    Call this final edge $e_{j+1}$.
    \end{enumerate}
\end{defn}

\begin{rem}
It is perhaps unclear why the above forbidden structures are of interest. To give a brief glimpse, consider associating $\vec T_4 \in \mcF_4$ with the following $4$-SAT formula: $$\phi_{\vec T_4} = (x_1 \land x_2 \land x_3 \land x_4) \lor (x_1 \land x_2 \land x_3 \land \overline{x_5}) \lor (x_1 \land x_2 \land x_4 \land x_5).$$
Observe that $\phi_{\vec T_4}$ is non-minimal, as satisfying the first clause by assigning $x_1 = x_2 = x_3 = x_4 = 1$ necessitates that one of the latter two clauses in $\phi_{\vec T_4}$ is satisfied (which one depends on our assignment of $x_5$). Further, any $4$-SAT formula that contains a pattern like $\phi_{\vec T_4}$ is also non-minimal. Thus, one can think of $\vec T_4$ as a a representation of a ``short certificate for non-minimality'' (noting that non-minimal formula may still lack a $\vec T_k$-type pattern).
\end{rem}

\begin{rem}
Let us emphasize again that some of the subgraphs generated by this procedure may include another as a subgraph, in which case we do not include the supergraph in $\cF_k$. For example, both $F = \{123,124,13\wc4\}$ and $F' = \{12\wc 3,124,13\wc4\}$ can be generated by the procedure, but $F$ is a subgraph of $F'$ and so we do not include $F'$ in $\cF_3$;  on the other hand, $F \in \cF_3$ since no subgraph of $F$ other than itself can be generated by the procedure.
\end{rem}

\begin{rem}[Maximum size of an element of $\cF_k$] \label{rem:Fk-size}
Every $k$-PDG in $\mcF_k$ has at most $2k$ vertices and at most $k+2$ edges.
Indeed, by the end of the extension step, we can have up to $2k$ vertices and $k+1$ edges. 
If the closure edge adds a new vertex, then one of the edges added in the extension much must have been redundant (see previous remark).
\end{rem}

\begin{example} \label{ex:F2}
Here are all the elements of $\cF_2$:
\begin{enumerate}[(a)]
    \item $12,1\wc3,23$;
    \item $1\wc 2, 2\wc 3$.
\end{enumerate}
\end{example}

\begin{example}\label{ex:F3}
Here are all the elements of $\cF_3$. For each $3$-PDG listed, the edges are printed in the order of their generation in \cref{d:forbidden} (the initialization edge appears first and the closure edge appears last).
\begin{enumerate}[(a)]
    \item $123, 12\wc 4, 134$
    \item $12\wc3,13\wc4$
    \item $123, 12\wc 4, 12\wc 5, 345$
    \item $123,12\wc4,13\wc5,145$
    \item $123,12\wc4,13\wc5,245$
    \item $123,12\wc4,12\wc5,12\wc6,456$
    \item $123,12\wc4,12\wc5,13\wc6,456$
    \item $123,12\wc4,13\wc5,23\wc6,456$
    \item $123,12\wc4,34\wc5$
    \item $123,12\wc4,13\wc5,45\wc6$
\end{enumerate}
\end{example}

\begin{example}\label{ex:F4}
Here are two of the elements of $\cF_4$, although there are many more not listed.
\begin{enumerate}[(a)]
    \item $1234,123\wc 5, 1245$
    \item $123\wc 4, 124\wc 5$
\end{enumerate}
\end{example}

\subsection{Results} 
We will show how the enumeration of $k$-SAT functions is determined by the Tur\'an density problem $\pi(\cF_k, \log_2 3)$.
Note that for every $k$ and $\theta$ we always have
\[
\pi(\cF_k, \theta) \ge 1
\]
since the complete undirected $k$-PDG is $\cF_k$-free. (This is related to the fact that every monotone formula is minimal.) 
Also, note the trivial inequality $\pi(\cF_k, \theta) \le \pi(\cF_k,\theta')$ if $\theta < \theta'$.

\begin{theorem} \label{thm:weak-count}
    Fix $k \ge 2$. 
    The number of $k$-SAT functions on $n$ variables is 
    \[ 2^{(\pi(\cF_k, \log_2 3)+o(1))\binom{n}{k}}.
    \]
\end{theorem}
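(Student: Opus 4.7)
The plan is to prove matching upper and lower bounds of $2^{(\pi(\cF_k, \log_2 3) + o(1))\binom{n}{k}}$ via the natural correspondence between simple $k$-SAT formulae and $k$-PDGs: a $k$-subset of variables supporting one clause becomes an undirected edge, and a $k$-subset supporting two clauses differing only in the sign of a single variable $v_i$ becomes an edge directed at $v_i$.

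For the lower bound, fix $\varepsilon > 0$ and let $\vec H$ be an $n$-vertex $\cF_k$-free $k$-PDG with $\alpha \binom{n}{k}$ undirected and $\beta \binom{n}{k}$ directed edges satisfying $\alpha + (\log_2 3)\beta \ge \pi(\cF_k, \log_2 3) - \varepsilon$; such a $\vec H$ exists for all sufficiently large $n$ by the definition of $\pi(\cF_k, \log_2 3)$ and a standard averaging argument. Identifying vertices with variables $x_1, \dots, x_n$, I build simple $k$-SAT formulae by letting each undirected edge contribute one of $2$ choices (include the monotone clause or omit) and each directed edge at $v_i$ contribute one of $3$ choices (include the clause with $v_i$ positive, with $v_i$ negated, or omit). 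This yields $2^{(\alpha + (\log_2 3)\beta)\binom{n}{k}}$ distinct simple formulae. I would then verify that (i) each such formula is minimal, using that the elements of $\cF_k$ (as constructed in \cref{d:forbidden} via initialization/extension/closure) are precisely the short certificates of non-minimality that can arise from the encoding; and (ii) distinct formulae represent distinct functions, because if $F_1, F_2$ differ at some edge $e$ and $F_1$ contains a clause $C$ that $F_2$ does not, a witness of $C$ in $F_1$ (guaranteed by minimality) evaluates $F_1$ to $1$ and $F_2$ to $0$.

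For the upper bound, every $k$-SAT function is represented by some minimal formula, which by \cref{prop:nearly-simple} is simple after discarding $o(n^k)$ clauses; reinserting these clauses costs only a factor of $2^{o(n^k)}$. I would apply the hypergraph container method to the family of simple minimal $k$-SAT formulae, with forbidden family $\cB$ consisting of all non-minimal formulae on a bounded number of variables whose associated $k$-PDG lies in $\cF_k$; this is a finite set since $\cF_k$ itself is bounded by \cref{rem:Fk-size}. After proving the requisite supersaturation lemma (any formula whose associated $k$-PDG has density a constant above $\pi(\cF_k, \log_2 3)$ contains many copies of some $B \in \cB$), the container theorem yields $2^{o(n^k)}$ container formulae, each essentially $\cB$-free, such that every simple minimal formula is a subformula of one of them. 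Each container $G$ corresponds (modulo $o(n^k)$ anomalous $k$-subsets) to an $\cF_k$-free $k$-PDG, hence its parameters satisfy $\alpha_G + (\log_2 3)\beta_G \le \pi(\cF_k, \log_2 3) + o(1)$. The number of simple subformulae of such a container is at most $2^{\alpha_G \binom{n}{k}} 3^{\beta_G \binom{n}{k}} 2^{o(n^k)}$, and summing over containers gives the desired upper bound.

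The main obstacle is engineering the container step: defining $\cB$ so that $\cB$-freeness at the formula level translates into $\cF_k$-freeness at the $k$-PDG level, and establishing the supersaturation needed to invoke the container theorem. This is precisely the reason for the slightly intricate initialization/extension/closure definition of $\cF_k$ in \cref{d:forbidden}: the recipe enumerates exactly those short patterns whose presence forces some clause in the corresponding formula to be redundant, making $\cF_k$-freeness of a $k$-PDG equivalent (up to negligible error) to minimality of its associated formulae. A secondary technical hurdle is ensuring the ``nearly simple'' reduction costs only $2^{o(n^k)}$, which requires controlling the number of $k$-subsets in a minimal formula that support more than two clauses, and showing that when exactly two are supported they almost always differ in one sign, validating the directed-edge encoding.
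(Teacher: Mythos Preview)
Your plan matches the paper's approach, but two technical points need repair.

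For the lower bound, your distinctness argument has a gap: a witness to $C$ in $F_1$ only avoids the other clauses of $F_1$, so it might still satisfy a clause of $F_2 \setminus F_1$. The paper (\cref{l:distinctsub}) instead uses that $\{C\} \cup F_2$ is again a simple subformula of the positive instance, hence minimal by \cref{l:subminimal}, and takes a witness to $C$ inside $\{C\} \cup F_2$. You also need a separate case for when $F_1, F_2$ occupy the same $k$-subsets but differ in signs on one of them.

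For the upper bound, your description of $\cB$ is off: a semisimple formula whose type lies in $\cF_k$ is not itself non-minimal; it is its $2$-blowup that contains a simple non-minimal subformula (this is \cref{prop:fktypenonmin}, the heart of the reduction, and why the $4k$ variable bound arises). Accordingly the paper takes $\cB$ to be \emph{all} simple non-minimal formulae on at most $4k$ variables. Supersaturation (\cref{lem:supersat-1,lem:supersat-2}) then reads: if the container $G$ has $\alpha_1(G) + (\log_2 3)\alpha_{2,1}(G)$ too large, or $\alpha_{2,2}(G)$ or some $\alpha_i(G)$ with $i\ge 3$ bounded below, then via blowups $G$ contains many copies of some $B \in \cB$. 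The container is not semisimple and has no associated $k$-PDG; the argument works entirely through these density parameters, which in turn are controlled by $\pi(\cF_k,\log_2 3)$.
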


In particular, for each $k \ge 2$, if $\pi(\cF_k,\log_23) = 1$ (conjectured to be true, and we can prove it for $k \le 4$), then the number of $k$-SAT functions is $2^{(1+o(1))\binom{n}{k}}$, which corresponds to a weaker version of \cref{conj:BBL}. To get the full \cref{conj:BBL}, we just need a tiny bit more on the Tur\'an density problem.

\begin{theorem} \label{thm:strong-count}
    Fix $k \ge 2$. If $\pi(\cF_k, \theta) = 1$ for some $\theta > \log_2 3$, then the number of $k$-SAT functions on $n$ variables is $(1+o(1))2^{n+\binom{n}{k}}$ (i.e., \cref{conj:BBL} is true for this $k$).
\end{theorem}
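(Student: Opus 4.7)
The plan is to combine the container-based argument underlying \cref{thm:weak-count} with a stability step that upgrades ``nearly unate'' formulae to ``exactly unate'', following and extending the approach of Ilinca--Kahn \cite{IK12} for $3$-SAT. I would partition all $k$-SAT functions on $n$ variables into three classes: unate functions, \emph{far-from-unate} functions (no sign pattern $\sigma \in \{+,-\}^n$ makes more than $(1-\delta)\binom{n}{k}$ of the clauses $\sigma$-consistent, for a small constant $\delta > 0$), and \emph{nearly-unate but not unate} functions. The unate count is already $(1+o(1))2^{n+\binom{n}{k}}$, so the task reduces to showing that each of the other two classes has size $o(2^{n+\binom{n}{k}})$.

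For the far-from-unate class, I would reuse the container output from the proof of \cref{thm:weak-count}: a family $\mcG$ of container formulae with $|\mcG| \le 2^{\epsilon \binom{n}{k}}$ for any prescribed $\epsilon > 0$, each with an associated $\cF_k$-free PDG. The strict hypothesis $\pi(\cF_k,\theta) = 1$ for some $\theta > \log_2 3$ is used via a stability version of the Tur\'an bound, asserting that any $\cF_k$-free $n$-vertex PDG with $\alpha + (\log_2 3)\beta \ge 1 - \eta$ must be $o_\eta(1)$-close to the complete undirected $k$-PDG. Any container whose subformulae include a far-from-unate formula must therefore satisfy $\alpha_G + (\log_2 3)\beta_G \le 1 - \eta(\delta)$ for some $\eta(\delta) > 0$, so it contributes at most $2^{(1-\eta(\delta))\binom{n}{k}}$ subformulae; choosing $\epsilon \ll \eta(\delta)$ makes the aggregate contribution much smaller than $2^{n+\binom{n}{k}}$.

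For the nearly-unate-but-not-unate class, I would fix a sign pattern $\sigma$ (WLOG the all-$+$ orientation, so ``unate'' becomes ``monotone'') and count minimal formulae $F$ that are $\delta$-close to monotone but contain at least one non-monotone clause $C$. The key structural input is that the minimality of $F$ together with the $\cF_k$-freeness of its PDG forces many monotone clauses on $k$-subsets near $C$ to be absent from $F$: any such present clause would complete a forbidden configuration from $\cF_k$ with $C$ playing the role of a directed edge. This should yield an entropy deficit of order $n^{k-1}$ per non-monotone clause --- comfortably dominating the $\log_2\binom{2^k \binom{n}{k}}{\le \delta n^k}$ overhead of specifying which clauses are ``bad'' --- so summing over the $2^n$ choices of $\sigma$ still gives $o(2^{n+\binom{n}{k}})$ for this class.

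The main obstacle is the nearly-unate step: extending the case analysis of \cite[Section~8]{IK12} from $k=3$ to general $k$. The combinatorial richness of $\cF_k$ (see \cref{d:forbidden}) grows substantially with $k$, and for each possible local configuration around a non-monotone clause I would need to exhibit a specific $F \in \cF_k$ that is completed by the surrounding monotone clauses, and then quantify the resulting reduction in the number of admissible monotone completions. Managing this case analysis uniformly in $k$ is where the real work lies, consistent with the authors' own remark that the $k$-SAT argument overcomes ``further technical challenges'' beyond the $k=3$ case of \cite{IK12}.
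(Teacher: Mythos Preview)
Your three-class partition (unate / far-from-unate / nearly-unate-but-not-unate) and the use of the strict hypothesis $\theta>\log_2 3$ to create a gap are exactly the paper's strategy. However, there is a genuine gap in your far-from-unate step, and your nearly-unate sketch misses the mechanism the paper actually needs.

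\textbf{Far-from-unate.} Your stability claim at the PDG level is correct and easy (from $\alpha+\theta\beta\le 1$ one gets $\beta\le\eta/(\theta-\log_23)$ and hence $\alpha\ge 1-O(\eta)$), but it does \emph{not} imply what you use next. The type map forgets all sign information: a container whose associated $k$-PDG is $o(1)$-close to the complete undirected $k$-PDG can still be arbitrarily far from unate as a \emph{formula}, since each $k$-set may carry a single clause with any of $2^k$ sign patterns. So ``container has $\alpha_G+(\log_23)\beta_G$ close to $1$'' does not force its subformulae to be close to unate. The paper closes this gap differently: once $\alpha_1(G)$ is close to $1$, the (nearly simple) container, if $\rho$-far from unate, must by a Kruskal--Katona argument contain $\Omega(n^{k+1})$ non-unate $(k{+}1)$-cliques, whose $2$-blowups are simple non-minimal formulae on $2(k{+}1)$ variables---contradicting the container's ``few copies of each $B\in\cB$'' property. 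This non-unate-simplex mechanism is the missing idea.

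\textbf{Nearly-unate.} Your heuristic ``$\cF_k$-freeness of the PDG forces an $n^{k-1}$ entropy deficit per non-monotone clause'' is off on two counts. First, minimal formulae do not have $\cF_k$-free type; the paper works directly with minimality (forbidding explicit non-minimal sub\emph{formulae}, not PDG patterns). Second, a single non-monotone clause does not by itself exclude $\Omega(n^{k-1})$ monotone clauses; the savings are global and of a smaller order. The paper's actual route is: reduce to the class $\cI^*(n,\zeta)$ of minimal formulae in which every clause has a witness with fewer than $\zeta n$ ones (this is where the containment argument of \cref{lem:familyrhoclosetounate} enters), then run a multi-step decomposition $\cI^*\supset\cI_1^*\supset\cdots\supset\cI_4^*$ that peels off, for each $1\le i\le k-2$, formulae with many clauses having exactly $i$ negative literals, using Kruskal--Katona and a hypergraph orientation lemma to feed Shearer's inequality. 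The output is a \emph{recursive} bound of the shape $|\cI^*(n,\zeta)|\le 2^{\binom{n}{k}}+\sum_i 2^{i(1-c)\binom{n}{k-1}}|\cI(n-i)|+2^{\binom{n}{k}-cn^{k-2}}$, with saving only $cn^{k-2}$ in the worst stratum, which is then unwound. Your sketch would need to discover both the witness condition and this Shearer/orientation machinery.
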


Thus, to prove \cref{conj:BBL}, we simply need to show the following.

\begin{conj} \label{conj:pi}
For every $k \ge 2$, there is some constant $\theta > \log_2 3$ such that $\pi(\cF_k, \theta) = 1$.
\end{conj}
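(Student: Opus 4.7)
The plan is to bootstrap from the baseline bound $\pi(\cF_k, \log_2 3) \le 1$ (the $\cF_k$-form of \cref{conj:T_k}) by combining stability with a supersaturation-type strict improvement. The unique asymptotic extremizer for this baseline is the complete undirected $k$-graph $K^{(k)}_n$, corresponding to $\alpha = 1$, $\beta = 0$. Heuristically, near this extremum a small positive $\beta$ should be achievable only by paying a strictly larger-than-$\log_2 3$ cost in $\alpha$; turning this intuition into a uniform slope $\theta > \log_2 3$ is exactly the content of the conjecture.

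First I would aim to establish, assuming $\pi(\cF_k, \log_2 3) = 1$, a stability companion: for every $\eta > 0$ there exists $\delta > 0$ such that any $\cF_k$-free $n$-vertex PDG $\vec H$ with $\alpha(\vec H) + (\log_2 3)\beta(\vec H) \ge 1 - \delta$ must differ from $K^{(k)}_n$ on at most $\eta \binom{n}{k}$ unordered $k$-tuples. For $k \in \{2,3,4\}$, such stability should be extractable by refining the extremal arguments already used in the present paper; for larger $k$ it would be the main new input, plausibly via a partially directed enhancement of the F\"uredi--Maleki triangular edge inequality.

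Next I would carry out a local supersaturation count using the closure patterns in \cref{d:forbidden}. A single directed edge $e$ pointed at some vertex $v$ forbids many additional edges on the unpointed vertices of $e$ together with fresh vertices, via the closure branches (a) and (b). Hence in any PDG $\eta$-close to $K^{(k)}_n$, each directed edge forces the removal of $\Omega(n^{k-1})$ other edges already present in the near-complete background, yielding a strict local inequality $\alpha + \theta \beta < 1$ with $\theta$ depending only on $k$ and strictly above $\log_2 3$. A compactness / removal-lemma style dichotomy then handles every regime of $\vec H$: either $\vec H$ is far from $K^{(k)}_n$, handled by the contrapositive of Step~1, or it is close to $K^{(k)}_n$, handled by the local count.

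The main obstacle is Step~1: a robust stability statement, given that even $\pi(\cF_k, \log_2 3) = 1$ is currently open for $k \ge 6$. A complementary finite-computation route is suggested by \cref{rem:Fk-size}: every element of $\cF_k$ has at most $2k$ vertices, so a supersaturation argument on random $2k$-vertex induced sub-PDGs should reduce \cref{conj:pi} to a finite-dimensional linear program over $2k$-vertex PDGs for each fixed $k$. The LP's size grows quickly with $k$, but this at least offers a decidable path for each individual case and a sanity check on the quantitative slope $\theta_k$ produced by the supersaturation argument above.
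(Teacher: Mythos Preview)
This statement is a conjecture that the paper leaves open for $k \ge 6$; for $k\le 4$ it is established in \cref{s:turan} (and for $k=5$ in Appendix~\ref{a:comp}) by direct extremal arguments---the link-averaging recursion of \cref{lem:vertex-averaging} seeded by F\"uredi's lemma and the F\"uredi--Maleki triangular-edge theorem---and not by any stability-plus-supersaturation bootstrap.

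Your plan has the structural gap you yourself flag: Step~1 presupposes the baseline $\pi(\cF_k,\log_2 3)=1$, which is exactly the unresolved content of the problem. The paper makes this point explicitly in the remark following \cref{eq:234}: since $\log_2 3$ plays no distinguished role in the Tur\'an problem, any proof of $\pi(\cF_k,\log_2 3)=1$ would ``almost certainly'' yield $\pi(\cF_k,\theta)=1$ for some slightly larger $\theta$ verbatim. So the passage from $\log_2 3$ to $\theta>\log_2 3$ is not where the difficulty lies, and layering stability and supersaturation on top of an assumed baseline does not address the actual obstruction. Even granting the baseline and a stability statement, your Step~2 is not a proof as written: the $\Omega(n^{k-1})$ forbidden $k$-tuples attached to each directed edge overlap heavily across the $\beta\binom{n}{k}$ directed edges, and without a double-counting control one cannot extract a uniform bound $\alpha + (1+c)\beta \le 1$ for any fixed $c>0$. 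The paper's own extremal construction (all edges directed into a $1/k$-fraction of the vertex set, giving $\beta = (1-1/k)^{k-1}+o(1)$ and $\alpha=0$) shows that the best possible $\theta$ is at most $(1-1/k)^{-(k-1)}$, which stays bounded as $k\to\infty$; this already indicates that a naive per-directed-edge count dramatically overcounts.
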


We prove this conjecture for $k = 2, 3,4$, thereby yielding the following result.

\begin{theorem} \label{eq:234}
For $k=2,3,4$, $\pi(\cF_k, \theta) = 1$ for some $\theta > \log_2 3$.
Consequently, the number of $k$-SAT functions on $n$ variables is $(1+o(1)) 2^{n+\binom{n}{k}}$.
\end{theorem}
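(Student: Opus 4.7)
The conclusion $(1+o(1))2^{n+\binom{n}{k}}$ follows from \cref{thm:strong-count} once $\pi(\cF_k, \theta) = 1$ is established for some $\theta > \log_2 3$ when $k \in \{2,3,4\}$. Since $\vec T_k \in \cF_k$, we have $\pi(\cF_k, \theta) \le \pi(\{\vec T_k\}, \theta)$, so it suffices to show that every $\vec T_k$-free $n$-vertex $k$-PDG satisfies $\alpha + \theta\beta \le 1 + o(1)$ for some such $\theta$.

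For $k = 2$, given a $\vec T_2$-free 2-PDG $\vec H$ with undirected part $U$ and directed part $D$, set $G = U \cup D$. A short subgraph analysis shows every triangle of $G$ lies entirely in $U$: any $G$-triangle containing a $D$-edge would yield $\vec T_2$ after forgetting extraneous orientations. A standard triangular-edge supersaturation estimate then gives that $G$ has at least $(2\alpha(G)-1)\binom{n}{2} - o(n^2)$ triangular edges, forcing $|U| + 2|D| \le \binom{n}{2} + o(n^2)$, hence $\alpha + 2\beta \le 1 + o(1)$; since $2 > \log_2 3$, this case is done.

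For $k = 3$, use a vertex-link reduction. For each $v$, define the link $\vec H_v$ as the 2-PDG on $V \setminus \{v\}$ whose edges come from 3-edges of $\vec H$ at $v$, with orientations inherited when the pointed vertex lies in $V \setminus \{v\}$ and recorded as undirected when the original 3-edge is directed at $v$. A direct comparison of $\vec T_3$ with its vertex links shows that $\vec H$ is $\vec T_3$-free if and only if every $\vec H_v$ is $\vec T_2$-free. Applying the $k = 2$ bound to each link and summing over $v$ yields $3a + 5b \le 3\binom{n}{3} + o(n^3)$: the coefficient of $b$ arises as $1 + 2 \cdot 2$, since each directed 3-edge contributes once to the link-undirected count at its pointed vertex and twice (with weight $2$ from the 2-PDG bound) to the link-directed count at the two non-pointed vertices. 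This gives $\alpha + \tfrac{5}{3}\beta \le 1 + o(1)$ with $\tfrac{5}{3} > \log_2 3$.

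For $k = 4$, the analogous pair-link reduction shows $\vec T_4$-freeness is equivalent to every pair-link $\vec H_{uv}$ being $\vec T_2$-free. However, the naive summation of the $k = 2$ bound over the $\binom{n}{2}$ pairs produces only $\alpha + \tfrac{3}{2}\beta \le 1 + o(1)$, and $\tfrac{3}{2} < \log_2 3$. To close the gap we invoke the F\"uredi--Maleki theorem on the minimum triangular-edge density in a graph of prescribed edge density, which sharpens the 2-PDG inequality $|U| + 2|D| \le \binom{n}{2}$ quantitatively. Applied link-wise to each $\vec H_{uv}$ and aggregated carefully over all pairs, the resulting slack is converted into an improvement of the coefficient of $\beta$ past $\log_2 3$. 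This $k = 4$ step is the main obstacle: tracking F\"uredi--Maleki's surplus uniformly across all pair-links and matching link-level triangular edges to global 4-edges without double-counting will require careful combinatorial bookkeeping.
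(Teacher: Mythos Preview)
Your treatment of $k=2$ and $k=3$ is essentially the paper's: for $k=2$ you use that directed edges must be non-triangular and then the F\"uredi lemma $e(G^2)\ge e(G)-\lfloor n/2\rfloor$ (your ``standard triangular-edge estimate'' is exactly this), and for $k=3$ you run the vertex-link averaging, which is the paper's \cref{lem:vertex-averaging}, to get $\theta=5/3$.

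The gap is at $k=4$. Your pair-link averaging gives $\alpha+\tfrac{1+\theta_2}{2}\beta\le 1+o(1)$, and since $\pi(\vec T_2,\theta_2)=1$ is tight at $\theta_2=2$, you correctly get only $3/2$. You then hope F\"uredi--Maleki applied to each pair-link, summed over pairs, lifts this past $\log_2 3$. But F\"uredi--Maleki does not improve the worst-case inequality $\alpha_{uv}+2\beta_{uv}\le 1$; it only says that \emph{when} $\beta_{uv}$ is large, the total density $\alpha_{uv}+\beta_{uv}$ is constrained below $1$ by a strictly convex function. There is no reason a straight sum of these per-link constraints yields any global improvement: links with small $\beta_{uv}$ contribute no slack, and you have no mechanism to force many links to have large $\beta_{uv}$. ``Careful bookkeeping'' is not the obstacle---a genuinely new structural step is.

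The paper's route is different and this difference matters. It does \emph{not} go $4\to 2$ via pair-links. Instead it first sharpens the $k=3$ bound to $\pi(\vec T_3,1.909)=1$ (\cref{prop:cf3}), and only then applies a single vertex-average to reach $k=4$. The sharpening for $k=3$ is the real content: for each vertex $v$ of a $\vec T_3$-free $3$-PDG, the link is split into three pieces $a_v,b_v,c_v$ (according to whether the original $3$-edge is undirected, directed away from $v$, or directed at $v$), and \cref{prop:linkgraphturan} proves a \emph{dichotomy}: either $a_v+\tfrac{3\phi-1}{2}b_v+c_v\le 1+\epsilon$ or $a_v+\phi(b_v+c_v)\le 1+\epsilon$. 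The first alternative averages to $\alpha+\phi\beta\le 1$; when it fails for some $v$, the second lets you \emph{delete $v$ and induct}. F\"uredi--Maleki enters only inside the proof of this dichotomy, via a computer-verified polynomial system. Your sketch contains no analogue of this dichotomy-plus-induction, and without it the F\"uredi--Maleki input does not aggregate.
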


\begin{rem}
If one could prove $\pi(\cF_k, \log_2 3) = 1$ then almost certainly the same proof verbatim would also yield $\pi(\cF_k, \theta) =1$ for some $\theta > \log_23$.
Indeed, the number $\log_2 3$ plays no special role for the Tur\'an density problem and is only meaningful because of its applications for $k$-SAT enumeration. 
So our results show that the $k$-SAT enumeration problem is essentially equivalent to determining $\pi(\cF_k, \log_23)$.
\end{rem}

\begin{rem}\label{r:finite-check}
For any fixed $k$, \cref{conj:pi}, if true, is verifiable via a finite computation, namely by checking over all $n_k$-vertex $\cF_k$-free $k$-PDGs for some sufficiently large but finite $n_k$ (depending on $\theta$).
In \cite{IK12}, an extremal problem of similar nature for $k=3$ was checked ``by hand'' over $5$-vertex graphs.
We do not take the brute-force computational approach for Theorem~\ref{eq:234}.
Instead, we solve the $k=4$ problem using extremal graph theoretic arguments, with the help of a recent result of F\"uredi and Maleki~\cite{FM17}. 
We also give a much simpler argument for $k=3$, which does not involve any case checking.

In Appendix~\ref{a:comp}, we include a description of a C++ implementation of an algorithm that does a brute force check to find that for all $7$-vertex $4$-PDGs, $\pi(\vec T_k, 7/4) = 1$. This, in concert with Lemma~\ref{lem:vertex-averaging}, verifies Conjecture~\ref{conj:pi} for $k=5$.
\end{rem}

For larger values of $k$, previously \cite[Theorem 5.3]{BBL03} showed that the number of $k$-SAT functions on $n$ variables is at most $2^{\sqrt{(k+1)\pi}\binom{n}{k}}$ for all $n \ge 2k$.
We obtain the following improved bound, shy of proving of \cref{conj:pi}.

\begin{theorem}\label{thm:generalub}
For all fixed $k \ge 5$ we have
\[
\pi(\cF_k, \log_23) \le \frac{\log_23}{2.727/k + 1}.
\]
Consequently, the number of $k$-SAT functions on $n$ variables is
\[
2^{(\pi(\cF_k, \log_23)+o(1))\binom{n}{k}} \le 2^{\paren{\frac{\log_23}{2.727/k + 1} + o(1)}\binom{n}{k}}.
\]
\end{theorem}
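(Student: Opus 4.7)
By Theorem~\ref{thm:weak-count}, it suffices to prove $\pi(\cF_k, \log_2 3) \le \frac{\log_2 3}{2.727/k + 1}$. My plan is a link-averaging argument that reduces the Tur\'an problem for $k$-PDGs to the $k=4$ case established in Theorem~\ref{eq:234}.

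Let $\vec H$ be an $\cF_k$-free $k$-PDG on $n$ vertices with undirected density $\alpha$ and directed density $\beta$. For each $(k-4)$-subset $T \subseteq V(\vec H)$, form the link $L_T$: the $4$-PDG on $V(\vec H) \setminus T$ whose edges are the $4$-sets $e$ with $T \cup e$ an edge of $\vec H$, where $e$ is directed towards $v \in e$ in $L_T$ when $T \cup e$ is directed towards $v$ in $\vec H$, and $e$ is undirected in $L_T$ when $T \cup e$ is undirected or is directed towards a vertex of $T$. A key structural claim is that $L_T$ is $\cF_4$-free: any copy of some $F \in \cF_4$ in $L_T$ lifts by appending $T$ to each edge to a copy of a corresponding element of $\cF_k$ in $\vec H$, which one verifies by running through the construction in Definition~\ref{d:forbidden}. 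By Theorem~\ref{eq:234} applied to $k=4$, there is $\theta^* > \log_2 3$ with $\pi(\cF_4, \theta^*) = 1$, so every link satisfies $\alpha(L_T) + \theta^* \beta(L_T) \le 1 + o(1)$.

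Standard double counting yields $\mathbb{E}_T \alpha(L_T) = \alpha + \tfrac{k-4}{k}\beta$ and $\mathbb{E}_T \beta(L_T) = \tfrac{4}{k}\beta$, where the extra $\tfrac{k-4}{k}\beta$ contribution to the undirected density accounts for directed edges of $\vec H$ whose pointed vertex lies in $T$ (counted as undirected in $L_T$). Averaging then gives
\[
\alpha + \left(1 + \frac{4(\theta^*-1)}{k}\right) \beta \le 1 + o(1).
\]
Setting $c := 1 + 4(\theta^*-1)/k$, which is less than $\log_2 3$ for $k \ge 5$ and $\theta^*$ near $1.682$, the linear program maximizing $\alpha + (\log_2 3)\beta$ subject to $\alpha + c\beta \le 1$ and $\alpha,\beta \ge 0$ is optimized at $\alpha = 0$, $\beta = 1/c$, giving $\pi(\cF_k, \log_2 3) \le (\log_2 3)k/(k + 4(\theta^*-1))$, which is at most $(\log_2 3)/(1 + 2.727/k)$ as soon as $4(\theta^*-1) \ge 2.727$.

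The main obstacle is pinning down the numerical value of $\theta^*$. Theorem~\ref{eq:234} only guarantees $\theta^* > \log_2 3$, whereas we need the quantitative improvement $\theta^* \ge 1.682$ (equivalently $4(\theta^*-1) \ge 2.727$). Extracting this bound requires carefully tracking the slack in the F\"uredi--Maleki-based proof of the $k=4$ case in Section~\ref{s:turan}; an alternate route would be to use a link of a different size $j$, invoking Theorem~\ref{eq:234} for $j=2$ or $j=3$ in place of $j=4$, but the numerical gain from the $k=4$ extremal argument is what produces the constant $2.727$.
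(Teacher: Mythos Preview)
Your approach is the paper's, just pivoted one level higher. The paper iterates Lemma~\ref{lem:vertex-averaging} starting from $\ell=3$ with the explicit value $\phi=1.909$ of Proposition~\ref{prop:cf3}, obtaining $\pi(\vec T_k,\,1+3(\phi-1)/k)\le 1$ and hence the constant $3(\phi-1)=2.727$. Your $(k-4)$-fold link down to level $4$ is the same computation shifted by one step: Proposition~\ref{prop:cf3} already states $\pi(\vec T_4,\theta^*)=1$ for the explicit $\theta^*=(3\phi+1)/4=1.68175$, so $4(\theta^*-1)=3(\phi-1)=2.727$ exactly. There is no obstacle; the numerical value you need is in the \emph{statement} of Proposition~\ref{prop:cf3}, not hidden in its proof, and your ``alternate route'' via $j=3$ is precisely what the paper does.

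One small simplification: you only need the links $L_T$ to be $\vec T_4$-free, which is immediate (adding $T$ back to a copy of $\vec T_4$ gives a copy of $\vec T_k$). The stronger claim that they are $\cF_4$-free is unnecessary, since the density bound you invoke from Proposition~\ref{prop:cf3} is for $\vec T_4$, and $\cF_k$-free already implies $\vec T_k$-free.
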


\subsection{Outline}

The rest of the paper can be divided into three parts.

\smallskip

\emph{Part I. A Tur\'an density problem.}

In~\cref{s:turan}, we study the Tur\'an density problem and prove \cref{conj:pi} for $k=2,3,4$. The readers who are interested in working on \cref{conj:BBL} only need to solve the remaining Tur\'an density problem and thus do not need to read beyond this section.

\smallskip

\emph{Part II. Exponential asymptotics.}

The goal of this part is to prove \cref{thm:weak-count}.
In~\cref{s:hyptools}, we recall some tools for hypergraphs, including the hypergraph container theorem. 
In \cref{s:firstcount}, we prove an upper bound on the number of $k$-SAT functions. 
In \cref{sec:lb}, we prove a matching lower bound.

\smallskip

\emph{Part III. A stability argument.} 

In the last part of this article, we prove \cref{thm:strong-count}.
In~\cref{s:stability}, by a more careful analysis of the containers, we reduce the problem to showing there are a negligibly many non-unate but close-to-unate minimal formulae.
This final claim is then established in \cref{s:istar} by extending the arguments in \cite[Section 8]{IK12}.

\section{A Tur\'an density problem}\label{s:turan}

The general hypergraph Tur\'an problem seeks to answer the following extremal question: how many edges can an $n$-vertex $k$-uniform hypergraph have without containing a copy of forbidden $k$-uniform hypergraph (``$k$-graph'') $F$? 
Many methods for tackling such problems are described in~\cite{Kee11}.
We consider a variant of the classical hypergraph Tur\'an problem and work with $k$-PDGs as defined in the introduction.
The main objective here is to show \cref{conj:pi}, that for each $k\ge 2$, $\pi(\cF_k,\theta) = 1$ for some $\theta > \log_23$; we prove this conjecture for $k=2,3,4$ here (and also $k = 5$ in the appanedix), and leave all larger values of $k$ as an open problem.
Towards the conjecture, it suffices to prove 
$\pi(\vec T_k,\theta) = 1$ for some $\theta > \log_23$
where $\vec T_k \in \cF_k$ is the special $k$-PDG defined in \cref{sec:extremal-open} (since $\cF_k$-free implies $\vec T_k$-free).

\subsection{Results}

We prove the following recursive bound relating the Tur\'an problem associated to $k$-SAT with that of $(k-1)$-SAT.

\begin{lemma} \label{lem:vertex-averaging} For $k \ge 3$, we have that
\[
\pi\paren{\vec T_k, \frac{(k-1)\theta + 1}{k}} \le \pi( \vec T_{k-1}, \theta).
\]
\end{lemma}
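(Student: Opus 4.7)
The plan is to use a link (vertex averaging) argument: to each vertex $v$ of an $n$-vertex $\vec T_k$-free $k$-PDG $\vec H$ we associate a $(k-1)$-PDG $\vec H_v$ on $V(\vec H)\setminus\{v\}$, show $\vec H_v$ is $\vec T_{k-1}$-free, apply the hypothesis to each $\vec H_v$, and then average. The subtle design decision is how to handle an edge of $\vec H$ that is directed \emph{at} $v$, and I will choose a link so that the edge-count bookkeeping yields exactly the weight $\theta'=\frac{(k-1)\theta+1}{k}$.

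Concretely, for each edge $e$ of $\vec H$ containing $v$, I will place the $(k-1)$-edge $e\setminus\{v\}$ in $\vec H_v$, where its orientation is inherited from $\vec H$ when $e$ is directed towards some $w\ne v$, and it is undirected when $e$ is undirected in $\vec H$ \emph{or} directed at $v$. The key step is the contrapositive claim: if $\vec H_v$ contains $\vec T_{k-1}$ on vertex set $T\cup\{a,b,c\}$ with $|T|=k-3$ (with $T\cup\{a,b\}$ and $T\cup\{b,c\}$ undirected and $T\cup\{a,\wc c\}$ directed at $c$), then by our link definition $\vec H$ contains the edges $T\cup\{v,a,b\}$, $T\cup\{v,a,c\}$, $T\cup\{v,b,c\}$, where the middle one must be directed at $c$ (since $c\ne v$), while the other two are either undirected or directed at $v$. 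Setting $S=T\cup\{v\}$ (of size $k-2$) and forgetting orientation on the two side edges if needed, we obtain $\vec T_k$ as a subgraph of $\vec H$, contradicting the $\vec T_k$-freeness of $\vec H$.

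With $\vec T_{k-1}$-freeness in hand, the $(k-1)$-Tur\'an density hypothesis gives
\[
e_u(\vec H_v)+\theta\, e_d(\vec H_v)\le (\pi(\vec T_{k-1},\theta)+o(1))\binom{n-1}{k-1}
\]
for each $v$. Summing over $v$, each undirected edge of $\vec H$ contributes $k$ to $\sum_v e_u(\vec H_v)$ and $0$ to $\sum_v e_d(\vec H_v)$, while each edge of $\vec H$ directed at some $w$ contributes $1$ undirected link-edge (in $\vec H_w$) and $k-1$ directed link-edges (one in each $\vec H_v$ for $v\ne w$). Hence
\[
\sum_v\bigl(e_u(\vec H_v)+\theta\, e_d(\vec H_v)\bigr)=k\,e_u(\vec H)+\bigl(1+(k-1)\theta\bigr)e_d(\vec H)=k\binom{n}{k}\bigl(\alpha(\vec H)+\theta'\beta(\vec H)\bigr).
\]
Combining with the identity $n\binom{n-1}{k-1}=k\binom{n}{k}$ and dividing through yields $\alpha(\vec H)+\theta'\beta(\vec H)\le \pi(\vec T_{k-1},\theta)+o(1)$, which is exactly the claim.

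The only real obstacle is the design of the link; once the correct convention (treating edges directed at $v$ as undirected in $\vec H_v$) is chosen, both the $\vec T_{k-1}$-freeness check and the arithmetic producing the weight $\theta'=\frac{(k-1)\theta+1}{k}$ fall out cleanly. Forgetting orientation in the subgraph relation is essential here, since it is what lets the contrapositive argument absorb edges that happened to be directed at $v$ in $\vec H$.
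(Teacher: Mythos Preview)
Your proof is correct and follows essentially the same approach as the paper: define the link $\vec H_v$ (with edges directed at $v$ becoming undirected), verify $\vec T_{k-1}$-freeness by the contrapositive, apply the Tur\'an bound to each link, and average. The paper phrases the averaging step via expectations ($\EE_v \alpha_v = \alpha + \beta/k$, $\EE_v \beta_v = (k-1)\beta/k$) rather than sums, but this is cosmetic; your more explicit treatment of why forgetting orientation handles the edges directed at $v$ is a welcome clarification of a step the paper leaves terse.
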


\begin{rem}\label{rem:vertex-averaging-tk}
The same proof also shows the inequality
\[
\pi\paren{\cF_k, \frac{(k-1)\theta + 1}{k}} \le \pi(\cF_{k-1}, \theta).
\]
\end{rem}

Recall that since the complete undirected $k$-PDG on $n$ vertices is $\vec T_k$-free, $\pi(\vec T_k, \theta) \ge 1$ for all $\theta > 0$. 
The following result solves the Tur\'an problem for enumerating $2$-SAT and $3$-SAT functions ($5/3 > \log_23=1.58496\dots$). 
We will see its short proof momentarily. 
The second claim follows from the first by \cref{rem:vertex-averaging-tk}.

\begin{prop}\label{prop:cf2}
$\pi(\vec T_2, 2) = 1$.  Therefore, $\pi(\vec T_3, 5/3) = 1$.
\end{prop}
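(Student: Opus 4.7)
The lower bound $\pi(\vec T_2, 2) \ge 1$ is already implicit in the introduction: orient every edge of $K_{\lfloor n/2\rfloor, \lceil n/2\rceil}$ toward a fixed part; this bipartite 2-PDG is trivially triangle-free (hence $\vec T_2$-free) and achieves $\alpha + 2\beta = 2\lfloor n/2\rfloor\lceil n/2\rceil/\binom{n}{2} = 1 + o(1)$.

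For the matching upper bound, I plan to run a Zykov-type symmetrization argument, by analogy with the classical proof of Tur\'an's theorem for triangle-free graphs. Let $\vec H$ be an arbitrary $\vec T_2$-free 2-PDG on $n$ vertices and set $f(v) := d_U(v) + 2 d_D(v)$, so $\sum_v f(v) = 2(|U| + 2|D|)$. The cloning move I will use is: for any two vertices $u, v$ that are non-adjacent in the underlying graph $U\cup D$, delete all edges incident to $v$ and then, for each $w \ne u,v$, insert an edge $vw$ of the same type as $uw$ (matching undirected/directed, and for directed edges placing the $\vee$ on the position corresponding to that of $uw$). This changes $|U| + 2|D|$ by exactly $f(u) - f(v)$, so we may always clone the vertex of smaller $f$-value as a copy of the larger. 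The only delicate step is checking that cloning preserves $\vec T_2$-freeness: if the cloned PDG acquires a $\vec T_2$ subgraph on some triple $xyz$, then either $v \notin \{x,y,z\}$, in which case the same triangle exists in $\vec H$, or $v \in \{x,y,z\}$, in which case swapping the role of $v$ for $u$ yields an isomorphic triangle already present in $\vec H$ (the possibility that both $u,v$ appear in the triangle is ruled out by non-adjacency, since $u \notin N_{\text{new}}(v)$). In either case $\vec H$ itself contained $\vec T_2$, a contradiction.

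Iterating cloning (and breaking ties in any way that strictly reduces the number of distinct neighborhood-types) produces a $\vec T_2$-free 2-PDG $\vec H^*$ with $|U^*|+2|D^*| \ge |U|+2|D|$ in which every two non-adjacent vertices are ``twins,'' so $\vec H^*$ is a blow-up of a primitive $\vec T_2$-free PDG $P$ on $k$ vertices; moreover the underlying graph of $P$ must be $K_k$ (else two non-adjacent vertices of $P$ would still be waiting to be merged). I will then finish by checking the three cases for $P$. If $k \ge 3$, every edge of $P$ lies in a triangle of $P$, so $\vec T_2$-freeness forces all edges of $P$ to be undirected, and Tur\'an's theorem gives $|U^*| + 2|D^*| = |U^*| \le (1 - 1/k)n^2/2 \le n^2/3$. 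If $k = 2$, the single edge of $P$ is either undirected, yielding $|U^*| + 2|D^*| = |V_1||V_2| \le n^2/4$, or directed, yielding $|U^*| + 2|D^*| = 2|V_1||V_2| \le n^2/2$ (with equality for $|V_1| = |V_2| = n/2$). For $k = 1$ everything is $0$. In all cases $|U| + 2|D| \le |U^*| + 2|D^*| \le n^2/2 = (1+o(1))\binom{n}{2}$, giving $\alpha + 2\beta \le 1 + o(1)$ and hence $\pi(\vec T_2, 2) \le 1$.

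The hardest part of this plan is purely bookkeeping in the $\vec T_2$-preservation step, and otherwise the argument follows the Tur\'an--Zykov template essentially verbatim; the dichotomy at the end explains precisely why the directed bipartite graph is extremal. The conclusion $\pi(\vec T_3, 5/3) = 1$ then follows immediately from \cref{rem:vertex-averaging-tk} applied with $\theta = 2$.
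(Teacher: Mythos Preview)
Your Zykov symmetrization route is genuinely different from the paper's, which instead uses F\"uredi's inequality $e(G^2) \ge e(G) - \lfloor n/2\rfloor$ together with the observation that in a $\vec T_2$-free PDG every directed edge is non-triangular in the underlying graph $G$ (hence disjoint from $E(G^2)$); one line of counting then gives $\alpha + 2\beta \le 1 + O(1/n)$. The paper's argument is shorter but relies on a black box; yours is self-contained and actually pins down the extremal structure, which is a nice bonus.

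Two small repairs. First, the inequality $(1-1/k)n^2/2 \le n^2/3$ is false for $k \ge 4$; what you need (and have) is simply $(1-1/k)n^2/2 < n^2/2$. Second, the assertion that $\vec H^*$ is literally a \emph{blow-up} of $P$ is stronger than cloning alone delivers. What you really obtain (once you make the standard transitivity step explicit: if $u\not\sim v$, $v\not\sim w$, $u\sim w$, clone both $u$ and $w$ onto $v$ and gain the contribution of the edge $uw$) is that the underlying graph of an extremal $\vec H^*$ is complete $k$-partite. For $k\ge 3$ every edge lies in a triangle and is thus undirected, so the blow-up claim is automatic. For $k=2$ the inter-part edges need not be of uniform type, but this does not matter: the bipartite case gives $|U^*| + 2|D^*| \le 2|V_1||V_2| \le n^2/2$ regardless of how the edges are oriented. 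With those tweaks the argument is complete.
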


The result for $4$-SAT follows from the following improved bound on $\pi(\vec T_3, \theta)$.
The second claim again follows from the first by \cref{lem:vertex-averaging}.

\begin{prop}\label{prop:cf3}
$\pi(\vec T_3, \phi) = 1$ with $\phi = 1.909 > (4\log_23-1)/3$. Consequently, $\pi(\vec T_4, \theta) = 1$ for $\theta = (3\phi+1)/4 > \log_23$.
\end{prop}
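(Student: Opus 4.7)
The second statement of the proposition follows immediately from the first via \cref{lem:vertex-averaging}: taking $k = 4$ and $\theta = \phi$, we get $\pi(\vec T_4, (3\phi + 1)/4) \le \pi(\vec T_3, \phi) = 1$, and the reverse inequality is trivial from the complete undirected $4$-PDG. Numerically, $(3 \cdot 1.909 + 1)/4 \approx 1.68 > \log_2 3$, as required.

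For the first statement, we show $\pi(\vec T_3, \phi) \le 1$ at $\phi = 1.909$ (the matching lower bound is trivial, realized by the complete undirected 3-PDG). Let $\vec H$ be an $n$-vertex $\vec T_3$-free 3-PDG with $\alpha\binom{n}{3}$ undirected and $\beta\binom{n}{3}$ directed edges. For each $v \in V(\vec H)$, consider its link $L_v$, a $\vec T_2$-free 2-PDG on $V \setminus \{v\}$, with $u_v$ undirected, $d_v$ directed, and $m_v = u_v + d_v$ total edges, and let $G_v := U_v \cup D_v$ denote its underlying simple graph.

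The crucial structural observation is: \emph{$L_v$ is $\vec T_2$-free if and only if every triangle of $G_v$ consists entirely of undirected edges of $L_v$}. Indeed, any triangle of $G_v$ containing a directed edge produces, upon undirecting the remaining two edges, a copy of $\vec T_2$. In particular every triangular edge of $G_v$ lies in $U_v$, so $u_v$ is at least the number of triangular edges of $G_v$. Applying the F\"uredi--Maleki theorem~\cite{FM17} to $G_v$, which provides a sharp lower bound on the number of triangular edges as a function of $m_v$ and $n-1$, gives the per-link inequality $u_v \ge f(\rho_v)\binom{n-1}{2}$, where $\rho_v = m_v/\binom{n-1}{2}$ and $f\colon [0,1]\to[0,1]$ is the explicit F\"uredi--Maleki extremal function (with $f(\rho)=0$ for $\rho \le 1/2$ and $f(1) = 1$).

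Summing the per-link inequality over $v$ and using the incidence identities $\sum_v u_v = 3 e_u + e_d$ and $\sum_v (u_v + d_v) = 3(e_u + e_d)$, together with Jensen's inequality, yields the global constraint
\[
\alpha + \tfrac{\beta}{3} \;\ge\; f(\alpha + \beta).
\]
Setting $t = \alpha + \beta$, this rearranges to $\beta \le \tfrac{3}{2}(t - f(t))$, so maximizing $\alpha + \phi\beta = t + (\phi - 1)\beta$ over the feasible region $\{\alpha,\beta\ge 0\}$ reduces to the one-variable analytic inequality
\[
\max\!\left(\phi t,\ \tfrac{3\phi - 1}{2}\, t - \tfrac{3(\phi - 1)}{2}\, f(t)\right) \;\le\; 1 \qquad \text{for all } t \in [0, 1],
\]
with equality at $t = 1$ (the complete undirected 3-PDG) and at $t = 1/\phi \approx 0.524$ (the extremal nearly-all-directed configuration). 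The main obstacle will be the concrete verification of this inequality using the explicit piecewise formula for $f$ from~\cite{FM17}, the value $\phi = 1.909$ being numerically calibrated as the largest threshold for which the inequality holds uniformly on $[0, 1]$.
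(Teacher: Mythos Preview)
Your per-vertex observation is correct and essentially matches the paper: in a $\vec T_2$-free link, every directed edge is non-triangular, so the F\"uredi--Maleki theorem bounds the directed part. However, the passage from the per-link inequality to the global constraint $\alpha + \beta/3 \ge f(\alpha+\beta)$ via Jensen does not work. Your function $f$ (the minimum triangular-edge density at total density $\rho$) equals $\rho - f_{\mathrm{FM}}(\rho)$, where $f_{\mathrm{FM}}$ is the F\"uredi--Maleki upper bound on non-triangular density. The paper notes explicitly that $f_{\mathrm{FM}}$ is strictly convex on $[1/2,1]$; hence your $f$ is strictly \emph{concave} there. Jensen's inequality therefore gives $\frac{1}{n}\sum_v f(\rho_v) \le f(\bar\rho)$, the wrong direction: from $\sum_v u_v \ge \sum_v f(\rho_v)\binom{n-1}{2}$ you cannot conclude $\alpha + \beta/3 \ge f(\alpha+\beta)$. (Concretely, a 3-PDG whose links have very uneven densities could satisfy all per-vertex bounds while violating the averaged one.)

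The paper circumvents this by refining the link decomposition into three parts $a_v, b_v, c_v$ (undirected edges of $\vec H$ through $v$; directed edges through but not towards $v$; directed edges towards $v$) rather than just $u_v = a_v + c_v$ and $d_v = b_v$. Using F\"uredi--Maleki at each vertex together with a computer-verified polynomial system, the paper proves a per-vertex \emph{dichotomy} (\cref{prop:linkgraphturan}): every vertex satisfies either the linear inequality $a_v + \tfrac{3\phi-1}{2} b_v + c_v \le 1+\epsilon$ or the linear inequality $a_v + \phi(b_v+c_v) \le 1+\epsilon$. If all vertices satisfy the first, straight averaging (no Jensen needed, since the inequality is linear) gives $\alpha + \phi\beta \le 1+\epsilon$. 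If some vertex satisfies the second, one removes it and inducts on $n$. This two-case induction is precisely what replaces your failed Jensen step, and it genuinely requires the finer $(a_v,b_v,c_v)$ breakdown rather than just $(u_v,d_v)$.
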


\begin{rem}
To prove~\cref{conj:pi} for $k=5$, 
by applying~\cref{lem:vertex-averaging} twice, 
it suffices to show $\pi(\vec T_3, \phi) = 1$ for some $\phi > 1.975 >(5\log_23-2)/3$.
\end{rem}

\cref{prop:cf2} and \cref{prop:cf3} together lead to \cref{eq:234}. To obtain an upper bound on $\pi(\vec T_k, \phi)$ for larger $k$, 
iterating~\cref{lem:vertex-averaging} gives
\[
\pi\paren{\vec T_k, \frac{\ell}{k}(\phi - 1) + 1} \le \pi(\vec T_\ell, \phi)
\qquad \text{for } k > \ell \ge 2.
\]
Using $\ell = 3$ and $\phi = 1.909$, we get
$\pi(\vec T_k, 3(\phi-1)/k + 1) = 1$ for all $k \ge 4$. 
Since $3(\phi-1)/k + 1 < \log_23$ for all $k \ge 5$, we have
\[
\pi(\vec T_k, \log_23) \le \frac{\log_23}{3(\phi - 1)/k + 1}\cdot  \pi\paren{\vec T_k, \frac{3}{k}(\phi - 1){k} + 1}
\le \frac{\log_23}{2.727/k + 1} 
\qquad \text{for all } k \ge 5.
\]
This gives~\cref{thm:generalub}.
Our work leads to the following natural Tur\'an problem for $k$-PDGs.

\begin{problem}
For each $k \ge 2$, what is the largest $\theta$ such that $\pi(\cF_k, \theta) = 1$?
\end{problem}

The following $k$-PDG is $\cF_k$-free: 
partitioning the $n$ vertices as $A \cup B$ with $\abs{A} = n/k$ and $\abs{B} = (k-1)n/k$, and add all directed edges with $k-1$ vertices in $B$ and directed towards a vertex in $A$.
This $k$-PDG has $\abs{A}\binom{\abs{B}}{k-1} = (n/k) \binom{(1-1/k)n}{k-1} = (1+o(1))(1-1/k)^{k-1}\binom{n}{k}$ edges. So $\pi(\cF_k, \theta) > 1$ whenever $\theta >  (1-1/k)^{-k+1}$.
We conjecture that this construction is optimal in the following sense.

\begin{conj} \label{conj:pi-opt}
$\pi(\vec T_k, (1-1/k)^{-k+1}) = 1$ for every $k \ge 2$.
\end{conj}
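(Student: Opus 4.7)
My plan attempts to prove the matching upper bound to the tight construction $V = A \sqcup B$ with $|A| = \lfloor n/k \rfloor$, all of whose edges are $(k-1)$-subsets of $B$ each pointed at some vertex of $A$. First I would verify the base case $k=2$ via \cref{prop:cf2}. For $k \ge 3$, the natural opening move is the link operation: for each $(k-j)$-subset $S \subseteq V$ of a $\vec T_k$-free $\vec H$, define $L_S$ as the $j$-PDG on $V \setminus S$ whose edges are $\{e \setminus S : S \subseteq e\text{ an edge of }\vec H\}$, with an edge of $\vec H$ directed at a vertex of $S$ becoming undirected in $L_S$ and otherwise inheriting its direction. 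One checks directly that $\vec H$ being $\vec T_k$-free forces each $L_S$ to be $\vec T_j$-free, since a $\vec T_j$ in $L_S$ with core $T$ of size $j-2$ lifts to a $\vec T_k$ in $\vec H$ with core $S \cup T$ of size $k-2$.

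Averaging an inequality of the form $\alpha(L_S) + \theta'\beta(L_S) \le 1$ over all $(k-j)$-subsets $S$ translates back to a bound $\pi(\vec T_k, \theta) \le 1$ for some $\theta$ depending on $j$ and $\theta'$. Unfortunately this route is lossy: starting from $\pi(\vec T_2,2)=1$ one obtains only $\pi(\vec T_k,(k+2)/k)\le 1$, and even plugging in the conjectured sharp bound for some intermediate $j$ falls well short of $(1-1/k)^{-(k-1)}$, which tends to $e$ as $k \to \infty$. The reason is that the extremal $\vec H$ has a coordinated global structure whose links are simultaneously near-extremal in a correlated way — information that independent link averaging discards.

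To close the gap, my plan is to pair the link argument with a stability analysis. Assuming $\alpha + \theta\beta$ is within $o(1)$ of the conjectured bound, I would identify a candidate sink set $A \subseteq V$ by selecting vertices of high $\theta$-weighted in-degree, show by double counting that $|A|/n \to 1/k$, and deduce that almost all edges of $\vec H$ are directed with pointed vertex in $A$ and remaining $k-1$ vertices in $B = V \setminus A$. Any deviating edge — an undirected edge, a directed edge with two vertices in $A$, or a directed edge pointed at $B$ — would then combine with two conforming edges to produce a $\vec T_k$, yielding a contradiction that rules out the deviation in bulk.

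The main obstacle I expect is this last step. The extremal construction is quite robust to small local perturbations, and ruling out \emph{every} type of deviation at the exact threshold $(1-1/k)^{-(k-1)}$ seems to require tracking the joint behavior of several links in concert rather than one at a time. An alternative route worth trying is to adapt flag algebras to partially directed hypergraphs; the clean extremal configuration and the appearance of the Lagrangian-like quantity $(1-1/k)^{-(k-1)}$ suggest that a sum-of-squares certificate may exist, at least for small $k$, and could serve as a stepping stone to a uniform proof.
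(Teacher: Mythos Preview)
This statement is labeled a \emph{conjecture} in the paper, and the paper does not prove it beyond $k=2$. Immediately after stating it, the paper writes ``\cref{prop:cf2} confirms this conjecture for $k=2$,'' and that is the full extent of what is established; for $k\ge 3$ the paper obtains only the weaker bounds $\pi(\vec T_3,1.909)=1$ and the consequences of \cref{lem:vertex-averaging}, none of which reach $(1-1/k)^{-(k-1)}$. So there is no ``paper's own proof'' to compare against.

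Your proposal is honest about being a plan rather than a proof, and the obstacles you flag are exactly why the conjecture remains open. Your $k=2$ case matches the paper. Your link-averaging step is precisely \cref{lem:vertex-averaging} iterated, and your computation that it yields only $(k+2)/k$ from the base case is correct and consistent with the paper's discussion preceding \cref{thm:generalub}. The stability scheme you sketch --- locate a sink set $A$ of density $1/k$, then argue deviating edges create a $\vec T_k$ --- is a reasonable line of attack in the spirit of F\"uredi--Pikhurko--Simonovits~\cite{FPS05}, which the paper itself cites as the inspiration for the extremal construction. But as you yourself say, the step from ``almost all edges conform'' to the exact inequality at the sharp threshold is where the argument currently has no content: you have not supplied any mechanism for controlling the $o(1)$ deviations, and stability alone does not give an exact bound without a separate exact argument near the extremum. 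So the proposal remains a plan with a genuine gap at precisely the point where the paper also stops.
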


\cref{prop:cf2} confirms this conjecture for $k=2$. 
The conjectured optimal construction is reminiscent of the solution to the hypergraph Tur\'an problem corresponding to 3-graphs with independent  neighborhoods by F\"uredi, Pikhurko, and Simonovits~\cite{FPS05}. 
Our techniques are partly inspired by their work.



The following conjecture is a restatement of \cref{conj:T_k}, which would imply \cref{conj:pi} and hence \cref{conj:BBL}.
In this section, we prove it for $k \le 4$, and in the appendix we prove it for $k \le 5$.

\begin{conj}
For every $k \ge 2$, $\pi(\vec T_k, \theta) = 1$ for some $\theta > \log_23$.
\end{conj}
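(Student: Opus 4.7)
The plan is to prove the conjecture for $k = 2, 3, 4$ by combining a direct extremal argument at the base case with the vertex-averaging bootstrap of~\cref{lem:vertex-averaging}, and to flag general $k$ as the main obstruction.

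I would start with $k = 2$ and establish $\pi(\vec T_2, 2) = 1$. The pivotal observation is that $\vec T_2$ sits inside a partially directed graph $\vec G$ precisely when some directed edge $b \to c$ shares a common neighbor with both endpoints in the underlying graph (forgetting orientations). Equivalently, in a $\vec T_2$-free $\vec G$, every directed edge is \emph{non-triangular}. Setting $e = e_u(\vec G) + e_d(\vec G)$: when $e \le \binom{n}{2}/2$ the bound $e_u + 2e_d \le 2e \le \binom{n}{2}$ is immediate; when $e > \binom{n}{2}/2$, the F\"uredi--Maleki theorem~\cite{FM17} on the minimum number of triangular edges supplies the matching bound that the number of non-triangular edges is at most $\binom{n}{2} - e + o(n^2)$, so $e_d \le \binom{n}{2} - e_u - e_d + o(n^2)$, yielding $e_u + 2e_d \le \binom{n}{2} + o(n^2)$.

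Next I would prove~\cref{lem:vertex-averaging} by vertex links. For a vertex $v$ of a $\vec T_k$-free $k$-PDG $\vec H$, define the link $L_v$ as the $(k-1)$-PDG on $V(\vec H) \setminus \{v\}$ whose edges are $e \setminus \{v\}$ for each $\vec H$-edge $e \ni v$, inheriting the direction if $e$ is directed at some $w \neq v$ and being undirected otherwise. A routine check shows that any $\vec T_{k-1}$ in $L_v$ lifts (by adjoining $v$ to all edges) to a $\vec T_k$ in $\vec H$, so $\vec T_k$-freeness forces every $L_v$ to be $\vec T_{k-1}$-free. An edge-by-edge accounting yields
\[
\sum_v e_u(L_v) = k \cdot e_u(\vec H) + e_d(\vec H), \qquad \sum_v e_d(L_v) = (k-1) \cdot e_d(\vec H),
\]
and summing the link inequality $\alpha(L_v) + \theta \beta(L_v) \le \pi(\vec T_{k-1}, \theta) + o(1)$ over $v$ rearranges to the lemma. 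Applied with $(k, \theta) = (3, 2)$ this already yields $\pi(\vec T_3, 5/3) = 1$.

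Since $5/3 < (4 \log_2 3 - 1)/3 \approx 1.79$, a naive lift into $k = 4$ is not enough. I would therefore attack $k = 3$ directly to push the constant up to $\phi = 1.909$, again via vertex links but now invoking the \emph{full} F\"uredi--Maleki profile: the number of triangular edges in an $n$-vertex graph of edge density $\rho$ is at least $(g(\rho) + o(1)) \binom{n}{2}$ with $g$ strictly exceeding $\max(0, 2\rho - 1)$ once $\rho$ leaves the bipartite threshold. Coupled with a convexity/averaging argument over the link-density profile $\{\rho_v\}$, this converts the excess $g(\rho) - \max(0, 2\rho - 1)$ into the improvement from $5/3$ to $1.909$. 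One final application of~\cref{lem:vertex-averaging} with $(k, \theta) = (4, \phi)$ then delivers $\pi(\vec T_4, (3\phi + 1)/4) = 1$, and $(3\phi + 1)/4 \approx 1.682 > \log_2 3$ as required.

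The hard part will be $k \ge 5$. Iterating~\cref{lem:vertex-averaging} starting from any bound $\pi(\vec T_3, \phi) = 1$ produces $\pi(\vec T_k, 3(\phi - 1)/k + 1) = 1$, which crosses below $\log_2 3$ precisely at $k = 5$ when $\phi = 1.909$; Appendix~\ref{a:comp} settles $k = 5$ via brute force. Pushing further appears to require either a substantially sharper direct bound on $\pi(\vec T_3)$ or $\pi(\vec T_4)$---ideally attaining the conjectured optimal $(1 - 1/k)^{-k+1}$ of~\cref{conj:pi-opt}---or a genuine higher-uniformity analogue of F\"uredi--Maleki formulated directly for $k$-PDGs, both of which seem out of reach of current techniques.
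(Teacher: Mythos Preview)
Your overall scaffolding---base case at $k=2$, the vertex-averaging bootstrap of \cref{lem:vertex-averaging}, then a sharper direct bound at $k=3$ via F\"uredi--Maleki to reach $k=4$---matches the paper. Your $k=2$ argument is correct (the paper uses the lighter F\"uredi lemma $e(G^2)\ge e(G)-\lfloor n/2\rfloor$ rather than the full F\"uredi--Maleki result, but your route also works), and your proof of \cref{lem:vertex-averaging} is essentially the paper's.

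The genuine gap is at $k=3$. Your plan is to feed the full F\"uredi--Maleki profile $g(\rho)$ into a ``convexity/averaging argument over the link-density profile $\{\rho_v\}$'' and harvest the excess $g(\rho)-\max(0,2\rho-1)$. This cannot work as stated: if you track only the scalar $\rho_v = \alpha(L_v)+\beta(L_v)$ for each link $L_v$, the best link-wise inequality you can extract is $\alpha(L_v)+\theta\beta(L_v)\le \rho_v+(\theta-1)f(\rho_v)$, and the maximum of $\rho+(\theta-1)f(\rho)$ over $\rho\in[0,1]$ equals $1$ precisely at $\theta=2$ (and exceeds $1$ for any $\theta>2$, witnessed by $\rho=1/2$). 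So link-wise you recover exactly $\pi(\vec T_2,2)=1$, which averages to $\pi(\vec T_3,5/3)=1$ and no better. The strict convexity of $f$ does not help here because Jensen runs the wrong direction.

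What the paper does instead is split each link into \emph{three} densities $a_v,b_v,c_v$ according to whether the parent $3$-edge is undirected, directed away from $v$, or directed toward $v$; note $\mathbb{E}_v a_v=\alpha$, $\mathbb{E}_v b_v=2\beta/3$, $\mathbb{E}_v c_v=\beta/3$, and only $b_v$-edges are forced to be non-triangular in $H_v$. The target $\alpha+\phi\beta$ rewrites as $\mathbb{E}_v\bigl[a_v+\tfrac{3\phi-1}{2}b_v+c_v\bigr]$, and the paper proves a \emph{dichotomy} (\cref{prop:linkgraphturan}): for every $v$, either $a_v+\tfrac{3\phi-1}{2}b_v+c_v\le 1+\epsilon$ or $a_v+\phi(b_v+c_v)\le 1+\epsilon$. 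If the first holds for all $v$ one averages; otherwise one deletes the offending $v$ and inducts on $n$. The dichotomy itself is verified by combining F\"uredi--Maleki (applied to the constraint $b_v\le f(a_v+b_v+c_v)$) with a computer-checked polynomial infeasibility. Without separating $b_v$ from $c_v$ and without the averaging/deletion dichotomy, you are stuck at $5/3$.
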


\subsection{An averaging argument} 
We first prove~\cref{lem:vertex-averaging} via an averaging argument. 

\begin{defn}
Given a $k$-PDG $\vec{H} = (V, E)$ and a vertex $v \in V$, the \emph{link of $v$}, denoted $\vec{H}_v$, is the $(k-1)$-PDG with vertex set $V \setminus \{v\}$ and with edges obtained by taking all edges of $\vec H$ containing $v$ and then deleting the vertex $v$ from each edge. We preserve all the direction data, except that every edge directed towards $v$ in $\vec H$ becomes an undirected edge in $\vec H_v$.
\end{defn}

\begin{lemma}
If $\vec{H}$ is $\vec T_k$-free, then $\vec{H}_v$ is $\vec T_{k-1}$-free for all $v \in V(\vec{H})$.
\end{lemma}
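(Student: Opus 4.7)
The plan is to prove the contrapositive: if $\vec{H}_v$ contains a copy of $\vec T_{k-1}$, then $\vec H$ already contains a copy of $\vec T_k$. The intuition is that the link operation only loses orientation data on edges that were pointed at $v$, so any small pattern in $\vec H_v$ can be lifted back by reinstating $v$ as an additional ``common'' (never-pointed-at) vertex, which is exactly the role that the extra vertices play in passing from $\vec T_{k-1}$ to $\vec T_k$.

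Concretely, suppose $\vec T_{k-1}$ embeds into $\vec H_v$ with triangle vertices $a,b,c$ and common vertices $u_1,\dots,u_{k-3}$, so that $\vec H_v$ contains edges $e_1'$ on $\{u_1,\dots,u_{k-3},a,b\}$, $e_2'$ on $\{u_1,\dots,u_{k-3},a,c\}$ directed toward $c$, and $e_3'$ on $\{u_1,\dots,u_{k-3},b,c\}$, where $e_1'$ and $e_3'$ may carry any direction (since the corresponding edges of $\vec T_{k-1}$ are undirected, we are allowed to forget direction). Lift each to the edge $e_i := e_i' \cup \{v\}$ of $\vec H$. By the definition of the link, the direction of $e_2'$ toward $c$ in $\vec H_v$ forces $e_2$ to be directed toward $c$ in $\vec H$. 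Each of $e_1$ and $e_3$ exists as some edge of $\vec H$ (possibly directed toward $v$, possibly directed elsewhere, possibly undirected); in every case its orientation can simply be forgotten.

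I then claim that $\{e_1,e_2,e_3\}$ exhibits $\vec T_k$ as a subgraph of $\vec H$. Indeed, restrict $\vec H$ to the $k+1$ vertices $\{v,u_1,\dots,u_{k-3},a,b,c\}$, delete all edges other than $e_1,e_2,e_3$, and forget the orientations of $e_1$ and $e_3$. Taking $\{v,u_1,\dots,u_{k-3}\}$ (a set of size $k-2$) as the common vertices of $\vec T_k$ and $\{a,b,c\}$ as its triangle, the resulting $k$-PDG is isomorphic to $\vec T_k$: the directed edge points at $c$, neither undirected edge points at the common set, and the three $k$-subsets match the triangle pattern. This contradicts $\vec H$ being $\vec T_k$-free, completing the proof.

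The step one has to be careful about is the orientation bookkeeping: one must check that the direction ``toward $c$'' of $e_2'$ genuinely survives the lift (it does, precisely because $v \ne c$, so the link did not alter that direction), and conversely that any spurious direction picked up by $e_1$ or $e_3$ in $\vec H$—including the possibility that either edge is directed toward $v$—is harmless, because the undirected edges of $\vec T_k$ only require forgetting direction, not imposing it. Beyond this small verification the argument is essentially immediate from the definitions, so no serious obstacle is expected.
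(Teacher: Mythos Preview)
Your proof is correct and takes essentially the same approach as the paper: prove the contrapositive by lifting a copy of $\vec T_{k-1}$ in the link back to $\vec H$ by adjoining $v$ to every edge. The paper compresses this to a single sentence, while you spell out the orientation bookkeeping (in particular, that a directed edge in $\vec H_v$ cannot arise from an edge of $\vec H$ directed at $v$, so $e_2$ really is directed at $c$), but the content is identical.
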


\begin{proof} 
If, for some vertex $v$, $\vec H_v$ contains $\vec T_{k-1}$ as a subgraph, then adding $v$ to all the edges shows that $\vec H$ contains $\vec T_k$ as a subgraph.
\end{proof}


In the proofs below, we drop dependencies on $\vec H$ from notation when there is no confusion:
\[
\alpha = \alpha(\vec H), \quad 
\alpha_v = \alpha(\vec H_v), \quad 
\beta = \beta(\vec H), \quad 
\beta_v = \beta(\vec H_v).
\]

\begin{proof}[Proof of~\cref{lem:vertex-averaging}] 
Let $\vec H$ be a $\vec T_k$-free $k$-PDG.
By linearity of expectation over a uniform random vertex $v \in V$, $\EE_v \alpha_v = \alpha + \beta/k$ and 
$\EE_v \beta_v = (k-1)\beta/k$.
For every $v \in V$, $\vec{H}_v$ is $\vec T_{k-1}$-free, so $\alpha_v + \theta\cdot \beta_v \le \pi(\vec T_{k-1}, \theta)+o(1)$. Thus, we have that
\[
\alpha + \frac{\theta(k-1) + 1}{k}\cdot \beta
\le 
\EE_v[\alpha_v + \theta\cdot \beta_v]
\le \pi(\vec T_{k-1}, \theta)+o(1).
\]
This implies $\pi\left(\vec T_k, \frac{(k-1)\theta + 1}{k}\right) \le \pi(\vec T_{k-1}, \theta)$.
\end{proof}

\subsection{Tur\'an density for 2-PDGs}
We leverage the following lemma of F\"uredi to show \cref{prop:cf2}, that partially directed graphs $\vec{H}$ satisfy $\pi(\vec T_2, 2) = 1$.

\begin{lemma}[{\cite[Lemma 2.1]{Fur92}}] \label{lem:furedi}
Given a graph $G = (V, E)$, let $G^2$ be the graph on vertex set $V$ with $xy \in E(G^2)$ if and only if there is some $z \in V$ with $xz, yz \in E$. Then, for any graph $G$,
$$e(G^2) \ge e(G) - \lfloor n/2 \rfloor.$$
\end{lemma}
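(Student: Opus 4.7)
The plan is to establish the equivalent inequality $e(G) - e(G^2) \le \nu(G)$, where $\nu(G)$ is the matching number of $G$; the lemma then follows from $\nu(G) \le \lfloor n/2 \rfloor$. I would fix a maximum matching $M$ of $G$ and construct an injection $\phi\colon E(G) \setminus M \hookrightarrow E(G^2)$. Because $M$ is maximum, every edge $e = uv \in E(G) \setminus M$ has at least one endpoint matched by $M$ (otherwise $M \cup \{e\}$ would be a strictly larger matching). This partitions $E(G) \setminus M$ into $L_1$ (exactly one matched endpoint) and $L_2$ (both endpoints matched).

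For $e = uv \in L_1$ with $u$ matched to $u^*$, I would set $\phi(e) = u^*v$, which lies in $E(G^2)$ since $u$ is a common neighbor. The restriction of $\phi$ to $L_1$ is injective because both the unmatched endpoint $v$ and the partner $u$ are uniquely recoverable from the image. For $e = uv \in L_2$ with $M$-partners $u^*, v^*$, there are two natural candidate images $u^*v$ and $uv^*$, both in $E(G^2)$ (via common neighbors $u$ and $v$ respectively). Crucially, images coming from $L_1$ always have one unmatched endpoint while candidate images for $L_2$ have both endpoints matched, so the two sets of images are automatically disjoint; it therefore suffices to choose images within $L_2$ so as to form an injection on $L_2$ alone.

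The remaining task is a Hall's condition verification, which I would localize to each pair of matched edges $xx^*, yy^* \in M$. Every $L_2$-edge using one vertex from $\{x,x^*\}$ and one from $\{y,y^*\}$ is one of the four ``cross edges'' $xy, xy^*, x^*y, x^*y^*$, and the candidate images of such a cross edge are exactly its two neighbors in the $C_4$ obtained by declaring two cross edges adjacent when they share one endpoint (the ``diagonally opposite'' cross edge is never a candidate, as one checks directly). Since candidate images from distinct pairs of matched edges involve disjoint vertex sets and thus cannot clash, Hall's condition can be checked locally, reducing to a short case analysis indexed by which subset of the four cross edges lies in $L_2$. I expect the only subtle subcase to be when the relevant $L_2$-edges form a diagonal pair $\{xy, x^*y^*\}$: both share the same pair of candidate images $\{xy^*, x^*y\}$, so Hall is tight, but one can still split them using either of the two perfect matchings in the resulting $K_{2,2}$.
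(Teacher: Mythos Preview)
The paper does not supply its own proof of this lemma; it is quoted verbatim from F\"uredi's 1992 paper and used as a black box. So there is nothing in the present paper to compare your argument against.

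That said, your argument is correct and in fact proves the sharper bound $e(G)-e(G^2)\le \nu(G)$. The $L_1$ injection is clean, and your separation of $L_1$-images from $L_2$-images is valid since every $L_1$-image $u^*v$ has the unmatched vertex $v$ as an endpoint while every $L_2$-candidate has both endpoints matched. The localization to unordered pairs $\{M_i,M_j\}$ of matching edges is also justified: any candidate image of an $L_2$-edge between $M_i$ and $M_j$ has one endpoint in each, so images coming from distinct pairs cannot coincide even when the pairs share a matching edge. Within a fixed pair, the four cross edges and their candidates form exactly the $C_4$ you describe, and the associated $2$-regular bipartite ``candidate'' graph has a system of distinct representatives for every subset of the left side (the only tight case is the diagonal pair, which you identified). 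Hall's condition therefore holds and the injection goes through.
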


In a graph $G$, we say that edge $xy \in E(G)$ is a \textit{triangular edge} if there exists some $z \in V(G)$ so that $xz,yz \in E(G)$.
We then have the following corollary, showing that $\pi(\vec T_2, 2) = 1$, which completes the proof of \cref{prop:cf2}.

\begin{cor}\label{cor:dtr}
If a 2-PDG $\vec{H}$ does not contain $\vec T_2$ as a subgraph, then $\alpha + 2\beta \le 1 + o(1)$.
\end{cor}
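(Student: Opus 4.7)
The plan is to pass to the underlying undirected graph of $\vec H$ and then invoke \cref{lem:furedi}. Let $G$ be the undirected graph on $V(\vec H)$ whose edges are the $e_u(\vec H) + e_d(\vec H)$ edges of $\vec H$ with all orientations forgotten. The first step is to translate the $\vec T_2$-free hypothesis into a statement about $G$. Since $\vec T_2$ consists of a single directed edge together with a third vertex joined to both of its endpoints, whenever $uv$ is a directed edge of $\vec H$ and there exists some $w$ with $uw, vw \in E(G)$, we can forget the orientations of $uw$ and $vw$ (if they happen to be directed) to exhibit $\vec T_2$ as a subgraph of $\vec H$. Hence the $\vec T_2$-free hypothesis forces every directed edge of $\vec H$ to lie in $E(G) \setminus E(G^2)$.

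Next I would combine two elementary ingredients. Inclusion--exclusion inside $\binom{V}{2}$ gives
\[
\abs{E(G) \cap E(G^2)} \ge e(G) + e(G^2) - \binom{n}{2},
\]
while \cref{lem:furedi} supplies $e(G^2) \ge e(G) - \lfloor n/2 \rfloor$. Chaining these two bounds yields
\[
\abs{E(G) \cap E(G^2)} \ge 2\, e(G) - \binom{n}{2} - \lfloor n/2 \rfloor.
\]

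To finish, I would observe that $E(G) \cap E(G^2)$ is exactly the set of triangular edges of $G$, and by the first paragraph this set is contained in the undirected edge set of $\vec H$; hence $e_u(\vec H) \ge 2\, e(G) - \binom{n}{2} - \lfloor n/2 \rfloor$. Substituting $e(G) = e_u(\vec H) + e_d(\vec H)$ and rearranging yields
\[
e_u(\vec H) + 2\, e_d(\vec H) \le \binom{n}{2} + \lfloor n/2 \rfloor,
\]
and dividing by $\binom{n}{2}$ gives $\alpha + 2\beta \le 1 + O(1/n) = 1 + o(1)$, as desired. I do not foresee a serious obstacle: the only step needing care is the translation of the $\vec T_2$-free condition into the disjointness $E_d \cap E(G^2) = \emptyset$, after which \cref{lem:furedi} together with a one-line inclusion--exclusion estimate supplies all the substantive content.
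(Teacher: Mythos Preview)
Your proof is correct and follows essentially the same approach as the paper: both pass to the underlying graph $G$, use that $\vec T_2$-freeness forces directed edges to be non-triangular (equivalently, disjoint from $E(G^2)$), and then apply \cref{lem:furedi}. The paper's bookkeeping is marginally shorter---it observes directly that the directed edges and $E(G^2)$ are disjoint subsets of $\binom{V}{2}$, giving $e_d(\vec H) + e(G^2) \le \binom{n}{2}$ without passing through the inclusion--exclusion bound on $\lvert E(G)\cap E(G^2)\rvert$---but the two arguments are the same in substance and yield the identical inequality $e_u(\vec H) + 2e_d(\vec H) \le \binom{n}{2} + \lfloor n/2\rfloor$.
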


\begin{proof} 
    Let $G$ be the underlying simple graph of $\vec{H}$ (the graph obtained from $\vec H$ by replacing directed edges with simple edges).
    Then $e(G) = (\alpha + \beta){n \choose 2}$. 
    Only non-triangular edges in $G$ can arise as directed edges in $\vec{H}$. We further note that all triangular edges in $G$ are in $G^2$. Thus by~\cref{lem:furedi},
    $${n\choose 2} \ge e(G^2) + \beta{n \choose 2} \ge e(G) + \beta{n \choose 2} - O(n)
    = (\alpha + 2\beta){n \choose 2} - O(n),$$
    which implies that $\alpha + 2 \beta \le 1 + o(1)$.  
\end{proof}

\subsection{An improvement for 3-PDGs}

Here we prove \cref{prop:cf3} that $\pi(\vec T_3, 1.909) = 1$.
We begin with a more careful decomposition of the links, followed by  an averaging-inductive argument.

\begin{defn}
For a $3$-PDG $\vec H = (V,E)$ and a vertex $v\in V$, we define four associated link (di)graphs. Let $A_v$ be a graph with edges arising from undirected edges in $\vec H$ containing $v$, i.e., 
\[
A_v = \left(V \backslash \{v\},\{w_1w_2:vw_1w_2\in E\}\right).
\]
Let $a_v := e(A_v)/{n-1\choose 2}$. 
Let $B_v$ be a graph arising from edges in $\vec H$ containing but not directed towards $v$,  i.e., 
\[
B_v = \left(V\backslash \{v\}, \{w_1w_2:vw_1\wc w_2\in E\}\right).
\]
Let $b_v:= e(B_v)/{n-1\choose 2}$. 
Let $C_v$ be a graph arising from directed edges in $\vec H$ pointed at $v$, i.e.,
\[
C_v=\left(V\backslash \{v\},\{w_1w_2:\wc v w_1w_2\in E\}\right).
\]
Let $c_v:= e(C_v)/{n-1\choose 2}$.
So $B_v$ is comprised of the directed edges of the link $\vec H_v$, whereas $A_v \cup C_v$ is comprised of the subgraph consisting of undirected edges of $\vec H_v$.

Finally, we write $H_v$ for the undirected graph formed by forgetting the directions in $\vec H_v$, i.e., the union of $A_v$, $B_v$, and $C_v$.
\end{defn}

\begin{prop}\label{prop:linkgraphturan}
Let $\phi = 1.909$.
For every $\epsilon > 0$, there is some $n_0(\epsilon)$ such that for every $\vec T_3$-free $3$-PDG $\vec H$ on $n \ge n_0(\epsilon)$ vertices, every vertex $v$ satisfies
\[
a_v+\frac{3\phi-1}{2}\cdot b_v+c_v\leq 1 + \epsilon \qquad \text{or} \qquad
a_v+\phi \cdot (b_v + c_v) \le 1 + \epsilon.
\]
\end{prop}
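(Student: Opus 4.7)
The plan is to exploit two complementary consequences of $\vec H$ being $\vec T_3$-free when analyzing the link of a fixed vertex $v$. First, as noted in the link lemma, the link $\vec H_v$ is itself $\vec T_2$-free, and since every triangle in the underlying graph $H_v$ containing at least one directed edge can be completed to a copy of $\vec T_2$ (by forgetting the orientations of the other edges of the triangle if needed), every edge of $B_v$ must be non-triangular in $H_v$. This already recovers the bound $a_v + 2b_v + c_v \le 1 + o(1)$ via \cref{lem:furedi} (as in the proof of \cref{cor:dtr}), but it is insufficient to establish the proposition's dichotomy, since $(3\phi-1)/2 > 2$ while $\phi < 2$.

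To sharpen the first alternative, I would apply the F\"uredi--Maleki theorem from~\cite{FM17} to $H_v$, regarded as a graph on $n-1$ vertices with edge density $a_v + b_v + c_v$. Their result strengthens \cref{lem:furedi} by giving a strictly tighter lower bound on the number of triangular edges as a function of the edge density, which then translates into a strictly tighter upper bound on $b_v$ and is what produces the coefficient $(3\phi-1)/2 > 2$ in the first inequality. To obtain the second alternative, I would separately extract from $\vec T_3$-freeness a constraint that specifically penalizes edges of $C_v$: if $w_1w_2 \in C_v$, meaning $\wc v w_1 w_2 \in E(\vec H)$, and $c$ is any vertex with $w_1c \in A_v$ or $w_2c \in A_v$ (i.e., $vw_1c$ or $vw_2c$ is an undirected edge of $\vec H$), then the triple $w_1 w_2 c$ cannot be an undirected edge of $\vec H$, as these three edges would otherwise form a $\vec T_3$. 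Carefully counting such forbidden configurations should yield an inequality that treats $b_v$ and $c_v$ more symmetrically, accounting for the coefficient $\phi$ attached to $b_v + c_v$ in the second alternative.

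Finally, I would combine these two ingredients in a case analysis keyed on a structural parameter of the link such as the ratio $c_v/b_v$, noting that the two inequalities agree exactly when $b_v = 2 c_v$. In the regime where $b_v$ dominates, the F\"uredi--Maleki bound drives the first inequality; in the complementary regime, the $C_v$-based constraint drives the second. The main technical obstacle is packaging the F\"uredi--Maleki inequality in a form that composes cleanly with the $C_v$-based argument and tuning the constants so that $\phi = 1.909$ works simultaneously in both regimes; this constant must be pushed below $(3\phi+1)/4$ translating to a value strictly greater than $\log_2 3$ after applying \cref{lem:vertex-averaging}, which is what forces the specific numerical target. A useful sanity check is the conjectural extremal $3$-PDG of \cref{conj:pi-opt} (the bipartition $V = A \cup B$ with $|A| = n/3$ and all edges of the form $bb'\wc a$ for $b,b' \in B$, $a \in A$): a vertex $v \in B$ has $a_v = c_v = 0$ with $b_v \to 4/9$, so the second inequality is comfortably satisfied while the first is violated, illustrating why both alternatives must appear in the statement.
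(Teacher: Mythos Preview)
Your proposal contains a genuine gap in the overall strategy. You correctly identify that edges of $B_v$ are non-triangular in $H_v$ and that the F\"uredi--Maleki theorem is the right tool, but you misdiagnose how the dichotomy arises.

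You propose two separate structural arguments: F\"uredi--Maleki for the first inequality, and a $C_v$-based forbidden-configuration count for the second. Neither half works as stated. First, the F\"uredi--Maleki bound $b_v \le f(a_v+b_v+c_v)$ does \emph{not} by itself yield the first inequality: with $a_v=c_v=0$ and $b_v=1/2$ one has $f(1/2)=1/2$, so the FM constraint holds, yet $\tfrac{3\phi-1}{2}\cdot\tfrac12\approx 1.18>1$. Second, your $C_v$-based observation---that $w_1w_2\in C_v$ and $w_ic\in A_v$ forbid the undirected $3$-edge $w_1w_2c$ in $\vec H$---is a constraint on edges of $\vec H$ \emph{not} through $v$; it says nothing about the link densities $a_v,b_v,c_v$ and therefore cannot produce the second inequality. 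In fact, from $\vec T_3$-freeness the \emph{only} constraint on the link itself (placing $v$ at each vertex of $\vec T_3$ and checking which placements keep all three edges through $v$) is exactly that $B_v$-edges are non-triangular in $H_v$.

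The paper's proof instead shows that the single F\"uredi--Maleki constraint implies the \emph{disjunction} of the two inequalities. One assumes both inequalities fail, unpacks $b\le f(a+b+c)$ via the auxiliary variables $x,y,z$ in the definition of $f$, and obtains a polynomial system in six real variables; a cylindrical algebraic decomposition (a one-line \textsc{Mathematica} call) certifies this system is infeasible at $\phi=1.909$. So the two alternatives are not tied to two different structural phenomena---they are a single algebraic consequence of the FM bound, verified by quantifier elimination rather than a case analysis on $c_v/b_v$.
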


\begin{proof}[Proof of~\cref{prop:cf3} using \cref{prop:linkgraphturan}]
It suffices to show that, for every fixed $\epsilon > 0$, there is some $C = C(\epsilon)$ so that every $n$-vertex $\vec T_3$-free $3$-PDG $\vec H$ satisfies
\[
(\alpha + \phi\cdot \beta)\binom{n}{k}
= e_u + \phi \cdot e_d \le (1+\epsilon)\binom{n}{k} + C
\]
By choosing a large constant $C$, we may assume that the inequality holds for all $n < n_0(\epsilon)$.
So assume $n \ge n_0(\epsilon)$.
By \cref{prop:linkgraphturan}, every vertex $v$ of $H$ satisfies 
\[
a_v+\frac{3\phi-1}{2}\cdot b_v+c_v\leq 1 + \epsilon \qquad \text{or} \qquad
a_v+\phi \cdot (b_v + c_v) \le 1 + \epsilon.
\]
By linearity of expectation, we have
\[
\EE_v a_v = \alpha, \quad 
\EE_v b_v = \frac{2\beta}3 , \quad \text{and}\quad 
\EE_v c_v = \frac{\beta}3.
\]
Thus, if the first inequality in the statement is true for all $v$, then averaging over all $v$ yields $\alpha + \phi \cdot \beta \le 1+\epsilon$, as desired.

Now suppose that there is some $v$ with $a_v + \phi \cdot (b_v + c_v)\le 1+\epsilon$.
The $(n-1)$-vertex $3$-PDG $\vec H - v$ is also $\vec T_3$-free, so by the induction hypothesis,
\[
e_u(\vec H-v) + \phi \cdot e_d(\vec H-v) \le (1+\epsilon)\binom{n-1}{k} + C.
\]
By separately considering edges in $\vec H$ containing $v$ and not containing $v$, we obtain
\[
e_u(\vec H) = a_v(\vec H) \binom{n}{k-1} + e_u(\vec H-v)
\]
and
\[
e_d(\vec H) = (b_v(\vec H) + c_v(\vec H)) \binom{n}{k-1} + e_d(\vec H-v).
\]
Consequently,
\begin{align*}
e_u(\vec H) + \phi \cdot  e_d(\vec H) 
&\le (a_v(\vec H)+\phi \cdot (b_v(\vec H) + c_v(\vec H)))\binom{n}{k-1} + (1+\epsilon)\binom{n-1}{k} + C
\\ &\le (1+\epsilon)\binom{n}{k-1} + (1+\epsilon)\binom{n-1}{k} + C
\\
& \le (1+\epsilon)\binom{n}{k} + C.
\end{align*}
This completes the induction.
\end{proof}

Thus, it remains to show~\cref{prop:linkgraphturan}. 

Since $\vec T_3=\{vxy, vx\wc z, vyz\}$ is forbidden, we see that every edge of $B_v$ is non-triangular in the graph $H_v$.
This leads to the following natural question. What is the maximum possible density of non-triangular edges in a graph of given edge density (over 1/2)?
Fortunately, this problem was recently solved by F\"uredi and Maleki~\cite{FM17}.
An optimal construction comes from partitioning the vertex set into $A\cup B\cup C$ and adding all edges within $A$ as well as edges between $A\cup C$ and $B$. 
In this construction, the non-triangular edges are those between $B$ and $C$.
One should optimize over possible sizes of the partition $A \cup B \cup C$ to maximize the number of non-triangular edges while maintaining the required total number of edges (that this number is optimal is proved asymptotically in \cite{FM17} and exactly in \cite{GL18} for sufficiently large graphs).
The precise statement that we will use is given below, where the the function $f(\rho)$ below is best possible due to the construction just described.

\begin{theorem}[F\"uredi and Maleki \cite{FM17}] \label{thm:FM} 
	Fix $0 \le \rho \le 1$. In an $n$-vertex graph with at least $\rho \binom{n}{2}$ edges, the number of non-triangular edges is at most
	\[
	(f(\rho) + o(1)) \binom{n}{2},
	\]
	where
	\begin{equation} \label{eq:t-delta}
	f(\rho) = \max \set{ 2yz : x+y+z=1, x^2 + 2xy + 2yz \ge \rho, \ x,y,z \in \RR_{\ge 0}}.
	\end{equation}
\end{theorem}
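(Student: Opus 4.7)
The plan is to prove this extremal inequality by a stability/symmetrization argument that localizes the non-triangular edges into a three-part blow-up structure, followed by a finite-dimensional optimization. The starting observation is that if $uv$ is a non-triangular edge of $G$, then $N(u) \cap N(v) = \emptyset$, so $N(u)$ and $N(v)$ are disjoint subsets of $V(G) \setminus \{u,v\}$ and hence $d(u) + d(v) \leq n$. Summing this inequality over all non-triangular edges gives a weak bound of the flavor of \cref{lem:furedi}, but it is the key local input for everything that follows.

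Next I would regularize $G$ by iteratively deleting vertices whose degree falls much below the average; each such deletion destroys only $O(n)$ non-triangular edges, and in total we lose $o(n^2)$ of them, so we may assume a suitable min-degree condition while keeping $e(G) \ge (\rho - o(1))\binom{n}{2}$. Then I would apply a Zykov-style symmetrization: between two vertices playing the ``same role'' one copies neighborhoods to equalize them, arguing that this operation does not decrease the edge count nor (after minor adjustments) the count of non-triangular edges. After this step the graph should be (close to) a blow-up of a small graph $G_0$. A finite case analysis on such small graphs, or equivalently the Lagrangian computation on the associated polynomial optimization, ought to single out the extremal configuration: $V = A \sqcup B \sqcup C$ with $A$ spanning a clique, complete bipartite graphs between $A$ and $B$ and between $B$ and $C$, and no other edges. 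In this graph, an edge $bc$ with $b \in B$ and $c \in C$ has $N(b) \subseteq A \cup C$ and $N(c) \subseteq B$, so $N(b) \cap N(c) = \emptyset$, confirming that the non-triangular edges are exactly those crossing $B$--$C$.

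Writing $x = \sabs{A}/n$, $y = \sabs{B}/n$, $z = \sabs{C}/n$, one then has $e(G)/\binom{n}{2} \to x^2 + 2xy + 2yz$ and $\sabs{T}/\binom{n}{2} \to 2yz$ (where $T$ denotes the non-triangular edges), so the problem reduces to maximizing $2yz$ subject to $x+y+z = 1$, $x,y,z \ge 0$ and $x^2 + 2xy + 2yz \ge \rho$, yielding exactly $f(\rho)$. The hard part will be the symmetrization step: the count of non-triangular edges is \emph{not} monotone under vertex duplication (copying a vertex's neighborhood onto another vertex can turn non-triangular edges into triangular ones and vice versa), so a naive Zykov move does not suffice. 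One likely needs to first pass to a ``cleaned'' subgraph on which the non-triangular edges already exhibit a bipartite-like structure (using the disjointness-of-neighborhoods property), and then symmetrize within each part of that structure, carefully tracking both the edge count and the triangular status of each edge.
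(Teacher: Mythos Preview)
This theorem is not proved in the paper; it is quoted as a black-box result from F\"uredi and Maleki \cite{FM17} and then applied in the proof of \cref{prop:linkgraphturan}. So there is no ``paper's own proof'' to compare against.

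As for your proposal itself, you correctly identify the extremal construction and the resulting finite-dimensional optimization, but the outline has a genuine gap that you yourself flag: the Zykov symmetrization move does not behave well with respect to the count of non-triangular edges. Copying one vertex's neighborhood onto another can create new common neighbors for previously non-triangular pairs (destroying them) or destroy common neighbors for previously triangular pairs (creating them), so neither $e(G)$ nor the number of non-triangular edges is monotone under the move in the needed direction simultaneously. Your suggested fix---first ``clean'' $G$ so the non-triangular edges already look bipartite, then symmetrize within parts---is circular: establishing that bipartite-like structure is essentially the whole theorem.

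The actual argument in \cite{FM17} does go through a symmetrization, but not on $G$ directly. They work with a \emph{pair} (or multiple) of graphs on the same vertex set---roughly, one recording all edges and one recording the non-triangular edges---and symmetrize the pair together, so that the relevant quantities (total edge count and non-triangular edge count) can be controlled simultaneously. This ``symmetrization for multiple graphs'' is the new idea in that paper and is exactly what fills the hole you identified; without it, your sketch does not go through.
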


In contrast, the easier \cref{lem:furedi} amounts to the inequality $f(\rho) \le 1 - \rho$, whereas $f(\rho)$ is strictly convex for $1/2 \le \rho \le 1$ with $f(1/2) = 1/2$ and $f(1) = 0$.
Here the strict convexity of $f$ is crucial for our proof.

\begin{proof}[Proof of \cref{prop:linkgraphturan}]
Suppose that for some vertex $v$ of $\vec H$, one has both
\[
a_v+\frac{3\phi-1}{2}\cdot b_v+c_v > 1+\epsilon  \qquad \text{and} \qquad
a_v+\phi \cdot (b_v + c_v) > 1+\epsilon.
\]
Since every edge of $B_v$ is non-triangular in $H_v$, by \cref{thm:FM},
\[
b_v \le f(a_v + b_v + c_v) + o(1).
\]
It follows, provided $n$ is sufficiently large, there exist real numbers $x,y,z,a,b,c$ satisfying the following system (to get to this system, we set $a=a_v$, $b= \min\{b_v,f(a_v+b_v+c_v)\}$, and $c=c_v$, so that $b > b_v - o(1)$, which implies the first two inequalities as $1+\epsilon-o(1)$ is strictly larger than $1$ if $n$ is sufficiently large):
\begin{align*}
	a + \frac{3\phi-1}{2}\cdot b +c &> 1, \\
	a + \phi \cdot (b + c) &> 1, \\
	x + y + z &= 1, \\
	x^2 + 2xy + 2yz &\ge a+b+c, \\
	2yz & \ge b, \\
	a,b,c,x,y,z &\ge 0.
\end{align*}
The following \textsc{Mathematica} code outputs to \texttt{False} when executed (under a second on a modern computer). 
It proves that the above system has no real solutions for $\phi = 1.909$, thereby yielding the desired contradiction.
This computer verification is rigorous and uses Collins' cylindrical algebraic decomposition \cite{Col75} (a form of quantifier elimination). See \cite{Kau10} for a tutorial.

\begin{verbatim}
    phi = 1909/1000;
    CylindricalDecomposition[
      a + b (3 phi - 1)/2 + c > 1 &&
      a + phi (b + c) > 1 &&  
      x + y + z == 1 &&
      x^2 + 2 x y + 2 y z >= a + b + c &&
      2 y z >= b &&
      a >= 0 && b >= 0 && c >= 0 && x >= 0 && y >= 0 && z >= 0, 
      {x, y, z, a, b, c}]
\end{verbatim}
\end{proof}

\section{Hypergraph Tools}\label{s:hyptools}
We recall several standard tools on hypergraphs that will prove useful in our analysis of minimal $k$-SAT formulae and $k$-PDGs.

\subsection{Densities of blowups} \label{sec:blowup}

Given a $k$-graph $H$, we write $H[b]$ for the \emph{$b$-blowup} of $H$, defined to be the $k$-graph obtained by replacing every $v \in V(H)$ with $b$ duplicates, where $k$ vertices in $V(H[b])$ form an edge if and only if the vertices they originate from form an edge in $H$.

By a standard Cauchy--Schwarz argument, we know that if some $n$-vertex $k$-graph contains $\delta n^{\abs{V(H)}}$ homomorphic copies of $H$, then it contains at least $\delta n^{2\abs{V(H)}}$ homomorphic copies of $H[2]$.
For sufficiently large $n$, all but $O_H(n^{2\abs{V(H)}-1})$ such homomorphic copies of $H[2]$ are actual subgraphs. So we have the following (here ``copies'' refers to isomorphic copies as subgraphs).

\begin{lemma}\label{lem:blowupkst} 
For every $\epsilon > 0$ and $k$-graph $H$, there is some $\delta > 0$ and $n_0$ such that for all $n > n_0$, if an $n$-vertex $k$-graph has at least $\epsilon n^{\abs{V(H)}}$ copies of $H$ then it has at least $\delta n^{2\abs{V(H)}}$ copies of $H[2]$.
\end{lemma}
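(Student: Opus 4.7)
The plan is to prove \cref{lem:blowupkst} in three steps, broadly following the sketch preceding the statement. Write $v := \abs{V(H)}$ throughout. The first step is just that every copy of $H$ in $G$ is in particular an injective homomorphism $H \to G$, so the hypothesis of $\epsilon n^v$ copies of $H$ in $G$ gives at least $\epsilon n^v$ homomorphisms $H \to G$.

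The main step is an iterated Cauchy--Schwarz argument to show that $\operatorname{hom}(H[2], G) \ge \epsilon^{2^v} n^{2v}$. I would fix an ordering $u_1, \dots, u_v$ of $V(H)$, and for $0 \le i \le v$ let $H^{(i)}$ be the $k$-graph obtained from $H[2]$ by identifying the two copies of $u_j$ for every $j > i$, so that $H^{(0)} \cong H$, $H^{(v)} = H[2]$, and $H^{(i+1)}$ is obtained from $H^{(i)}$ by splitting $u_{i+1}$ into two twins with identical neighborhood (this doubling operation is well defined because each vertex of $H$ appears at most once in any edge of $H$). Applying Cauchy--Schwarz to the partition of homomorphisms $\psi \colon V(H^{(i)}) \to V(G)$ according to the restriction $\psi|_{V(H^{(i)}) \setminus \{u_{i+1}\}}$ then yields
\[
\operatorname{hom}(H^{(i+1)}, G) \ge \frac{\operatorname{hom}(H^{(i)}, G)^2}{n^{\abs{V(H^{(i)})} - 1}},
\]
since for each fixed restriction $\chi$ the squared count of valid extensions $\psi(u_{i+1}) \in V(G)$ equals the number of homomorphisms from $H^{(i+1)}$ to $G$ whose restriction to the common part is $\chi$. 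Setting $T_i := \operatorname{hom}(H^{(i)}, G)/n^{v+i}$ and using $\abs{V(H^{(i)})} = v+i$, this becomes $T_{i+1} \ge T_i^2$, and iterating from $T_0 \ge \epsilon$ gives $T_v \ge \epsilon^{2^v}$, i.e., $\operatorname{hom}(H[2], G) \ge \epsilon^{2^v} n^{2v}$.

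Finally, the number of non-injective homomorphisms from $H[2]$ to $G$ is at most $\binom{2v}{2} n^{2v-1} = O_H(n^{2v-1})$, obtained by summing over the unordered pair of vertices of $H[2]$ whose images coincide and the remaining free images. For $n$ sufficiently large this is at most half of the lower bound $\epsilon^{2^v} n^{2v}$, so at least $\epsilon^{2^v} n^{2v}/2$ of these homomorphisms are injective. Each copy of $H[2]$ in $G$ accounts for exactly $\abs{\operatorname{Aut}(H[2])}$ such injective homomorphisms, so we may take $\delta := \epsilon^{2^v}/(2\abs{\operatorname{Aut}(H[2])})$, which depends only on $\epsilon$ and $H$. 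The only delicate point in the argument is verifying that the iterated doubling really produces the blowup $H[2]$, rather than (say) two disjoint copies of $H$; this is precisely what forces the ``mixed'' blowup edges $\{(v_1, a_1), \dots, (v_k, a_k)\}$ with $a \in \{1,2\}^k$ to appear at each doubling step.
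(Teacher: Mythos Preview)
Your proof is correct and follows exactly the approach the paper sketches in the paragraph preceding the lemma: the paper simply invokes a ``standard Cauchy--Schwarz argument'' to pass from homomorphism counts of $H$ to those of $H[2]$ and then discards the $O_H(n^{2v-1})$ non-injective maps, and you have filled in precisely those details via the vertex-by-vertex doubling $H^{(0)}, H^{(1)}, \dots, H^{(v)}$. The only difference is cosmetic: the paper states the homomorphism bound with the same $\delta$ on both sides (an imprecision), whereas your iterated $T_{i+1} \ge T_i^2$ correctly yields the constant $\epsilon^{2^v}$.
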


Given a $k$-SAT formula $G$, its \emph{$b$-blowup}, denoted $G[b]$, is obtained from $G$ by replacing each variable by $b$ identical duplicates. For example, for the $3$-SAT formula $G = \{x_1x_2x_3,x_1\ol x_3x_4\}$, we have
\begin{multline*}    
G[2]=\left\{x_1x_2x_3, x_1x_2x_3',x_1x_2'x_3,x_1x_2'x_3',
x_1'x_2x_3, x_1'x_2x_3',x_1'x_2'x_3,x_1'x_2'x_3',\right. \\
\left.x_1\ol {x_3}x_4,x_1\ol {x_3}x_4', x_1\ol {x_3'}x_4,x_1\ol {x_3'}x_4',
x_1'\ol {x_3}x_4,x_1'\ol {x_3}x_4', x_1'\ol {x_3'}x_4,x_1'\ol {x_3'}x_4'\right\}
\end{multline*}

We have a similar conclusion for blowups of formulae.
\begin{lemma}\label{lem:formula-blowup}
For every $\epsilon > 0$ and $k$-SAT formula $G$, there is some $\delta > 0$ and $n_0$ such that for all $n > n_0$, if a $k$-SAT formula on $n$ variables has at least $\epsilon n^{v(G)}$ copies of $G$, then it has at least $\delta n^{2v(G)}$ copies of $G[2]$.
\end{lemma}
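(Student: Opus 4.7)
The plan is to mimic the iterated Cauchy--Schwarz argument underlying \cref{lem:blowupkst}, taking care to respect the sign information carried by literals in a $k$-SAT formula. Write $t := v(G)$, label the variables of $G$ as $u_1, \dots, u_t$, and define the indicator $R \colon [n]^t \to \{0,1\}$ by setting $R(v_1, \dots, v_t) = 1$ exactly when the assignment $u_i \mapsto v_i$ sends every (signed) clause of $G$ to a clause of the ambient $n$-variable formula $G'$. Every unlabeled copy of $G$ in $G'$ lifts to at least one such tuple with distinct entries, so $\sum_{\vec v} R(\vec v) \ge \epsilon n^t$.

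Next, a standard tensor-power Cauchy--Schwarz (the same device behind \cref{lem:blowupkst}) gives
\[
\sum_{(w_{i,j}) \in [n]^{2t}} \prod_{f \colon [t] \to \{1,2\}} R\paren{w_{1,f(1)}, \dots, w_{t,f(t)}} \ge \paren{\frac{\sum_{\vec v} R(\vec v)}{n^t}}^{2^t} n^{2t} \ge \epsilon^{2^t} n^{2t}.
\]
At most $O_G(n^{2t-1})$ of the contributing $2t$-tuples $(w_{i,j})$ can have a repeated entry, so for $n$ sufficiently large at least $\tfrac12 \epsilon^{2^t} n^{2t}$ contributing tuples are non-degenerate.

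For each non-degenerate contributing tuple $(w_{i,j})$, send the two duplicates $u_i^{(1)}, u_i^{(2)} \in V(G[2])$ to $w_{i,1}, w_{i,2}$. Any clause of $G[2]$ is specified by a clause $C$ of $G$ together with a choice of duplicate for each variable of $C$; extending this choice arbitrarily to a function $f \colon [t] \to \{1,2\}$, the condition $R(w_{1,f(1)}, \dots, w_{t,f(t)}) = 1$ places the image clause in $G'$. Thus every non-degenerate tuple yields a labeled embedding of $G[2]$. Dividing by $|\mathrm{Aut}(G[2])| \le (2t)!$ to convert labeled embeddings into unlabeled subformula copies gives at least $\delta n^{2t}$ copies of $G[2]$ with $\delta := \epsilon^{2^t}/(2 \cdot (2t)!)$.

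The argument is essentially routine; the only point requiring care is the bookkeeping between labeled embeddings (natural for the Cauchy--Schwarz count) and unlabeled subformula copies, together with the need to track literal signs throughout. Both are handled uniformly by the indicator $R$, which effectively recasts the blow-up problem for $k$-SAT formulae as a blow-up problem for a ``signed hypergraph'' and reduces to the same core inequality as in \cref{lem:blowupkst}.
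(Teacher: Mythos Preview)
Your proof is correct and follows essentially the same approach as the paper. The paper does not give a separate proof of \cref{lem:formula-blowup}; it merely states the lemma after remarking that the same Cauchy--Schwarz argument behind \cref{lem:blowupkst} applies to formulae. Your write-up makes that deduction explicit: encoding signed clauses via the indicator $R$, applying the iterated Cauchy--Schwarz/box-norm inequality to pass from homomorphic copies of $G$ to homomorphic copies of $G[2]$, discarding the $O_G(n^{2t-1})$ degenerate tuples, and dividing by $\abs{\mathrm{Aut}(G[2])}$ to obtain unlabeled copies. This is precisely the route the paper has in mind.
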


\subsection{Kruskal--Katona theorem}

We need the following special case of the Kruskal--Katona theorem~\cite{Kru63,Kat68}.
Here a \emph{simplex} in a $k$-graph is a clique on $k+1$ vertices.

\begin{theorem} \label{thm:kruskalkatona}
A $k$-uniform hypergraph on $n$-vertices with at most $\beta n^k/k!$ edges contains at most $\beta^{\frac{k+1}{k}} n^{k+1}/(k+1)!$ simplices.
\end{theorem}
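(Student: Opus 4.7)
This is a direct consequence of the classical Kruskal--Katona theorem, so the task is essentially to rephrase its standard ``shadow'' form into the polynomial shape stated here.

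First, I would invoke the Lov\'asz-style restatement of Kruskal--Katona: for any family $\mathcal G$ of $(k+1)$-subsets of $[n]$, if the lower shadow $\partial \mathcal G$ (the collection of $k$-subsets contained in some member of $\mathcal G$) has $\lvert\partial \mathcal G\rvert \le \binom{x}{k}$ for some real $x \ge k$, then $\lvert\mathcal G\rvert \le \binom{x}{k+1}$. I apply this with $\mathcal G$ being the collection of $(k+1)$-cliques (simplices) of the $k$-graph $H$: since every $k$-subset of a simplex is an edge, $\partial \mathcal G \subseteq E(H)$, so $\lvert\partial \mathcal G\rvert \le \beta n^k/k!$.

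Second, I would choose the real $x \ge k$ with $\binom{x}{k} = \beta n^k/k!$ (disposing of the trivial case $\beta n^k/k! < 1$ separately). To convert $\binom{x}{k+1}$ into the stated polynomial bound, I use the elementary inequality $(x-k+1)^k/k! \le \binom{x}{k}$, which comes from replacing each factor in $x(x-1)\cdots(x-k+1)$ by the smallest one; this gives $x - k \le x - k + 1 \le \beta^{1/k} n$. Hence
\[
\binom{x}{k+1} = \binom{x}{k} \cdot \frac{x-k}{k+1} \le \frac{\beta n^k}{k!} \cdot \frac{\beta^{1/k} n}{k+1} = \frac{\beta^{(k+1)/k} n^{k+1}}{(k+1)!},
\]
which is exactly the claimed bound.

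There is no real obstacle: the substance is all in Kruskal--Katona, which I quote as a black box, and the rest is routine manipulation of binomial coefficients. The only point requiring a bit of care is bridging the ``continuous'' form $\binom{x}{k+1}$ and the polynomial-in-$n$ right-hand side, which is handled cleanly by the $(x-k+1)^k \le x(x-1)\cdots(x-k+1)$ estimate.
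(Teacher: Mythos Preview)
Your proof is correct, but the paper takes a different route. You invoke the Lov\'asz form of Kruskal--Katona as a black box and then massage $\binom{x}{k+1}$ into the polynomial bound via the elementary estimate $(x-k+1)^k \le x(x-1)\cdots(x-k+1)$. The paper instead gives a short entropy argument: it takes $(X_1,\dots,X_{k+1})$ to be the vertices of a uniformly random ordered simplex, applies Shearer's inequality to the cover of $[k+1]$ by its $k$-subsets to get $kH(X_1,\dots,X_{k+1}) \le (k+1)H(X_1,\dots,X_k)$, and then reads off $((k+1)!\cdot\#\text{simplices})^k \le (k!\cdot\#\text{edges})^{k+1}$ directly. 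Your approach is the natural one given the theorem's name and is perhaps more familiar to a combinatorics audience; the paper's entropy proof is slicker in that it bypasses the real-$x$ binomial calculus entirely and lands on the exact polynomial inequality in one line, at the cost of importing Shearer. Both are standard and of comparable length.
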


Here is a quick proof of this statement using Shearer's entropy inequality~\cite{CGFS86} (see \cite{Kah01}).

\begin{proof}
Let $(X_1, \dots, X_{k+1})$ be the vertices of a uniformly chosen simplex in the $k$-graph, with the $k+1$ vertices permutated uniformly at random.
By Shearer's inequality, letting $\vec X_{-i}$ denote $(X_1, \dots, X_{k+1})$ with the $i$-th coordinate removed, we have
\begin{align*}
k \log((k+1)!\text{\#simplices}) 
&= k H(X_1, \dots, X_{k+1}) 
\\
&\le H(\vec X_{-1}) + \cdots +  H(\vec X_{-(k+1)}) 
\\
&= (k+1) H(X_1, \dots, X_{k})
\le (k+1) \log (k!\text{\#edges}). 
\end{align*}
Since $k!\text{\#edges} \le \beta n^k$, we have $(k+1)!\text{\#simplices} \le \beta^{\frac{k+1}{k}}n^{k+1}$.
\end{proof}

\subsection{Hypergraph containers}\label{ss:containerlemma}

For any formula $B$, we let $v(B)$ denote the number of variables that appear in some clause in $B$. For a finite set $\cB$ of $k$-SAT formulae, we let
\[
    m(\cB)=\max_{B\in\cB}|B|.
\]

We use the hypergraph container theorem, proved independently by 
Balogh, Morris, and Samotij \cite{BMS15} 
and 
Saxton and Thomason \cite{ST15}
to show the following.

\begin{theorem}\label{thm:gencontub}
Let $\cB$ be a finite set of simple $k$-SAT formulae.
For every $\delta > 0$, there exists $C = C(\cB, \delta)$ such that for every $n$, there exists a collection $\cG=\cG(\cB,\delta)$ of formulae on variable set $X = \{x_1, \dots, x_n\}$ such that:
\begin{enumerate}
    \item[(a)] Every $\cB$-free formula with variables in $X$ is a subformula of some $G \in \cG$, and
    \item[(b)] For every $G \in \cG$ and $B \in \cB$, $G$ has at most $\delta n^{v(B)}$ copies of $B$, and
    \item[(c)] $\abs{\cG} \le n^{Cn^{k-1/(m(\cB)-1)}}$.
\end{enumerate}
\end{theorem}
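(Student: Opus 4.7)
The plan is to encode $\cB$-freeness as an independence condition in an auxiliary bounded-edge-size hypergraph and then invoke the hypergraph container theorem.

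\emph{Setting up the auxiliary hypergraph.} First I would build a hypergraph $\cH = \cH(n,\cB)$ whose vertex set $V(\cH)$ consists of all possible $k$-clauses on variables $\{x_1,\dots,x_n\}$, so $N := |V(\cH)| = 2^k\binom{n}{k} = \Theta(n^k)$. For each $B \in \cB$ and each (labeled) copy of $B$ inside $V(\cH)$ obtained by choosing an ordered tuple of $v(B)$ distinct variables from $X$, I add an edge of $\cH$ consisting of the $|B|$ clauses of that copy. The maximum edge size is $m := m(\cB)$ (possibly non-uniform). By construction, a formula $F$ on $X$ is $\cB$-free if and only if $F$, viewed as a subset of $V(\cH)$, is an independent set of $\cH$.

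\emph{Degree and co-degree bounds.} Next I would estimate the degrees of $\cH$, which is where simplicity of the formulae in $\cB$ enters. Writing $V^* := \max_{B \in \cB} v(B)$ and fixing $B \in \cB$, a single clause $c$ uses $k$ variables, and completing $c$ to a copy of $B$ requires choosing $v(B)-k$ more variables, so $c$ lies in $O(n^{v(B)-k})$ copies of $B$. Thus $\Delta_1(\cH) = O(n^{V^*-k})$. For $j \ge 2$, any $j$ distinct clauses lying inside a common copy of $B$ must span at least $k+1$ variables, because $B$ is simple and so no two clauses of $B$ use the same set of $k$ variables; hence $\Delta_j(\cH) = O(n^{V^*-k-1})$ for all $2 \le j \le m$.

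\emph{Applying the container theorem.} I would then apply the hypergraph container theorem (in the non-uniform form of Saxton--Thomason, or alternatively iterate the uniform container theorem over each $B \in \cB$ separately, which is harmless since $|\cB| = O(1)$). Setting $\tau := C' n^{-1/(m-1)}$ with $C'$ large, the key co-degree inequality $\Delta_j \le \tau^{j-1}\Delta_1/\operatorname{polylog}(N)$ holds for every $2 \le j \le m$: indeed, $\Delta_j/\Delta_1 = O(n^{-1})$, which is at most $\tau^{j-1}$ precisely when $\tau \ge n^{-1/(j-1)}$, with the worst case $j = m$ dictating our choice of $\tau$. The container theorem then outputs a family $\cG$ such that (a) every independent set (equivalently, every $\cB$-free formula on $X$) is contained in some $G \in \cG$; (b) for each $B \in \cB$, every $G \in \cG$ contains at most $\delta n^{v(B)}$ copies of $B$, since by choosing $n$ large (depending on $\delta$) we can make the sparseness guarantee $O(\tau^{|B|-1} \Delta_1 N) = o(n^{v(B)})$ beat $\delta n^{v(B)}$ for each $B$ simultaneously; and (c) $|\cG| \le \binom{N}{\le \tau N}$, which yields $|\cG| \le N^{O(\tau N)} = n^{O(n^{k - 1/(m-1)})}$ after absorbing constants into $C$.

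\emph{Main obstacle.} The genuinely delicate point is (b): the container theorem naturally certifies sparseness only with respect to the total edge count of $\cH$, whereas (b) asks for simultaneous sparseness with respect to copies of \emph{each} $B \in \cB$, possibly of different sizes. I would resolve this either by iterating the uniform container theorem once per $B \in \cB$ (refining containers $|\cB|$ times; since $|\cB|$ is constant the final $\tau$ remains of the same order), or by invoking a non-uniform version of the container theorem and verifying the weighted co-degree function bound directly. Everything else -- the degree estimates and the $\tau = n^{-1/(m-1)}$ choice -- is a routine bookkeeping exercise once the simplicity of $\cB$ is exploited to upgrade $\Delta_j$ from $n^{V^*-k}$ to $n^{V^*-k-1}$.
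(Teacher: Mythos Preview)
Your approach is correct and matches the paper's at a high level: encode $\cB$-freeness as independence in an auxiliary hypergraph on the set of all clauses, exploit simplicity to get the crucial $\Delta_j = O(n^{V^*-k-1})$ bound for $j\ge 2$, set the fingerprint parameter to $n^{-1/(m-1)}$, and read off (a)--(c).

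The one substantive difference is how the non-uniformity obstacle you flag in your ``Main obstacle'' paragraph gets resolved. You propose iterating the uniform container theorem once per $B\in\cB$, or invoking a non-uniform container theorem; both are workable but require some bookkeeping. The paper instead \emph{uniformizes upfront}: it replaces $\cB$ by the family $\cB'$ of all simple formulae with exactly $m=m(\cB)$ clauses on exactly $v=km$ variables (allowing isolated variables) that contain a copy of some $B\in\cB$. This makes the auxiliary hypergraph $m$-uniform from the start, so a single application of the uniform container theorem suffices. The per-$B$ sparseness in (b) then follows because having $\ge\delta n^{v(B)}$ copies of some $B$ forces $\ge\eta n^{v}$ copies of some $B'\in\cB'$ (by padding the extra clauses and variables), and the latter is exactly what the container output controls. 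This padding trick is slightly slicker than iteration and sidesteps the need to track multiple edge sizes simultaneously, but the two routes are equivalent in strength and yield the same bounds.
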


In the remainder of this section, we deduce the above claim from the more general hypergraph container theorem. To state the general result, we introduce some notation for an  $\ell$-uniform hypergraph $\cH$. Let $\mathcal I(\cH)$ denote the collection of independent vertex sets in $\cH$. For every $j\in\{1,\dots,\ell\}$, let $\Delta_j(\cH)$ denote the maximum $j$-codegree of vertices in $\cH$, i.e., the maximum number of edges containing a given $j$-vertex subset in $\cH$. Let $\cP(V)$ denote the family of all subsets of $V$, and let ${V\choose \leq s}$ denote the family of those subsets of $V$ having size at most $s$.

For an increasing family (i.e., closed under taking supersets)  of vertex sets $\cF\subset \cP(V(\cH))$, we say that $\cH$ is $(\cF,\epsilon)$-dense if $e(\cH[A])\geq\epsilon e(\cH)$ for every $A\in\cF$. A simple example of $\cF\subset \cP(V(\cH))$ for which $\cH$ is $(\cF,\epsilon)$-dense is the family
$$\cF_\epsilon=\{A\subset V(\cH):e(\cH[A])\geq\epsilon e(\cH)\}.$$

We apply the following version of the
hypergraph container theorem as stated in \cite[Theorem 2.2]{BMS15},
A similar result was proved in \cite{ST15}.

\begin{theorem}[Hypergraph container theorem]\label{t:hc}
For every $\ell\in\NN$ and $c,\epsilon>0$, there exists $C_0=C_0(\ell,c,\epsilon)>0$ such that the following holds. Let $\cH$ be an $\ell$-uniform hypergraph and $\cF\subset \cP(V(\cH))$ be an increasing family of sets such that $|A|\geq \epsilon v(\cH)$ for all $A\in\cF$. Suppose that $\cH$ is $(\cF,\epsilon)$-dense and there exists $p\in(0,1)$ such that, for every $j\in\{1,\dots,\ell\}$, we have
$$\Delta_j(\cH)\leq c\cdot p^{j-1}\cdot\frac{e(\cH)}{v(\cH)}.$$ 
Then there exists a family $\mathcal S\subset {V(\cH)\choose \leq C_0\cdot p\cdot v(\cH)}$ and functions $f:\mathcal S\rightarrow\ol{\cF}$, $g:\cI(\cH)\rightarrow\cS$ such that for every $I\in \mathcal I(\mcH)$, we have:
\begin{enumerate}
    \item[(i)] $g(I)\subset I$;
    \item[(ii)] $I\setminus g(I)\subset f(g(I))$.
\end{enumerate}
\end{theorem}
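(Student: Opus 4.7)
The plan is to prove \cref{t:hc} via the deterministic ``scythe'' algorithm of Balogh--Morris--Samotij~\cite{BMS15} (a closely related averaging argument of Saxton--Thomason~\cite{ST15} would also work). The goal is to design an algorithm that, given any $I \in \cI(\cH)$, produces a \emph{fingerprint} $g(I) \subseteq I$ of size at most $C_0\, p\, v(\cH)$ and a \emph{container} $f(g(I)) \in \ol{\cF}$ satisfying $I \setminus g(I) \subseteq f(g(I))$. Because the container is a deterministic function of the fingerprint alone, this automatically gives $|\cS| \le \binom{v(\cH)}{\le C_0 p v(\cH)}$, as required, and conclusions (i) and (ii) of the theorem are built into the construction.

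First I would describe one step of the algorithm. Maintain an ``available'' set $A \subseteq V(\cH)$, initialized to $V(\cH)$, and a fingerprint $S \subseteq I$, initialized to $\emptyset$. At each step, use the current $\cH[A]$ to impose a deterministic total order on $A$ (for example, by iterated maximum degree, falling back to any canonical tiebreak). Process vertices of $A$ in this order: for each $v$, query whether $v \in I$. If $v \in I$, add $v$ to $S$ and then prune $A$ by deleting every $u$ whose joint $j$-codegree with some $(j{-}1)$-subset of $S\cup\{v\}$ in $\cH$ exceeds a threshold calibrated to $p$ (for each $j = 1, \dots, \ell$); if $v \notin I$, simply delete $v$ from $A$. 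Terminate when $e(\cH[A]) < \epsilon\, e(\cH)$, at which point $A \in \ol{\cF}$ by $(\cF,\epsilon)$-density, and output $g(I) = S$ and $f(S) = A \cup S$.

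The quantitative heart of the proof is a potential argument on $e(\cH[A])$. The codegree assumption $\Delta_j(\cH) \le c\, p^{j-1}\, e(\cH)/v(\cH)$ is set up precisely so that, for each fingerprint vertex added and subsequent pruning, a weighted edge count of $\cH[A]$ drops by a factor depending on $1/p$ when summed across scales $j = 1, \dots, \ell$. Consequently, after at most $O(p\, v(\cH))$ fingerprint additions the weighted edge count falls below $\epsilon e(\cH)$, forcing $A \in \ol{\cF}$ and terminating the algorithm. The map $f$ is then well defined because the entire procedure is deterministic given the oracle answers on $S$: re-running the algorithm using $S$ as the oracle reconstructs the same $A$ from $V(\cH)$ alone.

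The hard part will be engineering the selection rule so that each fingerprint addition prunes $A$ efficiently \emph{across all codegree scales simultaneously}. A naive rule using only vertex degrees exploits only the $j=1$ assumption and fails for sparse $\cH$; the BMS approach resolves this via a layered max-degree selection together with an averaging over orderings, which is exactly what activates the $p^{j-1}$ savings at every scale $j$. Once this progress estimate is in place, the remaining verifications are essentially automatic: $g(I) \subseteq I$ because only $I$-vertices are added to $S$, and $I \setminus g(I) \subseteq f(g(I))$ because the algorithm only deletes from $A$ either vertices certified to lie outside $I$ or vertices whose exclusion is forced by $I \supseteq S$ together with the codegree-based pruning rule (which, on any putative $I$-vertex, would create too many edges and contradict independence).
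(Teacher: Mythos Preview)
The paper does not prove \cref{t:hc}; it is quoted verbatim as \cite[Theorem 2.2]{BMS15} and used as a black box. So there is nothing in the paper to compare your proposal against: the authors treat the container theorem as an input, and the work in \cref{ss:containerlemma} is entirely devoted to deducing \cref{thm:gencontub} from it by building the auxiliary $m$-uniform hypergraph $\cH_k$ and checking the codegree hypotheses.

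Your sketch is a reasonable high-level caricature of the Balogh--Morris--Samotij scythe algorithm, but note one slip: you set $f(S) = A \cup S$, whereas the theorem requires $f(S) \in \ol{\cF}$. Since $\cF$ is increasing, $\ol{\cF}$ is downward closed, so enlarging $A$ to $A \cup S$ can push you out of $\ol{\cF}$. The correct output is $f(S) = A$ (with $A \in \ol{\cF}$ by the termination criterion and $(\cF,\epsilon)$-density), and the conclusion $I \setminus g(I) \subseteq f(g(I))$ then needs $I \setminus S \subseteq A$, which is what the algorithm is actually designed to guarantee. Also, your informal description of the pruning rule and the associated potential argument elides the genuinely delicate part of \cite{BMS15}: the multi-scale bookkeeping that makes each fingerprint addition cost-effective at every codegree level $j$ simultaneously is the content of the proof, not a routine verification. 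If you intend to reprove the theorem rather than cite it, that is where the work lies.
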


To prove~\cref{thm:gencontub} using~\cref{t:hc}, we
proceed in two steps. First, we enlarge $\cB$ to a set $\cB'$ of formulae, each with $m(\cB)$ clauses. This allows us to construct an $m(\cB)$-uniform hypergraph $\cH_k$ on the set of all clauses on $X=\{x_1,\dots,x_n\}$, and apply the container argument. We then verify the codegree bounds of~\cref{t:hc}, which relies on all the elements of $\cB$ being simple.

We use~\cref{t:hc} to show that for every $\cB$ and $\delta>0$, we can take family $\cG$ of at most $n^{Cn^{k-1/(m(\cB)-1)}}$ formulae on $X$, such that: (1) every $n$-vertex $\cB$-free formula on $X$ is a subformula of some $G\in\cG$; (2) for any $B\in\cB$, every $G\in\cG$ has at most $\delta n^{v(B)}$ copies of $B$.

\begin{proof}[Proof of~\cref{thm:gencontub} using~\cref{t:hc}]
Fix a finite set $\cB$ of simple $k$-SAT formulae. We set $m= m(\cB)$, $v= km$ and $\cB'$ to be the set of all isomorphism classes of formulae $B'$ having the following properties:
\begin{itemize}
\item $B'$ has exactly $m$ clauses;
\item $B'$ has a variable set of size $v$; here we allow isolated variables (i.e., variables not used by any clause in $B'$);
\item $B'$ is simple;
\item there exists some $B \in \cB$ such that $B'$ has a copy of $B$.
\end{itemize}
Notice that $\cB'$ is uniquely determined by $\cB$.

Let $\mcH_k = (V, E(\mcH_k))$ be an $m$-uniform hypergraph, where $V$ is the set of all $k$-SAT clauses on $X=\{x_1,\dots,x_n\}$ (so $|V| = 2^k{n\choose k}$), and
$$E(\mcH_k)=\{G\subset V\mid G\text{ is isomorphic to some }B'\in \cB'\}.$$

Every $k\SAT$ formula $G$ on $X$ can be viewed as a vertex subset of $\mcH_k$. For any $B \in \cB$, if $G$ has more than $\delta n^{v(B)}$ copies of $B$, then there exists some $\eta_0=\eta_0(B,\delta)>0$ and $B'\in \cB'$ such that $G$ has more than $\eta_0 n^{v}$ copies of $B'$, with the property that $B'$ has a copy of $B$. Since $\cB$ is finite, we can take
$$\eta=\min_{B\in\cB}\eta_0(B,\delta)>0.$$
This implies that there exists $\eta=\eta(\cB,\delta)>0$ such that if $G$ has more than $\delta n^{v(B)}$ copies of $B$ for any $B\in\cB$, then $|E(\mcH_k[G])|>\eta n^{v}$.

Since every formula $B'\in\cB'$ has variable set of size $v$, for a formula $G\subset V$, the number of formulae $G'\subset V$ such that (1) $G\subset G'$ and (2) there exists some $B'\in\cB'$ with $G'$ isomorphic to $B'$ is $O_\cB(n^{v - v(G)})$. 
In particular, taking $G$ to be the empty formula, we have $e(\cH_k)= O_\cB(n^v)$.

We therefore have the following upper bounds on the maximum $j$-codegrees of vertices in $\cH_k$. When $j=1$, since every single-clause formula $G=\{C\}$ has $v(G)=k$, we have $\Delta_1(\cH_k)=O_\cB(n^{v-k})$.
For all $j\in\{2,\dots, m\}$, since $\cB'$ is a set of simple formulae, and any simple formula $G=\{C_1,\dots,C_j\}$ with $j$ clauses has $v(G)\geq k+1$, we have $\Delta_j(\cH_k)= O_\cB(n^{v-k-1})$.

Let $p=n^{-1/(m-1)}$. Using a trivial lower bound $e(\cH_k)\geq {n\choose v}$ (say) and the fact that $|V|= 2^k{n\choose k}$, we can choose some $c=c(\cB)>0$ such that

$$\Delta_1(\mcH_k)\leq c\cdot\frac{{n\choose v}}{2^k{n\choose k}}\leq c\cdot \frac{e(\cH_k)}{|V|}$$
and
$$\Delta_j(\cH_k)\leq c\cdot \frac{1}{n}\cdot\frac{{n\choose v}}{2^k{n\choose k}}\leq  c\cdot p^{j-1}\cdot\frac{e(\cH_k)}{|V|}$$
for all $j\in\{2,\dots,m\}$.

For any $\delta>0$, we set $\eta=\eta(\cB,\delta)>0$ as above, $\epsilon=\epsilon(\cB,\delta)>0$ sufficiently small, and
$$\cF=\{A\subset V(\cH):|A|\geq \epsilon \cdot v(\cH_k)\text{ and }e(\cH_k[A])\geq \epsilon \cdot e(\cH_k)\},$$
so that $\cF$ is an increasing family of sets such that $|A|\geq \epsilon v(\cH_k)$ for all $A\in\cF$, and $\cH_k$ is $(\cF,\epsilon)$-dense.

Applying~\cref{t:hc}, we know that there exists $C_0=C_0(m,c,\epsilon)=C_0(\cB,\delta)$, a family
$$\mathcal S\subset {V\choose \leq C_0\cdot n^{-1/(m-1)}\cdot |V|}$$
and functions $f:\mathcal S\rightarrow\ol{\cF}$, $g:\cI(\cH)\rightarrow\cS$ such that for every $I\in \mathcal I(\mcH)$, we have:
\begin{enumerate}
    \item[(i)] $g(I)\subset I$;
    \item[(ii)] $I\setminus g(I)\subset f(g(I))$.
\end{enumerate}

Let $\cG=\cG(\cB,\delta)=\{S \cup f(S):S\in\cS\}$. We show that $\cG$ meets all the conditions decribed in~\cref{thm:gencontub}.

(a) \emph{Every $\cB$-free formula with $n$ variables is a subformula of some $G \in \cG$.} 

Since every $\cB$-free formula is clearly $\cB'$-free, it is an independent set $I\in \cI(\cH_k)$. Since $I\setminus g(I)\subset f(g(I))$ and $g(I)\in\cS$, we have $I\subset (g(I)\cup f(g(I))) \in\cG$.

(b) \emph{For every $G \in \cG$ and $B \in \cB$, $G$ has at most $\delta n^{v(B)}$ copies of $B$.} 

Every $G \in \cG$ is of the form $G=S \cup f(S)$ for some $S\in\cS$. Since $|S|\leq  C_0\cdot n^{-1/(m-1)}\cdot |V|$, the number of edges incident to $S\setminus f(S)$ is at most
$$
    C_0\cdot n^{-1/(m-1)}\cdot |V|\cdot \Delta_1(\cH_k)
    \leq C_0\cdot n^{-1/(m-1)}\cdot 2^k{n\choose k}\cdot  \Delta_1(\cH_k)
    =O_{\cB,\delta}\left(n^{v-1/(m-1)}\right).
$$
Meanwhile, since $f(S)\in\ol{\cF}$, we either have $e(\cH_k[f(S)])<\epsilon e(\cH_k)\leq \epsilon|\cB'|\cdot {n\choose v}\cdot (v!)$ or have $|f(S)|<\epsilon v(\cH)$, in which case $e(\cH_k[f(S)])<|\cB'|\cdot {\epsilon n\choose v}\cdot (v!)$.
Hence for every $G=S\cup f(S) \in \cG$, we have (recall that we have set $\epsilon=\epsilon(\cB,\delta)>0$ sufficiently small)
$$e(\cH_k[G])\leq \epsilon|\cB'|\cdot {n\choose v}\cdot (v!)+O_{\cB,\delta}\left(n^{v-1/(m-1)}\right)\leq \eta n^{v}.$$
Recall that if $G$ has more than $\delta n^{v(B)}$ copies of $B$ for any $B\in\cB$, then we will have $e(\mcH_k[G])>\eta n^{v}$. Hence for every $B \in \cB$, $G$ has at most $\delta n^{v(B)}$ copies of $B$.

(c) \emph{$\abs{\cG} \le n^{Cn^{k-1/(m-1)}}$ for some $C=C(\cB,\delta)$.} 

Since $|\cG|\leq |\cS|\leq \left|\binom{V}{\leq C_0\cdot n^{-1/(m-1)}\cdot |V|}\right|$ with $|V|=2^k{n\choose k}$, we have
\begin{align*}
    |\cG|&\leq \sum_{s=1}^{C_0\cdot n^{-1/(m-1)}\cdot |V|}{|V|\choose s}\leq C_0\cdot n^{-1/(m-1)}\cdot |V|\cdot {|V|\choose C_0\cdot n^{-1/(m-1)}\cdot |V|}\\
    &\leq n^{Cn^{k-1/(m-1)}}=n^{Cn^{k-1/(m-1)}}
\end{align*}
for some $C=C(C_0)=C(\cB,\delta)$.
\end{proof}

\section{Weak upper bound on the number of $k$-SAT functions using containers}\label{s:firstcount}

In this section, we prove the upper bound in \cref{thm:weak-count}.

\begin{theorem} \label{thm:formulae-upperbound}
For every fixed $k \ge 2$, the number of minimal $k$-SAT formulae on $n$ variables is at most $2^{(\pi(\cF_k, \log_2 3) + o(1)) \binom{n}{k}}$.
\end{theorem}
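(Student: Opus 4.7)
The plan is to combine the hypergraph container theorem (Theorem~\ref{thm:gencontub}) with the Tur\'an density bound for partially directed $k$-graphs. The first step is to reduce to \emph{simple} minimal formulae, using the claim (Proposition~\ref{prop:nearly-simple}, referenced in the outline) that every minimal $k$-SAT formula becomes simple after deleting $o(n^k)$ clauses; this loses only a factor $2^{o(n^k)}$ in the count.

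Next, I fix $\epsilon > 0$ and let $\cB = \cB(k,\epsilon)$ be the finite family of all non-minimal simple $k$-SAT formulae on at most $2k$ variables. Every simple minimal formula is $\cB$-free, so Theorem~\ref{thm:gencontub}, applied with a sufficiently small $\delta = \delta(\epsilon)$, produces a collection $\cG$ of containers with $|\cG| \le 2^{o(n^k)}$ such that every simple minimal formula is a subformula of some $G \in \cG$, and every $G$ contains at most $\delta n^{v(B)}$ copies of each $B \in \cB$.

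To each $G \in \cG$ I would associate a $k$-PDG $\vec H_G$ on the $n$ variables as follows: a $k$-subset $S$ on which $G$ uses exactly one clause becomes an undirected edge; a $k$-subset on which $G$ uses exactly two clauses that differ only in the sign of a single variable $v$ becomes a directed edge pointed at $v$; all remaining $k$-subsets carry no edge. The container property bounds the number of $k$-subsets falling into the excluded cases (three or more clauses, or two clauses not differing in a single sign) by $o(n^k)$, since each such configuration contains a fixed small member of $\cB$. A simple subformula of $G$ is then determined by choosing, for each undirected edge, whether to keep its clause ($2$ options), and for each directed edge, whether to keep neither or one of the two clauses ($3$ options; keeping both would violate simplicity). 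Hence the number of simple subformulae of $G$ is at most $2^{\alpha(\vec H_G)\binom{n}{k}} \cdot 3^{\beta(\vec H_G)\binom{n}{k}} \cdot 2^{o(n^k)}$.

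The main remaining step is to verify that $\vec H_G$ is essentially $\cF_k$-free, so that $\alpha(\vec H_G) + (\log_2 3)\beta(\vec H_G) \le \pi(\cF_k, \log_2 3) + o(1)$. This is where Definition~\ref{d:forbidden} of $\cF_k$ plays its role: each $\vec F \in \cF_k$ is built by the initialization--extension--closure procedure precisely so that a copy of $\vec F$ in $\vec H_G$ corresponds to a fixed small non-minimal formula $B(\vec F) \in \cB$ appearing as a subformula of $G$, with the closure edge encoding the clause that is forced whenever the earlier clauses are satisfied. Hence $\Omega(n^{v(\vec F)})$ copies of $\vec F$ in $\vec H_G$ would yield $\Omega(n^{v(B(\vec F))})$ copies of $B(\vec F)$ in $G$, contradicting the container property; after discarding $o(n^k)$ offending edges the PDG is genuinely $\cF_k$-free, and Definition~\ref{d:densitypdg} applies. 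Summing $2^{(\alpha(\vec H_G) + (\log_2 3)\beta(\vec H_G) + o(1))\binom{n}{k}}$ over the $2^{o(n^k)}$ containers then yields the bound $2^{(\pi(\cF_k, \log_2 3) + o(1))\binom{n}{k}}$. The main obstacle is the final verification of the $\vec F \mapsto B(\vec F)$ correspondence: one must check clause-by-clause that every pattern produced by Definition~\ref{d:forbidden}, across all choices of initialization edge and extension/closure types, translates faithfully into a short non-minimal formula, which is conceptually clean but combinatorially demanding.
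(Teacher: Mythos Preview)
Your overall architecture matches the paper's: reduce to simple minimal formulae via Proposition~\ref{prop:nearly-simple}, apply the container theorem to the family $\cB$ of small simple non-minimal formulae, count simple subformulae of each container via the associated $k$-PDG, and use the Tur\'an density to bound the exponent. The gap is in the step where you pass from copies of $\vec F\in\cF_k$ in $\vec H_G$ to copies of a simple non-minimal $B(\vec F)$ in $G$.

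That correspondence does not hold directly. Take $\vec F=\{12\wc 3,\,13\wc 4\}\in\cF_3$. A semisimple formula with this type is $\{abc,\,ab\bar c,\,acd,\,ac\bar d\}$; every \emph{simple} subformula of it (pick one clause from each pair) is minimal, so no $B(\vec F)\in\cB$ arises. The same issue afflicts your treatment of the ``excluded'' $k$-subsets: a pair of clauses on the same variables differing in two or more signs, e.g.\ $\{abc,\,\bar a\bar bc\}$, is a simple \emph{minimal} formula, so it is not in $\cB$ either. In both cases the paper's fix is to pass to the $2$-blowup: Proposition~\ref{prop:fktypenonmin} shows that if $\type(G')\in\cF_k$ then $G'[2]$ contains a simple non-minimal subformula, and Remark~\ref{rem:parallel} handles the two-clauses-differing-in-$\ge 2$-signs case analogously. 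One then uses Lemma~\ref{lem:formula-blowup} to convert many copies of $G'$ into many copies of $G'[2]$, and this is packaged as the supersaturation Lemmas~\ref{lem:supersat-1} and~\ref{lem:supersat-2}. Because the blowup doubles the number of variables, $\cB$ must consist of simple non-minimal formulae on up to $4k$ (not $2k$) variables; with that change your argument goes through exactly as in the paper.
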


To prove this theorem, we first observe that every minimal formula is nearly simple. 
To  count simple minimal formulae, we apply the container method.
The container method requires us to characterize formulae with many instances of small non-minimal simple formulae, and these turn out to be related to the appearance of $\cF_k$ in an associated $k$-PDG.

\subsection{Minimal formulae are nearly simple}\label{ss:minimalsimple}

\begin{lemma}\label{lem:pairclausesnonmin}
The 2-blowup of a pair of clauses on the same set of $k$ variables is non-minimal.
\end{lemma}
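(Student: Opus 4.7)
The plan is to exhibit one clause in the 2-blowup that can be deleted without changing the computed Boolean function. Write the two clauses as $C_1 = z_1 \cdots z_k$ and $C_2 = z_1' \cdots z_k'$ with $z_i, z_i' \in \{x_i, \ol{x_i}\}$, and set $T = \{i : z_i \ne z_i'\}$ (nonempty since $C_1 \ne C_2$) and $S = [k] \setminus T$. In the 2-blowup, each $x_i$ becomes two copies $x_i^{(1)}, x_i^{(2)}$, and I will write $z_i^{(a)}$ for the corresponding literal on $x_i^{(a)}$. I target the clause $D := z_1^{(1)} z_2^{(1)} \cdots z_k^{(1)}$, one specific copy coming from $C_1$'s blowup, and show every assignment satisfying $D$ also satisfies some other clause of the 2-blowup.

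Fix $\alpha$ with $z_i^{(1)}(\alpha) = 1$ for every $i$. I split into two cases. If some $i$ satisfies $z_i^{(2)}(\alpha) = 1$, then the $C_1$-blowup clause obtained from $D$ by swapping position $i$ from copy $1$ to copy $2$ is still satisfied: the $i$-th literal is $z_i^{(2)}(\alpha) = 1$ and the others are unchanged from $D$. Otherwise $z_i^{(2)}(\alpha) = 0$ for every $i$, so $\ol{z_i^{(2)}}(\alpha) = 1$ for every $i$. I then check that the $C_2$-blowup clause $D'$ that uses copy $1$ at positions in $S$ and copy $2$ at positions in $T$ is satisfied by $\alpha$: for $i \in S$, the literal is $(z_i')^{(1)} = z_i^{(1)}$, which equals $1$; for $i \in T$, the literal is $(z_i')^{(2)} = \ol{z_i^{(2)}}$, which also equals $1$. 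Since $T \ne \emptyset$, $D'$ uses $x_i^{(2)}$ at some position where $D$ uses $x_i^{(1)}$, so $D' \ne D$.

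The main subtlety is the mixed case $S, T \ne \emptyset$: a naive pairing of $D$ with the fully-opposite $C_2$-blowup clause (copy $2$ at every position) fails at $S$-positions, where it would require $z_i^{(2)}(\alpha) = 1$ contrary to the Case 2 assumption. The asymmetric choice (copy $1$ on $S$, copy $2$ on $T$) is precisely dictated by this obstruction, and reflects the fact that $C_1$ and $C_2$ agree on $S$ (so we should keep the copy whose literal we already know is true) and disagree on $T$ (so we should flip to the copy whose $C_1$-literal fails and whose $C_2$-literal therefore succeeds). Once this choice is made, the verification is direct, and $D$ is seen to be redundant.
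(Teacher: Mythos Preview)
Your proof is correct and follows essentially the same approach as the paper's: both target the all-copy-$1$ instance of $C_1$ and show that any satisfying assignment either satisfies a single-swap $C_1$-blowup clause or else (when all copy-$2$ literals of $C_1$ are false) satisfies the $C_2$-blowup clause using copy $1$ on $S$ and copy $2$ on $T$. The only cosmetic differences are that the paper first normalizes (WLOG $C_1$ monotone, $T=\{1,\dots,j\}$) and presents the argument as a forced chain of assignments rather than an explicit case split, and it only uses the $|T|$ swap clauses at positions in $T$ rather than all $k$ of them.
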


For example, the lemma tells us that the 2-blowup of $\{xyz,xy\ol z\}$ is non-minimal.

\begin{proof}
Up to relabeling and/or negating some of the variables, we can write the pair of clauses as $v_1 v_2 \cdots v_k$ and $\overline{v_1} \cdots \ol {v_j}v_{j+1}\cdots v_k$ for some $1\leq j\leq k$. 
Then its 2-blowup has clauses (among others)
$$v_1 v_2 \cdots v_k,v_1' v_2 v_3 \cdots v_k, v_1 v_2' v_3 \cdots v_k, \ldots, v_1 \cdots v_{j-1}v_j'v_{j+1}\cdots v_k, \overline{v_1'} \cdots \ol {v_j'}v_{j+1}\cdots v_k.$$ 
It is impossible to satisfy the first clause only, since it would involve setting $v_1 = \cdots = v_k = 1$ and $v'_1 = \cdots = v'_j = 0$, which would then satisfy the last clause.
So the 2-blowup is non-minimal.
\end{proof}

\begin{rem}\label{rem:parallel}
In the above proof, if the initial pair of clauses differ by at least two negations, i.e., $j\ge 2$, then the selected clauses of the 2-blowup form a simple non-minimal subformula. 
We will use this fact later in \cref{ss:supersaturation}.
\end{rem}

\begin{prop}\label{prop:nearly-simple}
Every minimal $k$-SAT formula on $n$ variables can be made simple by deleting $o(n^k)$ clauses.
\end{prop}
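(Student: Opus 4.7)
The plan is to argue by contradiction. Let $G$ be a minimal $k$-SAT formula on $n$ variables, and call a $k$-subset of variables \emph{bad} if $G$ has at least two clauses supported on it. Since each bad subset needs at most $2^k - 1$ clause deletions to be made simple, it suffices to prove the number of bad $k$-subsets is $o(n^k)$, uniformly over minimal $G$. So assume for contradiction that, for some fixed $\epsilon > 0$ and arbitrarily large $n$, some minimal formula $G$ on $n$ variables has at least $\epsilon n^k$ bad subsets.

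For each bad subset $S$, I would arbitrarily pick two distinct clauses of $G$ supported on $S$; this is a subformula of $G$ isomorphic to some simple 2-clause $k$-SAT formula on $k$ variables. The number of isomorphism types of such formulae depends only on $k$, so pigeonhole yields a single such $P$ that is realized by $\Omega(n^k) = \Omega(n^{v(P)})$ distinct subformulae, i.e., $G$ has $\Omega(n^{v(P)})$ copies of $P$.

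Now I would invoke the blowup lemma (\cref{lem:formula-blowup}) to conclude that, for all sufficiently large $n$, $G$ contains at least one copy of the 2-blowup $P[2]$ as a subformula. On the other hand, \cref{lem:pairclausesnonmin} tells us that $P[2]$ is non-minimal; that is, $P[2]$ contains a clause $C$ for which no assignment to $P[2]$'s variables satisfies $C$ and no other clause in $P[2]$.

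To reach a contradiction, I would use the elementary observation that any subformula of a minimal formula is itself minimal: if $\mathsf{w}$ is a witness in $G$ for $C \in H \subseteq G$, then its restriction to the variables of $H$ witnesses $C$ in $H$ as well, since satisfaction of any clause depends only on the $k$ variables it involves. Applying this to the copy of $P[2]$ inside $G$ gives a witness for $C$ in $P[2]$, contradicting the non-minimality delivered by \cref{lem:pairclausesnonmin}. There is no serious obstacle here; the only delicacy is bookkeeping (making sure that copies coming from distinct bad subsets really are distinct copies, and that the finitely many isomorphism types of $P$ let one choose a uniform threshold $n_0$ in the blowup lemma).
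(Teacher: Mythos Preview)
Your proposal is correct and follows essentially the same approach as the paper: use \cref{lem:pairclausesnonmin} together with \cref{lem:formula-blowup} (via pigeonhole over the finitely many isomorphism types of two-clause pairs) and the fact that subformulae of minimal formulae are minimal. One small terminological slip: the two-clause formula $P$ you pick is \emph{not} simple in the sense of \cref{def:simple}, since both clauses use the same $k$ variables---but your argument does not actually need $P$ to be simple, only that there are finitely many isomorphism types.
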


\begin{proof}
Let $G$ be a minimal $k$-SAT formula on $n$ variables.
By \cref{lem:pairclausesnonmin}, $G$ does not contain any 2-blowup of a pair of clauses on the same $k$ variables. 
It follows from \cref{lem:formula-blowup} that the number of $k$-subsets of variables that support at least two clauses in $G$ is $o(n^k)$. Removing all such clauses yields a simple subformula of $G$.
\end{proof}

\subsection{From $k$-SAT to $k$-PDGs}

As hinted by \cref{rem:parallel}, for the container argument, we focus our attention on the following special type of formulae.

\begin{defn}[Semisimple formula] \label{def:semisimple}
A $k$-SAT formula is \emph{semisimple} if for every $k$-subset $S$ of variables, one of the following is true:
\begin{enumerate}
    \item[(a)] There is no clause on $S$, or
    \item[(b)] There is exactly one clause on $S$, or 
    \item[(c)] There are exactly two clauses on $S$ and they differ by exactly one variable negation.
\end{enumerate}
\end{defn}

\begin{example}
The 3-SAT formula $\{ x_1\overline{x_2}x_3, x_1\ol{x}_2\ol{x}_3,x_1x_2\overline{x_4}\}$ is semisimple, whereas the formula $\{ x_1\overline{x_2}x_3, x_1 x_2 \ol{x_3}\}$ is not semisimple.
\end{example}

We define a forgetful map from semisimple formulae to $k$-PDGs.
\begin{defn} 
Define
\[
\type\colon \{\text{semisimple $k$-SAT formula on $n$ variables} \} \to \{\text{$k$-PDGs on $n$ vertices}\}
\]
as follows. 
Given a semisimple formula $G$, let  $\type(G)$ be the $k$-PDG with vertices being the variables of $G$, where we add an undirected edge to every $k$-subset of variables supporting exactly one clause in $G$, and add a directed edge to every $k$-subset of variables support exactly two clauses in $G$, and for the latter, we direct the edge towards the variable where the two clauses differ by negation.
\end{defn}

\begin{example}
The semisimple formula
\[
abc,ab\ol{c}, \ol{a}b \ol{d},bd\ol{e},\ol{b}d\ol{e}
\]
has type
\[
ab\wc{c}, abd, \wc{b}de.
\]
\end{example}

\subsection{Forbidden subgraphs and non-minimal formulae}

The following proposition explains why $\cF_k$ plays a central role.

\begin{proposition}\label{prop:fktypenonmin}
Let $G$ be a $k$-SAT semisimple formula whose type lies in $\cF_k$.
Then $G[2]$ has a simple non-minimal subformula.
\end{proposition}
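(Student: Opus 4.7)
Proof plan:

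The plan is to translate the procedural definition of $\vec F \in \cF_k$ (Definition~\ref{d:forbidden}) into an explicit simple non-minimal subformula of $G[2]$, generalizing the role of $\phi_{\vec T_4}$ highlighted in the preceding remark. Recall that $\vec F$ is built from an initialization edge $e_0$, a list of extensions $e_1,\dots,e_j$ each pointed at a new vertex $w_i$, and a closure edge $e_{j+1}$, and that $G$ correspondingly contains a clause $C_0$ (or a pair, if $e_0$ is directed) on $e_0$'s vertices, a pair $C_i, C_i'$ differing in $w_i$ on each $e_i$, and a clause $C_{j+1}$ (or pair) on $e_{j+1}$.

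I will first handle the prototypical case of undirected $e_0$ and closure of type~(a). By globally negating variables in $G$, we may assume the literals of $C_0$ are all positive. The target subformula in $G[2]$ consists of: $D_0$, the blowup of $C_0$ using all primary duplicates; for each extension $e_i$, two blowups $D_i^+$ taken from $C_i$ with the primary $w_i$-duplicate and $D_i^-$ taken from $C_i'$ with the secondary $w_i'$-duplicate (so they sit on distinct $k$-subsets); and $D_{j+1}$, a blowup of $C_{j+1}$. For each non-pointed variable $v$ appearing in $C_i,C_i'$, or $C_{j+1}$, I use the primary duplicate $v$ when the clause's literal sign on $v$ matches $C_0$'s sign (so the literal evaluates to $1$ under $D_0$'s forcing) and the secondary duplicate $v'$ otherwise (so the literal sits on a free variable that can be set to evaluate to $1$). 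The $w_i$-duplicate for $D_{j+1}$ is chosen to make its $w_i$-literal evaluate to $1$ at the ``witness attempt'' assignment $w_i=0,\,w_i'=1$: one uses $w_i'$ if $C_{j+1}$'s sign on $w_i$ is positive and $w_i$ if it is negative.

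Simplicity follows because the $w_i$-duplicate choice separates $D_i^+$ from $D_i^-$, the vertex sets of the $e_i$'s and of $e_{j+1}$ in $\vec F$ are distinct by the definition of a $k$-PDG, and the sign-driven $v$-duplicate choices can only further separate variable sets. For non-minimality, I argue $D_0$ is unwitnessable inside the subformula: satisfying $D_0$ forces the primary $v$'s in $e_0$ to $1$; by the duplicate rule, for any free $v'$-assignment that makes all relevant $v$-literals in the other $D$'s evaluate to $1$, the only way to simultaneously falsify every $D_i^+$ and $D_i^-$ is to set $w_i=0$ and $w_i'=1$ for every $i$, which by construction satisfies every $w$-literal of $D_{j+1}$; thus $D_{j+1}$ is satisfied and no witness exists. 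The directed-$e_0$ case and closure-type-(b) case follow by the same template: when $e_0$ is a pair, one element plays the role of $D_0$ and the other element (with a $v_k$-duplicate swap) blocks the witness of its sibling; when $e_{j+1}$ is directed, both clauses of its pair enter the subformula with opposite $w_0$-duplicate choices in the $w_i = 0, w_i' = 1$ configuration.

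The main obstacle will be the case when a secondary duplicate $v'$ (used to fix a sign mismatch) is shared between multiple $D$-clauses. Then an adversarial assignment setting $v'$ to the ``wrong'' value simultaneously falsifies several $D$-clauses and could yield a witness for $D_0$. I expect to resolve this by enlarging the subformula with additional blowup clauses that vary the $v$-duplicate choices so that at least one of them is satisfied in each adversarial $v'$-assignment, or equivalently by choosing the unwitnessable clause adaptively (e.g.\ some $D_i^\pm$ rather than $D_0$) based on the specific sign pattern of $G$. This bookkeeping is the most technical part, but is made tractable by the fact that each original variable has exactly two duplicates in $G[2]$, bounding the relevant case analysis by the number of sign patterns supported on the at most $2k$ vertices of $\vec F$ (cf.\ Remark~\ref{rem:Fk-size}).
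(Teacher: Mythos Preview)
Your overall template is right---build a simple subformula of $G[2]$ in which some blowup of $C_0$ cannot be witnessed---and your sign-based duplicate rule for the $S$-variables is exactly what the paper uses. But the obstacle you flag in the last paragraph is real, and your proposed fixes do not resolve it. The issue is that once $D_0$ forces every primary $v\in e_0$ to $1$, a literal $\bar v$ in some $C_i$ can only be made true by using $\bar{v'}$; if the adversary sets $v'=1$ as well, every blowup of $C_i$ containing that literal is falsified on the $v'$-coordinate, so no amount of ``varying the $v$-duplicate choices'' among clauses of $C_i[2]$ will produce one with all $S$-literals satisfied. Likewise, switching the unwitnessable clause to some $D_i^\pm$ does not obviously help, since the same freedom in the primed variables persists.

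The paper's key move---which you are missing---is to add $k$ further blowups of $C_0$ itself: the clauses $\widetilde C_0^{(1)},\dots,\widetilde C_0^{(k)}$ obtained from $C_0$ by replacing a single variable $v$ by its primed copy $v'$. These are simple (each lives on a distinct $k$-set), and together with $D_0$ they force $v'=0$ for \emph{every} $v\in e_0$. Now both coordinates of every $S$-variable are pinned ($v=1$, $v'=0$), so your sign rule (positive literal $\to$ primary, negative literal $\to$ primed) makes every $S$-literal in the extension and closure clauses evaluate to $1$ automatically, and the only way to falsify each $\widetilde C_i$ is through its $w_i$-literal. From there the cascade you describe goes through with a \emph{single} clause per extension edge (two only when the closure edge re-uses that $w_i$ in both duplicates), and one or two closure clauses depending on whether $e_{j+1}$ is undirected or directed. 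Once you insert these $\widetilde C_0^{(i)}$'s, the ``main obstacle'' disappears and no adaptive case analysis is needed.
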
 

\begin{example}
We consider a $3\SAT$ example.
An example of a semisimple formula with type $\vec{H}= (\{1,2,3,4\}, \{12\wc 3, 23\wc 4\})$ is the collection of $4$-clauses on $4$-variables $G =\{abc, ab\ol c, \ol b c d, \ol bc \ol d\} $. 
The $2$-blowup of $G$ has the following simple non-minimal formula: $B = \{abc, ab'c, ab\ol{c'}, \ol{b'}cd,\ol{b'}c'\ol{d}\}$. We can observe that $B$ is non-minimal by noting that to only satisfy $abc$, we must assign $a,b,c \mapsto 1$ which forces us to assign $b' \mapsto 0, c' \mapsto 1$ to avoid satisfying the second or third clause in $B$. However then we must assign $d \mapsto 0$ to avoid satisfying the fourth clause in $B$ and this combination of assignments satisfies the fifth clause in $B$.
\end{example}

\begin{proof}[Proof of \cref{prop:fktypenonmin}]
Suppose the initialization edge in $\type(G)$ is undirected. For clarity, we use two examples of $G$ and $\type(G)$ to illustrate this result that capture the fully generality of our argument.  We then summarize the general algorithm for identifying a simple non-minimal formula in the $2$-blowup of a general $G$.

We first look at the case where the final edge in $\type(G)$ does not introduce any new vertex. Let
\begin{align*}
    G&=\{abcd,a\bar bce,a\bar bc\bar e, \bar ac\bar df,\bar ac\bar d\bar f, \bar a  de\bar f\}\\
    &=\{C_0,C_1^{(1)},C_1^{(2)},C_2^{(1)},C_2^{(2)},C_3\},
\end{align*}
so that every label names the clause of the same order (i.e., $C_0=abcd$, \dots, $C_3=\bar a  de\bar f$). We have
$$\type(G)=(\{1,2,\dots,6\},\{1234,123\wc5,134\wc6,1456\}).$$
Using the notation in~\cref{d:forbidden}, $\type(G)$ is a 4-PDG in $\cF_4$ with:
\begin{itemize}
    \item $j=2$, $S=\{1,2,3,4\}$, $T=\{1,2,\dots,6\}$;
    \item Initialization: $e_0=1234$;
    \item Extension: $e_1=123\wc5$, $e_2=134\wc6$;
    \item Closure: $e_3=1456$.
\end{itemize}

We pick a simple non-minimal formula
$$G'=\{\widetilde C_0,\widetilde C_0^{(1)},\dots, \widetilde C_0^{(4)},\widetilde C_1,\widetilde C_2,\widetilde C_3\}\subset G[2]$$ 
where
\begin{align*}
    \widetilde C_0,\widetilde C_0^{(1)},\dots, \widetilde C_0^{(4)}&\in C_0[2],\\
    \widetilde C_1&\in C_1^{(1)}[2]\cup C_1^{(2)}[2],\\
    \widetilde C_2&\in C_2^{(1)}[2]\cup C_2^{(2)}[2],\\
    \widetilde C_3&\in C_3[2],
\end{align*}
using the following steps:
\begin{itemize}
    \item Set $\widetilde C_0=abcd\in C_0[2]$.
    \item Set $\widetilde C_0^{(1)}=a'bcd, \widetilde C_0^{(2)}=ab'cd, \widetilde C_0^{(3)}=abc'd, \widetilde C_0^{(4)}=abcd'\in C_0[2]$.
    \item Set $\widetilde C_3=\ol{a'}de\ol{f'}\in C_3[2]$. We pick the variables $a',d,e,f'$ because the clause $C_4=\ol a d e\ol f\in G$ uses positive literals $d,e$ and negative literals $\ol a,\ol f$. 
    \item Set $\widetilde C_1=a\ol{b'}c\ol e$. We pick the variables $a,b',c$ because $C_1^{(1)}, C_1^{(2)}\in G$ use positive literals $a,c$ and negative literal $\ol b$. We pick the variable $e$ because $\widetilde C_3$ uses the variable $e$, and then pick the literal $\ol e$ (instead of $e$) because $e$ appears as a positive literal in $\widetilde C_3$, and we want the literal here to be the opposite of it.
    \item Set $\widetilde C_2=\ol{a'}c\ol{d'}f'$. We pick the variables $a',c,d'$ because $C_2^{(1)}, C_2^{(2)}\in G$ use positive literal $c$ and negative literals $\ol a,\ol d$. We pick the variable $f'$ because $\widetilde C_3$ uses the variable $f'$, and then pick the literal $f'$ (instead of $\ol{f'}$) because $\ol{f'}$ appears as a negative literal in $\widetilde C_3$, and we want the literal here to be the opposite of it.
\end{itemize}

We show that it is impossible to uniquely satisfy $\widetilde C_0\in G'$. Suppose to the contrary that there exists a witness that maps $\widetilde C_0$ to 1 and all other clauses in $G'$ to 0. We clearly have $a,b,c,d\mapsto 1$. Since all of $C_0^{(1)},\dots,C_0^{(4)}$ are mapped to 0, we then have $a',b',c',d'\mapsto 0$. Now since $\widetilde C_1$ is mapped to 0, we have $e\mapsto 1$. Since $\widetilde C_2$ is mapped to 0, we have $f'\mapsto 0$. This forces $\widetilde C_{3}$ to get mapped to 1, a contradiction.

We now look at the case where the final edge in $\type(G)$ is pointed at some new vertex. Let
\begin{align*}
    G&=\{abcd,\bar ab\bar ce, \bar ab\bar c\bar e, a c\bar df, ac\bar d\bar f, b\bar e\bar fg,b\bar e\bar f\bar g\}\\
    &=\left\{C_0,C_1^{(1)},C_1^{(2)},C_2^{(1)},C_2^{(2)},C_3^{(1)},C_3^{(2)}\right\},
\end{align*}
so that every label names the clause of the same order. We have
$$\type(G)=(\{1,2,\dots,7\},\{1234,123\wc5,134\wc6,256\wc7\}).$$
Using the notation in~\cref{d:forbidden}, $\type(G)$ is a 4-PDG in $\cF_4$ with:
\begin{itemize}
    \item $j=2$, $S=\{1,2,3,4\}$, $T=\{1,2,\dots,6\}$;
    \item Initialization: $e_0=1234$;
    \item Extension: $e_1=123\wc5$, $e_2=134\wc6$;
    \item Closure: $e_3=256\wc7$.
\end{itemize}

We pick a simple non-minimal formula
$$G'=\{\widetilde C_0,\widetilde C_0^{(1)},\dots, \widetilde C_0^{(4)},\widetilde C_1,\widetilde C_1',\widetilde C_2,\widetilde C_3,\widetilde C_3'\}\subset G[2]$$ 
where
\begin{align*}
    \widetilde C_0,\widetilde C_0^{(1)},\dots, \widetilde C_0^{(4)}&\in C_0[2],\\
    \widetilde C_1,\widetilde C_1'&\in C_1^{(1)}[2]\cup C_1^{(2)}[2],\\
    \widetilde C_2&\in C_2^{(1)}[2]\cup C_2^{(2)}[2],\\
    \widetilde C_3,\widetilde C_3'&\in C_3^{(1)}[2]\cup C_3^{(2)}[2],
\end{align*}
using the following steps.
\begin{itemize}
    \item Set $\widetilde C_0=abcd\in C_0[2]$.
    \item Set $\widetilde C_0^{(1)}=a'bcd, \widetilde C_0^{(2)}=ab'cd, \widetilde C_0^{(3)}=abc'd, \widetilde C_0^{(4)}=abcd'\in C_0[2]$.
    \item Set $\widetilde C_3=b\bar{e'}\bar{f'}g$ and $\widetilde C_3'=b\bar{e}\bar{f'}\bar g\in C_3[2]$. For $\widetilde C_3$, we pick the variables $b,e',f'$ because the clauses $C_3^{(1)},C_3^{(2)}\in G$ uses positive literal $b$ and negative literals $\ol e,\ol f$. We then pick the positive literal $g$ (it does not matter whether we use $g$ or $g'$ here). We obtain $\widetilde C_3'$ from $\widetilde C_3$ by replacing the variable $e'$ with $e$ and replacing the literal $g$ with $\ol g$.
    \item Set $\widetilde C_1=\ol{a'}b\ol{c'}e'$ and $\widetilde C_1'=\ol{a'}b\ol{c'}e$. We pick $\widetilde C_1$ exactly as how we would pick $\widetilde C_1$ in the previous case. We obtain $\widetilde C_1'$ from $\widetilde C_1$ by replacing the variable $e$ with $e'$.
    \item Set $\widetilde C_2=ac\ol{d'} f'$. We pick $\widetilde C_2$ exactly as how we would pick $\widetilde C_2$ in the previous case.
\end{itemize}

We show that it is impossible to uniquely satisfy $\widetilde C_0\in G'$. Suppose to the contrary that there exists a witness that maps $\widetilde C_0$ to 1 and all other clauses in $G'$ to 0. By the same reasoning as before, we have $a,b,c,d\mapsto 1$ and $a',b',c',d'\mapsto 0$. Now since $\widetilde C_1,\widetilde C_1'$ are mapped to 0, we have $e,e'\mapsto 0$. Since $\widetilde C_2$ is mapped to 0, we have $f'\mapsto 0$. This forces at least one of $\widetilde C_{3},\widetilde C_{3}'$ to get mapped to 1, a contradiction.

We can generalize this strategy to any such $G$. Let $C_0\in G$ be the clause corresponding to the initialization edge $e_0$. Consider a witness $\mathsf w\in\{0,1\}^n$ that maps the original clause in $C_0[2]$ to 1 and all other clauses in $G[2]$ to 1. It follows that for every variable $v\in S$, we must assign one of $v,v'$ to 0 and the other to 1. Then for each of $e_1,\dots,e_{j+1}$, we can  pick one or two clauses appropriately from the 2-blowup of clauses in $G$ corresponding to it, and show that it is impossible to map all of these clauses to 0. The strategy when the initialization edge in $\type(G)$ is directed is very similar, and we do not elaborate here.
\end{proof}

If a semisimple formula $G$ has type $\vec{H}$ where $\vec{H}$ contains many copies of some $\vec{F} \in \cF_k$, then we'd like to show that $G$ contains many non-minimal, simple formulae. We make this intuition quantitative below.

\subsection{Supersaturation}\label{ss:supersaturation}

Now we show that, roughly speaking, if a formula has many clauses (with respect to some weighting), then it contains a small simple non-miminal subformula.

For a $k$-SAT formula $G$ on $n$ variables, define
\[
\alpha_i(G) {n \choose k} = \left| \left\{S \in {V \choose k} \mid  G \text{ has exactly } i \text{ clauses with variables } S \right\} \right|.
\]
Furthermore, define 
$\alpha_{2,1}(G){n \choose k}$ to be the number of $S \in {V \choose k}$ such that 
$G$ has exactly two clauses with variables $S$ and the pair differs on exactly on variable by negation.
Also set
\[
\alpha_{2,2}(G) = \alpha_2(G) - \alpha_{2,1}(G).
\]

\begin{lemma}~\label{lem:supersat-1}
Let $\epsilon > 0$ and $\theta>0$.
There exists $\delta = \delta(\epsilon, \theta) > 0$ such that for all sufficiently large $n$, if an $n$-variable $k$-SAT formula $G$ satisfies $\alpha_1(G)+\theta \alpha_{2,1}(G)\geq \pi(\cF_k,\theta)+\epsilon$, 
then there is some simple non-minimal subformula $B$ on at most $4k$ vertices, such that $G$ contains at least $\delta n^{v(B)}$ copies of $B$ as subformulae.
\end{lemma}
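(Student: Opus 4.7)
The plan is to promote the weighted density bound on $G$ to a density bound on an associated partially directed hypergraph, use a supersaturation argument to find many copies of some forbidden configuration from $\cF_k$, and then combine \cref{prop:fktypenonmin} with the blow-up lemma \cref{lem:formula-blowup} to produce many copies of a small simple non-minimal sub-formula.

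First, I would form a sub-formula $G' \subseteq G$ by keeping only clauses whose underlying $k$-subset $S$ either supports a unique clause of $G$ or supports exactly two clauses of $G$ differing in a single negation, and discarding every other clause. By construction $G'$ is semisimple, and the $k$-PDG $\vec H := \type(G')$ satisfies $\alpha(\vec H) = \alpha_1(G)$ and $\beta(\vec H) = \alpha_{2,1}(G)$, so $\alpha(\vec H) + \theta\,\beta(\vec H) \ge \pi(\cF_k,\theta) + \epsilon$. Fix a large constant $s = s(\epsilon,\theta,k)$ and pick a uniformly random $s$-subset $U$ of $V(\vec H)$; linearity of expectation gives $\EE[\alpha(\vec H[U]) + \theta\,\beta(\vec H[U])] \ge \pi(\cF_k,\theta) + \epsilon$, and since this random variable is uniformly bounded above by $1+\theta$, a positive constant fraction of the subsets $U$ satisfy $\alpha(\vec H[U]) + \theta\,\beta(\vec H[U]) \ge \pi(\cF_k,\theta) + \epsilon/2$. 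Choosing $s$ large enough that the $o(1)$ slack in the definition of $\pi(\cF_k,\theta)$ drops below $\epsilon/2$, every such $\vec H[U]$ must contain some $\vec F \in \cF_k$ as a sub-PDG; a standard double-count, together with the finiteness of $\cF_k$, then yields $\Omega(n^{v(\vec F_0)})$ copies of a single specific $\vec F_0 \in \cF_k$ inside $\vec H$.

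For each copy of $\vec F_0$ in $\vec H$, the corresponding sub-formula of $G'$ is isomorphic to one of finitely many semisimple $k$-SAT formulae of type $\vec F_0$ (there are only boundedly many sign patterns on a bounded-size type), so another pigeonhole selects a fixed semisimple $G_0$ of type $\vec F_0$ appearing as a sub-formula of $G$ at least $\Omega(n^{v(G_0)})$ times. Applying \cref{lem:formula-blowup} produces $\Omega(n^{2v(G_0)})$ copies of $G_0[2]$ in $G$, and \cref{prop:fktypenonmin} supplies a simple non-minimal sub-formula $B \subseteq G_0[2]$. Since each copy of $B$ in $G$ extends to a copy of $G_0[2]$ in at most $O(n^{2v(G_0)-v(B)})$ ways, $G$ contains $\Omega(n^{v(B)})$ copies of $B$. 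Finally, $v(B) \le 2v(G_0) = 2v(\vec F_0) \le 4k$ by \cref{rem:Fk-size}, so the implicit constant furnishes the desired $\delta = \delta(\epsilon,\theta) > 0$.

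The main technical point is the supersaturation step for the weighted density $\alpha + \theta\,\beta$ on partially directed hypergraphs; once that is in place, the reduction via the $\type$ map, the pigeonholing over isomorphism types of semisimple lifts of $\vec F_0$, and the final blow-up and counting steps are routine.
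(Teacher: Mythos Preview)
Your proposal is correct and follows essentially the same route as the paper's proof: pass to a semisimple subformula, take its $\type$ to get a $k$-PDG exceeding the Tur\'an density, use supersaturation to find $\Omega(n^{v(\vec F_0)})$ copies of some $\vec F_0\in\cF_k$, pigeonhole to a fixed semisimple lift $G_0$, blow up via \cref{lem:formula-blowup}, and extract the simple non-minimal $B\subseteq G_0[2]$ from \cref{prop:fktypenonmin}. The only difference is cosmetic: the paper simply invokes ``a standard supersaturation argument'' and ``another application of pigeonhole'' where you have written out the random-$s$-subset averaging and the final $\Omega(n^{2v(G_0)})/O(n^{2v(G_0)-v(B)})$ count explicitly.
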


\begin{proof}Fix $\epsilon > 0$ and $\theta>0$. Consider an $n$-variable $k$-SAT formula $G$ such that $\alpha_1(G)+\theta \alpha_{2,1}(G)\geq \pi(\cF_k,\theta)+\epsilon$. Up to deleting all the clauses of $G$ not accounted for by $\alpha_1(G)$ and $\alpha_{2,1}(G)$, we can assume that $G$ is semisimple.

Let $\vec H = \type(G)$. Then $\vec H$ has $\alpha_1(G) \binom{n}{k}$ undirected edges and $\alpha_{2,1}(G)\binom{n}{k}$ directed edges.
It follows by the definition of Tur\'an density $\pi(\cF_k,\theta)$ and a standard supersaturation argument that there is some $\vec F \in \cF_k$ such that $\vec H$ contains at least $\Omega(n^{v(\vec F)})$ copies of $\vec F$.
There are only finitely many isomorphism classes of semisimple formulae with type $\vec F$, so by pigeonhole, there is some $k$-SAT semisimple formula $G'$, with $\type(G')=\vec F$, such that $G$ contains at least $\Omega(n^{v(\vec F)})$ copies of $G'$ as subformulae.
Then, by \cref{lem:formula-blowup}, $G$ contains $\Omega(n^{2v(\vec F)})$ copies of $G'[2]$ as subformulae.
By \cref{prop:fktypenonmin}, each copy of $G'[2]$ contains some simple non-minimal subformula. 
Note that $G'[2]$ has at most $4k$ variables since every element of $\cF_k$ has at most $2k$ variables. The conclusion then follows immediately after another application of pigeonhole.
\end{proof}

\begin{lemma}~\label{lem:supersat-2}
Let $\epsilon > 0$.
There exists $\delta = \delta(\epsilon)>0$ such that for all sufficiently large $n$, if an $n$-variable $k$-SAT formula has at least $\alpha_i(G) \ge \epsilon$ for some $i \ge 3$ or $\alpha_{2,2}(G) \ge \epsilon$, then there is some simple non-minimal subformula $B$ on at most $2k$ vertices such that $G$ contains at least $\delta n^{v(B)}$ copies of $B$ as subformulae.
\end{lemma}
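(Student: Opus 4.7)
The plan is to unify both hypotheses into a single structural conclusion: there are $\Omega_\epsilon(n^k)$ unordered $k$-subsets $S$ of variables on which $G$ carries two clauses that differ in at least two literal signs (equivalently, two clauses on $S$ at Hamming distance $\ge 2$ in the sign cube $\{0,1\}^k$). In the $\alpha_{2,2}(G)\ge \epsilon$ case this is immediate from the definition of $\alpha_{2,2}$. In the $\alpha_i(G)\ge \epsilon$, $i\ge 3$ case, I would argue by parity: a clause on a fixed $k$-set $S$ corresponds to a vertex of the bipartite Hamming cube $\{0,1\}^k$, so three distinct clauses on $S$ cannot be pairwise at Hamming distance $1$ (three pairwise-adjacent vertices would form an odd cycle). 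Hence some pair among the $\ge 3$ clauses on $S$ differs in $\ge 2$ signs.

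Next I would pigeonhole: among the finitely many isomorphism classes of two-clause formulae $P$ whose clauses share all $k$ variables and differ in exactly $j$ signs for some $2 \le j \le k$, one class occurs in $G$ at least $\Omega_\epsilon(n^k)$ times. Applying \cref{lem:formula-blowup} to this fixed $P$ promotes this to $\Omega(n^{2k})$ copies of $P[2]$ inside $G$.

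Now I invoke \cref{rem:parallel}: because $j \ge 2$, the explicit $(j+2)$-clause subformula built in the proof of \cref{lem:pairclausesnonmin} is not just non-minimal but also \emph{simple}. Call this subformula $B$; it uses $k+j \le 2k$ variables, so $v(B) \le 2k$ as required. Each copy of $P[2]$ in $G$ contributes a copy of $B$, while each copy of $B$ in $G$ is extended to a copy of $P[2]$ by selecting the remaining $2k-v(B)$ variables (and requiring certain additional clauses to be present), so each copy of $B$ lies in $O(n^{2k-v(B)})$ copies of $P[2]$. Dividing gives at least
\[
\Omega\!\paren{\frac{n^{2k}}{n^{2k-v(B)}}} = \Omega(n^{v(B)})
\]
copies of $B$ in $G$, and one more pigeonhole over the (finitely many) $B$ obtainable this way produces a single specific $B$ with $\delta n^{v(B)}$ copies for some $\delta=\delta(\epsilon)>0$.

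I do not anticipate any real obstacle: the bipartite-parity observation is the only conceptual addition beyond what was already used in \cref{lem:supersat-1}, and the remaining pigeonhole-plus-blowup template is identical. The one thing to be careful about is making sure the selected subformula from the proof of \cref{lem:pairclausesnonmin} is really simple when $j\ge 2$, which is precisely what \cref{rem:parallel} records.
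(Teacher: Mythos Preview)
Your proposal is correct and follows essentially the same route as the paper: pigeonhole onto a fixed two-clause pattern $P$ with sign-difference $j\ge 2$, blow up via \cref{lem:formula-blowup}, and extract the simple non-minimal subformula of \cref{rem:parallel}. Your bipartite-hypercube parity argument and your explicit $n^{2k}/n^{2k-v(B)}$ count are details the paper leaves implicit (it writes only ``by pigeonhole'' and ``the result follows''), so your write-up is in fact slightly more complete than the original.
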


\begin{proof}Fix $\epsilon > 0$. Consider an $n$-variable $k$-SAT formula $G$ such that $\alpha_i(G) \ge \epsilon$ for some $i \ge 3$, or $\alpha_{2,2}(G) \ge \epsilon$. By pigeonhole, there is some $2 \le j \le k$ such that $G$ has at least $\Omega(\epsilon n^k)$ subformulae of the form $G' = \{v_1 v_2 \cdots v_k$ and $\overline{v_1} \cdots \ol {v_j}v_{j+1}\cdots v_k\}$.
By \cref{lem:formula-blowup}, $G$ contains $\Omega(n^{2k})$ copies of $G'[2]$ as subformulae. Since every copy of $G'[2]$ has a non-minimal simple formula by \cref{rem:parallel}, the result follows.
\end{proof}

\subsection{Applying the container theorem}

Recall the definition of the $\alpha_i$'s from the beginning of \cref{ss:supersaturation}.

\begin{defn} 
Define the \textit{weight} of the formula of a $k$-SAT formula $G$ by
\[
\wt(G)=\alpha_1(G)+\log_23 \cdot \alpha_2(G)+\log_24 \cdot \alpha_3(G)+\dots+\log_2(2^k+1) \cdot \alpha_{2^k}(G).
\]
\end{defn}

Given a $k$-SAT formula $G$ on $n$ variables, the number of simple $k$-SAT subformulae of $G$ is
\[
\prod_{j =1}^{2^k} (j+1)^{\alpha_j(G) {n \choose k}} = 2^{\wt(G) {n \choose k}}
\]
since there are $j+1$ ways to choose at most one clause from  $j$ clauses. This motivates our definition of weight.

We are now ready to prove \cref{thm:formulae-upperbound} that the number of minimal $k$-SAT formulae on $n$ variables is at most $2^{(\pi(\cF_k, \log_2 3) + o(1)) \binom{n}{k}}$.

\begin{proof}[Proof of \cref{thm:formulae-upperbound}]
Let $\cB$ be the set of all non-minimal simple formulae on at most $4k$ variables, one for each isomorphism class (so that $\cB$ is finite).
Fix any $\epsilon > 0$.
Pick $\delta > 0$ to be smaller than the minimum of the $\delta$'s chosen in \cref{lem:supersat-1} (with $\theta = \log_2 3$) and \cref{lem:supersat-2}.
Applying the container result, \cref{thm:gencontub}, we find a collection $\cG$ of $n$-variable formulae on $X=\{x_1,\dots,x_n\}$ such that
\begin{enumerate}
    \item [(a)] Every $\cB$-free formula on $X$ is a subformula of some $G \in \cG$, so in particular, every simple minimal formula on $X$ is a subformula of some $G \in \cG$; and
    \item[(b)] For every $G \in \cG$ and $B \in \cB$, $G$ contains at most $\delta n^{v(B)}$ copies of $B$, which implies, by \cref{lem:supersat-1,lem:supersat-2}, that $\alpha_1(G) + \log_2 3 \cdot \alpha_{2,1}(G)< \pi(\cF_k, \log_2 3) + \epsilon$, $\alpha_{2,2}(G) < \epsilon$ and $\alpha_i(G)< \epsilon$ for all $i \ge 3$; and 
    \item[(c)] $\abs{\cG} \le 2^{o(n^k)}$.
\end{enumerate}

The number of simple minimal $k$-SAT subformula of each $G \in \cG$ is thus at most
\[
2^{\wt (G) \binom{n}{k}} \le 2^{(\pi(\cF_k, \log_2 3) + O(\epsilon))\binom{n}{k}}.
\]
By taking a union bound over all $G \in \cG$, of which there are at most $2^{o(n^k)}$, and noting that $\epsilon$ can be taken to be arbitrarily small, we see that the number of simple minimal $k$-SAT formula on $n$ variables is $2^{(\pi(\cF_k, \log_2 3) + o(1))\binom{n}{k}}$. 

Finally, by \cref{prop:nearly-simple},  we can obtain any minimal $k$-SAT formulae by adding $o(n^k)$ clauses to a simple minimal formula. This adds a neglible factor $\binom{2^k{n\choose k}}{\le o(n^k)} = 2^{o(n^k)}$. So the total number of minimal $k$-SAT formulae on $n$ variables is $2^{(\pi(\cF_k, \log_2 3) + o(1))\binom{n}{k}}$.
\end{proof}

\section{A lower bound on the number of $k$-SAT functions}\label{sec:lb}

In this section, we prove the lower bound in \cref{thm:weak-count}, namely that for every fixed $k \ge 2$, there are at least $2^{(\pi(\cF_k, \log_2 3)+o(1))\binom{n}{k}}$ distinct $k$-SAT functions on $n$ variables.

\begin{defn}
Given an $k$-PDG $\vec H$, let the \textit{positive instance} of $\vec H$ be the semisimple $k\SAT$ formula $G \in \type^{-1}(\vec H)$ obtained as follows:
\begin{itemize}
    \item  $G$ has the clause $v_1\cdots v_k$ if and only if there exists an edge (directed or undirected) on $\{v_1,\dots, v_k\}$ in $\vec H$;
    \item $G$ has the clause $v_1\cdots v_{k-1}\ol v_k$ if and only if $v_1\cdots  v_{k-1}\wc v_k\in E(\vec H)$.
\end{itemize}
\end{defn}

We observe that the positive instance $G \in \type^{-1}(\vec H)$ on $n$ vertices has $2^{\alpha(\vec H) {n \choose k}} 3^{\beta(\vec H) {n \choose k}}=2^{(\alpha(\vec H)+\log_23\cdot \beta(\vec H)) {n \choose k}}$ simple subformulae, as there are $\alpha(\vec H){n\choose k}$ $k$-subsets of variables in $G$ that support exactly one clause, and $\beta(\vec H){n\choose k}$ $k$-subsets of variables in $G$ that support exactly two clauses.

\begin{lemma}\label{l:subminimal}
Fix $\cF_k$-free $k$-PDG $\vec H$ on $n$ vertices with positive instance $G \in \type^{-1}(\vec H)$. Let $\cS$ denote the collection of simple subformulae of $G$. Then all $G' \in \cS$ are minimal.
\end{lemma}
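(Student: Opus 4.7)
The approach is to prove the contrapositive: if some clause $C_0$ in a simple subformula $G' \subseteq G$ has no witness, then $\vec H$ contains an element of $\cF_k$. Since each clause of the positive instance carries at most one negation, $C_0$ is either $v_1\cdots v_k$ (all positive, on vertex set $S_0$) or $v_1\cdots v_{k-1}\ol{v_k}$ arising from a directed edge $v_1\cdots v_{k-1}\wc v_k \in E(\vec H)$. I initialize the candidate $\cF_k$-structure by taking $e_0$ undirected on $S_0$ and setting $S := S_0$ in the first case (forgetting the direction of $\vec H$'s edge on $S_0$ if necessary), and $e_0$ directed on $S_0$ with $S := S_0 \setminus \{v_k\}$ in the second. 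Satisfying $C_0$ forces all of $S$ to $1$, plus $v_k$ to $0$ in case two.

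The core is a two-round forcing argument on a would-be witness. Round one sets $S\mapsto 1$, $v_k\mapsto 0$ if applicable, and all other variables to $0$. Simplicity of $G'$ on the $k$-set $S_0$ rules out clauses of $G'\setminus\{C_0\}$ supported on $S_0$; every clause of $G'\setminus\{C_0\}$ satisfied by the round-one assignment therefore has the form $u_1\cdots u_{k-1}\ol w$ with $u_i \in S$ and $w\notin S_0$, and arises from a directed edge $u_1\cdots u_{k-1}\wc w\in E(\vec H)$ that will serve as an extension edge $e_w$ of our $\cF_k$-structure. I collect these forced vertices into $W$, fix one such $e_w$ per $w\in W$, and flip $W\mapsto 1$ to obtain the round-two assignment. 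Because $C_0$ has no witness, some $C\in G'\setminus\{C_0\}$ remains satisfied. A parallel case analysis --- using simplicity on $S_0$ to exclude $C$ sitting on $S_0$, and observing that a clause $u_1\cdots u_{k-1}\ol{w'}$ with all $u_i\in S$ and $w'\notin S_0$ would already have placed $w'$ in $W$ --- shows that $C$ matches one of exactly two closure templates of \cref{d:forbidden}: (a) all positive on $k$ vertices of $T:=S_0\cup W$ with at least one in $W$, or (b) of the form $u_1\cdots u_{k-1}\ol{w'}$ with $u_i \in S\cup W$, $w'\notin T$, and at least one $u_i\in W$.

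Finally, $e_0$, the extension edges $\{e_{w_i}:w_i\in V(C)\cap W\}$, and the closure edge $C$ assemble into a $k$-PDG $\vec F^*$ built by the procedure of \cref{d:forbidden}, sitting inside $\vec H$ as a subgraph --- direction-forgetting may be required for $e_0$ in case one, and for a type-(a) closure whose underlying edge in $\vec H$ happens to be directed. Either $\vec F^*\in\cF_k$ directly, or $\vec F^*$ properly contains a non-redundant procedure-generated subgraph $\vec F\in\cF_k$; either way $\vec F \subseteq \vec H$. The main technical obstacle is verifying pairwise distinctness of $k$-subsets across the selected edges: crucially, the closure edge cannot share a $k$-set with any extension edge $e_{w_i}$, for then $G'$ would contain two distinct clauses on a single $k$-set, violating simplicity. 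One also needs $j:=|V(C)\cap W|\ge 1$, which is automatic because $W=\emptyset$ would leave the round-one assignment as a valid witness for $C_0$, contradicting the hypothesis.
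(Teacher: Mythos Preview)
Your approach is essentially the paper's: both construct the same candidate witness (your round-two assignment is the paper's $X_1$-indicator) and argue that any other clause it satisfies produces an $\cF_k$-pattern in $\vec H$. The contrapositive framing and the two-round presentation are cosmetic differences; the paper simply writes down the final assignment directly and performs the same case split.

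There is one small hole in your case analysis for the directed-initialization case. With $C_0=v_1\cdots v_{k-1}\ol{v_k}$, your round-two assignment sends $v_k\mapsto 0$, so a surviving clause of the form $C=u_1\cdots u_{k-1}\ol{v_k}$ with $u_i\in S\cup W$ and at least one $u_i\in W$ can occur (e.g.\ $k=3$, $\vec H=\{v_1v_2\wc v_3,\ v_1v_2\wc w,\ v_1w\wc v_3\}$, $C_0=v_1v_2\ol{v_3}$, $C=v_1w\ol{v_3}$). This $C$ is not ``all positive,'' so it misses your template (a), and it has $w'=v_k\in S_0\subseteq T$, so it misses your template (b). The fix is immediate: the corresponding edge $u_1\cdots u_{k-1}\wc v_k\in E(\vec H)$, after forgetting its direction, is an undirected edge contained in $T$, i.e.\ a type-(a) closure in \cref{d:forbidden}; your own remark about direction-forgetting for type-(a) closures then applies. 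With this extra subcase inserted, the argument is complete and matches the paper's.
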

\begin{proof}Fix any $G' \in \cS$. To prove that $G'$ is minimal, we show that for every clause $C_0\in G'$, we can find a witness $\mathsf w\in\{0,1\}^n$ such that $\mathsf w$ maps $C_0$ to 1 and all other clauses in $G'$ to $0$.

Fix any clause $C_0\in G'$. First suppose that $C_0$ is of the form $v_1\cdots v_k$. Let
$$X_1=\{v_1,\dots,v_k\}\cup\{w:v_{i_1}\cdots v_{i_{k-1}}\ol w\in G\text{ for some }i_1,\dots,i_{k-1}\in[k]\}.$$
Choose $\mathsf w\in\{0,1\}^n$ such that $\mathsf w(v)=1$ if $v\in X_1$ and $\mathsf w(v)=0$ otherwise. We show that $\mathsf w$ is a witness to $C_0$. For every $C\in G'\setminus\{C_0\}$, there are three cases:
\begin{itemize}
    \item $C$ uses at least two variables outside $X_1$. In this case, since $C$ has at most one negative literal, we know that $\mathsf w$ maps $C$ to 0.
    \item $C$ uses exactly one variable $u_0$ outside $X_1$. In this case, we show that the corresponding literal in $C$ cannot be $\ol u_0$, so $\mathsf w$ still maps $C$ to 0. By contradiction, suppose that $C$ contains $\ol u_0$, so $C$ is of the form $v_{i_1}\cdots v_{i_j}w_{i_{j+1}}\cdots w_{i_{k-1}}\ol u_0$ for some $i_1,\dots,i_{j}\in[k]$ and some $w_{i_{j+1}},\dots,w_{i_{k-1}}\in X_1\setminus \{v_1,\dots,v_k\}$. However, if $j=k-1$, then $u_0\in X_1$, a contradiction; if $j<k-1$, then the underlying $k$-PDG $\vec H$ is not $\cF_k$-free, as it has a subgraph of the form
    $$\qquad\{v_1\cdots v_k, v_{i^{(j+1)}_1}\cdots v_{i^{(j+1)}_{k-1}}\wc w_{i_{j+1}},\dots, v_{i^{(k-1)}_1}\cdots v_{i^{(k-1)}_{k-1}}\wc w_{i_{k-1}}, v_{i_1}\cdots v_{i_j}w_{i_{j+1}}\cdots w_{i_{k-1}}\wc u_0\},$$ 
    another contradiction.
    \item $C$ only uses variables in $X_1$. In this case, to show that $\mathsf w$ maps $C$ to 0, it suffices to show that $C$ cannot be monotone. By contradiction, suppose that $C$ is of the form $v_{i_1}\dots v_{i_j}w_{i_{j+1}}\dots w_{i_{k}}$ for some $i_1,\dots,i_{j}\in[k]$ and some $w_{i_{j+1}},\dots,w_{i_{k}}\in X_1\setminus \{v_1,\dots,v_k\}$. However, if $j=k$, then $C=C_0$, a contradiction;  if $j<k$, then the underlying $k$-PDG $\vec H$ is not $\cF_k$-free, as it has a subgraph of the form
    $$\{v_1\cdots v_k, v_{i^{(j+1)}_1}\cdots v_{i^{(j+1)}_{k-1}}\wc w_{i_{j+1}},\dots, v_{i^{(k)}_1}\cdots v_{i^{(k)}_{k-1}}\wc w_{i_{k}}, v_{i_1}\dots v_{i_j}w_{i_{j+1}}\dots w_{i_{k}}\},$$
    another contradiction.
\end{itemize}
We now suppose that $C_0$ is of the form $v_1\dots v_{k-1}\ol v_k$. Let
$$X_1=\{v_1,\dots,v_{k-1}\}\cup\{w:v_1\cdots v_{k-1}\ol w\in G'\}.$$
Choose $\mathsf w\in\{0,1\}^n$ such that $\mathsf w(v)=1$ if $v\in X_1$ and $\mathsf w(v)=0$ otherwise. By an almost identical argument, we can show that $\mathsf w$ is a witness to $C_0$.
\end{proof}

\begin{lemma}\label{l:distinctsub}
Fix $\cF_k$-free $k$-PDG $\vec H$ on $n$ vertices with positive instance $G \in \type^{-1}(\vec H)$. Let $\cS$ denote the collection of simple subformulae of $G$. If $G_1 \neq G_2 \in \cS$, then $G_1, G_2$ encode distinct $k\SAT$ functions.
\end{lemma}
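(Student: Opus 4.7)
The plan is to reuse the witness construction from the proof of \cref{l:subminimal} and observe that it is in fact a \emph{universal} witness, depending only on $C_0$ and on the positive instance $G$ (not on the particular simple subformula containing $C_0$).

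Without loss of generality, pick a clause $C_0$ that lies in $G_1 \setminus G_2$. Define $\mathsf w \in \{0,1\}^n$ exactly as in the proof of \cref{l:subminimal}: split into the two cases according to whether $C_0 = v_1 \cdots v_k$ or $C_0 = v_1 \cdots v_{k-1} \ol{v_k}$, form the set $X_1$ using the clauses of the positive instance $G$, and set $\mathsf w(v) = 1$ iff $v \in X_1$. The point is that the definition of $X_1$ refers only to $G$, not to any subformula of $G$.

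Next, I would show that $\mathsf w$ satisfies $C_0$ and fails every other clause of $G$. This is essentially what is proved inside \cref{l:subminimal}: in the three case analysis there, every step which concludes that some clause $C$ is mapped to $0$ uses only (i) the structure of clauses of the positive instance $G$ and (ii) the $\cF_k$-freeness of $\vec H$; it never invokes membership of $C$ in a particular subformula. Thus the same argument, applied verbatim to an arbitrary $C \in G \setminus \{C_0\}$ rather than just to $C \in G' \setminus \{C_0\}$, shows that $\mathsf w$ maps $C$ to $0$.

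Having established this, the conclusion is immediate. Since $C_0 \in G_1$ and $\mathsf w$ satisfies $C_0$, we have $G_1(\mathsf w) = 1$. Since $G_2$ is a subformula of $G$ not containing $C_0$, every clause of $G_2$ lies in $G \setminus \{C_0\}$ and is therefore mapped to $0$ by $\mathsf w$, giving $G_2(\mathsf w) = 0$. Hence $G_1$ and $G_2$ encode distinct Boolean functions. The only real step to justify carefully is the one above: that the three-case analysis in \cref{l:subminimal} never used $C \in G'$, so it upgrades to a universal statement about all clauses of $G$. No new extremal input is needed beyond what \cref{l:subminimal} already supplies.
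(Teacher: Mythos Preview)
There is a real gap. Your key assertion is that the witness $\mathsf w$ built in the proof of \cref{l:subminimal} depends only on $C_0$ and on the full positive instance $G$, and that it fails every clause of $G\setminus\{C_0\}$. Neither part holds when $C_0$ is non-monotone. First, in that case the paper's definition of $X_1$ explicitly uses $G'$, not $G$, so the witness is not universal in the sense you claim. Second, and more fundamentally, no such universal witness can exist. Take $\vec H$ containing the two edges $v_1\cdots v_{k-1}\wc{v_k}$ and $v_1\cdots v_{k-1}\wc{w}$ (this two-edge configuration is $\cF_k$-free). Then $G$ contains the four clauses $v_1\cdots v_{k-1}v_k$, $C_0 = v_1\cdots v_{k-1}\ol{v_k}$, $v_1\cdots v_{k-1}w$, and $v_1\cdots v_{k-1}\ol{w}$. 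Any assignment satisfying $C_0$ forces $v_1=\cdots=v_{k-1}=1$, and then exactly one of $v_1\cdots v_{k-1}w$ and $v_1\cdots v_{k-1}\ol{w}$ is satisfied no matter how $w$ is assigned. So $\mathsf w$ cannot fail all of $G\setminus\{C_0\}$, and if $G_2$ contains whichever of these two clauses $\mathsf w$ happens to satisfy, your argument does not yield $G_2(\mathsf w)=0$.

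The paper sidesteps this by never seeking a universal witness. It applies \cref{l:subminimal} as a black box to a simple subformula tailored to $G_1$ and $G_2$: either $\{C_1\}\cup G_2$ when $G_2$ has no clause on the variable set of $C_1$, or, when $G_1$ and $G_2$ occupy exactly the same $k$-sets but differ on some $S=\{v_1,\dots,v_k\}$, the formula obtained from $G_2$ by swapping its clause on $S$ for the monotone one $v_1\cdots v_k$. In each case the resulting formula is simple, so \cref{l:subminimal} directly supplies the required witness. Your idea can be repaired along similar lines (indeed, when some monotone clause lies in $G_1\triangle G_2$ your universal-witness argument does go through verbatim), but as written it is missing the case distinction that handles a non-monotone $C_0$.
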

\begin{proof}
Consider two distinct formulae in $\cS$, $G_1$ and $G_2$. First suppose that there exists a $k$-subset of variables $S\in{X\choose k}$ such that $G_1$ has a clause $C_1$ on $S$ but $G_2$ does not. By the above claim, $C_1\cup G_2$ is minimal, so there exists some $\mathsf w\in\{0,1\}^n$ that maps $C_1$ to 1 (and thus $G_1$ to 1) and $G_2$ to 0. Hence $G_1$ and $G_2$ encode distinct $k$-SAT functions.

Now suppose that there exists a $k$-subset of variables $S\in{X\choose k}$ such that $G_2$ has a clause $C_2$ on $S$ but $G_1$ does not. By the same argument, we know that $G_1$ and $G_2$ encode distinct $k$-SAT functions.

Finally, suppose that the clauses in $G_1$ and the clauses in $G_2$ lie on exactly the same $k$-subsets of variables. In this case, since $G_1\neq G_2$, there must exist some $S=\{v_1,\dots,v_k\}\in{X\choose k}$ such that $v_1\cdots v_k\in G_1$ and $v_1\cdots \ol v_k\in G_2$ (or vice versa). In this case, since $\{v_1\cdots v_k\}\cup G_2\setminus\{v_1\cdots \ol v_k\}$ is minimal, there exists some $\mathsf w\in\{0,1\}^n$ that maps $v_1\cdots v_k$ to 1 (and thus $G_1$ to 1) and $G_2\setminus\{v_1\cdots \ol v_k\}$ to 0 (and thus $G_2$ to 0). Hence $G_1$ and $G_2$ encode distinct $k$-SAT functions.
\end{proof}

Combining~\cref{l:subminimal} and~\cref{l:distinctsub} implies that for $\cF_k$-free PDG $\vec H$ with positive instance $G$, all simple subformulae of $G$ are minimal and yield distinct $k\SAT$ functions. This immediately implies the following lower bound.

\begin{cor}\label{cor:lb}
For every $\cF_k$-free $k$-PDG $\vec H$ on $n$ vertices, the number of distinct $k$-SAT functions on $n$ variables is at least $2^{(\alpha(\vec H)+\log_23\cdot \beta(\vec H)) {n \choose k}}$.
\end{cor}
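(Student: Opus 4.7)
The plan is to use the positive instance $G \in \type^{-1}(\vec H)$ of the $\cF_k$-free $k$-PDG $\vec H$ as a single ``template'' formula whose simple subformulae provide the desired collection of distinct $k$-SAT functions. First, I would observe that $G$ has precisely $2^{\alpha(\vec H)\binom{n}{k}} \cdot 3^{\beta(\vec H)\binom{n}{k}} = 2^{(\alpha(\vec H)+\log_23\cdot\beta(\vec H))\binom{n}{k}}$ simple subformulae: for each of the $\alpha(\vec H)\binom{n}{k}$ undirected edges of $\vec H$, the template contributes a single clause giving $2$ choices (include or exclude), and for each of the $\beta(\vec H)\binom{n}{k}$ directed edges, the template contributes two clauses differing by a single negation, giving $3$ simple choices (exclude both, or include exactly one of the two).

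Next, I would invoke \cref{l:subminimal} to conclude that each of these simple subformulae is minimal as a $k$-SAT formula. Then I would invoke \cref{l:distinctsub} to conclude that any two distinct simple subformulae compute distinct boolean functions. Chaining these two facts immediately yields at least $2^{(\alpha(\vec H)+\log_23\cdot\beta(\vec H))\binom{n}{k}}$ distinct $k$-SAT functions on $n$ variables, proving the corollary.

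No obstacle is anticipated: the hard work has been absorbed into \cref{l:subminimal,l:distinctsub}, and the corollary is a direct counting consequence once one identifies the correct template and the correct notion of ``simple subformula.'' The only thing worth double-checking is that the count of simple subformulae really factors as claimed, which follows because in a semisimple formula the clause choices at different $k$-subsets of variables are independent.
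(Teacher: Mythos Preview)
Your proposal is correct and follows essentially the same approach as the paper: take the positive instance $G$ of $\vec H$, observe that it has exactly $2^{\alpha(\vec H)\binom{n}{k}}3^{\beta(\vec H)\binom{n}{k}}$ simple subformulae, and then invoke \cref{l:subminimal} and \cref{l:distinctsub} to conclude that these subformulae are minimal and represent pairwise distinct $k$-SAT functions. The paper states this reasoning in a single sentence immediately preceding the corollary, so your write-up is a faithful expansion of it.
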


\begin{proof}[Proof of Theorem~\ref{thm:weak-count}] 
The upper bound follows from~\cref{thm:formulae-upperbound}. 
To obtain the lower bound, notice that for every $n$, we have
$$\max\{\alpha(\vec H) + \log_23 \cdot (\vec H): \vec H\text{ is an $\cF_k$-free $k$-PDG on $n$ variables}\}\geq \pi(\cF_k, \log_23).$$
This is because we always have
\begin{multline*}
    \max\{\alpha(\vec H) + \log_23 \cdot \beta(\vec H): \vec H\text{ is an $\cF_k$-free $k$-PDG on $n$ variables}\}\\
    \leq \max\{\alpha(\vec H) + \log_23 \cdot \beta(\vec H): \vec H\text{ is an $\cF_k$-free $k$-PDG on $n-1$ variables}\}
\end{multline*}
due to an averaging argument, so the left-hand side is non-increasing with respect to $n$.

Hence for all $n$, we can we can take an $\cF_k$-free $k$-PDG $\vec H_n$ with
$$\alpha(\vec H_n) + \log_23 \cdot \beta(\vec H_n) \geq \pi(\cF_k, \log_23).$$
By~\cref{cor:lb}, the number of distinct $k$-SAT functions on $n$ variables is at least $2^{\pi(\cF_k, \log_23){n\choose k}}$.
\end{proof}

\section{Formulae that are far from unate}\label{s:stability}

We prove~\cref{thm:strong-count} in the following two sections. Let $\cI(n)$ be the set of minimal $k$-SAT formulae on $n$ variables; notice that the number of $k\SAT$ functions is at most $|\cI(n)|$. In this section, we reduce the problem of bounding $|\cI(n)|$ to bounding the size of $\cI^*(n, \zeta)$, the collection of minimal formulae that satisfy a given witness condition. This collection of formulae is defined as follows:

\begin{defn}
Fix a $k$-SAT formula $G$. 
Given a variable $x$, let $m(x)$ denote the number of clauses in $G$ in which the positive literal $x$ appears; analogously, $m(\overline{x})$ is the number of clauses in  $G$ in which the negative literal $\overline{x}$ appears.
\end{defn}

\begin{defn}\label{d:cistar}
For $\zeta>0$, let $\cI^*(n, \zeta)$ be the collection of minimal $k$-SAT formulae where $m(x) \ge m(\overline{x})$ for every variable $x$, and each clause in each formula has a witness $\mathsf w$ (an element of $\{0, 1\}^n$ that satisfies only that clause in the formula) with fewer than $\zeta n$ variables assigned to $1$. 
\end{defn}

We work to obtain a more precise estimate on the number of simple, minimal $k\SAT$ formulae by analyzing the set of containers arising from applying~\cref{thm:gencontub}, where $\cB$ is the set of simple non-minimal formulae on at most $2(k+1)$ vertices.

In this section, we use these containers to enumerate those relatively ``small'' minimal formulae $G$, characterized by having $\alpha_1(G)$ bounded away from $1$. We also bound the number of simple, minimal formulae that are ``far from unate.'' We are able to bound the remaining ``large'' and ``almost unate'' formulae in terms of $|\cI^*(n, \zeta)|$, and give an upper bound on this quantity in~\cref{s:istar}, which allows us to complete the proof of~\cref{thm:strong-count}. 

We make the notion of being ``almost unate'' precise below.

\begin{defn}
For every $\rho > 0$, we say that a $k$-SAT formula $G$ is $\rho$-\emph{close to unate} it can be made unate by removing up to  $\rho\binom{n}{k}$ clauses. 
Otherwise we say that $G$ is $\rho$-\textit{far from unate}.

In addition, for every variable $v$, we say that $v$ is \textit{$\rho$-close to unate} if either $m(v)\leq \rho{n-1\choose k-1}$ or $m(\ol v)\leq \rho {n-1\choose k-1}$. Otherwise we say that $v$ is \emph{$\rho$-far from unate}.
\end{defn}

\subsection{Large, simple $\rho$-far from unate formulae have many subformulae in $\cB$}

We prove that for some appropriately chosen $\rho>0$, $a>0$ and $n$ sufficiently large, every simple $\rho$-far from unate formula with more than $(1-a){n\choose k}$ clauses has many non-unate $(k+1)$-cliques, where a $(k+1)$-clique is a simple formula with $k+1$ clauses on $k+1$ variables. 

By~\cref{lem:formula-blowup}, such a formula also contains many 2-blowups of non-unate $(k+1)$-cliques. Since the 2-blowup of every non-unate $(k+1)$-clique is non-minimal, such $G$ has many simple non-minimal subformulae on $2(k+1)$ vertices. The following definition of ``link" formula will be used in our proof:

\begin{defn}
Given $k$-SAT formula $G$ and variable $v$, the \textit{positive link} of $v$, denoted $G_{v}$, is the $(k-1)\SAT$ formula whose clauses are obtained by taking all clauses in $G$ that contain literal $v$ and removing $v$ from each such clause. The \textit{negative link} of $v$, denoted $G_{\ol v}$, is the $(k-1)\SAT$ formula obtained by taking all all clauses in $G$ that contain literal $\overline{v}$ and removing $\overline{v}$ from each such clause.
\end{defn}

\begin{prop}\label{p:nonpositivenonunate}
For every $0 < \rho < 1$, we can take sufficiently small $a := a(\rho) > 0$ such that the following holds for $n$ sufficiently large. If $G$ is a simple $\rho$-far from unate formula with at least $(1 - a){n\choose k}$ clauses, then for some $\delta_1 = \delta_1(\rho, a)>0$, $G$ has at least $\delta_1 {n \choose k+1}$ non-unate $(k+1)$-cliques.
\end{prop}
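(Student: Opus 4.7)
The plan is to locate many ``balanced'' variables $v$ (appearing with both signs in many clauses) and, for each, apply Kruskal--Katona to produce many non-unate $(k+1)$-cliques containing $v$.

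First, I exploit the $\rho$-far-from-unate hypothesis. Consider the majority sign assignment $\epsilon$ with $\epsilon(v) = +$ iff $m(v) \geq m(\overline v)$. A clause is bad under $\epsilon$ iff some literal has the minority sign of its variable; each bad clause contributes at least one pair $(C, v)$ to $\sum_v \min(m(v), m(\overline v))$, so $\sum_v \min(m(v), m(\overline v)) \geq \rho \binom{n}{k}$. A Markov-style pigeonhole then yields $\Omega_\rho(n)$ \emph{balanced} variables $v$ with $\min(m(v), m(\overline v)) \geq \gamma \binom{n-1}{k-1}$, for some $\gamma = \gamma(\rho, k) > 0$. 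Since $\sum_v |U_v| \leq a k \binom{n}{k}$ (where $U_v \subseteq \binom{V \setminus \{v\}}{k-1}$ collects $(k-1)$-subsets $S$ with $\{v\} \cup S$ not a clause of $G$), we may further assume that most such balanced $v$ also satisfy $|U_v| \leq u \binom{n-1}{k-1}$ for some small $u = u(\rho, a)$.

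Next, for each such $v$, let $P_v, N_v \subseteq \binom{V \setminus \{v\}}{k-1}$ collect $(k-1)$-subsets $S$ with $\{v\} \cup S$ supporting a positive-link and negative-link clause, respectively, and let $p_v = |P_v|/\binom{n-1}{k-1}$, $q_v = |N_v|/\binom{n-1}{k-1}$. The key claim is that at least $c\gamma \binom{n-1}{k}$ many $k$-subsets $U \subseteq V \setminus \{v\}$ have their $(k-1)$-shadow $\binom{U}{k-1}$ meeting both $P_v$ and $N_v$. Indeed, applying Kruskal--Katona to the family $\{U : \binom{U}{k-1} \cap N_v = \emptyset\}$, whose $(k-1)$-shadow lies in $\overline{N_v}$ of density $1-q_v$, this family has size at most $(1-q_v)^{k/(k-1)} \binom{n-1}{k}(1+o(1))$; symmetrically for $P_v$. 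Hence the ``good'' $U$-count is at least $[1 - (1-p_v)^{k/(k-1)} - (1-q_v)^{k/(k-1)} - o(1)] \binom{n-1}{k}$. Since $p_v + q_v = 1 - |U_v|/\binom{n-1}{k-1}$ is close to $1$ and $p_v, q_v \geq \gamma$, a first-order Taylor analysis shows this is at least $c(k) \gamma \binom{n-1}{k}$.

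For each good $U$, the $(k+1)$-subset $T = \{v\} \cup U$ has $v$ appearing with both signs in $G|_T$, so $G|_T$ is non-unate; we additionally need $T$ to support all $k+1$ clauses. The fraction of $U$'s for which some $k$-subset of $T$ is missing from $G$ is $O(a + u)$ by a direct averaging argument, which is negligible if $a$ is chosen sufficiently small in terms of $\rho$ (specifically $a \ll \gamma^2$). Summing over the balanced $v$'s and dividing by the overcounting factor of at most $k+1$ (each clique is witnessed by at most $k+1$ sign-culprit variables), the total number of non-unate $(k+1)$-cliques in $G$ is at least $\delta_1 \binom{n}{k+1}$ for some $\delta_1 = \delta_1(\rho, a) > 0$. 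The main obstacle is the Kruskal--Katona step, since $P_v$ and $N_v$ need not partition $\binom{V \setminus \{v\}}{k-1}$ when clauses are missing; restricting to balanced $v$ with small $|U_v|$ (the bulk of balanced variables once $a$ is small) sidesteps this, but the resulting linear-in-$\gamma$ bound forces $a$ to be taken polynomially small in $\rho$.
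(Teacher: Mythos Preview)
Your argument is correct and follows essentially the same route as the paper: locate $\Omega_\rho(n)$ balanced variables with nearly-full links, apply Kruskal--Katona (in the form of the paper's \cref{thm:kruskalkatona}) to the link $(k-1)$-graphs to show that most $k$-subsets $U$ of $V\setminus\{v\}$ yield non-monochromatic link structure, then sum and divide by the overcount. The paper's bookkeeping differs only cosmetically---it bounds monochromatic $k$-cliques in $G_v\cup G_{\bar v}$ directly via convexity of $x\mapsto x^{k/(k-1)}$, getting $c^{k/(k-1)}+(1-c)^{k/(k-1)}<1$, whereas you bound the complements $\{U:\binom{U}{k-1}\cap N_v=\emptyset\}$ and $\{U:\binom{U}{k-1}\cap P_v=\emptyset\}$ separately and subtract, arriving at the same inequality. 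Your treatment is actually slightly more careful than the paper's in one respect: you explicitly subtract off the $O(a+u)$ fraction of $U$'s for which $T=\{v\}\cup U$ fails to support all $k+1$ clauses (in particular, the clause on $U$ itself), a step the paper's write-up glosses over but which is absorbed into its choice of constants.
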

\begin{proof}
Take $c = \rho/2k$, $N>c^{-1}$ and  $a > 0$ sufficiently small so that
$$1-kN\cdot a -c^\frac{k}{k-1}-(1-c)^\frac{k}{k-1}>0.$$
Fix some $G$ that is $\rho$-far from unate with $|G|> (1 - a){n\choose k}$. We first show that there are at least $cn$ variables $v\in X$ that are $c$-far from unate in $G$. Notice that we can obtain a unate subformula $G'\subset G$ by doing the following: for every variable $v\in X$, if $m(v)\geq m(\ol v)$, then we delete all clauses in $G$ containing the literal $\ol v$; otherwise we delete all clauses in $G$ containing the literal $v$. If there are less than  $c n$ variables $v\in X$ that are $c$-far from unate in $G$, then we can delete at most
\begin{align*}
    n\cdot c {n-1\choose k-1}+ cn \cdot {n-1\choose k-1} \le \rho{n\choose k}
\end{align*}
clauses to make $G$ unate, which contradicts the fact that $G$ is $\rho$-far from unate.
Also notice that there are at most $n/N$ vertices $v\in X$ with $m(v)+m(\ol v)\leq (1-Na){n-1\choose k-1}$, as otherwise we would have
$$|G|<\frac{1}{k}\left(\frac{n}{N}(1-Na){n-1\choose k-1}+\left(1-\frac{1}{N}\right)n{n-1\choose k-1}\right)\leq (1-a){n\choose k}.$$
Combining the above tells us that there are at least $\left(c-N^{-1}\right)n$ variables that are $c$-far from unate with $m(v)+m(\ol v)> (1-Na){n-1\choose k-1}$.

For every such $v$, consider the positive and negative link $(k-1)$-graphs $G_v$ and $G_{\ol v}$. By the Kruskal--Katona theorem in the form of~\cref{thm:kruskalkatona}, the number of $k$-cliques in $G_v$ is at most $\left((k-1)!|G_v|/(n-1)^{k-1}\right)^\frac{k}{k-1}\cdot (n-1)^{k}/k!$ and the number of $k$-cliques in $G_{\ol v}$ is at most $\left((k-1)!|G_{\ol v}|/(n-1)^{k-1}\right)^\frac{k}{k-1}\cdot (n-1)^{k}/k!$. Since $|G_v|=m(v)$, $|G_{\ol v}|=m(\ol v)$ and $v$ is $c$-far from unate, we know that $|G_v|,|G_{\ol v}| \geq c{n-1\choose k-1}$. By convexity, the number of monochromatic $k$-cliques in $G_v\cup G_{\ol v}$ is at most 
$$\left(c^\frac{k}{k-1}+(1-c)^\frac{k}{k-1}+o(1)\right){n-1\choose k}.$$
Meanwhile, since $|G_v|+|G_{\ol v}|=m(v)+m(\ol v)> (1-Na){n-1\choose k-1}$, the number of $k$-cliques in $G_v\cup G_{\ol v}$ is at least $(1-Nk\cdot a ){n-1\choose k}$.
The number of non-monochromatic $k$-cliques in $G_v\cup G_{\ol v}$ is thus at least
$$\left(1-N k\cdot a-c^\frac{k}{k-1}-(1-c)^\frac{k}{k-1}+o(1)\right){n-1\choose k}.$$

Hence $G$ has at least
$$\frac{1}{k+1}\left(c-N^{-1}\right)n\cdot \left(1-N k\cdot a-c^\frac{k}{k-1}-(1-c)^\frac{k}{k-1}+o(1)\right){n-1\choose k}$$
non-unate $(k+1)$-cliques. Choosing any $0<\delta_1 < \left(c-N^{-1}\right)\left(1-N k\cdot a-c^\frac{k}{k-1}-(1-c)^\frac{k}{k-1}\right)$ yields the result.
\end{proof}

It remains to show that the 2-blowup of any non-unate simplex is non-minimal.

\begin{prop}\label{prop:blowupsofk+1cliquesnonminimal}
Let $H$ be a non-unate simple formula with $k+1$ clauses on $k+1$ variables. Then the 2-blowup $H[2]$ is non-minimal.
\end{prop}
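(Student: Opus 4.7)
The plan is to single out one specific blowup clause in $H[2]$ and show no witness exists for it. Label the variables $v_1,\ldots,v_{k+1}$ so that $C_i\in H$ omits $v_i$, and let $\sigma_{i,j}\in\{\pm 1\}$ record the sign of $v_j$ in $C_i$ (defined for $j\neq i$). Then $H$ is unate if and only if every column of the matrix $(\sigma_{i,j})$ is constant, so non-unateness supplies a column index $j_0$ together with indices $i_1,i_2\neq j_0$ satisfying $\sigma_{i_1,j_0}\neq\sigma_{i_2,j_0}$. Let $C^{\ast}$ be the blowup of $C_{j_0}$ in $H[2]$ in which every variable appears unprimed; I claim $C^{\ast}$ has no witness, which forces $H[2]$ to be non-minimal.

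The first step is to pin down every primed coordinate of a hypothetical witness $\mathsf w$. Satisfying $C^{\ast}$ forces $\mathsf w(v_j)=(\sigma_{j_0,j}+1)/2$ for each $j\neq j_0$, while $\mathsf w(v_{j_0})$, $\mathsf w(v_{j_0}')$, and each $\mathsf w(v_j')$ with $j\neq j_0$ are \emph{a priori} free. For each $j\neq j_0$, I then look at the blowup of $C_{j_0}$ obtained from $C^{\ast}$ by priming only the coordinate at position $j$; under $\mathsf w$ every literal at a position other than $j$ already evaluates to $1$, so to avoid satisfying this blowup the primed literal at position $j$ must evaluate to $0$, giving $\mathsf w(v_j')=1-\mathsf w(v_j)$.

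With every $v_j'$ ($j\neq j_0$) fixed in this way, the next step is to analyze the blowups of $C_{i'}$ for each $i'\neq j_0$. A direct computation shows that the literal at a position $j\neq j_0,i'$ in such a blowup evaluates to $1$ precisely when its primedness $\epsilon_j$ equals $\one[\sigma_{j_0,j}\neq\sigma_{i',j}]$; hence among the $2^k$ blowups of $C_{i'}$ exactly two---namely those with $\epsilon_j$ fixed by this equation for $j\neq j_0,i'$ and $\epsilon_{j_0}\in\{0,1\}$ free---have every literal at a position other than $j_0$ equal to $1$, and each of these two is satisfied if and only if the $j_0$-literal evaluates to $1$ under $\mathsf w$. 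For both to be unsatisfied, both $\mathsf w(v_{j_0})$ and $\mathsf w(v_{j_0}')$ must disagree with $\sigma_{i',j_0}$, forcing $\mathsf w(v_{j_0})=\mathsf w(v_{j_0}')=(1-\sigma_{i',j_0})/2$ for every $i'\neq j_0$. Applying this with $i'=i_1$ and $i'=i_2$ yields conflicting demands on $\mathsf w(v_{j_0})$, so no witness for $C^{\ast}$ can exist. The main obstacle is the bookkeeping in this second step: one must verify that the two \emph{potentially satisfied} blowups per $C_{i'}$ have been correctly identified, after which the non-unateness of column $j_0$ delivers the contradiction essentially for free.
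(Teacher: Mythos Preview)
Your proof is correct and follows essentially the same approach as the paper's: both isolate a variable $v_{j_0}$ that appears with opposite signs in two other clauses, show that any witness for the unprimed blowup of the clause omitting $v_{j_0}$ must pin down every $v_j,v_j'$ with $j\neq j_0$, and then derive conflicting constraints on $\mathsf w(v_{j_0})$ from those two other clauses. The only difference is presentational: the paper first relabels and negates variables so that the distinguished clause is monotone (making all $\sigma_{j_0,j}=+1$), whereas you keep track of the sign matrix $(\sigma_{i,j})$ explicitly throughout.
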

\begin{proof}Let $\{v_1,\dots, v_{k+1}\}$ be the variable set of $H$, and $\{v_1,v_1',\dots,v_{k+1},v_{k+1}'\}$ be the variable set of $H[2]$. Up to negating and/or relabeling some of the variables, $H$ contains a subformula $H'$ comprising $3$ clauses: a clause $C_1 = v_2 \cdots v_{k-1} v_k v_{k+1}$, a clause $C_2$ with positive literal $v_1$ and variables $v_2, v_3, \ldots, v_{k-1}, v_k$ (that may appear as positive or negative literals), and a clause $C_3$ with negative literal $\overline{v_1}$ and variables $v_2, v_3, \ldots, v_{k-2}, v_{k-1}, v_{k+1}$ (that may appear as positive or negative literals).

We show that $H'[2]$ is non-minimal. Since
$$\{v_2v_3\cdots v_{k+1},v_2'v_3\cdots v_{k+1},v_2v_3'\cdots v_{k+1},\dots,v_2v_3\cdots v_{k+1}'\}\subset H'[2],$$
if $\mathsf w\in\{0,1\}^{k+1}$ is a witness to the clause $v_2v_3\cdots v_{k+1}\in H'[2]$, then we must have
$$\mathsf w(v_2)=\dots=\mathsf w(v_{k+1})=1,$$ 
$$\mathsf w(v_2')=\dots=\mathsf w(v_{k+1}')=0.$$
Now if $\mathsf w(v_1)=1$, then there must be a clause in the 2-blowup of $C_2$ that gets mapped to 1. If $\mathsf w(v_1)=0$, then there must be a clause in the 2-blowup of $C_3$ that gets mapped to 1. This is a contradiction, so there is no witness $\mathsf w\in\{0,1\}^{k+1}$ to the clause $v_2v_3\cdots v_{k+1}\in H'[2]$.
\end{proof}

Combining~\cref{lem:formula-blowup}, ~\cref{p:nonpositivenonunate} and~\cref{prop:blowupsofk+1cliquesnonminimal}, we have the following:
\begin{cor}\label{cor:manyblowupsofk+1cliques}
For every $\rho>0$, we can take sufficiently small $a := a(\rho) > 0$ such that the following holds for $n$ sufficiently large. If $G$ is a simple $\rho$-far from unate formula with at least $(1 - a){n\choose k}$ clauses, then there exists $\delta_2 = \delta_2(\rho, a)$ such that $G$ has at least $\delta_2 n^{2(k+1)}$ simple non-minimal subformulae on $2(k+1)$ variables.
\end{cor}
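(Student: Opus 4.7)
The plan is to combine the three preceding ingredients: \cref{p:nonpositivenonunate} provides many non-unate $(k+1)$-cliques in $G$, \cref{lem:formula-blowup} promotes copies of a fixed small formula to copies of its $2$-blowup, and \cref{prop:blowupsofk+1cliquesnonminimal} certifies that such $2$-blowups are non-minimal. Given $\rho > 0$, I would fix the value $a = a(\rho) > 0$ supplied by \cref{p:nonpositivenonunate} and assume $n$ is large enough for that proposition to apply.

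First, applying \cref{p:nonpositivenonunate} to $G$ yields at least $\delta_1 \binom{n}{k+1} = \Omega(n^{k+1})$ non-unate simple $(k+1)$-cliques. Since a $(k+1)$-clique has only $k+1$ variables and $k+1$ clauses, there are finitely many isomorphism classes of such objects (the count depending only on $k$). By pigeonhole, there is some fixed non-unate simple $(k+1)$-clique $H$ such that $G$ contains at least $\epsilon n^{k+1}$ copies of $H$ as a subformula, for some $\epsilon = \epsilon(\rho, a) > 0$.

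Next, \cref{lem:formula-blowup} applied with this $\epsilon$ produces some $\delta = \delta(\epsilon, H) > 0$ and shows that $G$ contains at least $\delta n^{2(k+1)}$ copies of the $2$-blowup $H[2]$. By \cref{prop:blowupsofk+1cliquesnonminimal}, $H[2]$ is non-minimal, and it has $v(H[2]) = 2(k+1)$ variables by construction. Setting $\delta_2 := \delta$ gives the desired bound.

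The only subtlety to check is that each copy of $H[2]$ is itself \emph{simple}, as required by the conclusion. This follows because $H$ is simple: distinct clauses $C, C' \in H$ have different $k$-variable supports in $H$, so in the $2$-blowup, the clauses arising from $C[2]$ and from $C'[2]$ occupy disjoint ``position patterns'' among the doubled variables and therefore have distinct variable sets; and within a single $C[2]$, the $2^k$ clauses correspond to the $2^k$ distinct prime/non-prime choices, again giving distinct variable sets. I do not expect any real obstacle here — this is just a short bookkeeping assembly of the three lemmas, with the slightly nontrivial point being only to verify simplicity of $H[2]$ and to invoke finiteness of the list of isomorphism classes of non-unate $(k+1)$-cliques so that pigeonhole applies.
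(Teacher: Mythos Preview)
Your proposal is correct and follows exactly the approach the paper intends: the paper simply states that the corollary follows by combining \cref{lem:formula-blowup}, \cref{p:nonpositivenonunate}, and \cref{prop:blowupsofk+1cliquesnonminimal}, and you have supplied the natural details (pigeonhole over finitely many isomorphism classes of non-unate $(k+1)$-cliques, then blow up). Your verification that $H[2]$ is simple is a useful detail the paper leaves implicit.
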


\subsection{Reducing the problem to bounding the size of some $\cI^*(n, \zeta)$}
\label{ss:reduction}

We are now able to give an upper bound on the number of minimal $k$-SAT formulae that do not satisfy a particular witness condition, thereby reducing the problem of bounding the size of $\cI(n)$ to bounding the size of $\cI^*(n,\zeta)$.

We first show that for any collection of $\rho$-close to unate formulae $\cG$, all but few elements of $\bigcup_{G\in\cG}\{H\subset G:H\text{ is minimal}\}$ lie in $\cI^*(n,\zeta)$ up to replacing some of the variables by their negations.

\begin{lemma}\label{lem:familyrhoclosetounate}
For any collection $\cG$ of $\rho$-close to unate formulae and $\zeta>0$, we have
\begin{align*}
    \left|\bigcup_{G\in \cG}\{H\subset G:H\textup{ is minimal}\}\right|&\leq  |\cG|\cdot \left({n\choose\floor{\zeta n}}\left({\floor{\zeta n}\choose k}+1\right)2^{{n\choose k}-{\floor{\zeta n}\choose k}}2^{\rho{n\choose k}}\right)\\
    &\qquad+2^n|\cI^*(n,\zeta)|.
\end{align*}

\end{lemma}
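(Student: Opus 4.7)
The plan is to separate the minimal $H$ in $\bigcup_{G\in\cG}\{H\subset G:H\text{ minimal}\}$ according to whether $H$ can be negated into $\cI^*(n,\zeta)$. Call class~(I) those minimal $H$ in the union for which $\tau\cdot H\in\cI^*(n,\zeta)$ for some $\tau\in\{0,1\}^n$, and call class~(II) the rest.

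For class~(I), choosing any canonical $\tau_H$ per such $H$ yields an injection $H\mapsto(\tau_H,\tau_H\cdot H)$ into $\{0,1\}^n\times\cI^*(n,\zeta)$, bounding class~(I) by $2^n|\cI^*(n,\zeta)|$.

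For class~(II), I would bound the contribution per container $G\in\cG$ by the quantity
\[
A:={n\choose\floor{\zeta n}}\bigl({\floor{\zeta n}\choose k}+1\bigr)2^{{n\choose k}-{\floor{\zeta n}\choose k}}2^{\rho{n\choose k}}.
\]
Fix $G\in\cG$, decompose $G=G_0\cup R_G$ with $G_0$ unate and $|R_G|\le\rho{n\choose k}$, and let $\tau_G$ be any negation pattern with $\tau_G\cdot G_0$ monotone. For $H\subset G$ in class~(II), in particular $\tau_G\cdot H\notin\cI^*(n,\zeta)$; handling the alternative failure mode $m(x)<m(\overline x)$ by a local auxiliary negation that reassigns such $H$ into class~(I), one concludes that $\tau_G\cdot H$ has some clause $C^*$ whose every witness in $\tau_G\cdot H$ has at least $\zeta n$ variables assigned~$1$. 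Pick one such witness $\mathsf w^*$ and a set $T\subseteq(\mathsf w^*)^{-1}(1)$ with $|T|=\floor{\zeta n}$.

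The key structural observation is that, since $\mathsf w^*$ uniquely satisfies $C^*$ in $\tau_G\cdot H$ while every clause of $\tau_G\cdot G_0$ is monotone, at most one clause of $\tau_G\cdot H\cap\tau_G\cdot G_0$ can have all its variables in $T$, and if such a clause exists it must equal $C^*$. Thus $\tau_G\cdot H$ is determined by the following choices: the set $T$ (in ${n\choose\floor{\zeta n}}$ ways), the on-$T$ monotone clause of $\tau_G\cdot H$ if any (in ${\floor{\zeta n}\choose k}+1$ ways), an arbitrary off-$T$ subset of $\tau_G\cdot G_0$ (in $2^{{n\choose k}-{\floor{\zeta n}\choose k}}$ ways), and an arbitrary subset of $\tau_G\cdot R_G$ (in at most $2^{\rho{n\choose k}}$ ways). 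Multiplying gives $A$ per $G$; summing over $\cG$ yields the $|\cG|\cdot A$ contribution, and adding the class~(I) bound concludes the proof.

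The main difficulty is the sub-case where $\tau_G\cdot H$ has a small-weight witness for every clause but fails the $m(x)\ge m(\overline x)$ condition; the plan is to show that a further local negation fixing the offending variables still produces an element of $\cI^*(n,\zeta)$, placing such $H$ into class~(I) instead of class~(II) and thereby preserving the clean dichotomy that drives the counting argument above.
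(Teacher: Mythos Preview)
Your decomposition into class~(I) (those $H$ with some negation landing in $\cI^*(n,\zeta)$, bounded by $2^n|\cI^*(n,\zeta)|$) and class~(II) (the rest, bounded per container by $A$ via the large-witness-set-$T$ counting) is exactly the paper's argument, and your itemization of the four factors in $A$ matches the paper's verbatim. You have also correctly isolated the one genuine difficulty: the subcase where $\tau_G\cdot H$ satisfies the witness condition but fails $m(x)\ge m(\overline x)$ for some $x$.

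Your proposed fix, however, does not work as stated. Negating a variable $x$ replaces every witness $\mathsf w$ by the assignment with the $x$-bit flipped, so the number of ones in a witness can increase. If the set $S$ of variables you must negate to enforce $m(x)\ge m(\overline x)$ has size $\Theta(n)$, then witnesses of $\sigma\tau_G\cdot H$ may have up to $|S|$ more ones than the corresponding witnesses of $\tau_G\cdot H$, and the ``fewer than $\zeta n$ ones'' condition can fail. So you cannot simply reassign such $H$ to class~(I).

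For comparison, the paper does not carry out your fix; instead it chooses $\tau_G$ extremally (the passage appears to contain typos, but the intent is an extremal choice of the negation of $G$) and then asserts that $m(x)\ge m(\overline x)$ holds in $G'=\tau_G\cdot G$. Even granting that assertion, note it is a statement about $G'$, not about the subformula $H'\subset G'$; the implication ``$H'\notin\cI^*(n,\zeta)\Rightarrow H'$ has a clause with a witness of weight $\ge\zeta n$'' still requires $m(x)\ge m(\overline x)$ in $H'$ itself, which does not follow from the condition on $G'$. So the paper's handling of exactly the subcase you flag is no more complete than yours, and you have located the weak point precisely.
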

\begin{proof}For every $G\in\cG$, there exists a negation of some variables that maps $G$ to some formula $G'$ having at most $\rho{n\choose k}$ non-monotone clauses. Without loss of generality, we assume that $G'$ has the greatest number of non-monotone clauses. In other words, no negation of variables maps $G$ to a formula having more non-monotone clauses. Notice that this automatically implies that $m(x)\geq m(\ol {x})$ in $G'$ for every $x\in X$.

We give an upper bound on the number of minimal subformulae of $G'$ that do not lie in $\cI^*(n, \zeta)$. By \cref{d:cistar}, for all $H\notin\mathcal \cI^*(n, \zeta)$, there exists a clause in $H$ and a witness $\mathsf w\in\{0,1\}^n$ to that clause, such that $\mathsf w$ assigns at least $\floor{\zeta n}$ variables to 1. Let $(v_1,\dots,v_{\floor{\zeta n}})$ be the lexicographically smallest $(\floor{\zeta n})$-tuple of variables, such that there exists some witness $\mathsf w$ to some clause in $H$ with $\mathsf w(v_1)=\dots=\mathsf w(v_{\floor{\zeta n}})=1$. Notice that since $\mathsf w$ is a witness to some clause in $H$, it cannot map two clauses in $H$ to 1. Hence at most one clause of the form $v_{i_1}\cdots v_{i_k}$ can appear in $H$.

We obtain a minimal formula $H\subset G'$ with $H\notin\mathcal \cI^*(n, \zeta)$ by making the following set of choices.
\begin{itemize} 
    \item We pick the non-monotone clauses in $H$. There are at most $2^{\rho {n \choose k}}$ such ways to make this selection since $G'$ has at most $\rho {n \choose k}$ non-monotone clauses.
    \item We pick the lexicographically smallest subset of literals $\{v_1, \ldots, v_{\floor{\zeta n}}\}$ so that there exists some witness $\mathsf w$ to a clause in $H$ with $\mathsf w(v_1)=\dots=\mathsf w(v_{\floor{\zeta n}})=1$. There are at most ${n \choose \floor{\zeta n}}$ ways to make such a choice.
    \item We choose the monotone clauses of the form $v_{i_1}\cdots v_{i_k}$, of which there are $({\floor{\zeta n}\choose k}+1)$ possible subsets since we can include at most one such clause.
    \item Finally, we choose the remaining monotone clauses, with variables not entirely drawn from $v_1, \ldots, v_{\floor{\zeta n}}$.
\end{itemize}
This yields the following upper bound on the number of  minimal formulae $H \subset G'$ with $H \not \in \cI^*(n, \zeta)$:
$$    {n\choose\floor{\zeta n}}\left({\floor{\zeta n}\choose k}+1\right)2^{{n\choose k}-{\floor{\zeta n}\choose k}}2^{\rho{n\choose k}}.$$

Hence all but at most $|\cG|\cdot \left({n\choose\floor{\zeta n}}\left({\floor{\zeta n}\choose k}+1\right)2^{{n\choose k}-{\floor{\zeta n}\choose k}}2^{\rho{n\choose k}}\right)$ elements of $\bigcup_{G\in\cG}\{H\subset G:H\text{ is minimal}\}$ lie in $\cI^*(n,\zeta)$ up to replacing some of the variables by their negations, and we have the desired bound.
\end{proof}

We now state our main reduction step.

\begin{prop}\label{prop:reducetoboundik}
For every $\zeta>0$ and $\theta>\log_23$ with $\pi(\cF_k,\theta)=1$, there exists $\epsilon>0$ such that for all sufficiently large $n$ we have
$$|\cI(n)| \leq 2^{(1-\epsilon)\binom{n}{k}} + 2^{n} \abs{\cI^*(n, \zeta)}.$$
\end{prop}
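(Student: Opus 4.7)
The plan is to apply the hypergraph container theorem (Theorem 3.1) to the finite family $\cB$ of simple non-minimal $k$-SAT formulae on at most $4k$ variables, producing a collection $\cG$ of containers with $\abs{\cG} = 2^{o(n^k)}$. Every minimal formula is $\cB$-free (a copy of a simple non-minimal $B$ inside an ambient $G$ would immediately force $G$ to be non-minimal, by restricting witnesses to the variables of $B$) and hence fits inside some container. By choosing the container parameter $\delta_0$ sufficiently small and invoking the supersaturation Lemmas 5.1 and 5.2 with the given $\theta$ (so that $\pi(\cF_k,\theta) = 1$), each $G \in \cG$ satisfies $\alpha_1(G) + \theta\cdot\alpha_{2,1}(G) \le 1 + \epsilon_1$ as well as $\alpha_{2,2}(G), \alpha_i(G) \le \epsilon_1$ for every $i \ge 3$, where $\epsilon_1 > 0$ can be made arbitrarily small.

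The key step is a dichotomy: each container $G$ is either \emph{small}, meaning $\wt(G) \le 1 - \eta$ for some fixed $\eta > 0$, or \emph{$\rho$-close to unate} for a suitably chosen $\rho > 0$ depending on $\zeta$. Fix a threshold $a_0 > 0$. If $\alpha_{2,1}(G) \ge a_0$, then since $\theta > \log_2 3$,
\[
\wt(G) \le \alpha_1(G) + (\log_2 3)\alpha_{2,1}(G) + O(\epsilon_1) \le 1 + \epsilon_1 - (\theta - \log_2 3)\,a_0 + O(\epsilon_1),
\]
which is bounded below $1$. Otherwise $\alpha_{2,1}(G) < a_0$; form the simple subformula $G^s$ by keeping one clause per $k$-subset of variables. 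Either $\abs{G^s} < (1-a)\binom{n}{k}$, in which case $\alpha_1 \le \abs{G^s}/\binom{n}{k}$ combined with the smallness of $\alpha_{\ge 2}(G)$ forces $\wt(G) < 1 - a/2$; or $\abs{G^s} \ge (1-a)\binom{n}{k}$, in which case Corollary 6.6 applied to $G^s$ forces $G^s$ to be $\rho$-close to unate---otherwise $G^s$, and hence $G$, would contain $\Omega(n^{2(k+1)})$ copies of some $B \in \cB$, contradicting the container property as soon as $\delta_0$ was chosen small enough. Since $\abs{G \setminus G^s} \le 2^k \alpha_{\ge 2}(G)\binom{n}{k}$ is also small, $G$ itself is close to unate, with only a slightly worse constant.

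Partition $\cG = \cG_A \cup \cG_B$ accordingly. For each small container $G \in \cG_A$, Proposition 5.4 implies that every minimal subformula of $G$ is obtained from a simple subformula of $G$ by adding $o(n^k)$ extra clauses, yielding at most $2^{\wt(G)\binom{n}{k} + o(n^k)} \le 2^{(1-\eta/2)\binom{n}{k}}$ minimal subformulae per container; summing over $\abs{\cG_A} \le 2^{o(n^k)}$ gives at most $2^{(1-\eta/3)\binom{n}{k}}$ in total. For the close-to-unate containers $\cG_B$, Lemma 6.7 directly provides the bound $\abs{\cG_B} \cdot X + 2^n \abs{\cI^*(n,\zeta)}$ with
\[
X = \binom{n}{\lfloor \zeta n \rfloor}\paren{\binom{\lfloor \zeta n \rfloor}{k}+1}\, 2^{\binom{n}{k} - \binom{\lfloor \zeta n \rfloor}{k} + \rho\binom{n}{k}}.
\]
Since $\binom{\lfloor \zeta n \rfloor}{k}/\binom{n}{k} \to \zeta^k$, choosing $\rho < \zeta^k/2$ makes $\log_2 X \le (1 - \zeta^k/2 + o(1))\binom{n}{k}$, and the factor $\abs{\cG_B} = 2^{o(n^k)}$ is absorbed, giving the required bound $2^{(1-\epsilon)\binom{n}{k}} + 2^n \abs{\cI^*(n,\zeta)}$. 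The main delicacy is sequencing the cascade of constants: $\zeta$ is given, then $\rho < \zeta^k/2$, then $a = a(\rho)$ from Corollary 6.6, then $a_0$ small enough so that the excess $\abs{G \setminus G^s}$ fits inside $\rho\binom{n}{k}$, and finally $\epsilon_1$ (equivalently $\delta_0$) small enough for everything to close up.
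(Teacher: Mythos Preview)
Your proposal is correct and follows essentially the same route as the paper: apply the container theorem to the family $\cB$ of simple non-minimal formulae, use the supersaturation lemmas to control the $\alpha$-parameters of each container, split the containers into a ``small weight'' part (handled by counting simple subformulae together with Proposition~5.4) and a ``close-to-unate'' part (handled by Corollary~6.6 and Lemma~6.7), and finally absorb the $2^{o(n^k)}$ container count. The only organisational difference is that the paper thresholds directly on $\alpha_1(G)$ to define $\cG^{\sm}$ and $\cG^{\lg}$, whereas you first threshold on $\alpha_{2,1}(G)$ and then on $\abs{G^s}$; since $\alpha_1 + \theta\,\alpha_{2,1} \le 1+\epsilon_1$ forces these conditions to be essentially equivalent, the two decompositions coincide. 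Your choice of $\cB$ on at most $4k$ variables is in fact the correct one (needed for Lemma~5.1); the paper's statement of $2(k+1)$ in this section is a minor slip.
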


\begin{proof}

Given $\zeta>0$ and $\theta>\log_23$ with $\pi(\cF_k,\theta)=1$, we first choose $\rho>0$ sufficiently smaller than $\zeta$ and $\theta$, and then choose $\epsilon>0$ sufficiently smaller than $\rho$.

Let $\cB$ be the set of all simple non-minimal $k\SAT$ formulae on at most $2(k+1)$ vertices. Applying~\cref{thm:gencontub} with $\cB$ and some $\delta > 0$ sufficiently smaller than $\epsilon$, we obtain a collection $\cG=\cG(\cB,\delta)$ of formulae that meets the following conditions:
    \begin{itemize}
        \item Every $\cB$-free formula with $n$ variables is a subformula of some $G \in \cG$;
        \item For every $G \in \cG$ and $B \in \cB$, $G$ has at most $\delta n^{v(B)}$ copies of $B$;
        \item $\abs{\cG} \le n^{C(\cB,\delta)n^{k-1/(m(\cB)-1)}}$.
    \end{itemize}
Moreover, due to~\cref{lem:supersat-1}, ~\cref{lem:supersat-2} and the assumption that $\delta$ is sufficiently smaller than $\epsilon$, every $G\in \cG(\cB,\delta)$ also meets the following conditions:
\begin{itemize}
    \item $\alpha_1(G)+\theta\cdot\alpha_{2,1}(G)< 1+\epsilon$;
    \item $\log_23 \cdot \alpha_{2,2}(G)+\log_24 \cdot \alpha_3(G)+\dots+\log_2(2^k+1) \cdot \alpha_{2^k}(G)<\epsilon$.
\end{itemize}

Every formula in $\cI(n)$ is $\cB$-free, so it is a subformula of some $G\in\cG(\cB,\delta)$. Hence we have
$$|\cI(n)|\leq\left|\bigcup_{G\in \cG(\cB,\delta)}\{H\subset G:H\text{ is minimal}\}\right|.$$
Define a partition $\cG(\cB,\delta)=\cG^{\sm} \sqcup \cG^{\lg}$ by
\begin{align*}
    \cG^{\sm}&=\left\{G\in \cG(\cB,\delta):\alpha_1(G)\leq 1- 2\epsilon - \frac{3\epsilon }{\theta - \log_23}\right\},\\
    \cG^{\lg}&=\left\{G\in \cG(\cB,\delta):\alpha_1(G)> 1- 2\epsilon - \frac{3\epsilon }{\theta - \log_23}\right\}.
\end{align*}

We first give an upper bound on the number of minimal subformulae of every $G\in \cG^{\sm}$. According to~\cref{prop:nearly-simple}, every minimal $k\SAT$ formula can be made simple by deleting $o(n^k)$ clauses. This implies that for every $\epsilon>0$, we have
\begin{align*}
    (\text{\# of minimal subformulae of $G$})&\leq \left(\sum_{t=1}^{o( n^k)}{2^k{n\choose k}\choose t}\right)\cdot(\text{\# of simple minimal subformulae of $G$})\\
    &\leq 2^{\frac\epsilon2 {n\choose k}}\cdot (\text{\# of simple minimal subformulae of $G$})
\end{align*}
for sufficiently large $n$. 

We now show that every $G\in \cG^{\sm}$ has $\wt(G)<1-\epsilon$, so $G$ has at most $2^{(1-\epsilon){n\choose k}}$ simple minimal subformulae. For all $G\in \cG^{\sm}$, since
$$\log_23 \cdot \alpha_{2,2}(G)+\log_24 \cdot \alpha_3(G)+\dots+\log_2(2^k+1) \cdot \alpha_{2^k}(G)<\epsilon,$$ we have
$$\wt(G) < \alpha_1(G)+\log_23\cdot \alpha_{2,1}(G)+\epsilon.$$
If $\alpha_1(G)+\theta\cdot\alpha_{2,1}(G)\leq 1-2\epsilon$, then since $\theta>\log_23$, we clearly have $\wt(G) < 1-\epsilon$. If $\alpha_1(G)+\theta\cdot\alpha_{2,1}(G)> 1-2\epsilon$, then since $\alpha_1(G)\leq 1- 2\epsilon - \frac{3\epsilon}{\theta - \log_23}$, we have $\alpha_{2,1}(G)>\frac{3\epsilon}{\theta-\log_23}$, so that
$$\wt(G) < \alpha_1(G) + \theta \cdot \alpha_{2,1}(G) - (\theta - \log_23)\cdot \alpha_{2,1}(G) +\epsilon< (1 + \epsilon) - 3 \epsilon +\epsilon = 1 - \epsilon.$$
Hence every $G\in \cG^{\sm}$ has $\wt(G)<1 - \epsilon$, which implies that $G$ has at most $2^{(1-\epsilon){n\choose k}}$ simple minimal subformulae. Therefore, when $n$ is sufficiently large, every $G\in \cG^{\sm}$ has at most $2^{\left(1-\frac\epsilon2\right){n\choose k}}$ minimal subformulae, and we have the upper bound
\begin{align}\label{eq:gsmall}
    \left|\bigcup_{G\in \cG^{\sm}}\{H\subset G:H\text{ is minimal}\}\right|\leq |\cG^{\sm}|\cdot 2^{\left(1-\frac\epsilon2\right){n\choose k}}.
\end{align}

We now give an upper bound on $\left|\bigcup_{G\in \cG^{\lg}}\{H\subset G:H\text{ is minimal}\}\right|$. For all $G\in\cG^{\lg}$, we know that
$$\alpha_1(G) > 1 - 2\epsilon - \frac{3\epsilon}{\theta - \log_23}.$$
Choose a simple subformula $G_s\subset G$ with $|G_s|=\alpha_1(G){n\choose k}$.
Since $G$ has at most $\delta n^{v(B)}\leq \delta n^{2(k+1)}$ copies of any $B\in\cB$,  we know that $G_s\subset G$ has a total of at most $$|\cB|\cdot \delta n^{2(k+1)}\overset{(*)}{<}\delta_2\left(\frac{\rho}{2},2\epsilon + \frac{3\epsilon }{\theta - \log_23}\right)n^{2(k+1)}$$
simple non-minimal formulae on $2(k+1)$ variables, where $\delta_2$ is defined in~\cref{cor:manyblowupsofk+1cliques}. Here (*) is due to the fact that $\delta$ is much smaller than $\epsilon$ (and thus also $\rho,\theta$). Since $\epsilon$ is significantly smaller than $\rho$, ~\cref{cor:manyblowupsofk+1cliques} applies, so $G_s$ is $\frac{\rho}{2}$-close to unate.

Meanwhile, since $\alpha_1(G)$ is large, $\alpha_{2,1}(G),\alpha_{2,2}(G),\alpha_3(G),\dots,\alpha_{2^k}(G)$ are all very small. In particular, we have
$$\alpha_{2,1}(G)<\frac{1}{\theta}\left((1+\epsilon)-\left(1 - 2\epsilon - \frac{3\epsilon }{\theta - \log_23}\right)\right)=\frac{3\epsilon}{\theta}+\frac{3\epsilon }{\theta(\theta - \log_23)}$$
and
$$\log_23 \cdot \alpha_{2,2}(G)+\log_24 \cdot \alpha_3(G)+\dots+\log_2(2^k+1) \cdot \alpha_{2^k}(G)<\epsilon.$$
This gives
\begin{align*}
    |G\setminus G_s|&\leq (\alpha_{2,1}(G)+\alpha_{2,2}(G)){n\choose k}+2\alpha_{3}(G){n\choose k}+\dots+(2^k-1)\alpha_{2^k}(G){n\choose k}\\
    &\overset{(**)}{<}\frac{\rho}{2}{n\choose k},
\end{align*}
where (**) is due to the fact that $\epsilon$ is sufficiently smaller than $\rho$. 

Combining the above, we know that every $G\in \cG^{\lg}$ is $\rho$-close to unate. Hence $\cG^{\lg}$ is a family of $\rho$-close to unate formulae. By~\cref{lem:familyrhoclosetounate}, we have
\begin{align}\label{eq:glarge}
\begin{split}
\left|\bigcup_{G\in \cG^{\lg}}\{H\subset G:H\text{ is minimal}\}\right|&\leq  |\cG^{\lg}|\cdot\left({n\choose\floor{\zeta n}}\left({\floor{\zeta n}\choose k}+1\right)2^{{n\choose k}-{\floor{\zeta n}\choose k}}2^{\rho{n\choose k}}\right)\\
&\qquad+2^n|\cI^*(n,\zeta)|\\
&\overset{(***)}{\leq} |\cG^{\lg}|\cdot  2^{(1-\epsilon){n\choose k}}+2^n|\cI^*(n, \zeta)|,
\end{split}
\end{align}
for sufficiently large $n$, where (***) is due to the fact that $\epsilon$ is sufficiently smaller than $\rho$.

Since
$$|\cG^{\sm}|,|\cG^{\lg}|\leq|\cG(\cB,\delta)|\leq n^{C(\cB,\delta)n^{k-1/(m(\cB)-1)}},$$
combining~\cref{eq:gsmall} and~\cref{eq:glarge}, we have the following upper bound on $|\cI(n)|$ for $n$ sufficiently large:
\begin{align*}
    |\cI(n)|&\leq \left|\bigcup_{G\in \cG(\cB,\delta)}\{H\subset G:H\text{ is minimal}\}\right|\\
    &\leq |\cG^{\sm}|\cdot 2^{\left(1-\frac\epsilon2\right){n \choose k}}+|\cG^{\lg}|\cdot 2^{(1-\epsilon){n\choose k}}+2^n|\cI^*(n, \zeta)|\\
    &\leq n^{C(\cB,\delta)n^{k-1/(m(\cB)-1)}}\left(2^{\left(1-\frac\epsilon2\right){n \choose k}}+2^{(1-\epsilon){n\choose k}+n}\right)+2^n|\cI^*(n, \zeta)|\\
    &\leq 2^{\left(1-\frac\epsilon3\right){n \choose k}}+ 2^n|\cI^*(n, \zeta)|. \qedhere
\end{align*}
\end{proof}

\section{Nearly unate formulae}\label{s:istar}

In this section we prove the following recursive bound.
\begin{theorem}\label{t:boundcistar}
There exist $\zeta>0$ and $c>0$ such that for all $n$, we have
\begin{multline*}
|\cI^*(n, \zeta)| \le 2^{n\choose k}+\exp_2\sqb{(1-c){n\choose k}}+\sum_{i=1}^{k-2}\exp_2\sqb{i(1-c){n\choose k-1}}|\cI(n-i)|
\\+\exp_2\sqb{(1-c)k{n\choose k-1}} |\cI(n-k)|+\exp_2\sqb{{n\choose k}-cn^{k-2}}.
\end{multline*}
\end{theorem}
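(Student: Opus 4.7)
The plan is to prove the bound by classifying formulae in $\cI^*(n,\zeta)$ according to which variables appear as negative literals, and then counting each class using the minimality, witness, and sign conditions available. A key preliminary reduction is that for any $G \in \cI^*(n,\zeta)$ and any variable $v$, the subformula $G' \subseteq G$ obtained by deleting all clauses containing a literal of $v$ is again minimal on the $n-1$ remaining variables: a witness $\mathsf w$ to a surviving clause $C$ restricts to a witness in $X \setminus \{v\}$, since any clause of $G'$ satisfied by the restriction would already have been satisfied by $\mathsf w$ in $G$. Iterating, peeling off $i$ variables yields a minimal formula on $n-i$ variables, which is what justifies the $|\cI(n-i)|$ and $|\cI(n-k)|$ factors on the right-hand side.

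For $G \in \cI^*(n,\zeta)$, set $S(G) = \{x : m(\overline{x}) \ge 1\}$ and $i = |S(G)|$. When $i = 0$, the formula $G$ is monotone (since every variable not in $S(G)$ appears only positively), so $G$ is one of at most $2^{\binom{n}{k}}$ formulae, yielding the leading term. When $1 \le i \le k-2$, I specify $S(G)$ (a polynomial-in-$n$ factor that is absorbed into the exponent), specify $G|_{X \setminus S(G)}$ as a minimal formula on $n-i$ variables (contributing $|\cI(n-i)|$), and finally encode all clauses through $S(G)$. A naive $k$-subset-by-$k$-subset bound on the data of clauses through $S(G)$ gives $\sim 3^{i\binom{n-1}{k-1}}$, so to reach the required $\exp_2[i(1-c)\binom{n}{k-1}]$ one must achieve a genuine constant-fraction saving in the exponent per special variable. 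For $i \ge k$, I pick a $k$-subset $T \subseteq S(G)$, use $|\cI(n-k)|$ for $G|_{X \setminus T}$, and bound the data of clauses through $T$ by the analogous argument to produce the $\exp_2[(1-c)k\binom{n}{k-1}]\,|\cI(n-k)|$ term. The borderline value $i = k-1$, along with formulae that appear monotone but whose witnesses fail certain coherence conditions, are absorbed into the correction terms $\exp_2[(1-c)\binom{n}{k}]$ and $\exp_2[\binom{n}{k} - cn^{k-2}]$; the $cn^{k-2}$ saving in the last term tracks a forbidden codimension-two configuration forced by the witness condition on a would-be monotone hypergraph.

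The main obstacle is establishing the $(1-c)$ savings in counting clauses through $S(G)$. The sign constraint $m(x) \ge m(\overline{x})$ on its own gives at most a $2^{-1}$ factor per variable, far short of a constant-fraction saving in the exponent. The extra input must come from the witness condition: for each $x \in S(G)$ there is at least one clause $C$ containing $\overline{x}$, and its witness $\mathsf w_C \in \{0,1\}^n$ (with $|\mathsf w_C^{-1}(1)| < \zeta n$) cannot satisfy any other clause of $G$, ruling out a large collection of potential clauses through $x$ and restricting the sign patterns available on the remaining ones. Combining this local restriction with the global sign constraint $m(x) \ge m(\overline{x})$, and showing that the savings are simultaneous and essentially independent across $x \in S(G)$, is the core combinatorial work. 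This extends the approach of \cite[Section~8]{IK12} (which handled $k = 3$, where only $i \in \{0,3\}$ are relevant and the straddling case is simple) to general $k$, where the combinatorially richer sign patterns on clauses with $j$ variables from $S(G)$ (for $2 \le j \le k-1$) must also be tracked; this is the main technical difficulty.
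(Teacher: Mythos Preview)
Your decomposition by $|S(G)| = \#\{x : m(\overline{x}) \ge 1\}$ does not match the structure of the right-hand side and will not yield the stated bound. The terms $\exp_2[i(1-c)\binom{n}{k-1}]\,|\cI(n-i)|$ in the theorem do \emph{not} correspond to $|S(G)| = i$. In the paper, each such term arises from a very different event: the existence of a single $i$-tuple $u_1,\dots,u_i$ for which there are \emph{many} (at least $\beta_i n^{k-i}$) clauses of the specific form $\overline{u_1}\cdots\overline{u_i}v_1\cdots v_{k-i}$. Those $i$ variables are then peeled off, and the $(1-c)$ saving comes from the fact that this concentrated negative pattern, together with minimality, forbids a positive fraction of monotone clauses (via explicit non-minimal sub\-formulae), or else forces many $(k-1)$-subsets to have low positive/negative codegree, which is handled by Kruskal--Katona. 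Your plan has no analogue of this mechanism: the sign constraint $m(x)\ge m(\overline{x})$ and the existence of \emph{one} low-weight witness for a clause through $\overline{x}$ rule out only $O(1)$ clauses per $(k-1)$-set, not a constant fraction of the $\sim i\binom{n}{k-1}$ choices you must encode. The obstacle you flag is in fact the entire proof, and the classification by $|S(G)|$ gives no leverage on it.

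There is a second structural problem with the $i\ge k$ case. Peeling off a $k$-subset $T\subseteq S(G)$ leaves the variables in $S(G)\setminus T$ free to appear negatively in clauses through $T$, so the ``clauses through $T$'' data is not bounded by $\exp_2[k(1-c)\binom{n}{k-1}]$ unless you have already controlled \emph{all} negative occurrences --- which is exactly what you were trying to do. The paper's route is a telescoping refinement $\cI^*\supset\cI_1^*\supset\cI_2^*\supset\cI_3^*\supset\cI_4^*$: first discard formulae with a low-degree variable; then (using the witness bound) show the survivors have at most $\zeta n^k$ clauses with $j$ negative literals for each $j$; then peel off the above ``concentrated $i$-tuple'' cases; then impose a monotone-neighbourhood intersection condition (whose failure gives the $|\cI(n-k)|$ term via an entropy count on $2^k$-part partitions of $\binom{X}{k-1}$); finally, for the residual non-monotone formulae, a hypergraph-orientation lemma combined with Shearer's inequality produces the $\exp_2[\binom{n}{k}-cn^{k-2}]$ term. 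Your proposal would need to rediscover essentially all of these ingredients, and the $|S(G)|$ stratification is orthogonal to where the savings actually come from.
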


Our approach in this section extends the arguments in Ilinca--Kahn \cite[Section 8]{IK12}.
To obtain the above bound, we consider a sequence of subcollections of $\cI^*(n, \zeta)$ and bound the size of each one, showing that most of the formulae in $\cI^*(n, \zeta)$ are monotone.
\begin{enumerate}
\item We first show that most formulae in $\cI^*(n, \zeta)$ use each of the $n$ variables a comparable number of times amongst their clauses, thus disregarding those formulae that have very few occurrences of at least one variable.
\item We then observe that most formulae in $\cI^*(n, \zeta)$ have very few non-monotone clauses that contain at most $k-2$ negative literals.
\item Next, we show that most formulae in $\cI^*(n, \zeta)$ have very few non-monotone clauses that contain $k-1$ or $k$ negative literals.
\item Finally, we give an upper bound on the number of non-monotone formulae that satisfy the above three conditions.
\item We combine the above to obtain a recursive bound on $|\cI^*(n, \zeta)|$ that gives the desired stability result.
\end{enumerate}

We let $H(\cdot)$ denote the binary entropy function, $H(p)=-p\log_2p-(1-p)\log_2(1-p)$. Recall the standard inequality
\[
\sum_{t\leq p n}{n\choose i}\leq 2^{H(p)n} \quad \text{ for all } 0<p\le 1/2 \text{ and } n \in \NN.
\]

We also recall several definitions regarding $k\SAT$ formulae. Let $X=\{x_1,\dots,x_n\}$ denote the set of $n$ variables on which we count the $k$-SAT formulae in $\cI^*(n, \zeta)$. For every clause $C$ in a minimal formula $G$, we say that $\mathsf w\in\{0,1\}^n$ is a witness to $G$ if $\mathsf w$ maps $C$ to 1 and all other clauses in $G$ to $0$.

The following steps~\cref{s:startik} to~\cref{s:endik} roughly correspond to Steps 1 to 5 in~\cite[Section 8]{IK12}.

\subsection{Most minimal formulae have large support for each variable}\label{s:startik}We start by observing that most formulae in $\cI^*(n, \zeta)$ have lots of clauses involving $v$ for every variable $v\in X$. More precisely, we show that for most formulae in $\cI^*(n, \zeta)$, each variable $v \in X$ is used in at least $\frac{1}{10k}{n-1\choose k-1}$ clauses.

\begin{defn} Let $$\cI_1^*(n, \zeta)=\left\{G\in \cI^*(n, \zeta):\text{each variable $v\in X$ is used at least $\frac{1}{10 k}{n-1\choose k-1}$ times in $G$}\right\}.$$
\end{defn}
\begin{lemma}\label{l:step1}
We have
$$|\cI^*(n, \zeta)\setminus \cI_1^*(n, \zeta)| < \exp_2\sqb{\frac12 {n-1\choose k-1}} |\cI(n-1)|.$$
\end{lemma}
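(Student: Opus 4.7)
The plan is a direct enumeration: for each $G \in \cI^*(n,\zeta) \setminus \cI_1^*(n,\zeta)$, identify a variable $v$ that appears in at most $\frac{1}{10k}\binom{n-1}{k-1}$ clauses of $G$, and split $G$ as the disjoint union $G_v \sqcup G_{-v}$, where $G_v$ is the set of clauses containing $v$ (as either a positive or negative literal) and $G_{-v}$ is the set of clauses avoiding $v$ altogether.

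First I would observe that $G_{-v}$ is itself a minimal $k$-SAT formula on the variable set $X \setminus \{v\}$: any witness $\mathsf w \in \{0,1\}^n$ for a clause $C \in G_{-v}$ inside $G$ restricts to a witness for $C$ inside $G_{-v}$ on $n-1$ variables (no clause of $G_{-v}$ uses $v$, so the value of $\mathsf w(v)$ is irrelevant). Thus $G_{-v}$ contributes a factor of $|\cI(n-1)|$ to the count. The variable $v$ itself contributes a factor of $n$.

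Next I would bound the number of choices for $G_v$. Every clause using $v$ uses $v$ or $\overline{v}$, together with $k-1$ other variables drawn from the remaining $n-1$, with $2^{k-1}$ sign patterns on those other variables; this gives $2^k \binom{n-1}{k-1}$ possible clauses containing $v$. Writing $p = \frac{1}{10k \cdot 2^k}$ and using the standard entropy bound
\[
\sum_{t \le p N} \binom{N}{t} \le 2^{H(p) N}
\qquad \text{with} \qquad N = 2^k \binom{n-1}{k-1},
\]
I get that the number of choices for $G_v$ is at most $2^{H(p) \cdot 2^k \binom{n-1}{k-1}}$.

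The main (and only) quantitative step is verifying that $H(p) \cdot 2^k < \frac12$ with room to spare to absorb the factor of $n$. Using $H(p) \le p \log_2(e/p)$, one has
\[
H(p) \cdot 2^k \le \frac{1}{10 k}\bigl(\log_2(10ek) + k\bigr),
\]
which is bounded uniformly away from $\frac12$ for every $k \ge 2$ (for instance it is at most $0.4$). Combined with the trivial bound $\log_2 n = o\bigl(\binom{n-1}{k-1}\bigr)$, this gives
\[
n \cdot 2^{H(p) \cdot 2^k \binom{n-1}{k-1}} \le 2^{\frac12 \binom{n-1}{k-1}}
\]
for all sufficiently large $n$ (small $n$ being handled trivially since the right-hand side of the lemma dwarfs the total count $|\cI^*(n,\zeta)|$ for bounded $n$). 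Multiplying by the $|\cI(n-1)|$ factor coming from $G_{-v}$ yields the stated inequality. There is no significant obstacle in this argument; the numerical entropy computation is the only point that requires verification.
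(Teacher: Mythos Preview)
Your proposal is correct and follows essentially the same approach as the paper: choose a low-usage variable $v$, count the choices for the (small) set of clauses containing $v$ via the entropy bound $\sum_{t \le pN} \binom{N}{t} \le 2^{H(p)N}$ with $N = 2^k\binom{n-1}{k-1}$ and $p = 1/(10k\cdot 2^k)$, and observe that the remaining clauses form a minimal formula on $n-1$ variables. Your writeup is slightly more explicit than the paper's (you spell out why $G_{-v}$ is minimal and carry the entropy calculation further), but the argument is the same.
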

\begin{proof}
We count the number of $G\in \cI^*(n, \zeta)\setminus \cI_1^*(n, \zeta)$. There are at most $$n\sum_{t\leq \frac{1}{10 k}{n-1\choose k-1}}{{2^k{n-1\choose k-1}}\choose t}  < \exp_2\sqb{H\left(\frac{1}{ 10k\cdot 2^k}\right)2^k{n-1\choose k-1}}  < \exp_2\sqb{\frac12{n-1\choose k-1}} $$ ways to choose a variable $v\in X$ that occurs fewer than $\frac{1}{10 k} {n-1 \choose k-1}$ times in $G$, along with the clauses containing $v$. The remaining clauses in $G$ form a minimal formula on $n-1$ variables, so there are at most $|\cI(n-1)|$ ways to choose the remaining clauses. Combining gives the desired bound.
\end{proof}

Using the fact that every variable appears in many clauses in $\cI_1^*(n, \zeta)$ we are able to conclude that for formulae in $\cI^*_1(n, \zeta)$, most clauses are monotone. We make the following more precise observations that will prove useful in subsequent steps.
\begin{lemma}\label{lem:ci_1starproperties}
For every $G \in \cI_1^*(n, \zeta)$, the following are true.
\begin{enumerate}[label=(\alph*)]
    \item For every $k-1$ variables $v_1,\dots, v_{k-1}$, there are at most $\zeta n$ variables $w$ such that $v_1\cdots v_{k-1}\ol w\in G$. The same bound holds for $w$'s with $\ol v_1v_2\cdots v_{k-1}\ol w\in G$, $w$'s with $ v_1\ol v_2v_3 \cdots v_{k-1}\ol w\in G$,  $w$'s with $\ol v_1\ol v_2 v_3\cdots  v_{k-1}\ol w\in G$, and so on.
    \item For every variable $v$ and $j\in\{1,\dots,k-1\}$, there are at most $\zeta n^{k-1}$ clauses in $G$ containing the positive literal $v$ and exactly $j$ negative literals.
    \item For every variable $v$ and $j\in\{1,\dots,k-1\}$, there are at most $\zeta n^{k-1}$ clauses in $G$ containing the negative literal $\ol v$ and exactly $j$ negative literals other than $\ol v$.
    \item For every $i\in \{1,\dots,k\}$, there are at most $\zeta n^{k}$ clauses in $G$ having exactly $i$ negative literals.
    \item $G$ has at most $k\zeta n^{k}$ non-monotone clauses. 
    \item For every variable $v\in X$, $G$ has at least $\frac{1}{20k}  {n-1 \choose k-1} - (k-1)\zeta n^{k-1}$ monotone clauses containing $v$.
\end{enumerate}
\end{lemma}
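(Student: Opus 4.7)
The plan is to derive all six statements from the witness condition defining $\cI^*(n, \zeta)$, with (a) supplying the atomic bound and the rest following by routine counting (plus the size lower bound from $\cI_1^*$ and the $m(x) \ge m(\ol x)$ condition from $\cI^*$ for (f)). For (a), I would fix $v_1, \ldots, v_{k-1}$ and a negation pattern with $p$ of them appearing positively; let $M$ be the number of $w$'s with the specified clause $C_w = (\text{pattern})\ol w$ in $G$, and pick any witness $\mathsf w$ for some $C_{w_0}$. Then $\mathsf w$ must set the $p$ positive $v_i$'s to $1$ (to satisfy $C_{w_0}$) and must set every other $w \ne w_0$ with the same clause pattern to $1$ (to falsify $C_w$, which differs from $C_{w_0}$ only in the variable $w$, where $\ol w$ is the unique unfixed literal). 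Thus $\mathsf w$ assigns at least $p + M - 1$ variables to $1$, and the $\cI^*$ bound ``$< \zeta n$ variables set to $1$'' yields $M \le \zeta n - p + 1 \le \zeta n$. The same argument applies to each of the $2^{k-1}$ negation patterns listed in (a).

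For (b)--(d), I would run a codegree-style double count based on (a). To bound (b), distinguish one of the $j$ negative literals in each candidate clause as the ``$\ol w$'' of (a), enumerate the $k - 2$ remaining variables (those besides $v$ and $w$) in $\binom{n-2}{k-2}$ ways together with their $\binom{k-2}{j-1}$ negation patterns, and apply (a) to cap the number of $w$'s by $\zeta n$ per configuration. Dividing by the $j$-fold overcount gives at most $\frac{1}{j}\binom{n-2}{k-2}\binom{k-2}{j-1}\zeta n \le \zeta n^{k-1}$ clauses. The same setup handles (c) (with $\ol v$ placed among the fixed literals rather than $v$) and (d) (with no variable singled out in advance), producing the desired $\zeta n^{k-1}$ and $\zeta n^k$ bounds. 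Statement (e) is simply the sum of (d) over $i \in \{1,\ldots,k\}$.

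For (f), the $\cI_1^*$ condition gives $m(v) + m(\ol v) \ge \frac{1}{10k}\binom{n-1}{k-1}$, and the $\cI^*$ condition $m(v) \ge m(\ol v)$ upgrades this to $m(v) \ge \frac{1}{20k}\binom{n-1}{k-1}$. Summing (b) over $j \in \{1,\ldots,k-1\}$ shows that at most $(k-1)\zeta n^{k-1}$ of these $m(v)$ clauses are non-monotone, so subtracting gives the claimed lower bound on the number of monotone clauses containing $v$. I expect no genuine obstacle here: the lemma is essentially an unpacking of the witness definition into various counting estimates, and the only care required is in (a), namely to correctly identify, under each negation pattern, the precise set of variables the witness is forced to assign to $1$.
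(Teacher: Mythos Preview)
Your proposal is correct and follows essentially the same approach as the paper: part (a) is derived directly from the witness condition (the witness to one clause $C_{w_0}$ must set every other $w$ to $1$), parts (b)--(e) are codegree-style double counts built on (a), and (f) combines the $\cI_1^*$ degree lower bound with $m(v)\ge m(\ol v)$ and then subtracts the non-monotone contribution from (b). The only cosmetic differences are that the paper uses ordered $(k-2)$-tuples (bounding by $n^{k-2}$) rather than your binomial-coefficient formulation in (b), and derives (d) from (b)/(c) rather than directly from (a); both routes give the same bounds.
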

\begin{proof}We prove these properties of formulae $G\in \cI_1^*(n, \zeta)$.
\begin{enumerate}[label=(\alph*)] 
    \item If there are more than $\zeta n$ such $w$, then consider a witness $\mathsf w\in\{0,1\}^n$ to some $v_1\cdots v_{k-1}\ol w_0$. For any other $w\neq w_0$ such that  $v_1\cdots v_{k-1}\overline{w} \in G$, we must have $\mathsf w(w)=1$, a contradiction. Other cases are similar.
    \item For any $k-2$ literals $ \ol v_1, \dots, \ol v_{j-1}, v_j,\ldots,v_{k-2}$, by (a), $G$ contains at most $\zeta n$ clauses of the form $v\ol v_1 \cdots \ol v_{j-1} v_j\cdots v_{k-2}\ol w$. There are at most $n^{k-2}$ ways to choose these $k-2$ literals.
    \item Same as above.
    \item This easily follows from (b) and (c).
    \item Follows from (d).
    \item Since $G \in \cI^*(n, \zeta)$, we have $m(v) \ge m(\overline{v})$ for all variables $v\in X$. Since $G \in \cI_1^*(n, \zeta)$, we have $m(v) + m(\overline{v}) \ge \frac{1}{10k} {n-1\choose k-1}$. This implies $m(v) \ge \frac{1}{20k}{n-1\choose k-1}$. By (b), there are at most $(k-1)\zeta n^{k-1}$ non-monotone clauses that contain the positive literal $v$.
\end{enumerate}
\end{proof}

\subsection{Most minimal formulae have few clauses with negated literals} 
We will eventually show that most formulae in $\cI_1^*(n, \zeta)$ have very few clauses containing any negative literals. In this subsection, we give an upper bound on the number of formulae in $\cI_1^*(n, \zeta)$ having a large numbers of clauses containing $i$ negative literals, for any $1 \le i \le k-2$.

\begin{defn}For every $\beta_1>0$, we define 
\begin{multline*}
    \cI^*_{2,1}(n, \zeta,\beta_1)=\left\{G\in \cI_{1}^*(\zeta):\text{for all $u\in X$, $G$ contains at most $\beta_1n^{k-1}$ clauses}\right. \\\left.\text{of the form } \ol uv_1\cdots v_{k-1}\right\}.
\end{multline*}
For every $i\in\{2,\dots,k-2\}$ and $\beta_i>0$, we define
\begin{multline*}
    \cI^*_{2,i}(n, \zeta,\beta_1,\beta_i)=\left\{G\in \cI_{2,1}^*(n, \zeta,\beta_1):\text{for all $\{u_1,\dots,u_i\}\in{X\choose i}$, }\right. \\\left.\text{$G$ contains at most $\beta_in^{k-i}$ clauses of the form } \ol u_1\cdots\ol u_iv_1\cdots v_{k-i}\right\}.
\end{multline*}
For every vector $\vec\beta=(\beta_1,\dots,\beta_{k-2})$, we define $\cI_2^*(n, \zeta,\vec\beta)=\bigcap_{i=2}^{k-2}\cI^*_{2,i}(n, \zeta,\beta_1,\beta_i)$.
\end{defn}
We will first show that most formulae in $\cI_1^*(n, \zeta)$ are in $\cI_{2,1}^*(n, \zeta,\beta_1)$ for every $\beta_1 > 0$ and sufficiently small $\zeta$.

\subsubsection{Formulae with lots of clauses $\overline{u}v_1 \cdots v_{k-1}$}

\begin{lemma}\label{l:step2.1}
For every $\beta_1>0$, there exist $\zeta>0$ and $c > 0$ such that
$$|\cI^*_{1}(n, \zeta)\setminus \cI_{2,1}^*(n, \zeta,\beta_1)|\leq \exp_2\sqb{(1-c){n\choose k}}+\exp_2\sqb{(1-c){n\choose k-1}}|\cI (n-1)|.$$
\end{lemma}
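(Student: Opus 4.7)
Given $G \in \cI_1^*(n,\zeta) \setminus \cI_{2,1}^*(n,\zeta,\beta_1)$, there exists a ``bad'' variable $u \in X$ such that $G$ contains more than $\beta_1 n^{k-1}$ clauses of the form $\ol u v_1 \cdots v_{k-1}$. The plan is to first fix such a $u$ (contributing a factor of $n$) and then decompose $G = G_0 \sqcup G_u$, where $G_0$ collects the clauses not involving $u$ and $G_u$ is the remainder. Since every subformula of a minimal formula is minimal (via the witness definition), $G_0 \in \cI(n-1)$, contributing at most $|\cI(n-1)|$ choices. It then remains to bound the number of valid $G_u$ given $u$ and $G_0$.

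For each edge $e \in B_u := \{e \in \binom{X\setminus\{u\}}{k-1} : \ol u\cdot e \in G\}$, the $\cI^*$ property furnishes a witness set $Z_e \supseteq e$ with $u \notin Z_e$ and $|Z_e| < \zeta n$, such that the assignment $x \mapsto \mathbf{1}[x \in Z_e]$ (with $u \mapsto 0$) satisfies only $\ol u\cdot e$ in $G$. Unpacking which clauses this assignment satisfies yields two structural constraints: (i) $G_0$ contains no monotone $k$-clause with vertex set inside $Z_e$; and (ii) $B_u \cap \binom{Z_e}{k-1} = \{e\}$, so the $\ol u$-monotone link is locally very sparse within each witness set. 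Lemma~\ref{lem:ci_1starproperties}(b),(c) further restrict the number of non-simple $u$-clauses to $O_{\zeta}(n^{k-1})$, contributing only a $2^{o(\binom{n}{k-1})}$ factor.

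The counting then splits into two regimes based on $|G|$. In the sparse regime $|G| \le (1-c)\binom{n}{k}$ (for some $c > 0$ to be chosen from $\beta_1,\zeta$), the number of such $G$ is at most $\binom{2^k \binom{n}{k}}{\le (1-c)\binom{n}{k}} \le \exp_2\sqb{(1-c')\binom{n}{k}}$ by a standard entropy estimate, yielding the first term in the lemma. In the dense regime $|G| > (1-c)\binom{n}{k}$, after fixing $u$ and $G_0$ I would select a canonical witness triple $(u,e^*,Z^*)$ (e.g.\ by lex-smallest choice) and use constraints (i),(ii) to encode $G_u$ efficiently: constraint (ii) forces all but one of the $\sim\binom{|Z^*|}{k-1}$ potential $\ol u$-monotone clauses on $\binom{Z^*}{k-1}$ to be absent, while constraint (i) together with the density assumption $|G| > (1-c)\binom{n}{k}$ forces $G_0$ to already have many monotone clauses, which in turn rules out many $u$-related clauses by cross-minimality.

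The main obstacle will be the dense regime: verifying that the witness-induced constraints cumulatively save the required $\Omega(\binom{n}{k-1})$ bits when encoding $G_u$ beyond $G_0$, so that the total number of compatible $(G_0,G_u)$ pairs is at most $\exp_2\sqb{(1-c)\binom{n}{k-1}}|\cI(n-1)|$. The delicate point is that the savings from (i) and (ii) depend on the typical size of the canonical witness $Z^*$, and the degenerate case $|Z^*| = k-1$ (where constraint (i) is vacuous) must be handled separately---either by showing that the existence of $>\beta_1 n^{k-1}$ such minimal-sized witnesses already forces $|G|$ into the sparse regime via Lemma~\ref{lem:ci_1starproperties}(a), or by leveraging the near-monotone clause restrictions this case induces to compress $G_u$ in a different way.
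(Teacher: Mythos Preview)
Your proposal has two significant gaps.

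First, the sparse-regime bound is incorrect. You claim $\binom{2^k\binom{n}{k}}{\le (1-c)\binom{n}{k}} \le \exp_2\sqb{(1-c')\binom{n}{k}}$, but the entropy estimate gives $\binom{N}{\le pN} \le 2^{H(p)N}$ with $N = 2^k\binom{n}{k}$ and $p = (1-c)/2^k$, so the right-hand side is $\exp_2\sqb{H((1-c)/2^k)\cdot 2^k\binom{n}{k}}$. Since $H(p)\gg p$ for small $p$ (e.g.\ $H(1/4)\cdot 4 \approx 3.2$ for $k=2$), this exceeds $2^{\binom{n}{k}}$ and is nowhere near $2^{(1-c')\binom{n}{k}}$. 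Even after using \cref{lem:ci_1starproperties}(e) to restrict to monotone clauses, the number of subsets of the $\binom{n}{k}$ monotone clauses of size at most $(1-c)\binom{n}{k}$ is still essentially $2^{\binom{n}{k}}$ when $c<1/2$. A split on $|G|$ alone cannot produce the first term of the lemma.

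Second, and more fundamentally, you are missing the structural mechanism that actually drives both terms. The paper does not split on $|G|$; it analyses the two link $(k-1)$-graphs $N_u^+=\{e:u\cdot e\in G\}$ and $N_u^-=\{e:\ol u\cdot e\in G\}$ through their $(k-2)$-codegrees. Writing $S_1,S_2$ for the $(k-2)$-sets of low codegree in $N_u^+,N_u^-$ respectively and $T$ for those of high codegree in both, the hypothesis $|N_u^-|>\beta_1 n^{k-1}$ forces $|T|$ or $|S_1|$ to be large. When $|T|$ is large, the key is the forbidden pattern $\{uv_1\cdots v_{k-1},\ \ol u v_1\cdots v_{k-2}v_k,\ v_1\cdots v_k\}$: each $(k-2)$-set in $T$, paired with its many extensions into $N_u^+$ and $N_u^-$, rules out $\Theta(n^2)$ \emph{specific} monotone clauses from $G$, giving the $\exp_2\sqb{(1-c)\binom{n}{k}}$ term. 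When $|S_1|$ is large, one also shows $|S_2|$ is large (via the $\cI_1^*$ lower bound on $m(u)$), and then $N_u^+,N_u^-$ are encoded using Kruskal--Katona on the portions living over $S_1\setminus S_2$ and $S_2\setminus S_1$, saving a constant fraction of $\binom{n}{k-1}$ bits and yielding the second term.

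Your witness constraints (i),(ii) do not recover either of these savings: constraint (i) restricts $G_0$, not $G_u$, and constraint (ii) rules out at most $\binom{\zeta n}{k-1}$ clauses per witness with no control on overlaps across the $\beta_1 n^{k-1}$ witnesses. The approach as outlined does not close.
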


\begin{proof}

For every $\beta_1>0$, we choose $\xi>0$ sufficiently smaller than $\beta_1$, choose $\theta_1>0$  sufficiently smaller than $\xi$, and then choose $\zeta>0$  sufficiently smaller than $\theta_1$. Take a small constant $c=c(\beta_1,\xi,\theta_1,\zeta)>0$.

For every $G\in \cI^*_{1}(n,\zeta)\setminus \cI_{2,1}^*(n, \zeta,\beta_1)$, by definition, there exists some $u\in X$ such that $G$ contains more than $\beta_1n^{k-1}$ clauses of the form $\ol uv_1\cdots v_{k-1}$. Set $X_u=X\setminus \{u\}$ and
\begin{align*}
    N_u^+&=\left\{\{v_1,\dots,v_{k-1}\}\subset X_u:uv_1\cdots v_{k-1}\in G\right\},\\
    N_u^-&=\left\{\{v_1,\dots,v_{k-1}\}\subset X_u:\ol uv_1\cdots v_{k-1}\in G\right\},\\
    S_1&=\left\{\{v_1,\dots,v_{k-2}\}\subset X_u:\codeg_{N_u^+}(\{v_1,\dots,v_{k-2}\})<\theta_1n\right\},\\
    S_2&=\left\{\{v_1,\dots,v_{k-2}\}\subset X_u:\codeg_{N_u^-}(\{v_1,\dots,v_{k-2}\})<\theta_1n\right\},\\
    T&={X_u\choose k-2}\setminus(S_1\cup S_2),
\end{align*}
where  $\codeg_{N_u^+}$ (resp. $\codeg_{N_u^-}$) is defined to be
$$\codeg_{N_u^+}(\{v_1,\dots,v_{k-2}\})=|\{w\in X_u: \{v_1,\dots,v_{k-2},w\}\in N_u^+\}|.$$
Since there are at most
$$|S_2|\theta_1 n+\left({n-1\choose k-2}-|S_2|\right)n\leq (|S_1|+|T|)n+\frac{\theta_1 n^{k-1}}{(k-2)!}$$
clauses of the form $\ol uv_1\cdots v_{k-1}$, we have
$$\beta_1n^{k-1}\leq (|S_1|+|T|)n+\frac{\theta_1 n^{k-1}}{(k-2)!},$$
and at least one of the following: 
\begin{enumerate}
    \item $|T|>\theta_1 n^{k-2}$; 
    \item $|T|\leq \theta_1 n^{k-2}$ and $|S_1|\geq \left(\beta_1-\frac{(k-2)!+1}{(k-2)!}\cdot \theta_1\right)n^{k-2}>\xi n^{k-2}$. (Here we used the fact that  $\theta_1$ is sufficiently smaller than $\xi$.)
\end{enumerate}

\begin{claim}
For sufficiently small $c>0$, there are at most $\exp_2\left[(1-c){n\choose k}\right]$ formulae $G\in \cI^*_{1}(\zeta)\setminus \cI_{2,1}^*(n, \zeta,\beta_1)$ where $|T| > \theta_1 n^{k-2}$, for $T$ defined as above.
\end{claim}
\begin{proof}
Consider one such $G$.
Notice that if $\{v_1,\dots,v_{k-2},v_{k-1}\}\in N_u^+$ and $\{v_1,\dots,v_{k-2},v_{k}\}\in N_u^-$, then we cannot have $v_1\cdots v_{k-2}v_{k-1}v_k\in G$ due to the non-minimal formula
$$\{v_1\cdots v_{k-2}v_{k-1}u, v_1\cdots v_{k-2}v_{k}\ol u, v_1\cdots v_{k-2}v_{k-1}v_k\}$$
(as it is impossible to satisfy only the first clause). Hence after specifying (1) $u$ and $T$, (2) the elements in $N_u^+$ containing some element in $T$, and (3) the elements in $N_u^-$ containing some element in $T$, we know at least 
$$\frac{1}{{k\choose 2}}\cdot |T|\cdot \theta_1n\cdot \theta_1n>\frac{1}{{k\choose 2}}\cdot \theta_1n^{k-2}\cdot \theta_1n\cdot \theta_1n=\frac{\theta_1^3n^k}{{k\choose 2}}$$
monotone clauses that cannot belong to $G$. 
To specify some $G \in \cI_1^*(n, \zeta) \backslash \cI_{2, 1}(n,\zeta, \beta_1)$ where $|T| > \theta_1 n^{k-2}$, it suffices to make the following choices. 

\begin{itemize}
\item We choose a variable $u \in X$ where $G$ has more than $\beta_1 n^{k-1}$ clauses $\overline{u} v_1 \cdots v_{k-1}$. There are at most $n$ such choices.
\item We pick the set $T$. Since each element of $T$ is a set of $k-2$ variables, there are at most $2^{{n\choose k-2}}$ choices for $T$.
\item We choose the elements of $N_u^+$ and $N_u^-$ that contain some element of $T$. Since $N_u^+,N_u^-\subset {X_u\choose k-1}$ and $T\subset {X_u\choose k-2}$, the number of choices is at most $2^{|T|n} \cdot 2^{|T|n} \leq \exp_2[2\theta_1n^{k-1}]$.
\item We choose the non-monotone clauses in $G$.  \cref{lem:ci_1starproperties} tells us that $G$ has at most $k\zeta n^k$ non-monotone clauses, so the number of such choices is at most $\binom{(2^k-1)\binom{n}{k}}{\le k \zeta n^k} = \exp_2[O_k(\zeta \log(1/\zeta))\binom{n}{k}]$.
\item Finally, we choose the monotone clauses in $G$. Per above, we know that there are at least $\theta_1^3 n^k / {k \choose 2}$ monotone clauses that cannot belong to $G$; this implies the simple upper bound
$\exp_2\left[{n\choose k}-\frac{\theta_1^3n^k}{{k\choose 2}}\right]$.
\end{itemize}
Hence the number of possible $G$ is at most
$$n\cdot 2^{{n\choose k-2}}\cdot \exp_2[2\theta_1n^{k-1}]\cdot  \exp_2\left[O_k(\zeta \log(1/\zeta))\binom{n}{k}+{n\choose k}-\frac{\theta_1^3n^k}{{k\choose 2}}\right]\overset{(*)}{\leq} \exp_2\left[(1-c){n\choose k}\right],$$
where (*) is due to the fact that $\zeta$ is sufficiently smaller than $\theta_1$, and $c$ is small.
\end{proof}

\begin{claim}
For sufficiently small $c>0$, there are at most $\exp_2\sqb{(1-c){n\choose k-1}}|\cI(n-1)|$ formulae $G\in \cI^*_{1}(n,\zeta)\setminus \cI_{2,1}^*(n, \zeta,\beta_1)$ where $|T|\leq \theta_1 n^{k-2}$ and $|S_1| >\xi n^{k-2}$, for $S_1, T$ defined as above.
\end{claim}
\begin{proof}
Consider one such $G$. We can uniquely specify $G$ via the following 5 choices:
\begin{enumerate}
    \item $u$;
    \item the monotone clauses containing $u$;
    \item the clauses containing $\ol u$ where all other literals are positive;
    \item the clauses containing (i) either $u$ or $\ol u$; (ii) some negative literal other than $\ol u$;
    \item the clauses not involving the variable $u$.
\end{enumerate}
Observe that choosing (2) and (3) is equivalent to choosing $N_u^+$ and $N_u^-$. Since the number of monotone clauses in $G$ using $u$ is at least $\frac{1}{20}{n-1\choose k-1}-(k-1)\zeta n^{k-1}$ according to \cref{lem:ci_1starproperties}, and also at most $\frac{1}{k-1}(|S_1|\theta_1n+({n-1\choose k-2}-|S_1|)n)$, we have the inequality
$$\frac{1}{20}{n-1\choose k-1}-(k-1)\zeta n^{k-1}\leq \frac{1}{k-1}\left(|S_1|\theta_1n+\left({n-1\choose k-2}-|S_1|\right)n\right).$$
After some rearrangement, this gives an upper bound on the size of $S_1$ and thus a lower bound on the size of $S_2$:
\begin{align*}
    &|S_1|\leq \frac{1}{1-\theta_1}\left(\frac{1}{(k-2)!}+(k-1)^2\zeta-\frac{0.9}{20(k-2)!}\right)n^{k-2},\\
    &|S_2|\overset{(*)}{\geq} {n\choose k-2}-\theta_1n^{k-2}-\frac{1}{1-\theta_1}\left(\frac{1}{(k-2)!}+(k-1)^2\zeta-\frac{0.9}{20(k-2)!}\right)n^{k-2}\overset{(**)}{\geq}\xi n^{k-2}.
\end{align*}
Here (*) is due to the fact that $|T|\leq\theta_1n^{k-2}$, and (**) is due to the fact that $\theta_1$ is sufficiently smaller than $\xi$, and $\zeta$ is sufficiently smaller than $\theta_1$. We then count the ways to choose such $G$.
\begin{itemize} 
\item We choose a variable $u \in X$ where $G$ has more than $\beta_1 n^{k-1}$ clauses $\overline{u} v_1 \cdots v_{k-1}$. There are at most $n$ such choices.
\item We choose sets $S_1, S_2, T$. There are at most $2^{{n\choose k-2}} \cdot 2^{{n\choose k-2}} = 4^{{n\choose k-2}}$ to choose $S_1, S_2$ (and consequently $T$).
\item We then choose which elements of $N_u^+$ meet $S_1 \cup T$ and which elements of $N_u^-$ meet $S_2 \cup T$. The number of such choices is at most
\begin{align*}
    2^{|T|n}\left(\sum_{t=1}^{\theta_1n}{n\choose t}\right)^{|S_1|} \cdot 2^{|T|n}\left(\sum_{t=1}^{\theta_1n}{n\choose t}\right)^{|S_2|}&\leq \exp_2\sqb{ 2\theta_1n^{k-1}+H(\theta_1)n (|S_1|+|S_2|)}\\
    &\leq \exp_2\sqb{ 2\theta_1n^{k-1}+H(\theta_1)n^{k-1}}.
\end{align*}
\item We then choose the elements of $N_u^-$ that only meet $S_1\setminus S_2$ and the elements of $N_u^+$ that only meet $S_2\setminus S_1$. By the Kruskal--Katona theorem in the form of \cref{thm:kruskalkatona}, the number of such choices is at most
\begin{align*}
\exp_2 &\sqb{ \frac{((k-2)!|S_1\setminus S_2|)^\frac{k-1}{k-2}}{(k-1)!}+\frac{((k-2)!|S_2\setminus S_1|)^\frac{k-1}{k-2}}{(k-1)!} } \\
&\leq \exp_2 \sqb{ \frac{((k-2)!)^\frac{k-1}{k-2}}{(k-1)!}\left(|S_1\setminus S_2|^\frac{k-1}{k-2}+|S_2\setminus S_1|^\frac{k-1}{k-2} \right)}\\
&\overset{(*)}\le \exp_2 \sqb{\frac{((k-2)!)^\frac{k-1}{k-2}}{(k-1)!} \left(\xi^\frac{k-1}{k-2}+\left(\frac{1}{(k-2)!}-\xi\right)^\frac{k-1}{k-2}\right)n^{k-1}}.
\end{align*}
We obtain (*) by noting that since $x^{(k-1)/(k-2)}$ is convex and $|S_2|\geq \xi n^{k-2}$, we have
$$|S_1\setminus S_2|^\frac{k-1}{k-2}+|S_2\setminus S_1|^\frac{k-1}{k-2}\leq |S_1|^\frac{k-1}{k-2}+\left({n\choose k-2}-|S_1|\right)^\frac{k-1}{k-2}\leq \left(\xi^\frac{k-1}{k-2}+\left(\frac{1}{(k-2)!}-\xi\right)^\frac{k-1}{k-2}\right)n^{k-1}.$$
\item We choose those non-monotone clauses containing the variable $u$ and at least one negated literal that is not $\ol u$. By \cref{lem:ci_1starproperties}, there are at most
$$\left((k-1)!\sum_{t=1}^{\zeta n^{k-1}}{{n\choose k-1}\choose t}\right)^{2(k-1)}=\exp_2\left[O_k(\zeta\log(1/\zeta)){n\choose k-1}\right]$$
choices.
\item We choose the clauses not containing $u$, which we can do in at most $|\cI (n-1)|$ ways.
\end{itemize}
Consequently the number of $G \in \cI^*_{1}(n,\zeta)\setminus \cI_{2,1}^*(n,\zeta,\beta_1)$ with $|T|\leq \theta_1 n^{k-2}$ and $|S_1|>\xi n^{k-2}$ is bounded above by
\begin{align*}
    &n \cdot 4^{{n\choose k-2}} \cdot  \exp_2\left[2\theta_1n^{k-1}+H(\theta_1)n^{k-1}+\frac{((k-2)!)^\frac{k-1}{k-2}}{(k-1)!}\left(\xi^\frac{k-1}{k-2}+\left(\frac{1}{(k-2)!}-\xi\right)^\frac{k-1}{k-2}\right)n^{k-1}\right.\\
    &\left.\hspace{8cm}+O_k(\zeta\log(1/\zeta)){n\choose k-1}\right]|\cI (n-1)|\\
    &\overset{(*)}{\leq}\exp_2\sqb{(1-c){n\choose k-1}}|\cI (n-1)|,
\end{align*}
where (*) is due to the fact that $\theta_1$ and $\zeta$ are sufficiently smaller than $\xi$, and $c$ is small.
\end{proof}
Combining the above two cases, we have
$$|\cI^*_{1}(n,\zeta)\setminus \cI_{2,1}^*(n, \zeta,\beta_1)|\leq \exp_2\sqb{(1-c){n\choose k}}+\exp_2\sqb{(1-c){n\choose k-1}}|\cI (n-1)|.$$
\end{proof}

\subsubsection{Formulae with lots of clauses $\overline{u_1} \cdots \ol u_i v_1 \cdots v_{k-i}$}
Next, we bound the number of formulae in $\cI_{2,1}^*(n, \zeta,\beta_1)$ that have many clauses of the form $\ol u_1\cdots \ol u_iv_1\cdots v_{k-i}$ for some $2\leq i\leq k-2$ and $u_1,\dots,u_i\in X$.

\begin{lemma}\label{l:step2.2}
Fix some  $i\in\{2,\dots,k-2\}$. Then, for any $\beta_i>0$, there exist $\beta_1>0$, $\zeta>0$ and $c > 0$ such that
$$|\cI^*_{2,1}(n, \zeta,\beta_1)\setminus \cI_{2,i}^*(n, \zeta,\beta_1,\beta_i)|\leq \exp_2\left[(1-c){n\choose k}\right]+\exp_2\left[i(1-c){n\choose k-1}\right]|\cI(n-i)|.$$
\end{lemma}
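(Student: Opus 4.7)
The argument generalizes Lemma \ref{l:step2.1} by simultaneously handling $i$ distinguished variables $u_1,\ldots,u_i$. Fix $\beta_i > 0$ and choose parameters hierarchically: $\xi$ sufficiently smaller than $\beta_i$, then $\theta_i$ sufficiently smaller than $\xi$, then $\beta_1$ sufficiently smaller than $\theta_i$, then $\zeta$ sufficiently smaller than $\beta_1$, and finally a small $c > 0$. For each $G \in \cI^*_{2,1}(n,\zeta,\beta_1) \setminus \cI^*_{2,i}(n,\zeta,\beta_1,\beta_i)$, identify (in at most $\binom{n}{i}$ ways) variables $u_1,\ldots,u_i$ such that $G$ has more than $\beta_i n^{k-i}$ clauses of the form $\overline{u_1}\cdots\overline{u_i}v_1\cdots v_{k-i}$. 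Set $X' = X \setminus \{u_1,\ldots,u_i\}$ and let
\begin{align*}
N^+ &= \{\{v_1,\ldots,v_{k-i}\} \subseteq X' : u_1 \cdots u_i v_1 \cdots v_{k-i} \in G\},\\
N^- &= \{\{v_1,\ldots,v_{k-i}\} \subseteq X' : \overline{u_1}\cdots\overline{u_i}v_1 \cdots v_{k-i} \in G\},
\end{align*}
so $|N^-| \ge \beta_i n^{k-i}/(k-i)!$. Partition $\binom{X'}{k-i-1} = S_1 \sqcup S_2 \sqcup T$ by thresholding codegrees in $N^+$ and $N^-$ at $\theta_i n$, exactly as in Lemma \ref{l:step2.1}. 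The same averaging argument yields the dichotomy that either $|T| > \theta_i n^{k-i-1}$ or $|S_1| > \xi n^{k-i-1}$.

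The main non-minimality observation generalizes the triple $\{u v_1 \cdots v_{k-1},\, \overline{u}v_1\cdots v_{k-2}w,\, v_1\cdots v_{k-1} w\}$ of Lemma \ref{l:step2.1}: for $v_0 \in \binom{X'}{k}$, the monotone $k$-clause on $v_0$ cannot lie in $G$ whenever, for every $S \subseteq \{1,\ldots,i\}$, $G$ contains a clause $A_S = \widetilde u_1 \cdots \widetilde u_i p^{(S)}_1\cdots p^{(S)}_{k-i}$ (with $\widetilde u_j = \overline{u_j}$ iff $j \in S$) whose positive literals $p^{(S)}_\ell$ all lie in $v_0$. Indeed, any candidate witness of a monotone clause on $v_0$ must assign $v_0 = 1$ and some $(a_1,\ldots,a_i) \in \{0,1\}^i$ to $(u_1,\ldots,u_i)$; the clause $A_S$ with $S = \{j : a_j = 0\}$ is then also satisfied, while enlarging the witness by setting additional non-$u$ variables to $1$ only activates more clauses. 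The extreme patterns $A_\emptyset$ and $A_{\{1,\ldots,i\}}$ come from $N^+$ and $N^-$ directly, while the mixed-sign $A_S$ with $|S| \in \{1,\ldots,i-1\}$ are bounded by the $\cI^*_{2,1}$ hypothesis to at most $\beta_1 n^{k-1}$ clauses per type. The counting then splits by case: in the large-$|T|$ case, combining the $\Omega(\theta_i^2 n^{k-i+1})$ sandwich extensions in $N^+$, $N^-$ with the mixed-sign populations yields $\Omega_k(\binom{n}{k})$ forbidden monotone $k$-subsets, giving $\exp_2\sqb{(1-c)\binom{n}{k}}$; in the large-$|S_1|$ case, applying Kruskal--Katona to the positive and negative links of each $u_j$ separately (paralleling the convexity inequality in Lemma \ref{l:step2.1}) bounds the clauses incident to $u_j$ by $\exp_2\sqb{(1-c)\binom{n}{k-1}}$, and multiplying over the $i$ variables with a factor $|\cI(n-i)|$ for the clauses not involving any $u_j$ yields the second term.

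The principal obstacle is the large-$|T|$ case. The mixed-sign $A_S$ with $1 \le |S| \le i-1$ are only upper-bounded (not lower-bounded) by $\cI^*_{2,1}$, so their populations may a priori be very small or even zero, in which case the full $2^i$-clause forbidden pattern yields no forbidden monotone subsets and the direct saving vanishes. Handling this degenerate case likely requires a secondary argument: when the mixed-sign populations are small, the structure of $G$ is much more restricted (the links $G_{u_j}$ and $G_{\overline{u_j}}$ have few monotone subclauses of certain shapes), and a direct counting argument using Kruskal--Katona on these constrained links should provide the necessary saving. Alternatively, an inductive reduction to the $(i-1)$-distinguished variable case via analysis of the link of $u_i$ may circumvent the issue by reducing the problem to a $(k-1)$-uniform variant of itself.
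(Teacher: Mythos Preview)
Your proposed non-minimality certificate is the wrong one, and the obstacle you flag at the end is fatal to your approach rather than a technicality. You require, for each $S\subseteq[i]$, a clause $A_S$ with sign pattern $\widetilde u_j=\ol{u_j}$ iff $j\in S$ and positive part inside $v_0$; as you note, nothing forces the mixed-sign $A_S$ to exist, so the pattern yields no forbidden monotone $k$-sets and the large-$|T|$ case collapses. Your suggested fallbacks (Kruskal--Katona on constrained links, or induction on $i$) are not developed and it is not clear they recover a $\Theta(\binom{n}{k})$ saving.

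The paper avoids this entirely by using a much shorter certificate. The key non-minimal subformula has only $i+2$ clauses:
\[
\{v_1\cdots v_{k-i}w_1\cdots w_{i-1}z,\ v_1\cdots v_{k-i}w_1\cdots w_{i-1}u_1,\ \dots,\ v_1\cdots v_{k-i}w_1\cdots w_{i-1}u_i,\ \ol{u_1}\cdots\ol{u_i}v_1\cdots v_{k-i}\},
\]
so one needs only the all-negative clause on $\mathbf v=\{v_1,\dots,v_{k-i}\}$ together with, for each $j$, the \emph{monotone} clause $u_jv_1\cdots v_{k-i}w_1\cdots w_{i-1}$ sharing the same $(k-1)$-base. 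Accordingly the decomposition is not your $S_1/S_2/T$ on $\binom{X'}{k-i-1}$ but rather: for each $\mathbf v\in M:=N^-$ set $N_{u_j,\mathbf v}=\{\{w_1,\dots,w_{i-1}\}:u_jv_1\cdots v_{k-i}w_1\cdots w_{i-1}\in G\}$, and split $M=A\sqcup B$ according to whether $|\bigcap_j N_{u_j,\mathbf v}|\ge\theta_i n^{i-1}$. When $|A|$ is large, each $\mathbf v\in A$ and $\{w_1,\dots,w_{i-1}\}\in\bigcap_j N_{u_j,\mathbf v}$ forbids every monotone clause $v_1\cdots v_{k-i}w_1\cdots w_{i-1}z$, giving the $\exp_2[(1-c)\binom{n}{k}]$ term. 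When $|B|$ is large, for many $(k-1)$-subsets $\{w_1,\dots,w_{k-1}\}$ at least one of $u_1w_1\cdots w_{k-1},\dots,u_iw_1\cdots w_{k-1}$ is absent from $G$, so the choices for monotone clauses through $\{u_1,\dots,u_i\}$ drop from $2^i$ to $2^i-1$ on a positive fraction of $\binom{X'}{k-1}$; combined with the $\beta_1$-bound on clauses $\ol{u_j}v_1\cdots v_{k-1}$ and $|\cI(n-i)|$ for the rest, this gives the second term. No mixed-sign clauses, no Kruskal--Katona, and no convexity argument are needed in either case.
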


\begin{proof}
Fix $i$. For every $\beta_i>0$, we choose $\theta_i>0$ and $\beta_1>0$ sufficiently smaller than $\beta_i$, and then choose $\zeta>0$ sufficiently smaller than $\theta_i$. Take a small constant $c = c(\beta_i,\theta_i, \beta_1,\zeta) > 0$.

For every $G\in \cI_{2,1}^*(n,\zeta,\beta_1)\setminus \cI_{2,i}^*(n,\zeta,\beta_1,\beta_i)$, by definition, there exist some $u_1,\dots,u_i\in X$ such that there are more than $\beta_in^{k-i}$ clauses of the form $\ol u_1\cdots\ol u_iv_1\cdots v_{k-i}$. Let $X_{\mathbf u}=X\setminus\{u_1,\dots,u_i\}$. For every $j\in\{1,\dots,i\}$ and $\mathbf v=\{v_1,\dots,v_{k-i}\}\subset X_{\mathbf u}$, set
$$N_{u_j,\mathbf v}=\left\{\{w_1,\dots,w_{i-1}\}\in {X_{\mathbf u}\choose i-1}:u_jv_1\cdots v_{k-i}w_1\cdots w_{i-1}\in G\right\}.$$
Additionally, let
\begin{align*}
    M&=\left\{\{v_1,\dots,v_{k-i}\}\in{X_{\mathbf u}\choose k-i}:\ol u_1\cdots\ol u_iv_1\cdots v_{k-i}\in G\right\},\\
    A&=\left\{\mathbf v=\{v_1,\dots,v_{k-i}\}\in M:\left|\bigcap_{j=1}^iN_{u_j,\mathbf v}\right|\geq\theta_in^{i-1}\right\}, \\
    B&=M\setminus A.
\end{align*}
Since $|M|>\beta_in^{k-i}$, at least one of the following holds:
\begin{enumerate}
    \item $|A|>\theta_in^{k-i}$;
    \item $|B|>(\beta_i-\theta_i)n^{k-i}$.
\end{enumerate}

\begin{claim}
For sufficiently small $c>0$, there are at most $\exp_2\left[(1-c){n\choose k}\right]$ formulae $G \in \cI_{2,1}^*(n,\zeta,\beta_1)\setminus \cI_{2,i}^*(n,\zeta,\beta_1,\beta_i)$, with $|A|>\theta_in^{k-i}$ for $A$ defined as above.
\end{claim}
\begin{proof}
Consider one such $G$. Since $G$ is minimal, for every $\mathbf v=\{v_1,\dots,v_{k-i}\}\in M$ and $\{w_1,\dots,w_{i-1}\}\in \bigcap_{j=1}^iN_{u_j,\mathbf v}$, to avoid the non-minimal subformula
$$\{v_1\cdots v_{k-i}w_1\cdots w_{i-1}z, v_1\cdots v_{k-i}w_1\cdots w_{i-1}u_1,\ldots,v_1\cdots v_{k-i}w_1\cdots w_{i-1}u_i, \ol u_1\cdots\ol u_iv_1\cdots v_{k-i}\},$$
(as it is impossible to satisfy only the first clause), we cannot have $v_1\cdots v_{k-i}w_1\cdots w_{i-1}z\in G$ for any $z\in X$. Hence after specifying (1) $u_1,\dots,u_i$, (2) $A$ and (3) $\bigcap_{j=1}^iN_{u_j,\mathbf v}$ for every $\mathbf v\in A$, we know at least
$$\frac{1}{i{k\choose i}}\cdot |A|\cdot \theta_in^{i-1}\cdot (n-(k-1))\geq \frac{1}{i{k\choose i}}\cdot \theta_in^{k-i}\cdot \theta_in^{i-1}\cdot (n-(k-1))\geq \frac{\theta_i^2\cdot k!}{i{k\choose i}}{n\choose k}$$
monotone clauses that cannot belong to $G$. We can specify some $G$ by making the following set of choices.
\begin{itemize}
    \item We choose $i$ variables $u_1, \ldots , u_i$. This can be done in at most ${n\choose i}$ ways.
    \item We choose $A$. There are at most $2^{n\choose {k-i}}$ possibilities.
    \item For each $\mathbf v\in A$, we pick  $\bigcap_{j=1}^iN_{u_j,\mathbf v}$. There are at most $2^{{n\choose {i-1}}|A|}\leq 2^{n^{k-1}}$ ways to make this set of choices.
    \item We select the non-monotone clauses in $G$, from at most $\exp_2\sqb{O_k(\zeta\log(1/\zeta)){n\choose k}}$ choices.
    \item Finally, we choose the monotone clauses in $G$. We can do this in at most $$\exp_2\left[{n\choose k}-\frac{\theta_i^2\cdot k!}{i{k\choose i}}{n\choose k}\right]$$ ways given the set of monotone clauses excluded by the above argument. 
\end{itemize}
Hence the number of possible $G$ is at most
\begin{align*}
    &{n\choose i}\cdot 2^{n\choose {k-i}}\cdot 2^{n^{k-1}}\cdot \exp_2\left[O_k(\zeta\log(1/\zeta)){n\choose k}+{n\choose k}-\frac{\theta_i^2\cdot k!}{i{k\choose i}}{n\choose k}\right]\\
    &\overset{(*)}{\leq}\exp_2\left[(1-c){n\choose k}\right],
\end{align*}
where (*) is due to the fact that $\zeta$ is sufficiently smaller than $\theta_i$, and $c$ is small.
\end{proof}

\begin{claim}
For sufficiently small $c>0$, there are at most $\exp_2\left[i(1-c){n\choose k-1}\right]|\cI(n-i)|$ formulae $G \in \cI^*_{2,1}(n,\zeta,\beta_1)\setminus \cI^*_{2,i}(n,\zeta,\beta_1,\beta_i)$, with  $|B|>(\beta_i-\theta_i)n^{k-i}$ for $B$ defined as above.
\end{claim}
\begin{proof}
Consider one such $G$. 
We set
$$    Q=\left\{\{w_1,\dots,w_{k-1}\}\in {X_{\mathbf u}\choose k-1}:\exists j\in[i]\text{ such that }u_jw_1\cdots w_{k-1}\notin G\right\},$$
so that for every $\{w_1,\dots,w_{k-1}\}\in Q$, there are at most $2^i-1$ ways to choose which of $u_1w_1\cdots w_{k-1}$, \dots, $u_iw_1\cdots w_{k-1}$ lie in $G$. Notice that for every $\mathbf v=\{v_1,\dots,v_{k-i}\}$, we have
$$\left\{\{v_1,\dots,v_{k-i},w_1,\dots,w_{i-1}\}\in{X_\mathbf{u}\choose k-1}:\{w_1,\dots,w_{i-1}\}\notin \bigcap_{j=1}^iN_{u_j,\mathbf v}\right\} \subset Q.$$
We can fully specify $G$ via the following choices.
\begin{itemize}
    \item We choose $u_1,\dots,u_i$. There are at most ${n \choose i}$ ways to do this.
    \item We choose $B$. There are at most $2^{n\choose k-i}$ possibilities.
    \item We then choose $\bigcap_{j=1}^iN_{u_j,\mathbf v}$ for every $\mathbf v\in B$. Since every $\mathbf v\in B$ has $|\bigcap_{j=1}^iN_{u_j,\mathbf v}|<\theta_in^{i-1}$, the number of such choices is bounded above by $$\left(\sum_{t=1}^{\theta_in^{i-1}}{{n\choose i-1}\choose t}\right)^{|B|}\leq\exp_2\left[O_k(\theta_i\log(1/\theta_i)){n\choose i-1}{n\choose k-i}\right].$$
    \item We choose the monotone clauses involving some $u_j$. Since $|B|>(\beta_i-\theta_i)n^{k-i}$ and $|\bigcap_{j=1}^iN_{u_j,\mathbf v}|<\theta_in^{i-1}$ for every $\mathbf v\in B$,  after specifying $B$ and $\bigcap_{j=1}^iN_{u_j,\mathbf v}$ for every $\mathbf v\in B$, we know at least $${k-1\choose i-1}^{-1} (\beta_i-\theta_i)n^{k-i} \left({|X_\mathbf{u}\setminus \{v_1,\dots,v_{k-i}\}|\choose i-1}-\theta_in^{i-1}\right)$$ $(k-1)$-subsets of variables that belong to $Q$. Hence there are at most 
\begin{multline*}
    \exp_2\left[i{n-1\choose k-1}-(i-\log(2^i-1)){k-1\choose i-1}^{-1}(\beta_i-\theta_i)n^{k-i} \left({n-1-(k-i)\choose i-1}-\theta_in^{i-1}\right)\right]
\end{multline*}
ways of choosing the monotone clauses in $G$ that contain some $u_j$.
\item We next select the clauses containing some $\ol u_j$ where all other literals are positive. Since $G\in \cI_{2,1}^*(n, \zeta,\beta_1)$, there can be at most $\beta_1n^{k-1}$ such clauses for every $u_j$, so the total number of choices is at most
$$\left(\sum_{t\leq \beta_1n^{k-1}}{{n-1\choose k-1}\choose t}\right)^i\leq\exp_2\sqb{O_k(\beta_1\log(1/\beta_1)){n-1\choose k-1}}.$$
\item We then select the clauses containing some variable $u_j$ and some negative literal other than $\ol u_j$. By~\cref{lem:ci_1starproperties}, we can choose them in at most $\exp_2\sqb{O_k(\zeta\log(1/\zeta)){n\choose k-1}}$ ways.
\item Finally we select the clauses that do not involve any variable $u_j$. There are at most $|\cI(n-i)|$ choices.
\end{itemize}
Hence the number of possible $G$ is at most

\begin{align*}
    &{n\choose i}\cdot 2^{n\choose k-i}\cdot |\cI(n-i)|\\
    &\quad\cdot  \exp_2\left[O_k(\theta_i\log(1/\theta_i)){n\choose i-1}{n\choose k-i}+O_k(\zeta\log(1/\zeta)){n\choose k-1}+O_k(\beta_1\log(1/\beta_1)){n-1\choose k-1}\right.\\
    &\left.\qquad\qquad+i{n-1\choose k-1}-(i-\log(2^i-1)){k-1\choose i-1}^{-1}(\beta_i-\theta_i)n^{k-i} \left({n-1-(k-i)\choose i-1}-\theta_in^{i-1}\right)\right]\\
    &\overset{(*)}{\leq} \exp_2\left[i(1-c){n\choose k-1}\right]|\cI(n-i)|,
\end{align*}
where (*) is due to the fact that $\theta_i$, $\beta_1$ and $\zeta$ are all sufficiently smaller than $\beta_i$, and $c$ is small.
\end{proof}

Combining the above two cases, we have
$$|\cI^*_{2,1}(n,\zeta,\beta_1)\setminus \cI_{2,i}^*(n, \zeta,\beta_1,\beta_i)|\leq \exp_2\sqb{(1-c){n\choose k}}+\exp_2\sqb{i(1-c){n\choose k-1}}|\cI(n-i)|$$
for sufficiently small $c > 0$.
\end{proof}

\subsection{Monotone neighborhoods of variables} We build the tools for bounding the number of formulae in $\cI_2^*(n, \zeta,\vec\beta)$ having a large number of clauses containing either $k-1$ or $k$ negative literals.
Here we consider the size of the ``monotone neighborhoods'' of variable $u$ and the intersections of these neighborhoods, an idea we make more precise below.

\begin{defn}
For every $u\in X$, let $Z_{u} = \{\{v_1, \ldots v_{k-1}\} \in \binom{X}{k-1} \mid uv_1\cdots v_{k-1} \in G\}$, i.e., those $(k-1)$-subsets of variables that form a monotone clause with $u$. Let $\overline{Z_u} = {X \backslash \{u\} \choose k-1} \backslash Z_u$, i.e., those $(k-1)$-subsets of variables that do not form a monotone clause with $u$. Let

\begin{align*}
\cI_3^*(n, \zeta, \vec\beta) &=\left\{G \in \cI_2^*(n, \zeta, \vec\beta) : \forall u_1, u_2, \ldots, u_k \in X,\,\right. \\ 
&\left.\qquad|Z_{u_1} \cap \cdots \cap Z_{u_{k-1}} \cap Z_{u_k}|, |Z_{u_1} \cap \cdots \cap Z_{u_{k-1}} \cap \overline{Z_{u_k}}|,|Z_{u_1} \cap \cdots \cap \overline{Z_{u_{k-1}}} \cap \overline{Z_{u_k}}|,\right. \\ 
&\left.\qquad\qquad\dots,|\overline{Z_{u_1}} \cap \cdots \cap \overline{Z_{u_{k-1}}} \cap \overline{Z_{u_k}} | \ge \frac{1}{2^{k+1}} {n-k \choose k-1} \right\}.
\end{align*}

\end{defn}

\begin{lemma} \label{l:step3}
For all $\vec \beta=(\beta_1,\dots,\beta_{k-2}) > 0$ with $\beta_1$ sufficiently small, there exist $\zeta, c > 0$ such that
$$|\cI_2^*(n, \zeta, \vec\beta) \backslash \cI_3^*(n, \zeta, \vec\beta)| \le \exp_2\left[k(1-c){n\choose k-1}\right]|\cI(n-k)|.$$
\end{lemma}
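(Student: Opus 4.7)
The plan is a counting argument in the container-style spirit of \cref{l:step1,l:step2.1,l:step2.2}: given $G \in \cI_2^*(n,\zeta,\vec\beta) \setminus \cI_3^*(n,\zeta,\vec\beta)$, I would produce a succinct encoding and bound the total number of encodings. By the definition of $\cI_3^*$, there must exist $u_1,\dots,u_k \in X$ and a sign pattern $\epsilon = (\epsilon_1,\dots,\epsilon_k) \in \{+,-\}^k$ such that $Z_{u_1}^{\epsilon_1} \cap \cdots \cap Z_{u_k}^{\epsilon_k}$ (with the convention $Z_u^+ = Z_u$ and $Z_u^- = \overline{Z_u}$) has size less than $\frac{1}{2^{k+1}}\binom{n-k}{k-1}$. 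I would then encode $G$ via: (i) the variables $u_1,\dots,u_k$ and the sign pattern $\epsilon$ (a factor of $\binom{n}{k}\cdot 2^k = 2^{O_k(\log n)}$); (ii) for each $(k-1)$-subset $S \subseteq X \setminus \{u_1,\dots,u_k\}$, which of the $2^k$ cells of the partition $\{Z_{u_1}^{\delta_1}\cap\cdots\cap Z_{u_k}^{\delta_k}\}_{\delta \in \{+,-\}^k}$ it belongs to (this determines every monotone clause of $G$ containing exactly one of the $u_i$); (iii) the monotone clauses of $G$ involving two or more of the $u_i$; (iv) the non-monotone clauses of $G$ containing at least one $u_i$; and (v) the clauses of $G$ that involve none of $u_1,\dots,u_k$.

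The crucial step is (ii), where the savings over the trivial bound $2^{k\binom{n}{k-1}}$ must appear. Writing $N = \binom{n-k}{k-1}$, the number of maps $\phi \colon \binom{X\setminus\{u_1,\dots,u_k\}}{k-1} \to \{+,-\}^k$ with $|\phi^{-1}(\epsilon)| \le N/2^{k+1}$ is at most
\[
\sum_{t \le N/2^{k+1}} \binom{N}{t}(2^k-1)^{N-t} \le (N+1)\binom{N}{\lfloor N/2^{k+1}\rfloor}(2^k-1)^{N(1-2^{-k-1})},
\]
since the peak of the unrestricted distribution sits at $t \approx N/2^k$ and so the summands are increasing on our range of summation. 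Taking logarithms and using the expansions $H(2^{-k-1}) = (k+1)/2^{k+1} + 2^{-k-1}/\ln 2 + O(2^{-2k})$ together with $\log_2(2^k-1) = k - 2^{-k}/\ln 2 + O(2^{-2k})$, the exponent becomes $kN - (1/\ln 2 - 1)\cdot 2^{-k-1}N + O(N\cdot 2^{-2k}) = k(1-c_0)N$ for some $c_0=c_0(k)>0$.

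The remaining bookkeeping is routine: (iii) contributes $2^{O(n^{k-2})}$, since only $O(n^{k-2})$ monotone clauses contain at least two of the $u_i$; (iv) contributes $\exp_2[O_k(\zeta\log(1/\zeta))\binom{n}{k-1}]$ by applying \cref{lem:ci_1starproperties}(b),(c) to each $u_i$; and (v) contributes at most $|\cI(n-k)|$, since restricting any witness of $G$ to $X\setminus\{u_1,\dots,u_k\}$ certifies that the subformula of $G$ supported on those variables is itself minimal. Multiplying the five contributions and using $\binom{n-k}{k-1} = \binom{n}{k-1} - O(n^{k-2})$ to absorb lower-order terms into a slightly smaller positive constant then yields the desired bound $\exp_2[k(1-c)\binom{n}{k-1}]|\cI(n-k)|$, provided $\zeta$ is taken small enough in terms of $k$ and $c$. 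The main obstacle is the entropy computation in (ii): everything rests on verifying that forcing one of the $2^k$ cells to be underrepresented by a factor of two genuinely saves an $\Omega(N/2^k)$ term in the exponent, rather than merely matching the trivial $2^{kN}$ bound.
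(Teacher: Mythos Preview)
Your approach is essentially the paper's: fix the offending $u_1,\dots,u_k$ and sign pattern, encode the $2^k$-cell partition of $\binom{X\setminus\{u_1,\dots,u_k\}}{k-1}$ with one small cell, handle the remaining clauses separately, and finish with $|\cI(n-k)|$. The entropy computation in (ii) is correct; in fact you can shortcut the asymptotic expansion by observing that $H(p)+(1-p)\log_2(2^k-1)$ is exactly the entropy of the $2^k$-point distribution with one atom of mass $p=2^{-k-1}$ and the rest uniform, hence strictly less than $\log_2 2^k=k$ because that distribution is not uniform.

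There is one genuine gap in step (iv). Parts (b) and (c) of \cref{lem:ci_1starproperties} do \emph{not} bound the clauses of the form $\overline{u_i}\,v_1\cdots v_{k-1}$ (one negative literal, namely $\overline{u_i}$, and $k-1$ positive literals): part (b) requires the literal on $u_i$ to be positive, and part (c) requires at least one negative literal besides $\overline{u_i}$. Without a separate bound on these clauses, their contribution to (iv) could be as large as $2^{\binom{n-1}{k-1}}$ per $u_i$, wiping out the $c_0 N$ saving from (ii). This is exactly why the lemma hypothesis demands $\beta_1$ sufficiently small: since $G\in\cI_{2,1}^*(n,\zeta,\beta_1)$, there are at most $\beta_1 n^{k-1}$ clauses $\overline{u_i}\,v_1\cdots v_{k-1}$ for each $u_i$, so the total non-monotone count through each $u_i$ is at most $(\beta_1+2(k-1)\zeta)n^{k-1}$ and the cost of (iv) becomes $\exp_2\bigl[O_k((\zeta+\beta_1)\log(1/(\zeta+\beta_1)))\binom{n}{k-1}\bigr]$. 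Your concluding sentence ``provided $\zeta$ is taken small enough'' should therefore read ``provided $\zeta$ and $\beta_1$ are taken small enough''; once that is inserted, the argument is complete and matches the paper's.
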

\begin{proof}
Fix $\vec \beta > 0$ with $\beta_1$ sufficiently small. Take $\zeta>0$ sufficiently smaller than $\beta_1$, and  $c=c(\beta_1,\zeta)>0$. To specify some $G\in\cI_2^*(n, \zeta, \vec\beta) \backslash \cI_3^*(n, \zeta, \vec\beta)$, it suffices to pick the following.
\begin{itemize}
    \item We choose $u_1, \ldots, u_k$ violating the condition for $\cI_3^*(n, \zeta, \vec\beta)$, which we can do in at most ${n \choose k}$ ways.
    \item We choose the non-monotone clauses that include at least one of $u_1, \ldots, u_k$. For every $u_j$, since there are at most $\beta_1n^{k-1} + 2(k-1)\zeta n^{k-1}$ such clauses, we choose from at most 
    \begin{align*}
    &\sum_{t \le (\beta_1 + 2(k-1)\zeta)n^{k-1}} {(2^k - 1){ n - 1 \choose k-1} \choose t} \\
    &\le \exp_2 \left[  H \left( \frac{(\beta_1 + 2(k-1)\zeta) n^{k-1}}{(2^k - 1){n-1 \choose k-1}} \right) \cdot (2^k - 1) {n-1 \choose k-1} \right]
    \end{align*} choices.
    This gives in total at most 
    $$\exp_2\left[ O_k((\zeta+\beta_1)\log(1/(\zeta+\beta_1))){n \choose k-1}\right]$$
    choices.
    \item We pick the monotone clauses involving at least one of $u_1, \ldots, u_k$. Note that there are $2^{O(n^{k-2})}$ choices of subsets of clauses involving at least two of $u_1,\ldots, u_k$. Meanwhile, the number of monotone clauses involving exactly $1$ of $u_1, \ldots u_k$ is bounded by the number of choices of an ordered partition of ${X \backslash \{ u_1, \ldots, u_k\} \choose k-1}$ into $2^k$ parts (i.e., choosing membership in $Z_{u_i}$ or $\overline{Z_{u_i}}$ for every $(k-1)$-subset of variables), at least one of which has size less than $\frac{1}{2^{k+1}} {n-k \choose k-1}$. Note that for $N = {n-k \choose k-1}$ and $K = 2^k$, there are $\binom{N}{< \frac{1}{2^{k+1}} N} \le 2^{H\left(\frac{1}{2^{k+1}}\right) \cdot N}$ to choose $Z_{u_i}$ to be small for any fixed $i$ and $(K - 1)^N$ ways to choose membership for the remaining $u_j$ for $j \neq i$.
    The total number of choices is bounded above by $$2^k \exp_2\left[\left(H \left(\frac{1}{2^{k+1}}\right) + \log_2 (2^k - 1) \right) {n-k \choose k-1}  \right].$$
    \item Finally, we choose the clauses not involving any of $u_1, \ldots, u_k$. We can do this in at most $|\cI(n-k)|$ ways.
\end{itemize}
Hence the number of possible $G\in \cI_2^*(n, \zeta, \vec\beta) \backslash \cI_3^*(n, \zeta, \vec\beta)$ is at most
\begin{align*}
    &2^k {n\choose k}\cdot |\cI(n-k)|\cdot  \exp_2 \left[ O_k((\zeta+\beta_1)\log(1/(\zeta+\beta_1))){n \choose k-1}\right.\\
    &\left.\hspace{5cm}+\left(H \left(\frac{1}{2^{k+1}}\right) + \frac{2^{k+1}-1}{2^{k+1}}\cdot \log_2 (2^k-1)\right) {n-k \choose k-1} \right]\\
    &\overset{(*)}{<} \exp_2\left[k(1-c){n\choose k-1}\right]|\cI(n-k)|,
\end{align*}
where (*) holds since $\zeta$ is sufficiently smaller than $\theta_1$, and $c$ is small.
\end{proof}

\subsection{Most formulae in $\cI_3^*(n, \zeta, \vec \beta)$ are monotone}\label{s:endik}
In this step, we bound the number of formulae in $\cI_{3}^*(n,\zeta,\vec\beta)$ that are not monotone. Before stepping into the proof, we first summarize some properties of every $G\in \cI_3^*(n,\zeta,\vec\beta)$ that will be useful subsequently.
\begin{itemize}
    \item For every $k-1$ variables $v_1,\dots, v_{k-1}$, there are at most $\zeta n$ variables $w$ such that $v_1\cdots v_{k-1}\ol w\in G$. The same bound holds for $w$'s with $\ol v_1v_2\cdots v_{k-1}\ol w\in G$, $w$'s with $ v_1\ol v_2v_3 \cdots v_{k-1}\ol w\in G$,  $w$'s with $\ol v_1\ol v_2 v_3\cdots  v_{k-1}\ol w\in G$, and so on.
    \item For every $i\in \{1,\dots,k\}$, there are at most $\zeta n^{k}$ clauses in $G$ having exactly $i$ negative literals.
    \item For every $i\in\{1,\dots,k-2\}$ and $\{u_1,\dots,u_i\}\in{X\choose i}$, $G$ contains at most $\beta_in^{k-i}$ clauses of the form $\ol u_1\cdots\ol u_iv_1\cdots v_{k-i}$;
    \item For every $u_1,\dots,u_k\in X$, we have $|Z_{u_1}\cap\dots\cap Z_{u_k}|\geq \frac{1}{2^{k+1}} {n-k \choose k-1}$.
\end{itemize}

Define $\cI_{4}^*(n,\zeta,\vec\beta)$ to be the subset of monotone formulae in $\cI_{3}^*(n,\zeta,\vec\beta)$.

\begin{defn}Let
$$\cI_{4}^*(n,\zeta,\vec\beta)=\{G\in \cI_3^*(n, \zeta,\vec\beta):\text{$G$ is monotone}\}.$$
\end{defn}

We give an upper bound on $|\cI_{3}^*(n,\zeta,\vec\beta)\setminus \cI_{4}^*(n,\zeta,\vec\beta)|$. The following observation about $\ell$-hypergraphs generalizes a theorem on graphs \cite{Hak65} (also see \cite[Theorem 61.1]{Sch03}), and will be useful in our proof.

\begin{lemma}
\label{lem:orientation}Every $\ell$-uniform hypergraph $H$ can be directed (i.e., every edge directed at some vertex) such that for every $\ell-1$ vertices $v_1,\dots,v_{\ell-1}$, there are at most $$\ceil{\frac{(\ell-1)!}{(\ell!)^\frac{\ell-1}{\ell}}\cdot e(H)^\frac{1}{\ell}}$$ edges that contain all of $v_1,\dots,v_{\ell-1}$ and are directed at a vertex $w$ other than $v_1,\dots,v_{\ell-1}$.
\end{lemma}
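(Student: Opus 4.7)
The plan is to recast the orientation problem as a load-balanced bipartite assignment problem, and then verify feasibility using Hall's marriage theorem together with the Kruskal--Katona shadow bound. I would set
\[
d := \ceil{\frac{(\ell-1)!}{(\ell!)^{(\ell-1)/\ell}}\cdot e(H)^{1/\ell}} = \ceil{\frac{(\ell!)^{1/\ell}}{\ell}\cdot e(H)^{1/\ell}},
\]
noting the algebraic identity $(\ell-1)!/(\ell!)^{(\ell-1)/\ell} = (\ell!)^{1/\ell}/\ell$. Observe that orienting an edge $e$ at a vertex $w \in e$ is the same as assigning $e$ to the $(\ell-1)$-subset $e \setminus \{w\}$, and the codegree to be controlled at $S = \{v_1,\dots,v_{\ell-1}\}$ is precisely the number of edges assigned to $S$. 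Hence it suffices to exhibit an assignment that sends each $e \in E(H)$ to one of its $\ell$ size-$(\ell-1)$ subsets in such a way that every $(\ell-1)$-subset receives at most $d$ preimages.

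I would then invoke the defect form of Hall's theorem (equivalently, max-flow--min-cut) on the bipartite containment graph with left side $E(H)$ and right side $\binom{V}{\ell-1}$: such an assignment exists if and only if for every family $\mathcal{S}\subseteq \binom{V}{\ell-1}$, the subhypergraph
\[
H'_{\mathcal{S}} := \set{e \in E(H) : \tbinom{e}{\ell-1} \subseteq \mathcal{S}}
\]
satisfies $|H'_{\mathcal{S}}| \le d\,|\mathcal{S}|$. Since the $(\ell-1)$-shadow $\partial H'_{\mathcal{S}}$ is contained in $\mathcal{S}$, it is enough to prove the universal shadow-type inequality $|H'| \le d\,|\partial H'|$ for every $\ell$-uniform $H' \subseteq H$.

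The Kruskal--Katona shadow theorem then supplies exactly what is needed: writing $|H'| = \binom{x}{\ell}$ for the unique real $x \ge \ell-1$, one has $|\partial H'| \ge \binom{x}{\ell-1} = \ell \binom{x}{\ell}/(x-\ell+1)$, and combining this with the elementary bound $\binom{x}{\ell} \le x^\ell/\ell!$ (so $x \ge (\ell!\,|H'|)^{1/\ell}$) yields $|\partial H'| \ge \ell|H'|^{(\ell-1)/\ell}/(\ell!)^{1/\ell}$. Rearranging,
\[
\frac{|H'|}{|\partial H'|} \le \frac{(\ell!)^{1/\ell}}{\ell}\,|H'|^{1/\ell} \le \frac{(\ell!)^{1/\ell}}{\ell}\, e(H)^{1/\ell} \le d,
\]
so Hall's condition holds, the assignment exists, and directing each $e$ at the unique vertex of $e$ outside its assigned $(\ell-1)$-subset produces the desired orientation.

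The main obstacle is really identifying the right reformulation: once the task is phrased as load balancing on the containment bipartite graph, the Hall/flow step is completely standard, and the substantive ingredient is the multiplicative form of the Kruskal--Katona shadow bound, which conveniently delivers precisely the constant appearing in the statement.
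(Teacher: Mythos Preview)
Your approach is essentially identical to the paper's: both set up the bipartite containment graph between $E(H)$ and $\binom{V}{\ell-1}$, reduce to Hall's condition with capacity $d$ on the right (the paper encodes the capacity via $L$-fold multiplicities and checks Hall from the $E(H)$ side, you use the equivalent defect form and check from the $\binom{V}{\ell-1}$ side), and close with the multiplicative Kruskal--Katona shadow bound.

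One small slip worth fixing: in deriving $|\partial H'| \ge \ell|H'|^{(\ell-1)/\ell}/(\ell!)^{1/\ell}$ from the Lov\'asz form, you invoke $\binom{x}{\ell} \le x^\ell/\ell!$ to get $x \ge (\ell!\,|H'|)^{1/\ell}$, but since $|\partial H'| \ge \ell|H'|/(x-\ell+1)$ you actually need an \emph{upper} bound on $x-\ell+1$. The correct elementary input is $\binom{x}{\ell} \ge (x-\ell+1)^\ell/\ell!$, giving $x-\ell+1 \le (\ell!\,|H'|)^{1/\ell}$, which then yields exactly your claimed inequality. With this correction the argument is complete and matches the paper's.
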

\begin{proof}
Write
$$L:=\ceil{\frac{(\ell-1)!}{(\ell!)^\frac{\ell-1}{\ell}}\cdot e(H)^\frac{1}{\ell}}.$$
For an $\ell$-graph $H$, construct an associated bipartite graph $B_H$ in which one part is $E(H)$, the edge set of $H$, and the other part is the multiset $\left\{S^L:S\in {V(H)\choose \ell-1}\right\}$. For every $e\in E(H)$ and $S\in {V(H)\choose \ell-1}$, we define that $e\sim S$ in $B_H$ if and only if $S\subset e$. Observe that an edge direction of $H$ in which every $\ell-1$ vertices support at most $L$ edges directed at another vertex is equivalent to an $E(H)$-perfect matching in $B_H$, which exists if and only if $|W|\leq|N_{B_H}(W)|$ for every $W\subset E(H)$.

For every $W\subset E(H)$, $N_{B_H}(W)$ is the set of those $S\in {V(H)\choose \ell-1}$ that are contained in some $e\in W$. By the Kruskal--Katona theorem in the form of \cref{thm:kruskalkatona}, every collection of $(\ell-1)$-vertex sets $\cS\subset{V(H)\choose \ell-1}$ covers at most $\frac{((\ell-1)!|\cS|)^\frac{\ell}{\ell-1}}{\ell!}$ $\ell$-edges, so every $W\subset E(H)$ satisfies
\begin{align*}
    |W|&\leq \frac{\left((\ell-1)!\cdot\frac{|N_{B_H}(W)|}{L}\right)^\frac{\ell}{\ell-1}}{\ell!},\text{ and }\\
    |N_{B_H}(W)|&\geq\frac{(\ell!)^\frac{\ell-1}{\ell}|W|^\frac{\ell-1}{\ell}}{(\ell-1)!}\cdot L\geq  \frac{(\ell!)^\frac{\ell-1}{\ell}|W|^\frac{\ell-1}{\ell}}{(\ell-1)!}\cdot \frac{(\ell-1)!}{(\ell!)^\frac{\ell-1}{\ell}}\cdot e(H)^\frac{1}{\ell}\\
    &= |W|^\frac{\ell-1}{\ell}\cdot e(H)^\frac{1}{\ell}\geq |W|.\qedhere
\end{align*}
\end{proof}

We will also use the following version of Shearer's inequality \cite{CGFS86} (see also \cite[Lemma 6.5]{IK12}). 
\begin{lemma}\label{lem:shearer}
Let $W$ be a set and $\cF$ a family of subsets of $W$. Let $\cH$ be a hypergraph with vertex set $W$, with $\deg_{\cH}(v)\geq k$ for every $v\in W$. Then
$$\log_2|\cF|\leq\frac{1}{k}\sum_{A\in \cH}\log_2|\Tr(\cF,A)|,$$
where  $\Tr(\cF,A)=\{F\cap A:F\in\cF\}$.
\end{lemma}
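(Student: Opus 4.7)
The plan is to derive this as a direct specialization of Shearer's entropy inequality. Take $X$ uniformly distributed on $\cF$, so that $\log_2|\cF|=H(X)$ (where $H$ denotes binary Shannon entropy throughout). Identifying each $F\in\cF$ with its characteristic vector in $\{0,1\}^W$, I can regard $X$ as a tuple $(X_v)_{v\in W}$ of $\{0,1\}$-valued random variables, and for each $A\subseteq W$ write $X_A=(X_v)_{v\in A}$.

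The key input is Shearer's inequality: if $\cH$ is a hypergraph on $W$ with $\deg_{\cH}(v)\ge k$ for every $v$, then
\[
k\cdot H(X) \;\le\; \sum_{A\in\cH} H(X_A).
\]
I would quote this rather than reprove it; the standard derivation is a short chain-rule argument which counts each coordinate at least $k$ times on the right-hand side and uses that conditioning does not increase entropy.

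To finish, observe that for each $A\in\cH$ the random vector $X_A$ is supported on $\{F\cap A : F\in\cF\}=\Tr(\cF,A)$, so the uniform bound on entropy by log-support gives $H(X_A)\le\log_2|\Tr(\cF,A)|$. Substituting into the previous display and dividing by $k$ yields
\[
\log_2|\cF| \;\le\; \frac{1}{k}\sum_{A\in\cH}\log_2|\Tr(\cF,A)|,
\]
as desired. There is no real obstacle here: once one recognizes $\Tr(\cF,A)$ as the support of the $A$-marginal of a uniform sample from $\cF$, the statement is an immediate consequence of Shearer's inequality, and the only ``content'' of the lemma is packaging this reduction.
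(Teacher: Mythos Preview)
Your proof is correct and is the standard entropy argument. The paper does not actually prove this lemma; it simply states it with citations to \cite{CGFS86} and \cite[Lemma~6.5]{IK12}, so there is nothing to compare against beyond noting that your derivation is exactly the intended one behind those references.
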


We now show that most $k$-SAT formulae in $\cI_3^*(\vec\beta,\zeta)$ are also in $\cI_4^*(\vec\beta,\zeta)$ for sufficiently small $\vec\beta, \zeta$.

\begin{lemma}\label{l:step4}
We can choose $\vec \beta=(\beta_1,\dots,\beta_{k-2})$, $\zeta>0$ and $c > 0$ such that 
$$|\cI_3^*(n, \zeta,\vec\beta)\setminus \cI_{4}^*(\zeta,\vec\beta)|< \exp_2\left[{n\choose k}-cn^{k-2}\right].$$
\end{lemma}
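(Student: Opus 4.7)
The plan is to exhibit an efficient encoding of each $G\in\cI_3^*(n,\zeta,\vec\beta)\setminus\cI_4^*(n,\zeta,\vec\beta)$ in at most $\binom{n}{k}-cn^{k-2}$ bits, beating the trivial $\binom{n}{k}$-bit bound for monotone formulae by exploiting the presence of at least one non-monotone clause. Since $G\notin\cI_4^*$, fix a canonical non-monotone clause $C_0=\overline{u_1}\cdots\overline{u_j}v_1\cdots v_{k-j}$ with $1\le j\le k$ (say the lex-smallest), specifiable in $O_k(\log n)$ bits. The $\cI^*(n,\zeta)$ hypothesis gives a witness $w\in\{0,1\}^n$ whose support $A:=w^{-1}(1)$ has size less than $\zeta n$ and contains $\{v_1,\dots,v_{k-j}\}$; specify $A$ in at most $nH(\zeta)$ bits. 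By minimality no clause of $G\setminus\{C_0\}$ is satisfied by $w$, so $A$ is an independent set in the monotone subgraph of $G$, and every other non-monotone clause is simultaneously falsified by $w$, supplying structural constraints across the whole formula.

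\textbf{Extracting polynomial savings.} The bare ``$A$-independence'' constraint only rules out $\binom{|A|}{k}$ monotone clauses, which is too weak when $|A|$ is close to its forced minimum $k-j$. To push the savings up to $\Omega_k(n^{k-2})$, I would pair $C_0$ with the $\cI_3^*$ lower bound $|Z_{u_1}\cap\cdots\cap Z_{u_k}|\ge\frac{1}{2^{k+1}}\binom{n-k}{k-1}$: this guarantees abundantly many monotone clauses through any $k$-tuple, so the companion clauses needed to instantiate $\vec T_k$-type (or richer $\cF_k$-type) non-minimal certificates involving $C_0$ are present in $G$. Each certificate forces a distinct monotone clause out of $G$; letting the shared-vertex coordinates of larger $\cF_k$-patterns vary independently over linear-size sets produces $\Omega_k(n^{k-2})$ forbidden monotone clauses. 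To aggregate such forbiddances across multiple candidate non-monotone clauses without double-counting, I would first apply the orientation Lemma~\ref{lem:orientation} to the $k$-graph of non-monotone clauses of $G$ so that every $(k-1)$-tuple of variables is pointed at by at most $O_k(\zeta^{1/k}n)$ such clauses, and then invoke Shearer's inequality (Lemma~\ref{lem:shearer}) on the resulting oriented configuration to turn pointwise forbiddances into a joint entropy bound on monotone parts of size $2^{\binom{n}{k}-\Omega_k(n^{k-2})}$.

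\textbf{Counting and main obstacle.} The complete encoding consists of $C_0$ (negligible), $A$ (at most $2^{nH(\zeta)}$ choices), the remaining non-monotone clauses of $G$ (at most $\exp_2[O_k(\zeta\log(1/\zeta))\binom{n}{k}]$ choices via the $\cI_3^*$ codegree estimates, which is kept below any prescribed $\epsilon\binom{n}{k}$ by taking $\zeta$ small relative to $\vec\beta$), and the monotone part subject to the constraints above (at most $2^{\binom{n}{k}-\Omega_k(n^{k-2})}$ choices). Multiplying these and choosing $\zeta,\vec\beta,c$ appropriately delivers the desired bound $\exp_2[\binom{n}{k}-cn^{k-2}]$. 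The technical heart, and principal difficulty, lies in the combinatorial bookkeeping of Step~2: one must show that even in the worst case, when $G$ has very few non-monotone clauses and its witnesses have nearly minimal support, the $\cI_3^*$ density hypothesis alone suffices to manufacture $\Omega(n^{k-2})$ distinct $\cF_k$-type forbidden monotone clauses tied to a single $C_0$, and the orientation-plus-Shearer machinery is precisely what keeps the resulting bound tight without overcounting across different non-monotone clauses.
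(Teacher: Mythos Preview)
Your proposal has the right toolkit (orientation lemma plus Shearer) but misidentifies the mechanism at several points, and as written the argument would not close.

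First, the witness/support idea is a red herring: the small-support property of $\cI^*(n,\zeta)$ plays no role in this lemma (it was already exploited in \cref{lem:familyrhoclosetounate}). Second, and more seriously, your central claim that a single non-monotone clause $C_0$ with $j$ negatives yields $\Omega(n^{k-2})$ forbidden monotone clauses is not justified and appears false when $j$ is small. The correct forbidden-pattern analysis gives, for a clause $\overline{u_1}\cdots\overline{u_i}v_1\cdots v_{k-i}$, constraints of the form ``the $(i{+}1)$-tuple $\{v_1\cdots v_{k-i-1}a_1\cdots a_i w,\; v_1\cdots v_{k-i-1}a_1\cdots a_i u_1,\ldots\}$ cannot all lie in $G_0$'' as $a_1,\ldots,a_i$ range freely---only $\Theta(n^i)$ constraints, and each constraint does \emph{not} forbid a specific clause but only says an $(i{+}1)$-set is not fully present. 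This is why Shearer is needed at all, and why the savings for $i=1$ are only $\Theta(n)$, not $\Theta(n^{k-2})$. Third, your orientation step is misdirected: the paper does not orient ``the $k$-graph of non-monotone clauses''; rather, for each fixed $i$ and each fixed negative set $\{u_1,\ldots,u_i\}$, it orients the $(k-i)$-graph of \emph{positive parts}, using the $\cI_{2,i}^*$ bound $\beta_i n^{k-i}$ on its edge count to control the out-degree by $O(\beta_i^{1/(k-i)}n)$. This is precisely where $\vec\beta$ enters, which your sketch never invokes.

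The paper's route is: (a) use the $\cI_3^*$ intersection lower bound to eliminate the $j=k-1$ case outright and to bound the $j=k$ case by $\exp_2[(1-c)\binom{n}{k}]$; (b) for $1\le i\le k-2$, orient as above, build an auxiliary $(i{+}1)$-multigraph $H_i$ on $\binom{X}{k-i}$, and use its matching/cover numbers $\nu_i,\tau_i$ to both encode $G_i$ cheaply and, via Shearer over the family $\mathcal J_i$ of $(i{+}1)$-tuples arising from a maximum matching, deduce $\log_2|\mathcal G_0|\le\binom{n}{k}-\sum_i\delta_i\binom{n-k}{i}\tau_i$. The savings beat the encoding cost of the non-monotone layers termwise because $\vec\beta$ (hence $\rho_i$) is chosen small. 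Your sketch collapses these layers and loses exactly the structural bookkeeping that makes the argument work.
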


\begin{proof}
Choose sufficiently small $\vec \beta=(\beta_1,\dots,\beta_{k-2})$, $\zeta>0$ and $c=c(\vec \beta,\zeta) > 0$.

Observe that every $G\in \cI_3^*(n, \zeta,\vec\beta)$ cannot contain any clause with exactly $k-1$ negative literals. This is because if $\ol u_1\cdots\ol u_{k-1}v\in G$, then since there exists some $\{w_1,\dots,w_{k-1}\}\in Z_{u_1}\cap\dots\cap Z_{u_{k-1}}\cap Z_v$, $G$ would have a non-minimal suformula
$$\{w_1\cdots w_{k-1}v, w_1\cdots w_{k-1}u_1,\dots, w_1\cdots w_{k-1}u_{k-1}, w_1\cdots w_{k-1}v, \ol u_1\cdots\ol u_{k-1}v\}$$
(as it is impossible to satisfy only the first clause).

Next observe that the number of $G\in \cI_3^*(n, \zeta)$ containing a clause with exactly $k$ negative literals is at most $\exp_2[(1-c){n\choose k}]$. Indeed, if $\ol u_1\cdots\ol u_k\in G$, then for all $\{w_1,\dots, w_{k-1}\}\in\bigcap_{i=1}^kZ_{u_i}$, to avoid the non-minimal subformula
$$\{w_1\cdots w_{k-1}v, w_1\cdots w_{k-1}u_1,\dots,w_1\cdots w_{k-1}u_k, \ol u_1\cdots\ol u_k\}$$
(as it is impossible to satisfy only the first clause), there cannot be any $v\in X\setminus \{u_1,\dots,u_k\}$ such that $w_1\cdots w_{k-1}v\in G$. Hence after specifying $u_1,\dots,u_k$ and $\bigcap_{i=1}^kZ_{u_i}$, we know at least $$\frac{1}{k}\cdot (n-2k+1)\left|\bigcap_{i=1}^kZ_{u_i}\right|\geq \frac{n-2k+1}{k\cdot 2^{k+1}}{n-k\choose k-1}$$
monotone clauses that cannot belong to $G$. We can specify such $G$ in the following manner.
\begin{itemize}
    \item We choose $u_1, \ldots, u_k$. There are at most ${n\choose k}$ choices.
    \item We choose $\bigcap_{i=1}^kZ_{u_i}$. This can be done in at most $2^{{n\choose k-1}}$ ways.
    \item We choose the non-monotone clauses in $G$. The number of choices is bounded above by $\exp_2\left[O_k(\zeta\log(1/\zeta)){n\choose k}\right]$.
    \item Finally, we pick the monotone clauses in $G$, of which the number of choices is bounded above by $$\exp_2\sqb{{n\choose k}-\frac{n-2k+1}{k\cdot 2^{k+1}}{n-k\choose k-1}}.$$
\end{itemize}
Hence the number of possible $G$ is at most
$${n\choose k}\cdot 2^{{n\choose k-1}}\cdot \exp_2\left[O_k(\zeta\log(1/\zeta)){n\choose k}+{n\choose k}-\frac{n-2k+1}{k\cdot 2^{k+1}}{n-k\choose k-1}\right]\overset{(*)}{\leq} \exp_2\sqb{(1-c){n\choose k}},$$
where (*) is due to the fact that $\zeta$ and $c$ are sufficiently small.

Finally, we bound the number of $G\in \cI_3^*(n, \zeta,\vec\beta)\setminus \cI_4^*(n, \zeta,\vec\beta)$ containing no clause with exactly $k-1$ or $k$ negative literals. For every such $G$, we decompose $G=G_0\sqcup G_1\sqcup \dots\sqcup G_{k-2}$, such that every $G_i$ is the set of clauses in $G$ with exactly $i$ negative literals. Observe that $|G_1|,\dots,|G_{k-2}|\leq \zeta n^k$ as $G\in \cI^*(n, \zeta)$. We will prove in a second that:
\begin{claim}\label{claim:step4final}
For all $t_1,\dots,t_{k-2}\leq \zeta n^k$, we have
\begin{align*}
    & \left|\left\{G\in \cI_3^*(n, \zeta,\vec\beta)\setminus \cI_4^*(n, \zeta,\vec\beta):\text{$G$ contains no clause with exactly $k$ negative literals, }\right.\right.\\
    &\left.\left.\qquad\qquad\qquad\qquad\qquad\qquad\qquad |G_1|=t_1,\dots, |G_{k-2}|=t_{k-2} \right\}\right|\\
    & <\exp_2\left[{n\choose k}-2cn^{n-2}\right].
\end{align*}
\end{claim}

Since the number of choices of $t_1,\dots,t_{k-2}$ is at most $(\zeta n^k)^{k-2}$, given~\cref{claim:step4final}, we know that there are at most $(\zeta n^k)^{k-2}\cdot \exp_2\left[{n\choose k}-2cn^{k-2}\right]$ formulae in $|\cI_3^*(n, \zeta,\vec\beta)\setminus \cI_4^*(n, \zeta,\vec\beta)|$ containing no clause with exactly $k-1$ or $k$ negative literals. Combining this with the previous two observations, we have the desired bound
\begin{align*}
    |\cI_3^*(n, \zeta,\vec\beta)\setminus \cI_4^*(n, \zeta,\vec\beta)|&\leq (\zeta n^k)^{k-2}\cdot \exp_2\left[{n\choose k}-2cn^{k-2}\right]+\exp_2\left[(1-c){n\choose k}\right]\\
    &\leq \exp_2\left[{n\choose k}-cn^{k-2}\right].
\end{align*}
\end{proof}

\begin{proof}[Proof of~\cref{claim:step4final}]
Consider some $G\in \cI_3^*(n, \zeta,\vec\beta)\setminus \cI_4^*(n, \zeta,\vec\beta)$ such that $G$ contains no clause with exactly $k-1$ or $k$ negative literals, and $|G_1|=t_1$,\dots, $|G_{k-2}|=t_{k-2}$.

For every $i\in[k-2]$, we construct an auxiliary collection $G_i'\subset {X\choose i}\times {X\choose k-i-1}\times X$ based on $G_i$, by doing the following: for every clause $\ol u_1\cdots\ol u_iv_1\cdots v_{k-i}\in G_i$, we pick one $j\in\{1,\dots,k-i\}$ and put the element $(\{u_1,\dots,u_i\},\{v_1,\dots,v_{j-1},v_{j+1},\dots, v_{k-i}\},v_j)$ in $G_i'$. In other words, we obtain $G_i'$ from $G_i$ by making the set of positive literals in every clause ``directed" at some $v_j$. 

There are clearly many ways to build $G_i'$. We claim that there exists some $G_i'$ such that for every $u_1,\dots,u_i\in X$ and $v_1,\dots,v_{k-i-1}\in X\setminus\{u_1,\dots,u_i\}$, there are at most $$\ceil{\frac{(k-i-1)!}{((k-i)!)^\frac{k-i-1}{k-i}}\cdot \beta_i^\frac{1}{k-i}\cdot n}$$ elements of the form $(\{u_1,\dots,u_i\},\{v_1,\dots,v_{k-i-1}\},w)$. For every $u_1,\dots,u_i\in X$, define the $(k-i)$-hypergraph $H$ by
\begin{align*}
    V(H)&=X\setminus\{u_1,\dots,u_i\},\\
    E(H)&=\{\{v_1,\dots,v_{k-i}\}:\ol u_1\cdots\ol u_iv_1\cdots v_{k-i}\in G_i\}.
\end{align*}
Since $G$ contains at most $\beta_in^{k-i}$ clauses of the form $\ol u_1\dots\ol u_iv_1\dots v_{k-i}$, we have $e(H)\leq \beta_in^{k-i}$, so by~\cref{lem:orientation} we know that $H$ can be directed such that every $v_1,\dots,v_{k-i-1}\in X\setminus\{u_1,\dots,u_i\}$ are contained in at most $\ceil{\frac{(k-i-1)!}{((k-i)!)^\frac{k-i-1}{k-i}}\cdot \beta_i^\frac{1}{k-i}\cdot n}$ edges directed at some other vertex. This gives us the $G_i'$ as wanted.

Now we fix this $G_i'$. Define the $(i+1)$-multigraph $H_i$ by
\begin{align*}
   V(H_i)&={X\choose k-i},\\
   E(H_i)&=\{\{\{u_1,v_1,\dots,v_{k-i-1}\},\dots, \{u_i,v_1,\dots,v_{k-i-1}\},\{w,v_1,\dots,v_{k-i-1}\}\}:
   \\&\hspace{6cm} (\{u_1,\dots,u_i\},\{v_1,\dots,v_{k-i-1}\},w)\in G_i'\}.
\end{align*}
For every $e=\{\{u_1,v_1,\dots,v_{k-i-1}\},\dots, \{u_i,v_1,\dots,v_{k-i-1}\},\{w,v_1,\dots,v_{k-i-1}\}\}\in E(H_i)$ and any other $i$ variables $a_1,\dots,a_i\notin\{u_1,\dots, u_i,v_1,\dots,v_{k-i-1},w\}$, since $\ol u_1\cdots \ol u_{i}v_1\cdots v_{k-i-1}w\in G_i$, to avoid the non-minimal subformula
$$\{v_1\cdots v_{k-i-1}a_1\cdots a_iw, v_{1}\cdots v_{k-i-1}a_1\cdots a_iu_1,\dots, v_1\cdots v_{k-i-1}a_1\cdots a_iu_i,\ol u_1\cdots \ol u_{i}v_1\cdots v_{k-i-1}w\}$$
(as it is impossible to satisfy only the first clause), we have
$$G_0\cap \{v_1\cdots v_{k-i-1}a_1\cdots a_iw, v_{1}\cdots v_{k-i-1}a_1\cdots a_iu_1,\dots, v_1\cdots v_{k-i-1}a_1\cdots a_iu_i\}<i+1.$$
Define the formula $F(e,a_1,\dots,a_i)$ by
$$F(e,a_1,\dots,a_i)=\{v_1\cdots v_{k-i-1}a_1\cdots a_iw, v_{1}\cdots v_{k-i-1}a_1\cdots a_iu_1,\dots, v_1\cdots v_{k-i-1}a_1\cdots a_iu_i\}.$$

We also observe that given any vertex $\mathbf v\in V(H_i)$, we have
$$\deg_{H_i}(\mathbf v)\leq (k-i)\left({n\choose i-1}\cdot\ceil{\frac{(k-i-1)!}{((k-i)!)^\frac{k-i-1}{k-i}}\cdot \beta_i^\frac{1}{k-i}\cdot n}+{n\choose i-1}\cdot\zeta n\right).$$
This is because when $\mathbf v$ takes the role of (say) $\{u_1,v_1,\dots,v_{k-i-1}\}$ in an edge, to identify an edge containing $\mathbf v$, it suffices to:
\begin{itemize}
    \item identify $u_1$ from the $k-i$ variables in $\mathbf v$; 
    \item choose $u_2,\dots,u_i\in X$ from at most ${n\choose i-1}$ choices;
    \item choose $w$ from at most $\ceil{\frac{(k-i-1)!}{((k-i)!)^\frac{k-i-1}{k-i}}\cdot \beta_i^\frac{1}{k-i}\cdot n}$ choices, according to the property of $G_i'$.
\end{itemize}
When $\mathbf v$ takes the role of $\{w,v_1,\dots,v_{k-i-1}\}$ in an edge, to identify an edge containing $\mathbf v$, it suffices to:
\begin{itemize}
    \item identify $w$ from the $k-i$ variables in $\mathbf v$;
    \item choose the lexicographically smallest $i-1$ variables among $u_1,\dots, u_i$ from at most ${n\choose i-1}$ choices;
    \item choose the remaining variable from at most $\zeta n$ choices, according to~\cref{lem:ci_1starproperties}.
\end{itemize}

Let $\nu_i$ and $\tau_i$ denote the matching and vertex cover numbers of $H_i$. Choose $\rho_i>0$ sufficiently small, yet still larger than $\beta_i$ and $\zeta$ such that we have the degree bound
$$\deg_{H_i}(\mathbf v)\leq (k-i)\left({n\choose i-1}\cdot\ceil{\frac{(k-i-1)!}{((k-i)!)^\frac{k-i-1}{k-i}}\cdot \beta_i^\frac{1}{k-i}\cdot n}+{n\choose i-1}\cdot\zeta n\right)\leq\rho_in^i$$
for every $\mathbf v\in V(H_i)$. We then have the  inequalities $$(i+1)\nu_i\geq\tau_i\geq\frac{t_i}{\rho_i n^i}.$$

Given $t_i$, $\nu_i$ and $\tau_i$, the number of possible $G_i$ is bounded by the product of:

\begin{itemize}
    \item ${{n\choose k-i}\choose \tau_i}\leq {n^{k-i}\choose\tau_i}$ (choosing a vertex cover $\mathcal T_i$ of $H_i$);
    \item ${{k\choose i}\tau_i{n\choose i}\choose t_i}$ (choosing a collection of $t_i$ clauses, each using at least one member of $\mathcal T_i$).
\end{itemize}
Suppose $G_1,\dots,G_{k-2}$ have been determined and consider the number of possible $G_0$. Let $\mathcal M_i$ be some maximum matching of $H_i$; say $\mathcal M_i=\{e_j:j\in[\nu_i]\}$.
Define the family of monotone formulae $\mathcal J_i$ by
\begin{align*}
    \mathcal J_i&=\bigcup_{j=1}^{\nu_i}\{F(e_j,a_1,\dots,a_i):a_1,\dots,a_i\notin\{u_1,\dots, u_i,v_1,\dots,v_{k-i-1},w\}\\
    & \qquad\qquad\text{ with }e_j=\{\{u_1,v_1,\dots,v_{k-i-1}\},\dots, \{u_i,v_1,\dots,v_{k-i-1}\},\{w,v_1,\dots,v_{k-i-1}\}\}\}.
\end{align*}
For every possible $G_0$ and every $F(e_j,a_1,\dots,a_i)\in \mathcal J_i$, we have $|G_0\cap F(e_j,a_1,\dots,a_i)|< i+1.$
Moreover, since every $F(e_j,a_1,\dots,a_i)$ is of the form $\{y_1\cdots y_{k-1}z_1,\dots, y_1\cdots y_{k-1}z_{i+1}\}$, and every formula of the form $\{y_1\cdots y_{k-1}z_1,\dots, y_1\cdots y_{k-1}z_{i+1}\}$ can come from at most ${k-1\choose i}$ different  $e_j\in\mathcal M_i$, we have
$$|\mathcal J_i|\geq\frac{\nu_i{n-k\choose i}}{{k-1\choose i}}.$$

Let $\mathcal H$ be a (non-uniform) multigraph, whose vertex set is the set of all monotone clauses on variable set $X$, and whose edge set is the union of all $\mathcal J_i$ and a number of single-clause formulae, such that every monotone clause lies in exactly $\sum_{i=1}^{k-2}{k-1\choose i}$ edges. In other words, we have
\begin{align*}
    V(\mcH)={X\choose k},\qquad
    E(\mcH)=\left(\bigcup_{i=1}^{k-2}\mathcal J_i\right)\cup\left(\bigcup_{C\in {X\choose k}}\{C\}^{\sum_{i=1}^{k-2}\left({k-1\choose i}-\eta_i(C)\right)}\right),
\end{align*}
where $\eta_i(C)\leq {k-1\choose i}$ is the number of edges in $\mathcal J_i$ that contain $C$.  Notice that every vertex in $\cH$ has degree exactly $\sum_{i=1}^{k-2}{k-1\choose i}$.

Let $\mathcal G_{0}$ be the collection of all possible $G_0$. By~\cref{lem:shearer}, we have
$$\log_2|\mathcal G_{0}|\leq  \left(\sum_{i=1}^{k-2}{k-1\choose i}\right)^{-1}\sum_{F\in \mathcal J_i} \log_2|\Tr(\cG_0,F)|=\left(2^{k-1}-2\right)^{-1}\sum_{F\in \mathcal J_i} \log_2|\Tr(\cG_0,F)|.$$ For all $F(e_j,a_1,\dots,a_i)\in \mathcal J_i$, since $|G_0\cap F(e_j,a_1,\dots,a_i)|< i+1$ for all $G_0\in \mathcal G_{0}$, we have $|\text{Tr}(\mathcal G_{0},F(e_j,a_1,\dots,a_i))|\leq 2^{i+1}-1$. Since we also have all $\text{Tr}\left(\mathcal G_0,\{T\}^{\sum_{i=1}^{k-2}\left({k-1\choose i}-\eta_i(T)\right)}\right)\leq 2$, it follows that
\begin{align*}
    \log_2|\mathcal G_{0}|
    &\leq \left(2^{k-1}-2\right)^{-1}\left(\sum_{C\in {V\choose k}}\sum_{i=1}^{k-2}\left({k-1\choose i}-\eta_i(C)\right)+\sum_{i=1}^{k-2}|\mathcal J_i|\log_2(2^{i+1}-1)\right)\\
    &\overset{(*)}{=}{n\choose k}-\left(2^{k-1}-2\right)^{-1}\left(\sum_{i=1}^{k-2}(i+1-\log_2(2^{i+1}-1))|\mathcal J_i|\right)\\
    &\overset{(**)}{\leq} {n\choose k}-\left(2^{k-1}-2\right)^{-1}\left(\sum_{i=1}^{k-2}\frac{(i+1-\log_2(2^{i+1}-1)){n-k\choose i}\nu_i}{{k-1\choose i}}\right)\\
    &\overset{(***)}{\leq} {n\choose k}-\left(2^{k-1}-2\right)^{-1}\left(\sum_{i=1}^{k-2}\frac{(i+1-\log_2(2^{i+1}-1)){n-k\choose i}\tau_i}{(i+1){k-1\choose i}}\right)
\end{align*}
where (*) is because $\sum_{T\in {V\choose k}}\eta_i(T)=(i+1)|\mathcal J_i|$, (**) is because $|\mathcal J_i|\geq\frac{\nu_i{n-k\choose i}}{{k-1\choose i}}$, and (***) is because $\frac{\tau_i}{i+1}\leq\nu_i$. For every $i\in\{1,\dots, k-1\}$, set $$\delta_i=\left(2^{k-1}-2\right)^{-1}\cdot \frac{i+1-\log_2(2^{i+1}-1)}{(i+1){k-1\choose i}}>0.$$

Given all the $t_i$, $\nu_i$ and $\tau_i$ for $i\in\{1,\dots,k-2\}$, combining the above with the previous observation that there are at most ${n^{k-i}\choose\tau_i}{{k\choose i}\tau_i{n\choose i}\choose t_i}$ possible $G_i$, we obtain that the number of possible $G\in\cI_3^*(n, \zeta,\vec\beta)\setminus \cI_4^*(n, \zeta,\vec\beta)$  is at most
\begin{align*}
    &\exp_2\left[{n\choose k}-\left(\sum_{i=1}^{k-2}\delta_i{n-k\choose i}\tau_i\right)\right]\cdot \prod_{i=1}^{k-2}{n^{k-i}\choose\tau_i}{{k\choose i}\tau_i{n\choose i}\choose t_i}\\
    &\leq\exp_2\left[{n\choose k}+\sum_{i=1}^{k-2}\left(O_k(\log n)\tau_i+H\left(\frac{t_i}{{k\choose i}\tau_i{n\choose i}}\right)\tau_in^i-\delta_i{n-k\choose i}\tau_i\right)\right]\\
    &\overset{(*)}{\leq} \exp_2\left[{n\choose k}+\sum_{i=1}^{k-2}\left(O_k(\log n)+H\left(\frac{\rho_i\cdot 2(i!)}{{k\choose i}}\right)n^i-\delta_i{n-k\choose i}\right)\tau_i\right]\\
    &\leq \exp_2\left[{n\choose k}-3cn^{k-2}\right],
\end{align*}
where (*) is due to the fact that $\tau_i\geq\frac{t_i}{\rho_i n^i}$. For every $i\in\{1,\dots,k-2\}$, there are at most $\frac{{n\choose k-i}}{i+1}$ possible values of $\nu_i$ and at most ${n\choose k-i}$ possible values of $\tau_i$. Hence the number of possible $G\in\cI_4^*(n, \zeta,\vec\beta)$ given $t_1,\dots,t_{k-2}$ is at most $\exp_2\left[{n\choose k}-3cn^{k-2}\right]\cdot\left(\prod_{i=1}^{k-2}\frac{{n\choose k-i}}{i+1}{n\choose k-i}\right)\leq \exp_2\left[{n\choose k}-2cn^{k-2}\right]$.
\end{proof}

\subsection{Bound on $|\cI^*(n, \zeta)|$}

\begin{proof}[Proof of Theorem~\ref{t:boundcistar}] 

Decomposing $\cI^*(n, \zeta)$, we have

\begin{align*}
    |\cI^*(n, \zeta)| 
    &\leq |\cI^*(n, \zeta)\setminus\cI_1^*(n, \zeta, \beta_1)|+|\cI_1^*(n, \zeta, \beta_1)\setminus\cI_2^*(n, \zeta,\vec\beta)| + |\cI_2^*(n, \zeta,\vec\beta)\setminus\cI_3^*(n, \zeta,\vec\beta)|\\ &+|\cI_3^*(n, \zeta,\vec\beta)\setminus\cI_4^*(n, \zeta,\vec\beta)|+|\cI_4^*(n, \zeta,\vec\beta)|.
\end{align*}

Recall that we have the following inequalities from a combination of \cref{l:step1}, \cref{l:step2.1}, \cref{l:step2.2}, \cref{l:step3} and \cref{l:step4}, where $\vec \beta > 0$ chosen so that $\beta_1$ is sufficiently small (in general and also with respect to $\beta_2, \ldots \beta_k$), $\zeta > 0$ is sufficiently small relative to $\vec \beta$, and $c'> c > 0$ are constants chosen to be small relative to $\zeta$ and $\vec \beta$.
\begin{flalign*}
 |\cI^*(n, \zeta)\setminus\cI_1^*(n, \zeta)|   &\leq \exp_2\sqb{\frac{1}{2}{n-1\choose k-1}}\abs{\cI(n-1)},\\
 |\cI_1^*(n, \zeta, \beta_1)\setminus\cI_2^*(n, \zeta,\vec\beta)| &\le (k-2)\exp_2\sqb{(1-c'){n\choose k}}+\exp_2\sqb{(1-c'){n\choose k-1}}|\cI(n-1)|
 \\&\quad+\sum_{i=2}^{k-2}\exp_2\sqb{i(1-c'){n\choose k-1}}|\cI(n-i)|,\\
 |\cI_2^*(n, \zeta,\vec\beta)\setminus\cI_3^*(n, \zeta,\vec\beta)|   &\le\exp_2\sqb{k(1-c'){n\choose k-1}}|\cI(n-k)|,\\
 |\cI_3^*(n, \zeta,\vec\beta)\setminus\cI_4^*(n, \zeta,\vec\beta)| &\le   \exp_2\sqb{{n\choose k}-c'n^{k-2}},\\
 |\cI_4^*(n, \zeta,\vec\beta)| &\le 2^{{n\choose k}}.
 \end{flalign*}
 Combining the above inequalities gives the desired upper bound.
 \begin{flalign*}
  |\cI^*(n, \zeta)|   &\le 2^{n\choose k}+\exp_2\sqb{(1-c){n\choose k}}+\sum_{i=1}^{k-2}\exp_2\sqb{i(1-c){n\choose k-1}}|\cI(n-i)|\\
    &\qquad+
    \exp_2\sqb{(1-c)k{n\choose k-1}} |\cI(n-k)|+\exp_2\sqb{{n\choose k}-cn^{k-2}}. \qedhere
\end{flalign*}
\end{proof}

This immediately gives the desired tight asymptotic.

\begin{proof}[Proof of Theorem~\ref{thm:strong-count}]

Choose some $\theta>\log_23$ with $\pi(\cF_k,\theta)= 1$ and some $\zeta<\zeta_0$ as in \cref{t:boundcistar}. We then choose $\epsilon>0$ as in \cref{prop:reducetoboundik}.

Recall from \cref{prop:reducetoboundik} that for $n$ sufficiently large, we have
$$|\cI(n)|\leq  2^{\left(1-\epsilon\right)\binom{n}{k}} + 2^{n} \abs{\cI^*(n, \zeta)}.$$
Then, substituting the bound in~\cref{t:boundcistar}, we obtain by induction that for some constant $\Delta,c_0>0$ and all $n$,
\begin{equation}\label{eq:strongub}
    \abs{\cI(n)}\leq (1+\Delta\cdot 2^{-c_0n})B(n),
\end{equation}
where $B(n)=2^{{n\choose k}+n}$.
\end{proof}


\begin{thebibliography}{10}

\bibitem{All07}
Peter Allen, \emph{Almost every 2-{SAT} function is unate}, Israel J. Math.
  \textbf{161} (2007), 311--346.

\bibitem{BBCLMS17}
J\'{o}zsef Balogh, Neal Bushaw, Maur\'{\i}cio Collares, Hong Liu, Robert
  Morris, and Maryam Sharifzadeh, \emph{The typical structure of graphs with no
  large cliques}, Combinatorica \textbf{37} (2017), 617--632.

\bibitem{BMS15}
J\'{o}zsef Balogh, Robert Morris, and Wojciech Samotij, \emph{Independent sets
  in hypergraphs}, J. Amer. Math. Soc. \textbf{28} (2015), 669--709.

\bibitem{BMS18}
J\'{o}zsef Balogh, Robert Morris, and Wojciech Samotij, \emph{The method of
  hypergraph containers}, Proceedings of the {I}nternational {C}ongress of
  {M}athematicians---{R}io de {J}aneiro 2018. {V}ol. {IV}. {I}nvited lectures,
  World Sci. Publ., Hackensack, NJ, 2018, pp.~3059--3092.

\bibitem{BS20}
J\'{o}zsef Balogh and Wojciech Samotij, \emph{An efficient container lemma},
  Discrete Anal. (2020), Paper No. 17, 56pp.

\bibitem{BB03}
B\'{e}la Bollob\'{a}s and Graham~R. Brightwell, \emph{The number of {$k$}-{SAT}
  functions}, Random Structures Algorithms \textbf{22} (2003), 227--247.

\bibitem{BBL03}
B\'{e}la Bollob\'{a}s, Graham~R. Brightwell, and Imre Leader, \emph{The number
  of 2-{SAT} functions}, Israel J. Math. \textbf{133} (2003), 45--60.

\bibitem{CGFS86}
F.~R.~K. Chung, R.~L. Graham, P.~Frankl, and J.~B. Shearer, \emph{Some
  intersection theorems for ordered sets and graphs}, J. Combin. Theory Ser. A
  \textbf{43} (1986), 23--37.

\bibitem{Col75}
George~E. Collins, \emph{Quantifier elimination for real closed fields by
  cylindrical algebraic decomposition}, Automata theory and formal languages
  ({S}econd {GI} {C}onf., {K}aiserslautern, 1975), 1975, pp.~134--183.

\bibitem{EFR86}
P.~Erd\H{o}s, P.~Frankl, and V.~R\"{o}dl, \emph{The asymptotic number of graphs
  not containing a fixed subgraph and a problem for hypergraphs having no
  exponent}, Graphs Combin. \textbf{2} (1986), 113--121.

\bibitem{EKR76}
P.~Erd\H{o}s, D.~J. Kleitman, and B.~L. Rothschild, \emph{Asymptotic
  enumeration of {$K_{n}$}-free graphs}, Colloquio {I}nternazionale sulle
  {T}eorie {C}ombinatorie ({R}ome, 1973), {T}omo {II}, 1976, pp.~19--27.

\bibitem{Fur92}
Zolt\'{a}n F\"{u}redi, \emph{The maximum number of edges in a minimal graph of
  diameter {$2$}}, J. Graph Theory \textbf{16} (1992), 81--98.

\bibitem{FM17}
Zolt\'{a}n F\"{u}redi and Zeinab Maleki, \emph{The minimum number of triangular
  edges and a symmetrization method for multiple graphs}, Combin. Probab.
  Comput. \textbf{26} (2017), 525--535.

\bibitem{FPS05}
Zolt\'{a}n F\"{u}redi, Oleg Pikhurko, and Mikl\'{o}s Simonovits, \emph{On
  triple systems with independent neighbourhoods}, Combin. Probab. Comput.
  \textbf{14} (2005), 795--813.

\bibitem{GL18}
Vytautas Gruslys and Shoham Letzter, \emph{Minimizing the number of triangular
  edges}, Combin. Probab. Comput. \textbf{27} (2018), 580--622.

\bibitem{Hak65}
S.~L. Hakimi, \emph{On the degrees of the vertices of a directed graph}, J.
  Franklin Inst. \textbf{279} (1965), 290--308.

\bibitem{IK09}
L.~Ilinca and J.~Kahn, \emph{On the number of 2-{SAT} functions}, Combin.
  Probab. Comput. \textbf{18} (2009), 749--764.

\bibitem{IK12}
L.~Ilinca and J.~Kahn, \emph{The number of 3-{SAT} functions}, Israel J. Math.
  \textbf{192} (2012), 869--919.

\bibitem{Kah01}
Jeff Kahn, \emph{An entropy approach to the hard-core model on bipartite
  graphs}, Combin. Probab. Comput. \textbf{10} (2001), 219--237.

\bibitem{Kat68}
G.~Katona, \emph{A theorem of finite sets}, Theory of graphs ({P}roc.
  {C}olloq., {T}ihany, 1966), 1968, pp.~187--207.

\bibitem{Kau10}
Manuel Kauers, \emph{How to use cylindrical algebraic decomposition}, S\'{e}m.
  Lothar. Combin. \textbf{65} (2010/12), Art. B65a, 16.

\bibitem{Kee11}
Peter Keevash, \emph{Hypergraph {T}ur\'{a}n problems}, Surveys in combinatorics
  2011, London Math. Soc. Lecture Note Ser., vol. 392, Cambridge Univ. Press,
  Cambridge, 2011, pp.~83--139.

\bibitem{Kru63}
Joseph~B. Kruskal, \emph{The number of simplices in a complex}, Mathematical
  optimization techniques, Univ. of California Press, Berkeley, Calif., 1963,
  pp.~251--278.

\bibitem{MCK81}
Brendan~D. McKay, \emph{Practical graph isomorphism}, (1981), pp.~45--87.  
  
 \bibitem{MCK14}
Brendan~D. McKay, \emph{Practical graph isomorphism II}, J. Symbolic Comput. \textbf{60} (2014), pp.~94--112.  

\bibitem{ST15}
David Saxton and Andrew Thomason, \emph{Hypergraph containers}, Invent. Math.
  \textbf{201} (2015), 925--992.

\bibitem{Sch03}
Alexander Schrijver, \emph{Combinatorial optimization. {P}olyhedra and
  efficiency}, Springer-Verlag, Berlin, 2003.

\end{thebibliography}

\appendix 

\section{Brute force verification of Conjecture~\ref{conj:BBL} for $k=5$ (by Nitya Mani and Edward Yu)}\label{a:comp}

\subsection{Overview} 
As observed in Remark~\ref{r:finite-check}, to verify that $\pi(\mcF_k, \theta) = 1$ for some fixed $k$ and some $\theta > \log_2 3$, 
it suffices to show that for some constant $n > k$, $\alpha(\vec{H}) + (\log_2 3) \beta(\vec{H}) \le 1$ over all $\vec T_k$-free $k$-PDGs $\vec{H}$ on exactly $n$ vertices (with $\alpha(\vec H)\binom{n}{k}$ undirected edges and $\beta(\vec H)\binom{n}{k}$ directed edges).

This is a finite check for any fixed $k$. The new result for 5-SAT relies on $(n,k)=(7,4)$, which took 6600 CPU hours (in parallel) after several optimizations. We describe the algorithm used to efficiently enumerate $7$-vertex $4$-PDGs and verify they were $\vec T_4$-free. The complete, documented code (along with installation and running instructions) can be found at \url{https://github.com/ThinGarfield/Density-k-PDG/tree/v10.1}.

Using this code, we computed for the following pairs $(n, k)$, the largest value of $\theta$ (which appears in the corresponding table entry) such that $\alpha(\vec{H}) + \theta \beta(\vec{H}) \le 1$ for all $n$-vertex $\vec T_k$-free $k$-PDGs $\vec{H}$.

\begin{table}[h!]
    \centering
    \begin{tabular}{@{}ccccccc@{}}
        \toprule
                  & \multicolumn{6}{c}{$k$} \\ \cmidrule{2-7}
        $n$   & 2 & 3 & 4 & 5 & 6 & 7 \\ \midrule
        2 & 1 &&&&&\\
    3 & $3/2$ & 1 &&&& \\
    4 & $3/2$ & $4/3$ & 1 &&& \\
    5 & $5/3$ & $5/3$ & $5/4$ & 1 & \\
    6 & $5/3$ & $5/3$ & $3/2$ & $6/5$ & 1 \\
    7 & $7/4$ & $7/4$ & $\mathbf{7/4}$ & $7/5$ & $7/6$ & 1  \\
    8 & $7/4$ & ? & ? & \textit{8/5} & $4/3$ & $8/7$ \\ \bottomrule
    \end{tabular}
\end{table}

In the above table, the $(n,k)=(8,5)$ entry does \textit{not} follow by computation (indeed, the projected size of such a calculation using our methods would be far beyond any reasonable computational resources). Rather, this follows by way of using the $(n,k)=(7,4)$ case as in Proposition~\ref{p:a1} to observe that $\pi(\vec{T}_5, 8/5) \le 1$ and finding a $\vec{T}_5$-free $8$ vertex graph $\vec H$ with only directed edges where $\alpha(\vec{H}) + \frac85 \beta(\vec{H}) = \frac85 \beta(\vec{H}) = 1$.


\subsection{$n = 7, k =4$}
Our computation enables us to verify Conjecture~\ref{conj:BBL} for $k = 5$.

\begin{prop}\label{p:a1}
We have that $\pi(\vec T_5, \theta) = 1$ for some $\theta > \log_2 3 $. Consequently the number of $5$-SAT functions is $(1 + o(1))2^{n + {n \choose k}}$.
\end{prop}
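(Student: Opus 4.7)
The plan is to bootstrap from the brute-force base case at $(n,k) = (7,4)$ via the vertex-averaging inequality, then invoke Theorem~\ref{thm:strong-count}.

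The computational result summarized in the appendix overview establishes that every $7$-vertex $\vec T_4$-free $4$-PDG $\vec H$ satisfies $\alpha(\vec H) + (7/4)\,\beta(\vec H) \le 1$. The standard averaging over random vertex deletions (which preserves $\alpha + \theta\beta$ in expectation, so an $n$-vertex violator produces an $(n{-}1)$-vertex violator) extends this bound to all $n \ge 7$; combined with the complete undirected $4$-PDG as the matching lower bound, we obtain $\pi(\vec T_4, 7/4) = 1$.

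Next I would apply Lemma~\ref{lem:vertex-averaging} with $k = 5$ and $\theta = 7/4$. Since $\frac{(5-1)(7/4) + 1}{5} = \frac{8}{5}$, this gives $\pi(\vec T_5, 8/5) \le \pi(\vec T_4, 7/4) = 1$, and the trivial matching lower bound forces equality. Because $8/5 = 1.6 > \log_2 3$, this already proves the first assertion of the proposition. For the $5$-SAT enumeration claim, observe that every $\cF_5$-free $5$-PDG is in particular $\vec T_5$-free, so $\pi(\cF_5, 8/5) \le \pi(\vec T_5, 8/5) = 1$, with equality by the trivial lower bound. Theorem~\ref{thm:strong-count} then applies with $k = 5$ and $\theta = 8/5$, yielding that the number of $5$-SAT functions on $n$ variables is $(1 + o(1))\,2^{\,n + \binom{n}{5}}$.

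The genuine obstacle is the base case itself: a brute-force enumeration of all $7$-vertex $\vec T_4$-free $4$-PDGs, together with verification of the density inequality for each, required roughly $6600$ CPU-hours even after significant algorithmic optimization. The remainder of the appendix is devoted to describing this computation (the enumeration algorithm, the canonicalization used to collapse the symmetric group action on vertex labels, the pruning heuristics for discarding partial $4$-PDGs that cannot extend to a tight configuration, and the parallelization strategy). Everything downstream of the base case is a direct bookkeeping application of the reductions already established in Sections~\ref{s:turan} and~\ref{s:stability}.
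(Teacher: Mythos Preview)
Your proposal is correct and follows essentially the same route as the paper: use the $(n,k)=(7,4)$ computation to get $\pi(\vec T_4,7/4)=1$, apply Lemma~\ref{lem:vertex-averaging} to obtain $\pi(\vec T_5,8/5)=1$, and then invoke Theorem~\ref{thm:strong-count}. If anything, you are slightly more careful than the paper in explicitly noting the passage from $\pi(\vec T_5,\cdot)$ to $\pi(\cF_5,\cdot)$ before applying Theorem~\ref{thm:strong-count}.
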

\begin{proof}
By computation (described in more details in the following sections), we verify that for all $7$-vertex $\vec T_4$-free graphs $\vec{H}$, $\alpha(\vec{H}) + \frac74 \beta(\vec{H}) \le 1$. This is a tight estimate for $n = 7$, achieved by the following $4$-PDG: 
\begin{align*}
\vec{H} = ([7],\{&123\wc6, 124\wc6,134\wc6, 234\wc6, 125\wc6,135\wc6, 235\wc6, 145\wc6, 245\wc6,\\
&123\wc7, 124\wc7,134\wc7, 234\wc7, 125\wc7,135\wc7, 235\wc7, 145\wc7, 245\wc7\})   
\end{align*}
By subsampling, this verifies that $\pi(\vec{T}_4, 7/4) = 1$. Applying Lemma~\ref{lem:vertex-averaging}, we find that 
$$1 \ge \pi(\vec T_4, 7/4) 
\ge \pi\left(\vec T_5, \frac{4 \cdot 7/4 + 1}{5} \right) 
= \pi(\vec T_5, 8/5).$$
Taking $\theta = 8/5 > \log_2 3$, the desired result  follows from Theorem~\ref{thm:strong-count}. 
\end{proof}

\subsection{Compute environment}
We implement this search in C++ 17 (specific details about the development environment and installation can be found in the code README).

We use the Google Cloud Platform Compute Engine to enumerate $k$-PDGs. Below we summarize the runtime for fixed $n, k$ of computing the maximal $\theta$ such that $\alpha(\vec{H}) + \theta \beta(\vec{H}) \le 1$ for all $n$-vertex, $\vec T_k$-free $k$-PDGs.
\begin{itemize}
    \item $n = 7, k = 3$ took ~880 CPU hours
    \item $n = 7, k = 4$ (the primary desired case) took ~6600 CPU hours in 30 batches, checking 29313 base graphs (described in more detail in the following subsection)
    \item $n = 7, k = 5$ took a few minutes on a regular computer
    \item $n = 8, k =5$ is not computationally tractable with the current algorithm (would require $\sim 10^{15}$ CPU hours with the current implementation and checking 4722759 base graphs). We think $n = 9, k =5$, an even more expensive computation, is the minimum $n$ that might verify Conjecture~\ref{conj:BBL} for $k = 6$.
    \item $n = 8, k = 6$ took ~200 CPU hours
\end{itemize}

\subsection{Algorithm}

We outline the algorithm below. Further documentation and associated testing can be found in the codebase.

\begin{alg}
Principally, we enumerate in a way to reduce the number of ``base $k$-PDGs'' that must be checked by growing a search tree of $k$-PDGs and eliminate unnecessary steps in computation. These optimizations are crucial for making the problem more computationally tractable.

\begin{enumerate}
\item Initialization: Let $\mcG_{k-1}$ be the set comprising the single PDG formed by taking a collection of vertices $\{1, \ldots, k-1\}$ with no edges. 
\item For $\ell \in \{k, k+1, \ldots, n-1\}$:
\begin{itemize}
    \item Initialize $\mcG_{\ell} := \left\{(V(\vec{G}) \cup \{\ell\}, E(\vec{G})) : \vec{G} \in \mcG_{\ell-1}\right\}$
    \item For each $\vec{G} \in \mcG_{\ell}$:
    \begin{itemize}
    \item Consider all possible combinations of edges (undirected or directed towards any vertex) that could be added to each $\vec{G} \in \mcG_{\ell}$ that include vertex $\ell$. Let $\mcE_{\ell}$ be the collection of possible edge sets to add.
    \item For each potential collection of edges $F \in \mcE_{\ell}$, consider candidate $\vec{H}_F = (V(\vec{G}), E(\vec{G}) \cup F)$, and check if $\vec{H}$ is $\vec T_k$-free. If not, discard the associated $\vec{H}_F$. If so, express $\vec{H}_F$ in a \textit{canonical form}, which is a representation chosen to ease in tracking non-isomorphic $k$-PDGs.
    \item For each $\vec T_k$-free $\vec{H}_F$ not isomorphic to some existing element in $\mcG_{\ell}$, add $\vec{H}_F$ to $\mcG_{\ell}$.
    \end{itemize}
    \item Sort $\mcG_{\ell}$ with respect to the canonical form.
\end{itemize}
\item Repeat the above process for $\ell = n$, but this time do not represent PDGs in canonical form or check for isomorphisms (but verify that the $k$-PDGs in $\mcG_n$ are $\vec T_k$-free)
\item Return the maximum value of $\theta$ such that for all $\vec{H} \in \mcG_n$, $\alpha(\vec{H}) + \theta \beta(\vec{H}) \le 1$.
\end{enumerate}
\end{alg}

There are several implemented optimizations (verified by a variety of tests, outlined in the code \texttt{README}) that make the above algorithm computationally tractable. We outline some of the most important ones below.
\begin{enumerate}
\item We represent $k$-PDGs with a data structure that contains edges, vertex signatures, and a graph hash without pointer indirections, representing edges with a 16-bit struct. This representation makes it faster to verify that a $k$-PDG is $\vec T_k$-free, check for isomorphisms, and copy and grow $k$-PDGs.
\item As we grow our classes $\mcG_{\ell}$, for every $\vec{G} \in \mcG_{\ell}$ we keep track of the largest $\theta$ such that $\alpha(\vec{G}) + \theta \beta(\vec{G}) \le 1$.
\item We use bit mask manipulation and a bit representation of edges to efficiently verify if a $k$-PDG is $\vec T_k$-free. We also attempt to limit the number of supersets of edge sets that contain a copy of $\vec T_k$ that are checked.
\item We adapt some graph isomorphism checking strategies~\cite{MCK81,MCK14} to $k$-PDGs by hashing degree information of vertices in a current $k$-PDG in a permutation invariant way (which is maintained via a canonical form to represent $k$-PDGs). We call this hash the \textit{vertex signature}. We also compute a whole $k$-PDG \textit{graph hash.} These steps make it easier to verify if two $k$-PDGs are isomorphic (although both of these sets of hashes could match for non-isomorphic $k$-PDGs, in which case we check for the $k$-PDGs being identical as we walk through permutations that align in vertex signature).
\end{enumerate}


\end{document}